\numberwithin{equation}{section}
\theoremstyle{definition}
\newtheorem{definition}{Definition}[subsection]
\newtheorem{remark}[definition]{Remark}
\newtheorem{example}[definition]{Example}
\theoremstyle{plain}
\newtheorem{theorem}[definition]{Theorem}
\newtheorem{proposition}[definition]{Proposition}
\newtheorem{lemma}[definition]{Lemma}
\newtheorem{corollary}[definition]{Corollary}
\newtheorem{theorem*}{Theorem}
\theoremstyle{remark}
\def\fd{\mathop{\rm fd}\nolimits}
\def\sup{\mathop{\rm sup}\nolimits}
\def\hom{\mathop{\rm \mathscr{H}om}\nolimits}
\def\sup{\mathop{\rm sup}\nolimits}
\def\lim{\mathop{\underrightarrow{\rm lim}}\nolimits}
\newcommand{\Tor}{\operatorname{Tor}}
\newcommand{\Mod}{\operatorname{Mod}}
\newcommand{\Hom}{\operatorname{Hom}}
\newcommand{\Ext}{\operatorname{Ext}}
\newcommand{\op}{{}^{\operatorname{op}}}
\newcommand{\id}{\operatorname{id}}
\newcommand{\ra}{\rightarrow}
\newcommand{\FP}{\mbox{FP}}
\newcommand{\Ch}{\operatorname{Ch}}
\def\hom{\mathop{\rm \mathscr{H}{\it om}}\nolimits}
\def\@seccntformat#1{%
  \protect\textup{\protect\@secnumfont
    \ifnum\pdfstrcmp{section}{#1}=0 \scshape\bfseries\fi
    \ifnum\pdfstrcmp{subsection}{#1}=0 \bfseries\fi
    \csname the#1\endcsname
    \protect\@secnumpunct
  }%
}
\begin{document}

\title{Relative FP-injective and FP-flat complexes \\ and their Model Structures}
\thanks{2010 MSC: 18G35, 18G15, 16E10}
\thanks{Key Words: complex, pre-envelope, cover, cotorsion pair, model structure, Quillen equivalence.}

\author{Tiwei Zhao}
\address[T. Zhao]{Department of Mathematics. Nanjing University. Nanjing 210093. PEOPLE'S REPUBLIC OF CHINA}
\email{tiweizhao@hotmail.com}

\author{Marco A. P\'erez}
\address[M. A. P\'erez]{Instituto de Matem\'aticas. Universidad Nacional Aut\'onoma de M\'exico. Mexico City 04510. MEXICO}
\email{maperez@im.unam.mx}

\baselineskip=14pt
\maketitle

\begin{abstract}
In this paper, we introduce the notions of $\FP_n$-injective and $\FP_n$-flat complexes in terms of complexes of type $\FP_n$. We show that some characterizations analogous to that of injective, FP-injective and flat complexes exist for $\FP_n$-injective and $\FP_n$-flat complexes. We also introduce and study  $\FP_n$-injective and $\FP_n$-flat dimensions of modules and complexes, and give a relation between them in terms of Pontrjagin duality. The existence of pre-envelopes and covers in this setting is discussed, and we prove that any complex has an $\FP_n$-flat cover and an $\FP_n$-flat pre-envelope, and in the case $n\geq 2$ that any complex has an $\FP_n$-injective cover and an $\FP_n$-injective  pre-envelope. Finally, we construct model structures on the category of complexes from the classes of modules with bounded $\FP_n$-injective and $\FP_n$-flat dimensions, and analyze several conditions under which it is possible to connect these model structures via Quillen functors and Quillen equivalences.
\end{abstract}


\pagestyle{myheadings}
\markboth{\rightline {\scriptsize T. Zhao and M. A. P\'{e}rez}}
         {\leftline{\scriptsize Relative FP-injective and FP-flat complexes and their model structures}}


\section*{\textbf{Introduction}}

Throughout this paper, $R$ denotes an associative ring with unit, $\Mod(R)$ (resp., $\Mod(R\op)$) denotes the category of all left (resp., right) $R$-modules, and $\Ch(R)$ (resp., $\Ch(R\op)$) denotes the  category of all complexes of left (resp., right) $R$-modules. We denote by $(\bm{X},\delta)$, or simply by $\bm{X}$, a chain complex
\[
\bm{X} = \cdots \to X_2 \xrightarrow{\delta_{2}^{\bm{X}}} X_1 \xrightarrow{\delta_1^{\bm{X}}} X_0 \xrightarrow{\delta^{\bm{X}}_0}  X_{-1} \xrightarrow{\delta^{\bm{X}}_{-1}}  \cdots
\]
in $\Ch(R)$ (or $\Ch(R\op)$), and by $Z(\bm{X})$ and $B(\bm{X})$ the sub-complexes of cycles and boundaries of $\bm{X}$, respectively. For more background material, we refer the reader to \cite{EJ11,GR99,We94}.

The category $\Ch(R)$ plays an important role in homological algebra, and it has been studied by many authors (see, for example \cite{AEGO01, AF91, EG97,EG98,EJ11, GR99, WL11,Ya,YL10}), and many results in $\Mod(R)$ have been generalized to $\Ch(R)$. As we know, injective and flat complexes are key in the study of $\Ch(R)$, and they have a closed relation with injective and flat modules respectively. For example, a complex $\bm{X}$ in $\Ch(R)$ is injective (resp., flat) if, and only if, $\bm{X}$ is exact and $Z_m(\bm{X})$ is injective (resp., flat) as a left $R$-module for any $m \in \mathbb{Z}$. In 1970, Stenstr\"{o}m \cite{St70} introduced the notion of FP-injective modules, which generalizes that of injective modules, and using it, he gave the homological properties over coherent rings analogous to that of injective modules over Noetherian rings. In \cite{WL11, YL10}, Liu et al. introduced the notion of FP-injective complexes. They obtained many nice characterizations of them over coherent rings, and they showed that some properties of injective complexes have counterparts for FP-injective complexes. Recently, Gao and Wang \cite{GW15} introduced the notions of weak injective and weak flat modules, which are further generalizations of FP-injective modules and flat modules. Independently, from the viewpoint of model structures, D. Bravo, J. Gillespie and M. Hovey \cite{BGH14} also investigated these classes of modules, and in their paper, they called them absolutely clean (or $\FP_\infty$-injective) and level (or $\FP_\infty$-flat) modules, respectively. The counterpart of the corresponding complexes was also introduced and investigated in \cite{BG16,GH16}. It seems that there is  a gap between 1 and $\infty$, and it is in this gap where one can extend essential aspects from coherent rings to arbitrary rings. In fact, in some cases, the parameter `2' is enough to obtain a lot of information (See for example \cite[Section 3]{BP16}). Recently, Bravo and the second author introduced and investigated in \cite{BP16} $\FP_n$-injective and $\FP_n$-flat modules for each non-negative integer $n$, and generalized many results from coherent rings to  $n$-coherent rings by using them. In this process, finitely presented modules are replaced by finitely $n$-presented modules. As a summary to the above work, we outline a diagram to reflect the intrinsic relation between these concepts as follows:
\[
{\footnotesize
\xymatrix @C=0.3cm @R=1.15cm {\mbox{Injective module}\ar@{=>}[r]\ar@{:>}[d]&\mbox{FP-injective module}\ar@{=>}[r]\ar@{:>}[d]&\cdots\ar@{=>}[r]&\mbox{$\FP_n$-injective module}\ar@{=>}[r]\ar@{:>}[d]&\cdots\ar@{=>}[r]&\mbox{$\FP_\infty$-injective module}\ar@{:>}[d]\\
\mbox{Injective complex}\ar@{=>}[r]&\mbox{FP-injective complex}\ar@{=>}[r]&\cdots\ar@{=>}[r]&{\mbox{~~~~~ ?~~~~~ }}\ar@{=>}[r]&\cdots\ar@{=>}[r]&\mbox{$\FP_\infty$-injective complex}}
}
\]
Following the above philosophy, it is natural to extend the notions of $\FP_n$-injective and $\FP_n$-flat  modules to $\Ch(R)$, and then to establish a relation between the $\FP_n$-injectivity (resp., $\FP_n$-flatness) of a complex and that of its cycles.

The structure of this paper is as follows:
\begin{itemize}
\item[$\bullet$] In Section \ref{sec:prelim}, we  recall  some notions and terminologies needed in this article.

\item[$\bullet$] Section \ref{sec:FPninjflat} is devoted to introducing the notion of complexes of type $\FP_n$ for some non-negative integer $n$, and give some characterizations for $n$-coherent rings in terms of a stable condition of complexes of type $\FP_n$. Then, we  introduce the notions of $\FP_n$-injective and $\FP_n$-flat complexes in terms of complexes of type $\FP_n$. We will obtain a description of $\FP_n$-injective complexes (resp., $\FP_n$-flat complexes) in terms of their exactness and the injectivity (resp., flatness) of their cycles relative to the class of modules of type $\FP_n$, among other homological properties (See Theorems \ref{tfae1} and \ref{theo:char_flat}).

\item[$\bullet$] In Section \ref{sec:FPninjflatdimensions}, we present and characterize the $\FP_n$-injective and $\FP_n$-flat dimensions of (left and right) modules and complexes, denoted $\FP_n\mbox{-}\id_R(M)$ and $\FP_n\mbox{-}\fd_{R\op}(N)$ for $M$ in $\Mod(R)$ and $N$ in $\Mod(R\op)$, and by $\FP_n\mbox{-}\id(\bm{X})$ and $\FP_n\mbox{-}\fd(\bm{Y})$ for $\bm{X}$ in $\Ch(R)$ and $\bm{Y}$ in $\Ch(R\op)$. In the contexts of complexes, we prove that $\FP_n\mbox{-}\id(\bm{X}) \leq m$ if, and only if, $\bm{X}$ is exact and $\FP_n\mbox{-}\id_R (Z_i(\bm{X}))\leq m$ for any $i \in \mathbb{Z}$, along with a dual characterization for the $\FP_n$-flat dimension. As a consequence, we get that if $\bm{X}$ and $\bm{Y}$ are exact complexes in $\Ch(R)$ and $\Ch(R\op)$, respectively, then:
\begin{align*}
\FP_n\mbox{-}\id(\bm{X}) & = \sup\{\FP_n\mbox{-}\id_R(Z_m(\bm{X})) \mbox{ : } m \in \mathbb{Z}\}, \\
\FP_n\mbox{-}\fd(\bm{Y}) & = \sup\{\FP_n\mbox{-}\fd_{R\op}(Z_m(\bm{Y})) \mbox{ : } m \in \mathbb{Z}\}.
\end{align*}
Moreover, we prove that:
\begin{align*}
\FP_n\mbox{-}\fd(\bm{Y}) & = \FP_n\mbox{-}\id(\bm{Y}^+) \mbox{ for every $n \geq 0$}, \\
\FP_n\mbox{-}\id(\bm{X}) & = \FP_n\mbox{-}\fd(\bm{X}^+) \mbox{ for every $n \geq 2$},
\end{align*}
where $\bm{X}^+$ and $\bm{Y}^+$ denote the Pontrjagin dual of $\bm{X}$ and $\bm{Y}$ in $\Ch(R\op)$ and $\Ch(R)$, respectively.

\item[$\bullet$] Denote by $\mathcal{F}_{(n,k)}(R\op)$ (resp., $\mathcal{I}_{(n,k)}(R)$) the class of modules in $\Mod(R\op)$ (resp., in $\Mod(R)$) with $\FP_n$-flat (resp., $\FP_n$-injective) dimension at most $k$, and the corresponding classes in $\Ch(R\op)$ and $\Ch(R)$ by $\mathscr{F}_{(n,k)}(R\op)$ and $\mathscr{I}_{(n,k)}(R)$. In Section \ref{sec:InjFlatCom}, we show that the pair $(\mathcal{F}_{(n,k)}(R\op),\mathcal{I}_{(n,k)}(R))$ is a duality pair over $R$ for every $n \geq 0$; and that the same holds for the pair $(\mathcal{I}_{(n,k)}(R),\mathcal{F}_{(n,k)}(R\op))$ in the case $n \geq 2$. We later prove that these results carry over to $\Ch(R)$, by using a method to inducing three different dual pairs of complexes from a duality pair of modules (See Theorem \ref{theo:induced_duality}). After constructing these duality pairs, we use some results by H. Holm and P. J\o rgensen \cite{duality} and by X. Yang in \cite{Ya} on duality pairs to obtain covers and pre-envelopes associated to the previous classes.

\item[$\bullet$] The final Section \ref{sec:models} is devoted to constructing several model structures on $\Ch(R)$ associated to the classes $\mathcal{I}_{(n,k)}(R)$ and $\mathcal{F}_{(n,k)}(R\op)$. The method we apply is the so called Hovey's correspondence, along with several techniques developed by Gillespie to induce cotorsion pairs in the category of complexes from a cotorsion pair of modules. We also study the possibility to obtaining Quillen equivalences between these new model structures, from the identity, induction and restriction functors, by analyzing certain conditions on the ground rings, and using it, we can judge whether or not a ring $R$ and its opposite ring $R\op$ are derived equivalent.
\end{itemize}

Throughout this paper, the results stated in the categories $\Mod(R)$ and $\Ch(R)$ will be also valid in $\Mod(R\op)$ and $\Ch(R\op)$, and viceversa.


\section[Preliminaries]{\textbf{Preliminaries}}\label{sec:prelim}

In this paper, we mainly use the superscripts to distinguish complexes and the subscripts for a complex components. For example, if $\{\bm{X^i}\}_{i\in I}$ is a family of complexes in $\Ch(R)$, then $X^i_n$ denotes the degree-$n$ term of the complex $\bm{X^i}$. Given a left $R$-module $M$, we denote by $D^n(M)$ the \emph{$n$-disk on $M$}, that is, the complex
\[
\cdots \to 0 \to M \xrightarrow{\id} M \to 0 \to \cdots
\]
with $M$ in the $n$-th and $(n\mbox{-}1)$-st positions; and by $S^n(M)$ the \emph{$n$-sphere on $M$}, that is, a complex with $M$ in the $n$-th position and 0 everywhere else. Given a complex $\bm{X}$ in $\Ch(R)$ and an integer $m$, $\bm{X}[m]$ denotes the complex such that $X[m]_n = X_{n-m}$, and whose boundary operators are
\[
\delta^{\bm{X}[m]}_n := (-1)^m\delta_{n-m}^{\bm{X}}.
\]
The complex $\bm{X}[m]$ is usually referred as the \emph{$m$-th suspension} of $\bm{X}$.

For complexes $\bm{X}$ and $\bm{Y}$ in $\Ch(R)$, $\Hom_{\Ch}(\bm{X},\bm{Y})$ (or $\Hom(\bm{X},\bm{Y})$ for short) is the abelian group of morphisms from $\bm{X}$ to $\bm{Y}$ in the category of complexes, and $\Ext^i_{\Ch}(\bm{X},\bm{Y})$ (or $\Ext^i(\bm{X},\bm{Y})$ for short) for $i\geq 1$ will denote the extension groups we get from the right derived functors of $\Hom(-,-)$. We will frequently consider the sub-group $\Ext^1_{\rm dw}(\bm{X},\bm{Y})$ of $\Ext^1_{\Ch}(\bm{X},\bm{Y})$ formed by those short exact sequences $\bm{0} \to \bm{Y} \to \bm{Z} \to \bm{X} \to \bm{0}$ in $\Ch(R)$ which are split exact at the module level. Note that $\Ext^1_{\Ch}(\bm{X},\bm{Y}) = \Ext^1_{\rm dw}(\bm{X},\bm{Y})$ if $\Ext^1_R(X_m,Y_m) = 0$ for every $m \in \mathbb{Z}$.

We let $\hom(\bm{X},\bm{Y})$ be the complex of abelian groups
\[
\cdots \xrightarrow{\delta^{\hom(\bm{X},\bm{Y})}_{n+1}} \prod_{i\in \mathbb{Z}}\Hom_R(X_i,Y_{n+i}) \xrightarrow{\delta^{\hom(\bm{X},\bm{Y})}_n} \prod_{i\in \mathbb{Z}}\Hom_R(X_i,Y_{n-1+i}) \xrightarrow{\delta^{\hom(\bm{X},\bm{Y})}_{n-1}} \cdots
\]
(where $\mathbb{Z}$ is the additive group of integers) such that if $f\in \hom(\bm{X},\bm{Y})_n$, then
\[
(\delta^{\hom(\bm{X},\bm{Y})}_n(f))_m  := \delta_{n+m}^{\bm{Y}} \circ f_m - (-1)^n f_{m-1} \circ \delta_m^{\bm{X}}.
\]
This construction defines a bifunctor $\hom(-,-)$, which is the internal-hom of a closed monoidal structure $(\Ch(R),\otimes)$. Here, the tensor product $\bm{Z} \otimes \bm{Y}$ of two complexes $\bm{Z}$ in $\Ch(R\op)$ and $\bm{Y}$ in $\Ch(R)$ is defined as the complex
\[
\bm{Z} \otimes \bm{Y} := \cdots \xrightarrow{\delta^{\bm{Z} \otimes \bm{Y}}_{n+1}} \bigoplus_{k \in \mathbb{Z}} Z_k \displaystyle\operatorname*{\otimes}_R Y_{n-k} \xrightarrow{\delta^{\bm{Z} \otimes \bm{Y}}_n} \bigoplus_{k \in \mathbb{Z}} Z_k \displaystyle\operatorname*{\otimes}_R Y_{n-1-k} \xrightarrow{\delta^{\bm{Z}\otimes\bm{Y}}_{n-1}} \cdots
\]
where the boundary operators $\delta^{\bm{Z} \otimes \bm{Y}}_n \colon (\bm{Z} \otimes \bm{Y})_n \to (\bm{Z} \otimes \bm{Y})_{n-1}$ are defined, for every generator $z \otimes y$ with $z \in Z_k$ and $y \in Y_{n-k}$ as
\[
\delta^{\bm{Z} \otimes \bm{Y}}_n(z \otimes y) := \delta^{\bm{Z}}_k(z) \otimes y + (-1)^k z \otimes \delta^{\bm{Y}}_{n-k}(y).
\]
On the other hand, let $\underline{\Hom}(\bm{X},\bm{Y}) = Z(\hom(\bm{X},\bm{Y}))$. Then $\underline{\Hom}(\bm{X},\bm{Y})$ can be made into a complex with $\underline{\Hom}(\bm{X},\bm{Y})_n$ the abelian group of morphisms from $\bm{X}$ to $\bm{Y}[n]$ and with a boundary operator given by $\delta^{\underline{\Hom}(\bm{X},\bm{Y})}_n(f) \colon \bm{X} \ra \bm{Y}[n-1]$, where $f \in \underline{\Hom}(\bm{X},\bm{Y})_n$ and
\[
(\delta^{\underline{\Hom}(\bm{X},\bm{Y})}_n (f))_m := (-1)^n \delta^{\bm{Y}} \circ f_m
\]
for any $m\in \mathbb{Z}$. As it happens with $\hom(-,-)$, the previous construction defines a bifunctor $\underline{\Hom}(-,-)$ which turns out to be the internal-hom of a closed monoidal structure $(\Ch(R),\overline{\otimes})$, where the tensor product $\bm{Z} \overline{\otimes} \bm{Y}$ of $\bm{Z}$ in $\Ch(R\op)$ and $\bm{Y}$ in $\Ch(R)$ is defined as the chain complex
\[
\bm{Z} \overline{\otimes} \bm{Y} := \cdots \xrightarrow{\delta^{\bm{Z} \overline{\otimes} \bm{Y}}_{n+1}} \frac{(\bm{Z} \otimes \bm{Y})_n}{B_n(\bm{Z} \otimes \bm{Y})} \xrightarrow{\delta^{\bm{Z} \overline{\otimes} \bm{Y}}_n} \frac{(\bm{Z} \otimes \bm{Y})_{n-1}}{B_{n-1}(\bm{Z} \otimes \bm{Y})} \xrightarrow{\delta^{\bm{Z} \overline{\otimes} \bm{Y}}_{n-1}} \cdots
\]
where the boundary operators $\partial^{\bm{Z} \overline{\otimes} \bm{Y}}_n$ are given for every coset $z \otimes y + B_n(\bm{Z} \otimes \bm{Y})$ with $z \in Z_k$ and $y \in Y_{n-k}$ by
\[
\delta^{\bm{Z} \overline{\otimes} \bm{Y}}_n(x \otimes y + B_n(\bm{Z} \otimes \bm{Y})) := \partial^{\bm{Z}}_k(z) \otimes y + B_{n-1}(\bm{Z} \otimes \bm{Y}).
\]
The functor $- \overline{\otimes} \bm{Y}$ is right exact, for every $\bm{Y} \in \Ch(R)$, so we can construct the corresponding left derived functors $\overline{\Tor}_i(-,\bm{Y})$ with $i \geq 0$. Note also that the bifunctor $\underline{\Hom}(-,-)$ has right derived functors whose values will be complexes. These values are denoted by $\underline{\Ext}^i(\bm{X},\bm{Y})$. One can see that $\underline{\Ext}^i(\bm{X},\bm{Y})$ is the complex
\begin{align}\label{eqn:underext}
\underline{\Ext}^i(\bm{X},\bm{Y}) & = \cdots \ra \Ext^i(\bm{X},\bm{Y}[n+1]) \ra \Ext^i(\bm{X},\bm{Y}[n]) \ra \Ext^i(\bm{X},\bm{Y}[n-1]) \ra \cdots
\end{align}
with boundary operator induced by the boundary operator of $\bm{Y}$. For a detailed proof, we refer the reader to \cite[Proposition 4.4.7]{PerezBook}. For any complex $\bm{X}$ in $\Ch(R)$, the \emph{character} or \emph{Pontrjagin dual} complex is defined, according to \cite{GR99}, by
\[
\bm{X}^+ := \underline{\Hom}(\bm{X},D^1({\mathbb{Q}/\mathbb{Z}})),
\]
a complex in $\Ch(R\op)$, where $\mathbb{Q}$ is the additive group of rational numbers. There is an equivalent definition of $\bm{X}^+$ which will be used in the sequel. Namely, according to \cite[Proposition 4.4.10]{PerezBook}, we have that
\begin{align}\label{eqn:alter_Pontrjagin}
\bm{X}^+ & \simeq \cdots \to \Hom_\mathbb{Z}(X_{-n-2},\mathbb{Q} / \mathbb{Z}) \xrightarrow{\delta_n} \Hom_\mathbb{Z}(X_{-n-1},\mathbb{Q} / \mathbb{Z}) \to \cdots
\end{align}
where the boundary operators are given by the formula $\delta_n := (-1)^n\Hom_\mathbb{Z}(\delta^{\bm{X}}_{-n-1},\mathbb{Q} / \mathbb{Z})$. We denote the complex on the right side by $\bm{X^\ast}$.

Following \cite{En81}, for any sub-category $\mathcal{F}$ of an abelian category $\mathscr{A}$, a morphism $f \colon F \ra M$ in $\mathscr{A}$ with $F \in \mathcal{F}$ is called an $\mathcal{F}$-\emph{pre-cover} of $M$ if for any morphism $f' \colon F' \ra M$ in $\mathscr{A}$ with $F' \in \mathcal{F}$, there exists a morphism $h \colon F' \ra F$ such that the following diagram commutes:
\[
\begin{tikzpicture}[description/.style={fill=white,inner sep=2pt}]
\matrix (m) [matrix of math nodes, row sep=3em, column sep=3em, text height=1.5ex, text depth=0.5ex]
{
{} & F' \\
F & M \\
};
\path[->]
(m-1-2) edge node[right] {\footnotesize$f'$} (m-2-2)
(m-2-1) edge node[below] {\footnotesize$f$} (m-2-2)
;
\path[dotted,->]
(m-1-2) edge node[sloped,description] {\footnotesize$\exists \mbox{ } h$} (m-2-1)
;
\end{tikzpicture}
\]
The morphism $f \colon F \to M$ is called \emph{right minimal} if an endomorphism $h \colon F \to F$ is an automorphism whenever $f = f \circ h$. An $\mathcal{F}$-pre-cover $f \colon F \to M$ is called an $\mathcal{F}$-\emph{cover} if $f$ is right minimal. An $\mathcal{F}$-pre-cover $f \colon F \to M$ in $\mathscr{A}$ is called \emph{special} if it is epic and if $\Ext^1_{\mathscr{A}}(F',{\rm Ker}(f)) = 0$ for every $F' \in \mathcal{F}$. The sub-category $\mathcal{F}$ is called a \emph{(special) (pre-)covering} in $\mathscr{A}$ if every object in $\mathscr{A}$ has an (special) $\mathcal{F}$-(pre-)cover. Dually, one has the notions of \emph{(special)} $\mathcal{F}$-\textit{(pre-)envelopes}, \textit{left minimal morphisms} and \emph{(special)} \textit{(pre-)enveloping sub-categories}.

The previous notions are closely related to the concept of cotorsion pairs. Two classes $\mathcal{A}$ and $\mathcal{B}$ of objects in $\mathscr{A}$ form a \emph{cotorsion pair} $(\mathcal{A,B})$ if
\begin{align*}
\mathcal{A} & = {}^\perp\mathcal{B} := \{ A \in \mathscr{A} \mbox{ : } \Ext^1_{\mathscr{A}}(A,B) = 0 \mbox{ for every } B \in \mathcal{B} \}, \\
\mathcal{B} & = \mathcal{A}^\perp := \{ B \in \mathscr{A} \mbox{ : } \Ext^1_{\mathscr{A}}(A,B) = 0 \mbox{ for every } A \in \mathcal{A} \}.
\end{align*}
A cotorsion pair $(\mathcal{A,B})$ is \emph{complete} if every object in $\mathscr{A}$ has an special $\mathcal{A}$-pre-cover and a special $\mathcal{B}$-pre-envelope. All of the cotorsion pairs presented in this paper will be complete, and one way to showing this to provide a cogenerating set. A cotorsion pair $(\mathcal{A,B})$ in $\mathscr{A}$ is said to be \emph{cogenerated by a set} $\mathcal{S}$ if $\mathcal{B} = \mathcal{S}^\perp$. Due to the Eklof and Trlifaj's Theorem \cite{EkTr}, we know that every cotorsion pair cogenerated by a set is complete. As complete cotorsion pairs are related to special pre-cover and special pre-envelopes, the analogous type of cotorsion pair for covers and envelopes is known as \emph{perfect}, that is, a cotorsion pair $(\mathcal{A,B})$ in $\mathscr{A}$ such that every object in $\mathscr{A}$ has an $\mathcal{A}$-cover and a $\mathcal{B}$-envelope. In order to show that a cotorsion pair is perfect, it suffices to verify that it is complete and that $\mathcal{A}$ is closed under direct limits (See \cite[Corollary 2.3.7]{Gobel}).


\subsection{Finiteness of modules and chain complexes}

In order to generalize the homological properties from noetherian rings to coherent rings, Stenstr\"{o}m  \cite{St70} introduced the notion of FP-injective modules as follows.

\begin{definition}
A module $M$ in $\Mod(R)$ is called \emph{$\FP$-injective} if $\Ext_{R}^1(L,M) = 0$ for all finitely presented modules $L$ in $\Mod(R)$.
\end{definition}

To give an extension of homological algebra to arbitrary rings, one of the key problems is to increase the length of finitely generated projective resolutions of modules. So the following definition from \cite{BGH14} and \cite{GW15} suits this purpose.

\begin{definition}
A module $L$ in $\Mod(R)$  is said to be of \emph{type $\FP_\infty$} (or \emph{super finitely presented}) if there exists an exact sequence $\cdots \to P_n \to P_{n-1} \to \cdots \to P_1 \to P_0 \to L \to 0$ in $\Mod(R)$, where each $P_i$ is finitely generated projective.
\end{definition}

We denote by $\mathcal{FP}_\infty(R)$ the class of modules in $\Mod(R)$ of type $\FP_\infty$.

Using the previous concept, Bravo, Gillespie and Hovey \cite{BGH14} and independently, Z. Gao and F. Wang \cite{GW15} introduced the following extension of the notions of FP-injective and flat modules.

\begin{definition} Let $M$ be a module in $\Mod(R)$ and $N$ be a module in $\Mod(R\op)$.
\begin{itemize}
\item[(a)] $M$ is called \emph{absolutely clean} (or \emph{weak injective}) if $\Ext_{R}^1(L,M) = 0$ for all $L \in \FP_\infty(R)$.

\item[(b)] $N$ is called \emph{level} (or \emph{weak flat}) if $\Tor^R_1(N,L) = 0$ for all $L \in \FP_\infty(R)$.
\end{itemize}
\end{definition}

We will denote by $\mathcal{I}_\infty(R)$ and $\mathcal{F}_\infty(R\op)$ the classes of absolutely clean and level modules in $\Mod(R)$ and $\Mod(R\op)$, respectively.

To extend the homological properties related to finiteness from modules to complexes, the key step is to give the counterpart for complexes of the above definitions, and we will list them below. We begin recalling from \cite[Definition 2.1]{EG98} and \cite[Definitions 1.3.1 and 1.3.2]{EJ11} the definitions of finitely generated and finitely presented complexes.

\begin{definition} \
\begin{itemize}
\item[{\rm (a)}] A \emph{graded set} $G$ is a family of sets $\{ G_m \mbox{ : } m \in \mathbb{Z} \}$ such that $G_m \cap G_n = \emptyset$ whenever $m \neq n$. If $G$ and $H$ are graded sets, a \emph{morphism $f \colon G \to H$ of degree $p$} is a family of functions of the form $f_m \colon G_m \to H_{m+p}$ with $m \in \mathbb{Z}$. Given a graded set $G$ and a complex $\bm{X}$ in $\Ch(R)$, the notation $G \subseteq \bm{X}$ means $G_n \subseteq X_n$ for every $n \in \mathbb{Z}$. In this case, a sub-complex $\bm{Y} \subseteq \bm{X}$ is the \emph{sub-complex generated by $G$} if $\bm{Y}$ is the intersection of all sub-complexes of $\bm{X}$ containing $G$. A complex $\bm{X}$ is said to be \emph{finitely generated} if one of the following equivalent conditions is satisfied:
	\begin{itemize}
	\item[(a.1)] There exists a finite graded set $G \subseteq \bm{X}$ that generates $\bm{X}$.
	
	\item[(a.2)] Whenever $\bm{X} = \sum_{i \in I} \bm{S}^i$ for some collection $\{ \bm{S}^i \}_{i \in I}$ of sub-complexes of $\bm{X}$, then there exists a finite subset $J \subseteq I$ for which $\bm{X} = \sum_{i \in J} \bm{S}^i$
	\end{itemize}

\item[(b)] A complex $\bm{X}$ is called \emph{finitely presented} if $\bm{X}$ is finitely generated and for each short exact sequence $\bm{0} \to \bm{K} \to \bm{Y} \to \bm{X} \to \bm{0}$ in $\Ch(R)$ with $\bm{Y}$ finitely generated projective, $\bm{K}$ is also finitely generated; or equivalently, if there is an exact sequence $\bm{P_1} \to \bm{P_0} \to \bm{X} \to \bm{0}$ in $\Ch(R)$ such that $\bm{P_0}$ and $\bm{P_1}$ are finitely generated and projective.
\end{itemize}
\end{definition}

We have the following characterization from \cite[Lemma 2.2]{EG98} of finitely generated and finitely presented complexes.

\begin{lemma}\label{fg-fp} The following equivalences hold for any complex $\bm{X}$ in $\Ch(R)$:
\begin{itemize}
\item[{\rm (a)}] $\bm{X}$ is finitely generated if, and only if, it is bounded and each term $X_m$ is finitely generated in $\Mod(R)$.

\item[{\rm (b)}] $\bm{X}$ is finitely presented if, and only if, it is bounded and each term $X_m$ is finitely presented in $\Mod(R)$.
\end{itemize}
\end{lemma}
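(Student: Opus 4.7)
The plan is to handle (a) first and then bootstrap (b) from it. The recurring tool is the observation that, because $\delta^2 = 0$, the sub-complex of $\bm{X}$ generated by a graded set $G \subseteq \bm{X}$ is, in degree $n$, exactly the $R$-submodule $\langle G_n \rangle + \langle \delta_{n+1}^{\bm{X}}(G_{n+1}) \rangle$. This single identity converts global finite generation of a complex into level-wise finite generation of its terms and vice versa.

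For the ``if'' direction of (a), given a bounded $\bm{X}$ with each $X_m$ finitely generated, I take finite generating sets $G_m \subseteq X_m$ for the finitely many $m$ with $X_m \neq 0$ and assemble them into a finite graded set; the identity above shows it generates $\bm{X}$. For the converse, if $\bm{X}$ is generated by a finite graded set $G$ supported in some interval $[a,b]$, the same identity forces $X_n$ to be finitely generated for every $n$, and $X_n = 0$ whenever $n \notin [a-1,b]$, giving boundedness.

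For the forward direction of (b), I start from a presentation $\bm{P_1} \to \bm{P_0} \to \bm{X} \to \bm{0}$ with $\bm{P_0},\bm{P_1}$ finitely generated projective; applying (a) to $\bm{P_0}$ and $\bm{P_1}$ makes both bounded with finitely generated terms, and degree-wise right exactness then supplies a finite free presentation of each $X_m$. Conversely, assuming $\bm{X}$ bounded with each $X_m$ finitely presented, I build a finitely generated projective surjection $\bm{P_0} \twoheadrightarrow \bm{X}$ as follows: for each of the finitely many degrees $m$ with $X_m \neq 0$, choose a finite free presentation $R^{k_m} \xrightarrow{\phi_m} R^{l_m} \xrightarrow{\pi_m} X_m \to 0$; set $\bm{P_0} := \bigoplus_m D^m(R^{l_m})$, which is finitely generated projective by construction; and use the universal property of $D^m$ to define a chain map $\bm{P_0} \to \bm{X}$ whose $m$-th component factors through $\pi_m$, hence is level-wise surjective. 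Applying (a) to the kernel $\bm{K}$ shows $\bm{K}$ is finitely generated, and repeating the construction on $\bm{K}$ yields the required $\bm{P_1}$.

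The delicate point is controlling $\bm{K}$ in that last step. Because $\bm{P_0}$ mixes disk-summands from different degrees, $K_m$ is not simply $\ker(\pi_m)$; it also collects elements coming from the $D^{m+1}(R^{l_{m+1}})$-summand in degree $m$. The use of disks (whose differentials are identities) keeps the bookkeeping tractable: $K_m$ decomposes into contributions from $\im(\phi_m)$ and from compatibility elements that arise when lifting $\delta^{\bm{X}}$ against the chosen $\pi_{m+1}$. Both families are finitely generated, and boundedness of $\bm{K}$ is inherited from $\bm{P_0}$, so (a) closes the argument.
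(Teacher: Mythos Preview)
Your argument is correct. The paper does not supply its own proof of this lemma; it simply records the result as \cite[Lemma 2.2]{EG98}, so there is nothing to compare against beyond noting that your direct argument does the job.

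One simplification worth pointing out: in your final paragraph you work harder than necessary to show each $K_m$ is finitely generated. Once you have the degree-wise short exact sequence $0 \to K_m \to (P_0)_m \to X_m \to 0$ with $(P_0)_m$ finitely generated projective and $X_m$ finitely presented, Schanuel's lemma (or the standard consequence that the kernel of any surjection from a finitely generated projective onto a finitely presented module is finitely generated) immediately gives $K_m$ finitely generated. There is no need to decompose $K_m$ into the $\im(\phi_m)$ piece and the ``compatibility'' piece coming from the adjacent disk summand; that bookkeeping, while not wrong, is superfluous.
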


Now recall from \cite[Definition 2.1]{BG16} and \cite[Definition 3.1]{GH16} the following.

\begin{definition}
A complex $\bm{X}$ is said to be of \emph{type $\FP_\infty$} (or, \emph{super finitely presented}) if there exists an exact sequence $\cdots \to \bm{P_n} \to \bm{P_{n-1}} \to \cdots \to \bm{P_1} \to \bm{P_0} \to \bm{X} \to \bm{0}$ in $\Ch(R)$, where each $\bm{P_i}$ is finitely generated projective.
\end{definition}

We denote by $\mathscr{FP}_\infty(R)$ the class of all complexes in $\Ch(R)$ of type $\FP_\infty$.

Recall from \cite{YL10} and \cite{BG16,GH16} the following.

\begin{definition}
Let $\bm{X}$ be a complex in $\Ch(R)$ and $\bm{Y}$ a complex in $\Ch(R\op)$.
\begin{itemize}
\item[(a)] $\bm{X}$ is called  \emph{$\FP$-injective} if $\underline{\Ext}^1(\bm{L},\bm{X}) = \bm{0}$ for all finitely presented $\bm{L}$ in $\Ch(R)$.

\item[(b)] $\bm{X}$ is called \emph{absolutely clean} (or \emph{weak injective}) if $\underline{\Ext}^1(\bm{L},\bm{X}) = \bm{0}$ for all $\bm{L} \in \mathscr{FP}_\infty(R)$.

\item[(c)] $\bm{Y}$ is called \emph{level} (or \emph{weak flat}) if $\overline{\Tor}_1(\bm{Y},\bm{L}) = \bm{0}$ for all $\bm{L} \in \mathscr{FP}_\infty(R)$.
\end{itemize}
\end{definition}

To investigate the homological nature of finiteness of modules more precisely, Bravo and the second author \cite{BP16} studied the following class of modules.

\begin{definition}
A module $M$ in $\Mod(R)$ is called of \emph{type $\FP_n$} (or \emph{finitely $n$-presented}) if there exists an exact sequence
\begin{align} \label{eqn:nprepmodule}
P_n & \to P_{n-1} \to \cdots \to P_1 \to P_0 \to M \to 0
\end{align}
in $\Mod(R)$ where each $P_i$ is finitely generated and projective.
\end{definition}

We denote by $\mathcal{FP}_n(R)$ the class of all left $R$-modules of type $\FP_n$.

The injectivity and flatness associated to finitely $n$-presented modules were defined in \cite[Definitions 3.1 and 3.2]{BP16} as follows.

\begin{definition} Let $M$ be a module in $\Mod(R)$ and $N$ be a module in $\Mod(R\op)$.
\begin{itemize}
\item[(a)] $M$ is called \emph{$\FP_n$-injective} if $\Ext_R^1(L,M) = 0$ for all $L \in \mathcal{FP}_n(R)$.

\item[(b)] $N$ is called \emph{$\FP_n$-flat} if ${\Tor}_1^R(N,L) = 0$ for all $L \in \mathcal{FP}_n(R)$.
\end{itemize}
\end{definition}

We denote by $\mathcal{I}_n(R)$ the class of $\FP_n$-injective modules  in $\Mod(R)$, and by $\mathcal{F}_n(R\op)$ the class of $\FP_n$-flat modules in $\Mod(R\op)$. Note that the $\FP_0$-injective modules coincide with the  injective modules, the $\FP_1$-injective modules coincide with the  $\FP$-injective or absolutely pure modules, and the $\FP_i$-flat modules are the flat modules for $i=0,1$.


\section{\textbf{$\FP_{n}$-injective and $\FP_{n}$-flat complexes}}\label{sec:FPninjflat}

In this section, we first  introduce the notion of complexes of type $\FP_n$ for some non-negative integer $n$, and give some characterizations for $n$-coherent rings in terms of the stable condition of complexes of type $\FP_n$. Later, we  introduce the notions of $\FP_n$-injective and $\FP_n$-flat complexes in terms of complexes of type $\FP_n$, and study the relation between $\FP_n$-injective (resp., $\FP_n$-flat) complexes and $\FP_n$-injective (resp., $\FP_n$-flat) modules.


\subsection{Complexes of type $\FP_n$}

Let $n\geq 0$ be an integer.

\begin{definition}
A complex $\bm{X}$ in $\Ch(R)$ is said to be of \emph{type $\FP_n$}  if there is an exact sequence
\begin{align}\label{eqn:partial_presentation}
\bm{P_{n}} \to \bm{P_{n-1}} \to \cdots \to \bm{P_1} \to \bm{P_0} \to \bm{X} \to \bm{0}
\end{align}
in $\Ch(R)$, where each $\bm{P_i}$ is finitely generated projective.
\end{definition}

The exact sequence \eqref{eqn:partial_presentation} will be referred as a partial presentation of $\bm{X}$ of length $n$ (by finitely generated projective complexes). In some references in the literature, some authors rather define finitely presented modules (and more generally, modules of type $\FP_n$) by considering presentations by finitely generated free modules, such as in \cite{Bourbaki,Glaz}. Actually, the two approaches to this definition are equivalent in the sense that a left $R$-module $M$ has a partial presentation as \eqref{eqn:nprepmodule} if, and only if, there exists an exact sequence
\begin{align}\label{eqn:presentation_mod}
F_n \to F_{n-1} \to \cdots \to F_1 \to F_0 \to M \to 0
\end{align}
where each $F_k$ is a finitely generated free left $R$-module, that is, $M$ has a so called partial presentation of length $n$ by finitely generated free modules (or just a \emph{finite $n$-presentation}, for short). Modules of type $\FP_\infty$ have also a similar description. We will show that this equivalence is also valid for chain complexes, but in order to do that, we first recall from \cite[Definition 1.3.3]{EJ11} the definition of free complexes.

\begin{definition}
A complex $\bm{F}$ is called \emph{free} if there exists a graded set $B \subseteq \bm{F}$ such that for any complex $\bm{X}$ and any morphism $B \to \bm{X}$ of degree $0$, there exists a unique morphism $\bm{F} \to \bm{X}$ of complexes that agrees with $B \to \bm{X}$.
\end{definition}

\begin{proposition}\label{prop:free_complexes}
The following conditions hold true:
\begin{itemize}
\item[{\rm (a)}] $D^n(F)$ is a free complex in $\Ch(R)$ for any free module $F$ in $\Mod(R)$ and any $n \in \mathbb{Z}$.

\item[{\rm (b)}] \underline{Eilenberg Swindle}: For any finitely generated projective complex $\bm{P}$ in $\Ch(R)$, there exists a finitely generated free complex $\bm{F}$ in $\Ch(R)$ such that $\bm{F} \oplus \bm{P} \simeq \bm{F}$.
\end{itemize}
\end{proposition}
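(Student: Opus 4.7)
For part (a), the plan is to exhibit a graded set concentrated in degree $n$. Fix a module basis $\{f_i\}_{i \in I}$ of $F$ and define a graded set $B \subseteq D^n(F)$ by $B_n := \{f_i\}_{i \in I} \subseteq F = D^n(F)_n$ and $B_m := \emptyset$ for $m \neq n$. Any sub-complex of $D^n(F)$ containing $B_n$ must, in degree $n$, contain the $R$-submodule generated by $\{f_i\}$, hence all of $F$; then, since $\delta^{D^n(F)}_n = \id_F$, it must contain $F$ in degree $n-1$ as well. So $B$ generates $D^n(F)$, and in particular $B \subseteq D^n(F)$ with the right ``span''.

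To verify the universal property, observe that a degree $0$ morphism $B \to \bm{X}$ is just a set map $B_n \to X_n$. By the universal property of the free module $F$, this extends uniquely to an $R$-linear map $\alpha_n \colon F \to X_n$. The chain condition $\delta^{\bm{X}}_n \circ \alpha_n = \alpha_{n-1} \circ \delta^{D^n(F)}_n$ together with $\delta^{D^n(F)}_n = \id_F$ forces
\[
\alpha_{n-1} := \delta^{\bm{X}}_n \circ \alpha_n,
\]
while $\alpha_m = 0$ for $m \notin \{n,n-1\}$ is forced by $D^n(F)_m = 0$. Compatibility at the $(n-1)$-st differential is automatic from $\delta^{\bm{X}}_{n-1} \circ \delta^{\bm{X}}_n = 0$. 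Uniqueness is built into the forcing.

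For part (b), the plan is to run the classical Eilenberg swindle inside $\Ch(R)$. Since $\bm{P}$ is finitely generated projective, it is a direct summand of some finitely generated free complex: concretely, using part (a) one can write $\bm{P} \oplus \bm{Q} \simeq \bm{F_0}$, where $\bm{F_0}$ is a finite direct sum of disks of the form $D^k(R^{m_k})$ and $\bm{Q}$ is a complementary summand. Set $\bm{F} := \bigoplus_{i=1}^{\infty} \bm{F_0}$. Then re-bracketing the direct sum yields
\[
\bm{P} \oplus \bm{F} \simeq \bm{P} \oplus \bigoplus_{i=1}^{\infty}(\bm{P} \oplus \bm{Q}) \simeq \bigoplus_{i=1}^{\infty}(\bm{P} \oplus \bm{Q}) \simeq \bm{F}.
\]
That $\bm{F}$ is itself free follows by assembling a graded basis from the graded bases of each summand $\bm{F_0}$ supplied by part (a): the disjoint union is a graded set inside $\bm{F}$ whose universal property is checked componentwise via the universal property of direct sums.

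The main obstacle is cardinality bookkeeping: the swindle as written genuinely produces a countably generated free complex $\bm{F}$, so the argument delivers a free $\bm{F}$ in the sense of the paper's definition but not a finitely generated one unless $\bm{P}$ is already stably free in a much stronger sense. The crux of the proof is therefore pinning down precisely the size of the free complex $\bm{F}$ being claimed, and verifying that the disjoint-union-of-bases assembled from part (a) genuinely witnesses freeness for the infinite direct sum. Once this is granted, the rest is formal manipulation of direct sums in $\Ch(R)$, which reduces degreewise to the classical module-theoretic identities.
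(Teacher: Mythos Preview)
Your argument for part (a) is correct and matches the paper's proof essentially verbatim: both place the basis of $F$ in degree $n$, extend to $F \to X_n$ by freeness of $F$, and then force the $(n{-}1)$-component via the differential.

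For part (b), your strategy is again the same as the paper's: first exhibit $\bm{P}$ as a direct summand of a finitely generated free complex $\bm{F_0}$ (the paper does this by writing $\bm{P} \simeq \bigoplus_{i=1}^{m} D^{n_i}(Q_i)$, covering each $Q_i$ by a finitely generated free module, and splitting), and then run the classical swindle. The paper defers the swindle step to an external reference, while you write it out explicitly.

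You are right to flag the cardinality issue, and it is not a defect of your proof but of the statement. The swindle genuinely produces $\bm{F} = \bigoplus_{i \geq 1} \bm{F_0}$, which is free but \emph{not} finitely generated. In fact no finitely generated $\bm{F}$ with $\bm{F} \oplus \bm{P} \simeq \bm{F}$ can exist in general: taking $\bm{P} = D^0(Q)$ with $Q$ a nonzero finitely generated projective over a commutative ring and comparing ranks in degree $0$ gives a contradiction. The paper's proof does not address this either; the qualifier ``finitely generated'' on $\bm{F}$ in the statement of (b) appears to be a slip. What the sole application (the equivalence (1) $\Leftrightarrow$ (2) in the characterization of complexes of type $\FP_n$) actually requires is only the intermediate fact that every finitely generated projective complex is a direct summand of a finitely generated \emph{free} complex, which both you and the paper establish before invoking the swindle.
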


\begin{proof} \
\begin{itemize}
\item[(a)] Let $B \subseteq F$ be a finite set generating $F$. Consider the graded set $S^n(B) \subseteq D^n(F)$, and a morphism $S^n(B) \to \bm{X}$ of degree $0$, that is, we have a function $f \colon B \to X_n$ and zero morphisms $0 \to X_k$ for $k \neq n$. Since $F$ is free with generating set $B$, there is a unique homomorphism $\overline{f} \colon F \to X_n$ that agrees with $f$. From $\overline{f}$, we can define a morphism of complexes $\bm{f} \colon D^n(F) \to \bm{X}$ with $f_n := \overline{f}$, $f_{n-1} := \delta^{\bm{X}}_n \circ \overline{f}$, and $f_k := 0$ for every $k \neq n, n-1$. It is easy to verify that $\bm{f}$ is the only morphism of complexes that agrees with $S^n(B) \to \bm{X}$. Hence, $D^n(F)$ is a free complex.

\item[(b)] First note that $\bm{P} \simeq \bigoplus^m_{i = 1} D^{n_i}(Q_{i})$ with each $Q_{i}$ finitely generated and projective. From module theory, we can choose for each $i$ a finitely generated free module $F_i$ and an epimorphism $F_i \to Q_i$. This family of epimorphisms gives rise to an epimorphism $\bigoplus^m_{i = 1} D^{n_i}(F_i) \to \bm{P}$, where $\bigoplus^m_{i = 1} D^{n_i}(F_i)$ is finitely generated and free by part (a) and \cite[Section 1.3]{EJ11}. This epimorphism splits and so $\bigoplus^m_{i = 1} D^{n_i}(F_i) \simeq \bm{P} \oplus \bm{X}$ for some complex $\bm{X}$. The rest follows by \cite[Proposition 9.2.2]{PerezBook}.
\end{itemize}
\end{proof}

We denote by $\mathscr{FP}_n(R)$ the class of all complexes of type $\FP_n$ in $\Ch(R)$. Obviously, $\mathscr{FP}_0(R)$ consists of all finitely generated complexes in $\Ch(R)$, and $\mathscr{FP}_1(R)$ consists of all finitely presented complexes in $\Ch(R)$. For $n > 1$, and following the spirit of \cite[Proposition 2.2]{BG16}, we have the following characterization for complexes in $\mathscr{FP}_n(R)$.

\begin{proposition}\label{type fp-n}
The following statements are equivalent for a complex $\bm{X}$ in $\Ch(R)$.
\begin{itemize}
\item[{\rm (1)}] $\bm{X}$ is of type $\FP_n$.

\item[{\rm (2)}] There exists an exact sequence
\begin{align}\label{eqn:presentation}
\bm{F_n} \to \bm{F_{n-1}} \to \cdots \to \bm{F_1} \to \bm{F_0} \to \bm{X} \to \bm{0}
\end{align}
in $\Ch(R)$, where each $\bm{F_i}$ is finitely generated free.

\item[{\rm (3)}] $\bm{X}$ is bounded and each term $X_m$ is of type $\FP_n$ in $\Mod(R)$.

\item[{\rm (4)}] There exists an exact sequence $\bm{0} \to \bm{K_n} \to \bm{P_{n-1}} \to \cdots \to \bm{P_1} \to \bm{P_0} \to \bm{X} \to \bm{0}$ in $\Ch(R)$, where each $\bm{P_i}$ is finitely generated projective and $\bm{K_n}$ is finitely generated.

\item[{\rm (5)}] For each exact sequence $\bm{0} \to \bm{E_n} \to \bm{Q_{n-1}} \to \cdots \to \bm{Q_1} \to \bm{Q_0} \to \bm{X} \to \bm{0}$ in $\Ch(R)$ with each $\bm{Q_i}$ finitely generated projective, one has that $\bm{E_n}$ is finitely generated.
\end{itemize}
\end{proposition}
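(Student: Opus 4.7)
The plan is to establish a cycle of implications that, together with Eilenberg Swindle (Proposition~\ref{prop:free_complexes}(b)), handles every equivalence. I would first dispose of (1) $\Leftrightarrow$ (2): the implication (2) $\Rightarrow$ (1) is immediate since free complexes are projective, while (1) $\Rightarrow$ (2) follows by enlarging each $\bm{P_i}$ with a finitely generated free $\bm{F_i}$ satisfying $\bm{F_i}\oplus\bm{P_i}\simeq\bm{F_i}$ and inserting identity maps $\bm{F_i}\to\bm{F_i}$ at each stage of the partial presentation, converting it into one by finitely generated free complexes without affecting exactness.

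For (1) $\Rightarrow$ (3), I would use that every finitely generated projective complex is a finite direct sum of disks $D^{m}(Q)$ with $Q$ a finitely generated projective module, so each $\bm{P_i}$ is bounded with finitely generated projective terms. Reading the partial presentation in each degree $m$ then yields a partial presentation of length $n$ of $X_m$ by finitely generated projective modules, and boundedness of $\bm{X}$ is inherited from $\bm{P_0}$ via the surjection $\bm{P_0}\to\bm{X}$. Conversely, for (3) $\Rightarrow$ (4), I would build the resolution degree-wise while respecting differentials: for each $m$ in the (finite) support of $\bm{X}$, pick a finitely generated projective module $Q_m$ and a surjection $Q_m\twoheadrightarrow X_m$, then take $\bm{P_0}:=\bigoplus_{m} D^{m}(Q_m)$, which yields a morphism $\bm{P_0}\to\bm{X}$ that is surjective in each degree. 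The kernel $\bm{K_0}$ is bounded, and each $(K_0)_m$ sits in a short exact sequence with $(P_0)_m$ finitely generated projective and $X_m$ of type $\FP_n$; by the module-level shifting lemma for type $\FP_n$, $(K_0)_m$ is of type $\FP_{n-1}$. Iterating $n-1$ more times and invoking Lemma~\ref{fg-fp}(a) at the end gives the desired sequence with $\bm{K_n}$ bounded and termwise finitely generated, hence finitely generated.

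The implication (4) $\Rightarrow$ (1) is then obtained by the same $\bigoplus_m D^m(-)$ construction: a finitely generated $\bm{K_n}$ is bounded with finitely generated terms, so one builds a finitely generated projective $\bm{P_n}$ surjecting onto it, extending the sequence in (4) to a partial presentation of length $n$. For (1) $\Rightarrow$ (5), I would apply the generalized Schanuel lemma in the abelian category $\Ch(R)$: given any alternative exact sequence $\bm{0}\to\bm{E_n}\to\bm{Q_{n-1}}\to\cdots\to\bm{Q_0}\to\bm{X}\to\bm{0}$ with $\bm{Q_i}$ finitely generated projective, iterated Schanuel comparison with the partial presentation from (1) shows that $\bm{E_n}$ differs from the image of $\bm{P_n}$ in $\bm{P_{n-1}}$ by a summand consisting of a finite direct sum of the $\bm{P_i}$'s and $\bm{Q_i}$'s. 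Since the image of $\bm{P_n}$ is finitely generated as a quotient of $\bm{P_n}$, so is $\bm{E_n}$. Finally, (5) $\Rightarrow$ (4) is immediate, using (1) (already equivalent to (4)) to exhibit a concrete exact sequence of the required form whose kernel is then guaranteed to be finitely generated by (5).

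The main obstacle will be the clean execution of (3) $\Rightarrow$ (4): one must verify that the disk-complex construction $\bigoplus_m D^m(Q_m)\to\bm{X}$ really produces a morphism of chain complexes surjective in every degree (which requires recalling that a map $D^m(Q_m)\to\bm{X}$ is freely determined by a module map $Q_m\to X_m$), and that the module-level $\FP_n$-shifting propagates degree by degree through $n$ iterations while preserving boundedness uniformly. Once this bookkeeping is in place, the remaining implications reduce to routine applications of Schanuel's lemma and the characterization of finitely generated complexes from Lemma~\ref{fg-fp}.
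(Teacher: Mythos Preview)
Your approach is essentially the paper's: the core implication (1) $\Leftrightarrow$ (3) is handled identically (the paper's explicit complex $\bm{P^0} = \cdots \to P^0_s \to P^0_s \oplus P^0_{s-1} \to \cdots$ is exactly your $\bigoplus_m D^m(Q_m)$ written out termwise), and invoking the Eilenberg swindle for (1) $\Leftrightarrow$ (2) is what the paper does. Your use of the generalized Schanuel lemma for (1) $\Rightarrow$ (5) is the correct mechanism, which the paper suppresses by calling the equivalences (1) $\Leftrightarrow$ (4) $\Leftrightarrow$ (5) ``trivial.''

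There is, however, a genuine circularity in your final step. For (5) $\Rightarrow$ (4) you write that you will ``use (1) (already equivalent to (4)) to exhibit a concrete exact sequence of the required form.'' But at that point in the argument you are assuming only (5); you have not yet shown that (5) implies (1) or (4), so you cannot invoke them to produce the sequence. The difficulty is real: condition (5), read literally as a universal statement, is vacuously satisfied by any complex $\bm{X}$ that admits \emph{no} such partial resolution (for instance, any $\bm{X}$ that is not finitely generated), so (5) $\Rightarrow$ (4) fails as stated. The paper has the same defect---it declares the equivalence trivial without comment---so this is a flaw in the proposition's formulation rather than in your strategy. The clean fix is either to read (5) as asserting that such sequences exist \emph{and} every $\bm{E_n}$ is finitely generated, or to route the implications as (5) $\Rightarrow$ (1) by first observing that (5) forces $\bm{X}$ to be finitely generated when $n=0$, and for $n\geq 1$ to add the standing hypothesis that $\bm{X}$ is finitely generated (so that at least one sequence of the form in (5) exists, which one then uses directly).
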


\begin{proof}
The equivalence (1) $\Leftrightarrow$ (2) is a consequence of Proposition \ref{prop:free_complexes} (b), while the equivalences (1) $\Leftrightarrow$ (4) $\Leftrightarrow$ (5) are trivial. We only focus on proving (1) $\Leftrightarrow$ (3).
\begin{itemize}
\item[$\bullet$] (1) $\Rightarrow$ (3): Let $\bm{X}$ be a complex  of type $\FP_n$ in $\Ch(R)$. By definition, $\bm{X}$ must be finitely generated, and so is bounded by Lemma \ref{fg-fp}. Moreover, for each term $X_m$ of $\bm{X}$, there is an exact sequence $(P_n)_m \to (P_{n-1})_m \to \cdots \to (P_1)_m \to (P_0)_m \to X_m \to 0$ in $\Mod(R)$, where each $(P_i)_m$ is finitely generated projective. Thus $X_m$ is  of type $\FP_n$ in $\Mod(R)$.

\item[$\bullet$] (3) $\Rightarrow$ (1): Since the complex $\bm{X}$ is bounded, we may assume that it is of the form
\[
\bm{X} = \cdots \to 0 \to X_s \to X_{s-1} \to \cdots \to X_{t+1} \to X_t \to 0 \to \cdots
\]
with each $X_i \in \mathcal{FP}_n(R)$. In particular, each $X_i$ is finitely generated, and then we can take an exact sequence $P^0_i\rightarrow X_i\rightarrow 0$ with $P^0_i$ finitely generated projective. Then we get a finitely generated projective complex $\bm{P^0}$ defined as
\[
\bm{P^0} := \cdots \to 0 \to P^0_s \to P_s^0 \oplus P^0_{s-1} \to \cdots \to P^0_{t+1} \oplus P^0_t \to P^0_t \to 0 \to \cdots
\]
and an exact sequence $\bm{P^0} \rightarrow \bm{X} \rightarrow \bm{0}$ in $\Ch(R)$. Set $\bm{K^1} = \mbox{Ker}(\bm{P^0} \rightarrow \bm{X})$. Then clearly $\bm{K^1}$ is bounded, and since each $X_i$ is of type $\FP_n$, each $K^1_i$ is of type $\FP_{n-1}$. By an argument similar to the previous process, we can get an exact sequence $\bm{P^1} \rightarrow \bm{K^1} \rightarrow \bm{0}$ in $\Ch(R)$ with $\bm{P^1}$ a finitely generated projective complex. Repeating this, we obtain an exact sequence in $\Ch(R)$ as \eqref{eqn:partial_presentation} where each $\bm{P^i}$ is finitely generated projective, that is, $\bm{X} \in \mathscr{FP}_n(R)$.
\end{itemize}
\end{proof}

We can use the previous proposition to note a couple of facts about the classes $\mathscr{FP}_n(R)$ and the interplay between them. First, we note that $\mathscr{FP}_\infty(R) = \bigcap_{n\geq 0} \mathscr{FP}_n(R)$. On the one hand, the inclusion $``\subseteq"$ follows from the descending chain of inclusions:
\begin{equation}\label{1}
\mathscr{FP}_0(R) \supseteq \mathscr{FP}_1(R) \supseteq \cdots \supseteq \mathscr{FP}_n(R) \supseteq \mathscr{FP}_{n+1}(R) \supseteq \cdots\supseteq \mathscr{FP}_\infty(R),
\end{equation}
while on the other hand, the remaining inclusion follows from the fact that any truncated finitely generated projective resolution of length $n$ of a complex in $\bigcap_{n\geq 0}\mathscr{FP}_n(R)$ can be extended to a truncated finitely generated projective resolution of length $n+1$. Note that the inclusions in (\ref{1}) may be strict, as shown by the following examples.

\begin{example}\label{ex1}
Let $R=k[x_1,x_2,\cdots]/{\mathfrak{m}^2}$ with $k$ a field and the ideal $\mathfrak{m}=\langle x_1, x_2,\cdots\rangle$.
\begin{itemize}
\item[(a)] By \cite[Example 1.3]{BP16}, for each $i\geq 1$, $\langle x_i \rangle \in \mathcal{FP}_0(R) \backslash \mathcal{FP}_1(R)$. Then by Lemma \ref{fg-fp}, one has $D^n(\langle x_i\rangle) \in \mathscr{FP}_0(R) \backslash \mathscr{FP}_1(R)$ and $S^n(\langle x_i\rangle) \in \mathscr{FP}(R)_0 \backslash \mathscr{FP}_1(R)$.

\item[(b)] By \cite[Example 1.3]{BP16}, for each $i\geq 1$, $R/\langle x_i\rangle\in \mathcal{FP}_1(R) \backslash \mathcal{FP}_2(R)$. Then by Proposition \ref{type fp-n}, one has $D^n(R/\langle x_i\rangle) \in \mathscr{FP}_1(R) \backslash \mathscr{FP}_2(R)$ and $S^n(R/\langle x_i\rangle) \in \mathscr{FP}_1(R) \backslash \mathscr{FP}_2(R)$.

\item[(c)] By \cite[Example 1.3]{BP16} or \cite[Proposition 2.5]{BGH14}, one has that $\mathcal{FP}_n(R) = \mathcal{FP}_\infty(R)$ for every $n \geq 2$, which consists of the class of finitely generated free $R$-modules. Then by Proposition \ref{type fp-n}, we obtain $\mathscr{FP}_2(R) = \mathscr{FP}_3(R) = \cdots = \mathscr{FP}_\infty(R)$. Therefore, in this case, the chain (\ref{1}) is just as follows:
\begin{equation}\label{aa}
\mathscr{FP}_0(R) \supsetneq \mathscr{FP}_1(R) \supsetneq \mathscr{FP}_2(R) = \mathscr{FP}_3(R) = \cdots = \mathscr{FP}_\infty(R),
\end{equation}
which is stable after $n = 2$.
\end{itemize}
\end{example}

The inclusions in (\ref{1}) may all be strict.

\begin{example}\label{ex2}
Let $R = k[x_1,x_2,\cdots,y_1,y_2,\cdots]/{\langle x_{i+1}x_i,x_1y_1,y_1y_j\rangle}_{i,j\geq 1}$ with $k$ a field. By \cite[Example 1.4]{BP16}, $\langle y_1\rangle\in \mathcal{FP}_0(R) \backslash \mathcal{FP}_1(R)$, and hence $D^n(\langle y_1\rangle)\in \mathscr{FP}_0(R) \backslash \mathscr{FP}_1(R)$ and $S^n(\langle y_1\rangle)\in \mathscr{FP}_0(R) \backslash \mathscr{FP}_1(R)$. Moreover, for each $i\geq 1$, $\langle x_i\rangle\in \mathcal{FP}_i(R) \backslash \mathcal{FP}_{i+1}(R)$, and hence $D^n(\langle x_i\rangle)\in \mathscr{FP}_i(R) \backslash \mathscr{FP}_{i+1}(R)$ and $S^n(\langle x_i\rangle)\in \mathscr{FP}_i(R) \backslash \mathscr{FP}_{i+1}(R)$. Therefore, in this case, the chain (\ref{1}) is just as $\mathscr{FP}_0(R) \supsetneq \mathscr{FP}_1(R) \supsetneq \cdots \supsetneq \mathscr{FP}_n(R) \supsetneq \mathscr{FP}_{n+1}(R) \supsetneq \cdots$ which is not stable at any level.
\end{example}

The second fact to note about $\mathscr{FP}_n(R)$ is a series of closure properties. Recall that a class $\mathscr{X}$ of complexes in $\Ch(R)$ is:
\begin{itemize}
\item[(a)] \emph{closed under direct summands} if for every $\bm{X} \in \mathscr{X}$ and every complex $\bm{X}'$ that is a direct summand of $\bm{X}$, one has $\bm{X}' \in \mathscr{X}$;

\item[(b)] \emph{closed under extensions} if for every short exact sequence $\bm{\eta} \colon \bm{0} \to \bm{A} \to \bm{B} \to \bm{C} \to \bm{0}$ with $\bm{A}, \bm{C} \in \mathscr{X}$, one has $\bm{B} \in \mathscr{X}$;

\item[(c)] \emph{closed under epi-kernels} if for every short exact sequence as $\bm{\eta}$ with $\bm{B}, \bm{C} \in \mathscr{X}$, one has $\bm{A} \in \mathscr{X}$; and \emph{closed under mono-cokernels} is the dual property is satisfied.
\end{itemize}
These definitions are analogous in the category $\Mod(R)$.

The following result is a consequence of Proposition \ref{type fp-n} and the closure properties of $\mathcal{FP}_n(R)$ proved by Bravo and the second author in \cite[Proposition 1.7]{BP16}.

\begin{corollary}\label{coro:properties_FPn}
For every $n \geq 0$, the class $\mathscr{FP}_n(R)$ of complexes of type $\FP_n$ in $\Ch(R)$ is closed under extensions, direct summands and mono-cokernels.
\end{corollary}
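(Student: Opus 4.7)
The plan is to reduce both properties to the module level via the equivalence $(1) \Leftrightarrow (3)$ in Proposition \ref{type fp-n}, and then invoke the corresponding closure properties for $\mathcal{FP}_n(R)$ from \cite[Proposition 1.7]{BP16}. So the first step will be to rewrite ``$\bm{X}$ of type $\FP_n$'' as the conjunction of two conditions: $\bm{X}$ is bounded, and $X_m \in \mathcal{FP}_n(R)$ for every $m \in \mathbb{Z}$. This reduction is the conceptual heart of the argument; after that, each closure property splits into a trivial boundedness check and a degreewise module-theoretic verification.

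For closure under extensions, I would start with a short exact sequence $\bm{0} \to \bm{A} \to \bm{B} \to \bm{C} \to \bm{0}$ in $\Ch(R)$ with $\bm{A}, \bm{C} \in \mathscr{FP}_n(R)$. Boundedness of $\bm{B}$ follows because if $m$ is outside the (finite) supports of both $\bm{A}$ and $\bm{C}$, then exactness of $0 \to A_m \to B_m \to C_m \to 0$ forces $B_m = 0$. For each remaining degree, the same short exact sequence $0 \to A_m \to B_m \to C_m \to 0$ together with closure of $\mathcal{FP}_n(R)$ under extensions yields $B_m \in \mathcal{FP}_n(R)$. Applying Proposition \ref{type fp-n} in the reverse direction gives $\bm{B} \in \mathscr{FP}_n(R)$.

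For closure under direct summands, suppose $\bm{X} = \bm{X}' \oplus \bm{X}''$ with $\bm{X} \in \mathscr{FP}_n(R)$. Since $\bm{X}$ is bounded and $X'_m$ is a direct summand of $X_m$, the complex $\bm{X}'$ is bounded as well, and each $X'_m$ is a direct summand of a module of type $\FP_n$, hence itself of type $\FP_n$ by the module-level statement. The mono-cokernel case is entirely parallel: given $\bm{0} \to \bm{A} \to \bm{B} \to \bm{C} \to \bm{0}$ with $\bm{A}, \bm{B} \in \mathscr{FP}_n(R)$, the complex $\bm{C}$ is bounded (its support is contained in that of $\bm{B}$), and each $C_m$ is the cokernel of a monomorphism between two modules of type $\FP_n$, which again belongs to $\mathcal{FP}_n(R)$. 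One more application of Proposition \ref{type fp-n} concludes.

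The main (and essentially the only) potential obstacle is making sure Proposition \ref{type fp-n} truly applies in the extension case, since we need to know \emph{a priori} that $\bm{B}$ is bounded before we can invoke the equivalence $(1) \Leftrightarrow (3)$; but as noted above, this is immediate from degreewise exactness. All other steps are formal consequences of the module-level result.
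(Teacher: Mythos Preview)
Your proposal is correct and takes essentially the same approach as the paper: the paper states the corollary as a direct consequence of Proposition \ref{type fp-n} and \cite[Proposition 1.7]{BP16}, exactly the reduction you spell out. You have simply made explicit the degreewise boundedness checks that the paper leaves implicit.
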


Recall that a class $\mathscr{X}$ of complexes in $\Ch(R)$ is called \emph{thick} if (a), (b) and (c) above are satisfied. Thick classes of modules are defined in a similar way. For example, in the category $\Ch(R)$, the class of exact complexes and the class of bounded complexes are both thick. But in general, we cannot assert that $\mathscr{FP}_n(R)$ is thick (or equivalently in this case, closed under epi-kernels). This missing closure property for $\mathscr{FP}_n(R)$ is related to the stable condition of the chain (\ref{1}), which in turn can characterize  classes of special rings. For example, Bravo and Gillespie proved in \cite[Corollary 2.3]{BG16} that:
\begin{itemize}
\item[(a)] A ring $R$ is left Noetherian if, and only if, $\mathscr{FP}_0(R) = \mathscr{FP}_\infty(R)$.

\item[(b)] A ring $R$ is left coherent if, and only if, $\mathscr{FP}_1(R) = \mathscr{FP}_\infty(R)$.
\end{itemize}
For an analogous equivalence involving $\mathscr{FP}_n(R)$, one needs a more general class of rings, introduced by D. L. Costa in \cite{Co94}.

\begin{definition}
A ring $R$ is called \emph{left $n$-coherent} if each module of type $\FP_n$ in $\Mod(R)$ is of type $\FP_{n+1}$, that is, $\mathcal{FP}_n(R) \subseteq \mathcal{FP}_{n+1}(R)$.
\end{definition}

By definition, left $0$-coherent rings are just left noetherian rings, and left $1$-coherent rings are just left coherent rings. The family of $n$-coherent rings can be characterized in terms of thick classes of modules, as in \cite[Theorem 2.4]{BP16}. Namely, a ring $R$ is left $n$-coherent if, and only if, $\mathcal{FP}_n(R)$ is closed under epi-kernels. The analogous for $\mathscr{FP}_n(R)$ is specified below, which follows by \cite[Theorem 2.4]{BP16}, by the characterization of $\mathscr{FP}_n(R)$ proved in Proposition \ref{type fp-n}, and by Corollary \ref{coro:properties_FPn}.

\begin{proposition}
The following conditions are equivalent:
\begin{itemize}
\item[{\rm (1)}] $R$ is left $n$-coherent.

\item[{\rm (2)}] The class $\mathscr{FP}_n(R)$ is thick.

\item[{\rm (3)}] $\mathscr{FP}_n(R) = \mathscr{FP}_{n+1}(R)$.

\item[{\rm (4)}] $\mathscr{FP}_n(R) = \mathscr{FP}_\infty(R)$.

\item[{\rm (5)}] The chain \eqref{1} stabilizes at $n$, that is,
\[
\mathscr{FP}_0(R) \supseteq \mathscr{FP}_1(R) \supseteq \cdots \supseteq \mathscr{FP}_n(R) = \mathscr{FP}_{n+1}(R) = \cdots = \mathscr{FP}_\infty(R).
\]
\end{itemize}
\end{proposition}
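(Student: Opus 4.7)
My plan is to reduce everything to the already-established module-level statement \cite[Theorem 2.4]{BP16}, which asserts the analogous equivalences for $\mathcal{FP}_n(R)$: $R$ is left $n$-coherent iff $\mathcal{FP}_n(R)$ is closed under epi-kernels (equivalently thick, since it is already closed under extensions and direct summands) iff $\mathcal{FP}_n(R) = \mathcal{FP}_{n+1}(R)$ iff $\mathcal{FP}_n(R) = \mathcal{FP}_\infty(R)$. The bridge between this result and the desired statement for complexes is Proposition \ref{type fp-n}, which tells us that $\bm{X} \in \mathscr{FP}_n(R)$ exactly when $\bm{X}$ is bounded and each term $X_m$ lies in $\mathcal{FP}_n(R)$.

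For the implication (1) $\Rightarrow$ (2), given Corollary \ref{coro:properties_FPn}, I only need to verify that $\mathscr{FP}_n(R)$ is closed under epi-kernels. Starting from a short exact sequence $\bm{0} \to \bm{A} \to \bm{B} \to \bm{C} \to \bm{0}$ with $\bm{B}, \bm{C} \in \mathscr{FP}_n(R)$, boundedness of $\bm{A}$ is immediate, and in each degree one has a short exact sequence of modules with the outer two terms in $\mathcal{FP}_n(R)$; by \cite[Theorem 2.4]{BP16} applied to $n$-coherent $R$, this forces $A_m \in \mathcal{FP}_n(R)$, and Proposition \ref{type fp-n} then yields $\bm{A} \in \mathscr{FP}_n(R)$. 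Conversely, for (2) $\Rightarrow$ (1), I test thickness of $\mathscr{FP}_n(R)$ against short exact sequences of the form $0 \to S^0(A) \to S^0(B) \to S^0(C) \to 0$, where the sphere functor carries a short exact sequence of modules with $B, C \in \mathcal{FP}_n(R)$ into a sequence of complexes in $\mathscr{FP}_n(R)$; thickness forces $S^0(A) \in \mathscr{FP}_n(R)$, hence $A \in \mathcal{FP}_n(R)$, so $\mathcal{FP}_n(R)$ is closed under epi-kernels and $R$ is $n$-coherent.

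The equivalences (3) $\Leftrightarrow$ (4) $\Leftrightarrow$ (5) among the stabilization statements will follow by the same degreewise translation: Proposition \ref{type fp-n} (together with the fact $\mathscr{FP}_\infty(R) = \bigcap_{n \geq 0}\mathscr{FP}_n(R)$ already observed in the text) lets each of these equalities of classes of complexes be tested on each degree, reducing them to the corresponding module-level identities $\mathcal{FP}_n(R) = \mathcal{FP}_{n+1}(R) = \mathcal{FP}_\infty(R)$, which are equivalent to $n$-coherence by \cite[Theorem 2.4]{BP16}. For example, for (3) $\Rightarrow$ (4), if $\mathscr{FP}_n(R) = \mathscr{FP}_{n+1}(R)$, then spheres give $\mathcal{FP}_n(R) = \mathcal{FP}_{n+1}(R)$, hence $\mathcal{FP}_n(R) = \mathcal{FP}_\infty(R)$ by BP16, and reapplying Proposition \ref{type fp-n} yields $\mathscr{FP}_n(R) = \mathscr{FP}_\infty(R)$.

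I do not expect any real obstacle: the only delicate point is verifying that the sphere and truncation constructions send module-level short exact sequences to short exact sequences of complexes with terms in the appropriate $\mathscr{FP}_n$-class, so that the module-level characterization of $n$-coherence from \cite{BP16} can be invoked in both directions without having to redo any of the inductive resolution arguments at the level of complexes.
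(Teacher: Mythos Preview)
Your proposal is correct and follows exactly the approach indicated by the paper, which states that the proposition ``follows by \cite[Theorem 2.4]{BP16}, by the characterization of $\mathscr{FP}_n(R)$ proved in Proposition \ref{type fp-n}, and by Corollary \ref{coro:properties_FPn}.'' You have simply spelled out the degreewise translation via Proposition \ref{type fp-n} and the use of sphere complexes to pass back to modules, which is precisely what the paper leaves implicit.
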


It follows that the ring of Example \ref{ex1} is $2$-coherent, and the ring of Example \ref{ex2} is not $n$-coherent for any $n\geq 0$.

So far we have given several descriptions of the classes $\mathscr{FP}_n(R)$, but they can also be interpreted in terms of a certain resolution dimension, that is going to be presented and studied next.


\subsection{Presentation dimension}

In \cite[Section 1 of Chapter 2]{Glaz}, S. Glaz defined the following value for every module $M$ in $\Mod(R)$:
\[
\lambda_R(M) := \left\{
\begin{array}{ll}
{\rm sup} \{ n \geq 0 \mbox{ : $\exists$ a finite $n$-presentation (as \eqref{eqn:presentation_mod}) of $M$} \}, & \mbox{if $M$ is finitely generated}; \\
-1, & \mbox{otherwise}.
\end{array}
\right.
\]
We will refer to the value $\lambda_R(M)$ as the \emph{presentation dimension of $M$}. Motivated by this, and by Proposition \ref{type fp-n} (b), we define the \emph{presentation dimension of a complex $\bm{X}$ in $\Ch(R)$} as:
\[
\lambda(\bm{X}) := \left\{
\begin{array}{ll}
{\rm sup} \{ n \geq 0 \mbox{ : $\exists$ a finite $n$-presentation (as \eqref{eqn:presentation}) of $\bm{X}$} \}, & \mbox{if $\bm{X}$ is finitely generated}; \\
-1, & \mbox{otherwise}.
\end{array}
\right.
\]
Note that, by Proposition \ref{type fp-n}, for every finitely generated complex $\bm{X}$ in $\Ch(R)$, one has that $\lambda(\bm{X}) = n$ if, and only if, there exists a finite $n$-presentation of $\bm{X}$ with non-finitely generated $(n+1)$-st syzygy.

There is a relation between the presentation dimension of complexes and that of modules, specified in the following result.

\begin{theorem}\label{theo:presentation}
For every bounded complex $\bm{X}$ in $\Ch(R)$, the following equality holds:
\begin{align}\label{eqn:formula}
\lambda(\bm{X}) = {\rm inf} \{ \lambda_R(X_m) \mbox{ {\rm : }} m \in \mathbb{Z} \}.
\end{align}
\end{theorem}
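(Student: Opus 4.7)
The plan is to reduce the equality to a clean application of Proposition \ref{type fp-n}, separating the finitely generated case from the non-finitely generated one, and handling the possibility that one (or both) sides takes the value $\infty$.

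First I would dispose of the degenerate case. If $\bm{X}$ is not finitely generated, then $\lambda(\bm{X}) = -1$ by definition. Since $\bm{X}$ is assumed bounded, Lemma \ref{fg-fp} (a) forces some component $X_{m_0}$ to be non-finitely generated, hence $\lambda_R(X_{m_0}) = -1$. As every presentation dimension is $\geq -1$, this gives $\inf\{\lambda_R(X_m) : m \in \mathbb{Z}\} = -1$, and the formula \eqref{eqn:formula} holds trivially.

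Next assume $\bm{X}$ is finitely generated. By Lemma \ref{fg-fp} (a) each $X_m$ is then finitely generated, so all quantities involved are $\geq 0$. The key observation is the following reformulation, valid for every integer $n \geq 0$:
\[
\lambda(\bm{X}) \geq n \iff \bm{X} \in \mathscr{FP}_n(R) \iff X_m \in \mathcal{FP}_n(R) \text{ for all } m \in \mathbb{Z} \iff \inf_m \lambda_R(X_m) \geq n,
\]
where the first and last equivalences are by the definition of the presentation dimensions (together with Proposition \ref{type fp-n} (2) in the complex setting and \eqref{eqn:presentation_mod} in the module setting), and the middle equivalence is exactly Proposition \ref{type fp-n} (1) $\Leftrightarrow$ (3), using that $\bm{X}$ is bounded. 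From this equivalence for all $n \geq 0$, one reads off the desired equality in the case both sides are finite: taking $n = \lambda(\bm{X})$ yields $\inf_m \lambda_R(X_m) \geq \lambda(\bm{X})$, and taking $n = \inf_m \lambda_R(X_m)$ yields the reverse inequality.

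It only remains to verify that $\lambda(\bm{X}) = \infty$ if and only if $\inf_m \lambda_R(X_m) = \infty$, which follows directly from the same chain of equivalences applied to every $n$: both sides are $\infty$ precisely when $\bm{X} \in \mathscr{FP}_n(R)$ for every $n \geq 0$, i.e.\ when $\bm{X} \in \bigcap_{n \geq 0} \mathscr{FP}_n(R) = \mathscr{FP}_\infty(R)$, equivalently each $X_m \in \mathcal{FP}_\infty(R)$.

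No serious obstacle arises, since the heavy lifting has already been done in Proposition \ref{type fp-n}; the only mild technical point is being careful about the conventions $\lambda = -1$ (non-finitely generated) and $\lambda = \infty$ (type $\FP_\infty$), which are handled uniformly by the level-by-level characterization above.
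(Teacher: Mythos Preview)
Your proof is correct and rests on the same key input as the paper's, namely Proposition \ref{type fp-n} (1) $\Leftrightarrow$ (3). The only difference is organizational: the paper separates into the cases $\lambda(\bm{X}) = \infty$ and $\lambda(\bm{X}) < \infty$ and establishes the two inequalities individually, whereas your uniform ``$\lambda(\bm{X}) \geq n \iff \inf_m \lambda_R(X_m) \geq n$ for all $n$'' argument packages both directions and the infinite case at once; this is a tidier presentation of the same idea.
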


\begin{proof}
Suppose first that $\bm{X}$ is a bounded complex which is not finitely generated, and so $\lambda(\bm{X}) = -1$. Then, by Lemma \ref{fg-fp} there exists $m_0 \in \mathbb{Z}$ such that $X_{m_0}$ is not finitely generated, and so $\lambda_R(X_{m_0}) = -1$. Hence, the formula \eqref{eqn:formula} holds.

Now we may assume that $\bm{X}$ is finitely generated, and so bounded with finitely generated terms. This implies ${\rm inf} \{ \lambda_R(X_m) \mbox{ {\rm : }} m \in \mathbb{Z} \} \geq 0$. Suppose $\bm{X} \in \mathscr{FP}_\infty(R)$. Then $X_m \in \mathcal{FP}_\infty(R)$ for every $m \in \mathbb{Z}$ by \cite[Proposition 2.2]{BG16}. Hence, in this case, the formula \eqref{eqn:formula} is also true.

Finally, suppose that the presentation dimension of $\bm{X}$ is finite, say $\lambda(\bm{X}) = n$. Then, $X_m \in \mathcal{FP}_n(R)$ for every $m \in \mathbb{Z}$ by Proposition \ref{type fp-n}. It follows $\lambda_R(X_m) \geq n$ for every $m \in \mathbb{Z}$, and so ${\rm inf} \{ \lambda_R(X_m) \mbox{ : } m \in \mathbb{Z} \} \geq \lambda(\bm{X})$. On the other hand, since the presentation dimension of $\bm{X}$ is finite, there exists $m_0 \in \mathbb{Z}$ such that $\lambda_R(X_{m_0}) = k < \infty$. Without loss of generality, we may assume ${\rm inf} \{ \lambda_R(X_m) \mbox{ : } m \in \mathbb{Z} \} = \lambda_R(X_{m_0})$. Then, $X_m \in \mathcal{FP}_k(R)$ for every $m \in \mathbb{Z}$. Since $\bm{X}$ is bounded, we can use the arguments applied in the proof of Proposition \ref{type fp-n} (3) $\Rightarrow$ (1) to showing that $\bm{X} \in \mathscr{FP}_k(R)$. Hence, ${\rm inf} \{ \lambda_R(X_m) \mbox{ : } m \in \mathbb{Z} \} = k \leq \lambda(\bm{X})$.
\end{proof}

\begin{remark}
Note that Theorem \ref{theo:presentation} holds for every finitely generated complex. We have not included in the statement the case where $\bm{X}$ is not finitely generated, since for such complexes the formula \eqref{eqn:formula} may not hold. For instance, the complex $\bm{S} = \bigoplus_{m \in \mathbb{Z}} S^m(R)$ is not finitely generated by Lemma \ref{fg-fp}, since it is unbounded, and so $\lambda(\bm{S}) = -1$. On the other hand, ${\rm inf} \{ \lambda_R(S_n) \mbox{ {\rm :} } n \in \mathbb{Z} \} = {\rm inf} \{ \lambda_R(R) \mbox{ {\rm :} } n \in \mathbb{Z} \} = \infty$. Notice that $\lambda_R(R) = \infty$ since $R$ is free and so of type $\FP_\infty$.
\end{remark}

Using Theorem \ref{theo:presentation}, we can extend \cite[Theorem 2.1.2]{Glaz} to the category of complexes in $\Ch(R)$, as a way to compare the presentation dimension of complexes appearing in short exact sequences.

\begin{proposition}
Let $\bm{\eta} \colon \bm{0} \to \bm{A} \to \bm{B} \to \bm{C} \to \bm{0}$ be a short exact sequence in $\Ch(R)$. The following relations hold:
\begin{itemize}
\item[{\rm (a)}] $\lambda(\bm{A}) \geq {\rm min} \{ \lambda(\bm{B}), \lambda(\bm{C}) - 1 \}$.

\item[{\rm (b)}] $\lambda(\bm{B}) \geq {\rm min} \{ \lambda(\bm{A}), \lambda(\bm{C}) \}$.

\item[{\rm (c)}] $\lambda(\bm{C}) \geq {\rm min} \{ \lambda(\bm{B}), \lambda(\bm{A}) + 1 \}$.

\item[{\rm (d)}] If $\bm{B} = \bm{A} \oplus \bm{C}$, then $\lambda(\bm{B}) = {\rm min} \{ \lambda(\bm{A}), \lambda(\bm{C}) \}$.
\end{itemize}
\end{proposition}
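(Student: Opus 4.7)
The plan is to reduce each of (a)--(d) to its degreewise module-level analogue \cite[Theorem 2.1.2]{Glaz}, and then to interchange the resulting infimum over $m \in \mathbb{Z}$ with the minimum via Theorem \ref{theo:presentation}. Since $\lambda(\bm{X}) \geq -1$ for every complex $\bm{X}$, the inequalities (a)--(c) are automatic whenever the right-hand side is $\leq -1$; hence I may restrict attention to the case where the right-hand side is non-negative.

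Under this assumption, all three of $\bm{A}, \bm{B}, \bm{C}$ turn out to be bounded. In case (a), the condition $\min\{\lambda(\bm{B}), \lambda(\bm{C}) - 1\} \geq 0$ forces $\lambda(\bm{B}) \geq 0$ and $\lambda(\bm{C}) \geq 1$, so by Lemma \ref{fg-fp} both $\bm{B}$ and $\bm{C}$ are bounded, and then $\bm{A}$ is bounded as a sub-complex of $\bm{B}$. Similar (easier) arguments work in (b), where $\lambda(\bm{A}), \lambda(\bm{C}) \geq 0$ forces $\bm{A}$ and $\bm{C}$ bounded, hence $\bm{B}$ bounded (since $A_m = C_m = 0$ implies $B_m = 0$), and in (c), where $\lambda(\bm{B}) \geq 0$ forces $\bm{B}$ bounded, whence both its sub-complex $\bm{A}$ and its quotient $\bm{C}$ are bounded. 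Once boundedness is established, Theorem \ref{theo:presentation} gives $\lambda(\bm{X}) = \inf_m \lambda_R(X_m)$ for each $\bm{X} \in \{\bm{A}, \bm{B}, \bm{C}\}$; degreewise the short exact sequence yields $0 \to A_m \to B_m \to C_m \to 0$ in $\Mod(R)$, and \cite[Theorem 2.1.2]{Glaz} furnishes the corresponding module inequalities at every $m$. The conclusion then follows from the elementary identity
\[
\inf_m \min\{u_m, v_m\} = \min\{\inf_m u_m,\ \inf_m v_m\},
\]
together with $\inf_m (v_m \pm 1) = (\inf_m v_m) \pm 1$, which allows me to swap infimum and minimum and produce (a)--(c).

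For (d), I argue separately according to whether $\bm{B}$ is finitely generated. If it is not, then at least one of $\bm{A}, \bm{C}$ fails to be finitely generated as well (since a direct sum of finitely generated complexes is finitely generated), so both sides equal $-1$ trivially. Otherwise $\bm{B}$ is bounded with each $B_m = A_m \oplus C_m$ finitely generated, so the same holds for $\bm{A}$ and $\bm{C}$; applying the module-level equality $\lambda_R(A_m \oplus C_m) = \min\{\lambda_R(A_m), \lambda_R(C_m)\}$ from \cite[Theorem 2.1.2]{Glaz} degreewise, together with the same inf/min interchange, closes the argument. I expect the only genuine subtlety to be the bookkeeping around the $-1$ convention in the boundary cases; once boundedness is in hand, the rest is mechanical via Theorem \ref{theo:presentation}.
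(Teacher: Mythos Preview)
Your proof is correct and rests on the same two ingredients as the paper's: the formula $\lambda(\bm{X}) = \inf_m \lambda_R(X_m)$ for bounded complexes (Theorem \ref{theo:presentation}) and the module-level inequalities from \cite[Theorem 2.1.2]{Glaz}. The organization, however, is genuinely cleaner than the paper's. Where you dispose of all the degenerate cases at once by observing that the inequalities (a)--(c) are vacuous when the right-hand side is $\leq -1$, the paper instead runs a lengthy case analysis according to which of $\bm{A},\bm{B},\bm{C}$ fails to be finitely generated and checks each of (a)--(d) in each case. And where you pass from the degreewise inequalities to the global ones via the identity $\inf_m \min\{u_m,v_m\} = \min\{\inf_m u_m,\inf_m v_m\}$, the paper argues more by hand, fixing an index $m_0$ realizing one of the infima and splitting into sub-cases according to which of $\lambda_R(A_m),\lambda_R(C_m)$ is smaller. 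Your route is shorter and more uniform; the paper's is more explicit but carries more bookkeeping. Both arrive at the same place by the same essential mechanism.
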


\begin{proof}
The first lines of this proof will be devoted to show that we may assume that the sequence $\bm{\eta}$ is formed by finitely generated complexes. We study the finiteness possibilities for each term in several cases:
\begin{itemize}
\item[$\bullet$] \underline{$\bm{A}$ is not finitely generated}: Suppose that $\bm{B}$ is finitely generated. Then, $\bm{C}$ must be finitely generated. We will see that, in this case, $\lambda(\bm{C}) = 0$. Suppose that $\lambda(\bm{C}) = k > 0$. Then $\bm{\eta}$ is a short exact sequence of bounded complexes, since the class of such complexes is thick. Since $\bm{A}$ is bounded and not finitely generated, there exists $m_0 \in \mathbb{Z}$ such that $\lambda_R(A_{m_0}) = -1$. On the other hand, by \cite[Theorem 2.1.2 (3)]{Glaz} we have $\lambda_R(A_{m_0}) \geq {\rm min}\{ \lambda_R(B_{m_0}), \lambda_R(C_{m_0}) - 1 \}$. Using the hypothesis that $\bm{B}$ is finitely generated, and by the previous inequality, we have that ${\rm min}\{ \lambda_R(B_{m_0}), \lambda_R(C_{m_0}) - 1 \} = \lambda_R(C_{m_0}) - 1$. Then, $\lambda_R(C_{m_0}) \leq 0$, and thus we get a contradiction with the assumption $\lambda(\bm{C}) > 0$. Hence, we have $\lambda(\bm{C}) = 0$, and so (a) holds. The inequalities (b) and (c) are clearly satisfied in this case. Note that (d) cannot be covered under the assumption that $\bm{A}$ is not finitely generated and $\bm{B}$ is finitely generated.

In the case $\bm{B}$ is not finitely generated, items from (a) to (d) clearly hold true.

\item[$\bullet$] \underline{$\bm{B}$ is not finitely generated}: It follows either $\bm{A}$ or $\bm{C}$ must not be finitely generated. Otherwise, we would contradict the fact that finitely generated complexes are closed under extensions. It follows that the inequalities (a), (b) and (c) hold. In fact, (b) is actually an equality, and so (d) is also true.

\item[$\bullet$] \underline{$\bm{C}$ is not finitely generated}: Then, $\bm{B}$ must not be finitely generated, and hence items from (a) to (d) are clearly satisfied.
\end{itemize}
For the rest of the proof, we may assume that $\bm{\eta}$ is a short exact sequence of finitely generated (and so bounded) complexes. We only prove (2) and (4), and the remaining inequalities will follow similarly. Without loss of generality, suppose ${\rm min}\{ \lambda(\bm{A}), \lambda(\bm{C}) \} = \lambda(\bm{A})$. If $\lambda(\bm{A}) = \infty$, then $\lambda(\bm{C}) = \infty$, and so $\lambda(\bm{B}) = \infty$ since the class of complexes of type $\FP_\infty$ is closed under extensions. In this case, (2) follows immediately.

Now suppose $\lambda(\bm{A}) < \infty$. Since $\bm{A}$ is bounded, there exists $m_0 \in \mathbb{Z}$ such that $\lambda(\bm{A}) = \lambda_R(A_{m_0})$. From the assumption that ${\rm min}\{ \lambda(\bm{A}), \lambda(\bm{C}) \} = \lambda(\bm{A})$, note that $\lambda_R(A_{m_0}) \leq \lambda_R(C_{m_0})$. Let $m \in \mathbb{Z}$. By \cite[Theorem 2.1.2 (1)]{Glaz}, we have $\lambda_R(B_m) \geq {\rm min}\{ \lambda_R(A_m), \lambda_R(C_m) \}$. On the one hand, by Theorem \ref{theo:presentation}, if ${\rm min}\{ \lambda_R(A_m), \lambda(C_m) \} = \lambda(A_m)$, then
\[
\lambda_R(B_m) \geq \lambda_R(A_m) \geq {\rm inf}\{ \lambda_R(A_m) \mbox{ : } m \in \mathbb{Z} \} \geq \lambda_R(A_{m_0}) = {\rm min}\{ \lambda(\bm{A}), \lambda(\bm{C}) \}.
\]
On the other hand, using Theorem \ref{theo:presentation} again, if ${\rm min}\{ \lambda_R(A_m), \lambda_R(C_m) \} = \lambda_R(C_m)$, then
\[
\lambda_R(B_m) \geq \lambda_R(C_m) \geq {\rm inf}\{ \lambda_R(C_m) \mbox{ : } m \in \mathbb{Z} \} =  \lambda(\bm{C}) \geq \lambda(\bm{A}) = {\rm min}\{ \lambda(\bm{A}), \lambda(\bm{C}) \}.
\]
It follows that $\lambda(\bm{B}) \geq {\rm min}\{ \lambda(\bm{A}), \lambda(\bm{C}) \}$.

For the case $\bm{B} = \bm{A} \oplus \bm{C}$. On the one hand, we already know that $\lambda(\bm{B}) \geq {\rm min}\{ \lambda(\bm{A}), \lambda(\bm{C}) \}$. On the other hand, by Theorem \ref{theo:presentation} and \cite[Theorem 2.1.2 (4)]{Glaz}, we have
\begin{align*}
\lambda(\bm{B}) & \leq \lambda_R(B_m) = {\rm min}\{ \lambda_R(A_m), \lambda_R(C_m) \} \leq \lambda_R(A_m), \\
\lambda(\bm{B}) & \leq \lambda_R(B_m) = {\rm min}\{ \lambda_R(A_m), \lambda_R(C_m) \} \leq \lambda_R(C_m),
\end{align*}
for every $m \in \mathbb{Z}$. Then, $\lambda(\bm{B}) \leq \lambda(\bm{A})$ and $\lambda(\bm{B}) \leq \lambda(\bm{C})$, and hence $\lambda(\bm{B}) \leq {\rm min}\{ \lambda(\bm{A}), \lambda(\bm{C}) \}$ follows.
\end{proof}


\subsection{Injective and flat complexes relative to complexes of type $\FP_n$}

We now give the definitions of $\FP_n$-injective and $\FP_n$-flat complexes as follows.

\begin{definition}
Let $\bm{X}$ be a complex in $\Ch(R)$ and $\bm{Y}$ be a complex in $\Ch(R\op)$. We say that:
\begin{itemize}
\item[(a)] $\bm{X}$ is \emph{$\FP_n$-injective} if $\underline{\Ext}^1(\bm{L},\bm{X}) = \bm{0}$ for every $\bm{L} \in \mathscr{FP}_n(R)$.

\item[(b)] $\bm{Y}$ is \emph{$\FP_n$-flat} if $\overline{\Tor}_1(\bm{Y},\bm{L}) = \bm{0}$ for every $\bm{L} \in \mathscr{FP}_n(R)$.
\end{itemize}
\end{definition}

We denote by $\mathscr{I}_n(R)$ the class of $\FP_n$-injective complexes in $\Ch(R)$, and by $\mathscr{F}_n(R\op)$ the class of $\FP_n$-flat complexes in $\Ch(R\op)$. Note that the $\FP_0$-injective complexes coincide with the  injective complexes, the $\FP_1$-injective complexes coincide with the  $\FP$-injective or absolutely pure complexes, and the $\FP_i$-flat complexes are the flat complexes for $i=0,1$. Moreover, we immediately obtain the following ascending chains:
\begin{align}\label{2}
\mathscr{I}_0(R) & \subseteq \mathscr{I}_1(R) \subseteq \cdots \subseteq \mathscr{I}_n(R) \subseteq \mathscr{I}_{n+1}(R) \subseteq \cdots \subseteq \mathscr{I}_\infty(R),\\
\mathscr{F}_0(R\op) & = \mathscr{F}_1(R\op) \subseteq \cdots \subseteq \mathscr{F}_n(R\op) \subseteq \mathscr{F}_{n+1}(R\op) \subseteq \cdots \subseteq \mathscr{F}_\infty(R\op).
\end{align}

\begin{remark}\label{rem}
By definition, one easily checks that the class of  $\FP_n$-injective complexes is closed under extensions,  products and direct summands; and the category of $\FP_n$-flat complexes is closed under extensions, direct limits (and so under coproducts) and direct summands. We can add a couple of more properties for these two classes, after showing the following characterization.
\end{remark}

\begin{theorem}\label{tfae1}
The following are equivalent for every complex $\bm{X}$ in $\Ch(R)$ and every $n \geq 0$:
\begin{itemize}
\item[{\rm (1)}] $\bm{X} \in \mathscr{I}_n(R)$.

\item[{\rm (2)}] $\Ext^1_{\Ch}(\bm{L},\bm{X}) = 0$ for every $\bm{L} \in \mathscr{FP}_n(R)$.

\item[{\rm (3)}] $\Ext^1_{\Ch}(S^m(L),\bm{X}) = 0$ for every $m \in \mathbb{Z}$ and every $L \in \mathcal{FP}_n(R)$.

\item[{\rm (4)}] $\bm{X}$ is exact and $Z_m(\bm{X}) \in \mathcal{I}_n(R)$ for every $m \in \mathbb{Z}$.

\item[{\rm (5)}] $X_m \in \mathcal{I}_n(R)$ for every $m \in \mathbb{Z}$, and $\hom(\bm{L},\bm{X})$ is exact for every $\bm{L} \in \mathscr{FP}_n(R)$.

\item[{\rm (6)}] For any exact sequence $\bm{\eta} \colon \bm{0} \to \bm{Q} \to \bm{W} \to \bm{L} \to \bm{0}$ in $\Ch(R)$ with $\bm{L} \in \mathscr{FP}_n(R)$, the induced sequence $\underline{\Hom}(\bm{\eta},\bm{X})$ is exact.
\end{itemize}
\end{theorem}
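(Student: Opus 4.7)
The plan is to establish all six conditions as equivalent via the cycle $(1)\Rightarrow(2)\Rightarrow(3)\Rightarrow(4)\Rightarrow(5)\Rightarrow(6)\Rightarrow(1)$, relying at each step on Proposition \ref{type fp-n} (which realises a complex of type $\FP_n$ as a bounded complex with components in $\mathcal{FP}_n(R)$) together with the obvious closure of $\mathscr{FP}_n(R)$ under suspensions. The equivalence $(1)\Leftrightarrow(2)$ is read off directly from \eqref{eqn:underext}: the vanishing of the complex $\underline{\Ext}^1(\bm{L},\bm{X})$ is the same as $\Ext^1_{\Ch}(\bm{L},\bm{X}[m])=0$ for every $m\in\mathbb{Z}$, and by suspension-closure this is in turn equivalent to (2). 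The step $(2)\Rightarrow(3)$ is trivial since $S^m(L)\in\mathscr{FP}_n(R)$ whenever $L\in\mathcal{FP}_n(R)$.

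For $(3)\Rightarrow(4)$ I would apply $\Hom_{\Ch}(-,\bm{X})$ to the short exact sequence $\bm{0}\to S^{m-1}(L)\to D^m(L)\to S^m(L)\to\bm{0}$ and use the standard adjunctions $\Hom_{\Ch}(S^m(L),\bm{X})\cong\Hom_R(L,Z_m(\bm{X}))$, $\Hom_{\Ch}(D^m(L),\bm{X})\cong\Hom_R(L,X_m)$, together with $\Ext^1_{\Ch}(D^m(L),\bm{X})\cong\Ext^1_R(L,X_m)$ (which holds because $D^m$ is left adjoint to the exact $m$-th-component functor and therefore preserves projectives). Setting $L=R$ in the resulting long exact sequence yields $\Ext^1_{\Ch}(S^m(R),\bm{X})\cong H_{m-1}(\bm{X})$, forcing exactness of $\bm{X}$; the exactness in turn produces a short exact sequence $0\to Z_m(\bm{X})\to X_m\to Z_{m-1}(\bm{X})\to 0$ whose induced long exact $\Ext$ sequence gives an injection $\Ext^1_R(L,Z_m(\bm{X}))\hookrightarrow\Ext^1_{\Ch}(S^m(L),\bm{X})=0$, proving $Z_m(\bm{X})\in\mathcal{I}_n(R)$.

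The main technical obstacle is $(4)\Rightarrow(5)$. Componentwise $\FP_n$-injectivity of $X_m$ is immediate once one views $X_m$ as an extension of $Z_{m-1}(\bm{X})$ by $Z_m(\bm{X})$ and recalls that $\mathcal{I}_n(R)$ is closed under extensions. The hard part is the exactness of $\hom(\bm{L},\bm{X})$ for all $\bm{L}\in\mathscr{FP}_n(R)$; for this I would first handle the base case $\bm{L}=S^m(L)$, where the claim reduces to exactness of the abelian-group complex $\Hom_R(L,\bm{X})$ and follows by dimension-shifting along the sequences $0\to Z_k(\bm{X})\to X_k\to Z_{k-1}(\bm{X})\to 0$ using $\Ext^1_R(L,Z_k(\bm{X}))=0$. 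The general bounded case is then handled by induction on the number of nonzero components of $\bm{L}$ through the degreewise-split short exact sequence $\bm{0}\to\bm{L}'\to\bm{L}\to S^t(L_t)\to\bm{0}$ given by stupid truncation below the top nonzero degree $t$; the degreewise splitting ensures that $\hom(-,\bm{X})$ preserves the sequence's exactness, and the long exact homology sequence completes the induction.

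For $(5)\Rightarrow(6)$, the componentwise $\FP_n$-injectivity of each $X_m$ combined with $L_k\in\mathcal{FP}_n(R)$ makes $\bm{0}\to\hom(\bm{L},\bm{X})\to\hom(\bm{W},\bm{X})\to\hom(\bm{Q},\bm{X})\to\bm{0}$ short exact as a sequence of complexes; the corresponding long exact homology sequence, together with $H_\ast(\hom(\bm{L},\bm{X}))=\bm{0}$ supplied by (5), makes the connecting map vanish, so $\underline{\Hom}(\bm{\eta},\bm{X})$ (obtained by applying the cycle functor $Z$) is exact. Finally, $(6)\Rightarrow(1)$ is obtained by applying (6) to a short exact sequence $\bm{0}\to\bm{K}\to\bm{P}\to\bm{L}\to\bm{0}$ with $\bm{P}$ finitely generated projective (which exists by definition of $\mathscr{FP}_n(R)$) and reading $\underline{\Ext}^1(\bm{L},\bm{X})=\bm{0}$ off the long exact sequence for $\underline{\Ext}(-,\bm{X})$ together with $\underline{\Ext}^1(\bm{P},\bm{X})=\bm{0}$.
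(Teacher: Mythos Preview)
Your proof is correct and follows the same overall cycle as the paper, but the two arguments diverge at $(4)\Rightarrow(5)$ and $(5)\Rightarrow(6)$. For $(4)\Rightarrow(5)$ the paper bypasses your induction on the number of nonzero terms by invoking Gillespie's identification $H_k(\hom(\bm{L},\bm{X}))\cong\Ext^1_{\rm dw}(\bm{L},\bm{X}[-k-1])$; since each $X_m\in\mathcal{I}_n(R)$ and each $L_m\in\mathcal{FP}_n(R)$, this coincides with $\Ext^1_{\Ch}(\bm{L},\bm{X}[-k-1])$, which vanishes by the already-established equivalence $(2)\Leftrightarrow(4)$. This is shorter than your truncation induction, though your argument is more self-contained. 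For $(5)\Rightarrow(6)$ the paper takes a different route: rather than applying $\hom(-,\bm{X})$ to $\bm{\eta}$, it shows directly that $\Ext^1_{\Ch}(\bm{L},\bm{X})=0$ by arguing that any extension of $\bm{L}$ by $\bm{X}$ is degreewise split (by the $\FP_n$-injectivity of each $X_m$), hence isomorphic to the mapping cone of some $\bm{f}\colon\bm{L}[1]\to\bm{X}$, and $\bm{f}\sim 0$ because $\hom(\bm{L}[1],\bm{X})$ is acyclic. Your argument is arguably more direct for the stated conclusion, but note that the step ``connecting map vanishes, so $Z$ applied is exact'' deserves one more line: what you really use is that if $\bm{0}\to\bm{A}\to\bm{B}\to\bm{C}\to\bm{0}$ is exact with $\bm{A}$ acyclic, then $Z_m(\bm{B})\to Z_m(\bm{C})$ is surjective (lift a cycle, correct by a boundary in $\bm{A}$). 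The remaining implications match the paper's treatment.
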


\begin{proof}
The equivalence (1) $\Leftrightarrow$ (2) is clear by \eqref{eqn:underext}. On the other hand, (2) $\Leftrightarrow$ (3) $\Leftrightarrow$ (4) follows as in \cite[Lemma 2.5 and Proposition 2.6]{BG16}.
\begin{itemize}
\item[$\bullet$] (4) $\Rightarrow$ (5): Suppose $\bm{X}$ is an exact complex with $\FP_n$-injective cycles. For each $m \in \mathbb{Z}$, we have a short exact sequence $0 \to Z_m(\bm{X}) \to X_m \to Z_{m-1}(\bm{X}) \to 0$ in $\Mod(R)$. Since $\mathcal{I}_n(R)$ is closed under extensions by \cite[Proposition 3.10]{BP16}, we have that $X_m \in \mathcal{I}_n(R)$.

Now let $\bm{L} \in \mathscr{FP}_n(R)$. Using \cite[Lemma 2.1]{GillespieFlat}, we have that:
\begin{align*}
H_k(\hom(\bm{L},\bm{X})) & \cong \Ext^1_{\rm dw}(\bm{L},\bm{X}[-k-1]) = \Ext^1_{\Ch}(\bm{L},\bm{X}[-k-1]),
\end{align*}
where the right-hand side equality is valid since $\Ext^1_R(L_m,X_{m+k+1}) = 0$, being $L_m$ of type $FP_n$ by Proposition \ref{type fp-n} and $X_{m+k+1} \in \mathcal{I}_n(R)$. On the other hand, using the equivalence (2) $\Leftrightarrow$ (4) we have that $H_k(\hom(\bm{L},\bm{X})) \cong \Ext^1_{\Ch}(\bm{L}[k+1],\bm{X}) = 0$ for every $k \in \mathbb{Z}$. Hence, the complex $\hom(\bm{L},\bm{X})$ is exact.

\item[$\bullet$] (5) $\Rightarrow$ (6): Let us show $\underline{\Ext}^1(\bm{L},\bm{X}) = \bm{0}$ for every $\bm{L} \in \mathscr{FP}_n(R)$. Suppose we are given a short exact sequence $\bm{0} \ra \bm{X} \to \bm{H} \ra \bm{L} \ra \bm{0}$ in $\Ch(R)$. Since $X_m \in \mathcal{I}_n(R)$ by (5) for every $m \in \mathbb{Z}$, this exact sequence splits at the module level, and so it is isomorphic to $\bm{0} \ra \bm{X} \ra M(\bm{f}) \ra \bm{L} \ra \bm{0}$, where  $\bm{f} \colon \bm{L}[1] \ra \bm{X}$ is a morphism of complexes and $M(\bm{f})$ denotes its mapping cone. Since $\hom(\bm{L}[1],\bm{X})$ is exact by (5), $\bm{f}$ is homotopic to 0. It follows that $\bm{0} \ra \bm{X} \ra M(\bm{f}) \ra \bm{L} \ra \bm{0}$ is a split exact sequence in $\Ch(R)$ by \cite[Lemma 2.3.2]{GR99}. Therefore $\Ext^1_{\Ch}(\bm{L},\bm{X}) = 0$, and so $\underline{\Ext}^1(\bm{L},\bm{X}) = \bm{0}$ by \eqref{eqn:underext}. Thus, (6) follows.

\item[$\bullet$] (6) $\Rightarrow$ (1): Let $\bm{L} \in \mathscr{FP}_n(R)$. There is an exact sequence $\bm{\eta} \colon \bm{0} \ra \bm{Q} \ra \bm{P} \ra \bm{L} \ra \bm{0}$ in $\Ch(R)$ with $\bm{P}$ finitely generated projective. Applying $\underline{\Hom}(-,\bm{X})$ to $\bm{\eta}$, we get the exact sequence $\underline{\Hom}(\bm{P},\bm{X}) \ra \underline{\Hom}(\bm{Q},\bm{X}) \ra \underline{\Ext}^1(\bm{L},\bm{X}) \ra \bm{0}$ where the morphism $\underline{\Hom}(\bm{P},\bm{X}) \ra \underline{\Hom}(\bm{Q},\bm{X})$ is epic by (6). It follows that $\underline{\Ext}^1(\bm{L},\bm{X}) = \bm{0}$, and hence $\bm{X}$ is $\FP_n$-injective.
\end{itemize}
\end{proof}

Recall that a short exact sequence $\bm{\eta} \colon \bm{0} \to \bm{A} \to \bm{B} \to \bm{C} \to \bm{0}$ in $\Ch(R)$ is \emph{pure} if $\bm{Y} \overline{\otimes} \bm{\eta}$ for every complex $\bm{Y}$ in $\Ch(R\op)$. A class $\mathscr{X}$ of complexes in $\Ch(R)$ is \emph{closed under pure sub-complexes} (resp., \emph{closed under pure quotients}) if for every pure exact sequence as $\bm{\eta}$, one has that $\bm{B} \in \mathscr{X}$ implies $\bm{A} \in \mathscr{X}$ (resp., $\bm{C} \in \mathscr{X}$). Purity for modules and the corresponding closure properties are analogous, where one considers the usual tensor product $- \otimes_R -$ on $\Mod(R\op) \times \Mod(R)$ instead.

\begin{proposition}\label{coproduct-closed}
The sub-category $\mathscr{I}_n(R)$ is closed under coproducts and pure sub-complexes for any $n \geq 1$, and under direct limits and and pure quotients for any $n \geq 2$.
\end{proposition}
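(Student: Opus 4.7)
The plan is to reduce each of the four closure properties to the analogous property of $\mathcal{I}_n(R)$ in $\Mod(R)$ established by Bravo and the second author in \cite[Proposition 3.10]{BP16}, and to invoke the characterization of Theorem \ref{tfae1} (4): $\bm{X} \in \mathscr{I}_n(R)$ if, and only if, $\bm{X}$ is exact with $Z_m(\bm{X}) \in \mathcal{I}_n(R)$ for every $m \in \mathbb{Z}$.

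For coproducts (when $n \geq 1$) and direct limits (when $n \geq 2$) the argument is routine. Both colimits are computed degree-wise in $\Ch(R)$, are exact, and commute with the cycle functor $Z_m$; in particular, a coproduct or filtered colimit of exact complexes is exact. Applied to a family in $\mathscr{I}_n(R)$, the resulting complex is therefore exact, and its $m$-th cycles form a coproduct or direct limit of modules in $\mathcal{I}_n(R)$, which again belongs to $\mathcal{I}_n(R)$ by the module-level statement of \cite[Proposition 3.10]{BP16}.

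For closure under pure sub-complexes and pure quotients I would proceed in the same spirit. Given a pure exact sequence $\bm{\eta}\colon \bm{0} \to \bm{A} \to \bm{B} \to \bm{C} \to \bm{0}$ with $\bm{B} \in \mathscr{I}_n(R)$, I need three auxiliary facts: (i) each degree-wise sequence $0 \to A_m \to B_m \to C_m \to 0$ is pure exact in $\Mod(R)$; (ii) each cycle sequence $0 \to Z_m(\bm{A}) \to Z_m(\bm{B}) \to Z_m(\bm{C}) \to 0$ is also pure exact in $\Mod(R)$; and (iii) both $\bm{A}$ and $\bm{C}$ remain exact. Step (i) is immediate by specializing the defining property of purity to $\bm{Y} = S^m(R\op)$. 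For (ii) and (iii) I would go through the character functor $(-)^+$: purity of $\bm{\eta}$ is equivalent to $\bm{\eta}^+$ being split exact in $\Ch(R\op)$, and a split short exact sequence of complexes splits in every degree and on cycles and boundaries separately; translating back along $(-)^+$ then yields both the purity of the cycle sequence and the preservation of exactness. Equivalently, one may appeal to the fact that any pure exact sequence of complexes is a filtered colimit of degree-wise split short exact sequences, so the homology long exact sequence of $\bm{\eta}$ is a filtered colimit of long exact sequences attached to split triangles, and $H_m(\bm{B}) = 0$ forces $H_m(\bm{A}) = H_m(\bm{C}) = 0$. With (i)--(iii) in hand, the parts of \cite[Proposition 3.10]{BP16} asserting that $\mathcal{I}_n(R)$ is closed under pure submodules for $n \geq 1$ and under pure quotients for $n \geq 2$ conclude via Theorem \ref{tfae1} (4).

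The main obstacle I anticipate is step (ii): moving purity across the cycle functor. Cycles do not commute with arbitrary colimits of short exact sequences, so the transfer must really use splitness after $(-)^+$ (equivalently, the directed-colimit-of-split-sequences description of pure exactness). Once that is set up cleanly, step (iii) follows almost for free from the homology long exact sequence, and the overall reduction to \cite[Proposition 3.10]{BP16} goes through without further trouble.
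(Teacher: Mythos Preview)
Your approach is correct and, for coproducts and direct limits, identical to the paper's. For the purity statements the two proofs diverge.

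For pure sub-complexes the paper does \emph{not} reduce to cycles at all: given a pure exact $\bm{\eta}$ with middle term in $\mathscr{I}_n(R)$ and any $\bm{L}\in\mathscr{FP}_n(R)$ (hence finitely presented, as $n\geq 1$), it invokes \cite[Lemma 5.1.1 and Theorem 5.1.3]{GR99} to get $\underline{\Hom}(\bm{L},\bm{\eta})$ exact, and then the long exact sequence of $\underline{\Ext}$ immediately gives $\underline{\Ext}^1(\bm{L},\bm{A})=\bm{0}$. This bypasses your steps (ii) and (iii) entirely for the sub-complex case and is noticeably cleaner.

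For pure quotients the paper \emph{does} reduce to cycles via Theorem~\ref{tfae1}(4), as you propose, but it verifies purity of the cycle sequence $\zeta_m$ differently: it applies $\underline{\Hom}(S^m(L),-)$ to $\bm{\eta}$ for each finitely presented module $L$, and identifies the resulting short exact sequence with $\Hom_R(L,\zeta_m)$ via \cite[Lemma 3.1(2)]{GillespieFlat}. Your route through the character complex (splitting of $\bm{\eta}^+$, hence splitting on cycles, hence purity of $\zeta_m$) is equally valid once one checks that $Z_m((-)^+)$ is naturally the Pontrjagin dual of a cycle module of the input when the complexes involved are exact; the paper itself uses this identification elsewhere (see the proof of Theorem~\ref{theo:induced_duality}). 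Your approach has the virtue of uniformity across all four closure properties; the paper's has the virtue of a shorter argument for pure sub-complexes and of avoiding the indexing subtleties you flag in step~(ii).
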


\begin{proof}
Let $\{ \bm{X^i} \}_{i \in I}$ be a directed family of $\FP_n$-injective complexes in $\Ch(R)$, and let $\bm{X} := \varinjlim_{i \in I} \bm{X^i}$ denote its direct limit (this covers the case where $\bm{X}$ is a coproduct of $\FP_n$-injective complexes). By Theorem \ref{tfae1}, each $\bm{X^i}$ is exact with $\FP_n$-injective cycles. Since $\Mod(R)$ is a Grothendieck category, we have that $\bm{X}$ is exact. On the other hand, direct limits preserve kernels and so $Z_m(\bm{X}) \cong \varinjlim_{i \in I} Z_m(\bm{X^i})$. Since the class $\mathcal{I}_n(R)$ is closed under direct limits (and so under coproducts) if $n > 1$, we have that $Z_m(\bm{X}) \in \mathcal{I}_n(R)$ if $n > 1$. It remains to cover the case where $n = 1$, in which we will only consider closure under coproducts\footnote{Recall that $\FP_1$-injective modules in $\Mod(R)$ are closed under direct limits if, and only if, the ground ring $R$ is left coherent. See \cite[Theorem 3.1]{DingChen}, for instance.}. In this case, it is know that $\FP$-injective modules are closed under coproducts \cite[Exercise 19 (ii), page 3.11]{StBook}. Hence, $\bm{X}$ is a $\FP$-injective complex by Theorem \ref{tfae1}.

Now suppose we are given a pure exact sequence $\bm{\eta} \colon \bm{0} \to \bm{A} \to \bm{B} \to \bm{C} \to \bm{0}$ with $\bm{B} \in \mathscr{I}_n(R)$ and $n \geq 1$. Let $\bm{L} \in \mathscr{FP}_n(R)$, and so $\bm{L}$ finitely presented. By \cite[Lemma 5.1.1 and Theorem 5.1.3]{GR99}, we have that $\underline{\Hom}(\bm{L},\bm{\eta})$ is exact. On the other hand, we have a long exact sequence $\bm{0} \to \underline{\Hom}(\bm{L},\bm{A}) \to \underline{\Hom}(\bm{L},\bm{B}) \to \underline{\Hom}(\bm{L},\bm{C}) \to \underline{\Ext}^1(\bm{L},\bm{A}) \to \underline{\Ext}^1(\bm{L},\bm{B})$ where $\underline{\Ext}^1(\bm{L},\bm{B}) = \bm{0}$ since $\bm{B} \in \mathscr{I}_n(R)$, and $\underline{\Hom}(\bm{L},\bm{B}) \to \underline{\Hom}(\bm{L},\bm{C})$ is an epimorphism. It follows that $\underline{\Ext}^1(\bm{L},\bm{A}) = \bm{0}$, that is, $\bm{A} \in \mathscr{I}_n(R)$. In the case $n > 1$, we use the characterization proved in Theorem \ref{tfae1} to show that $\bm{C} \in \mathscr{I}_n(R)$. First, we note that $\bm{C}$ is exact. Now, by \cite[Lemma 3.3.8]{PerezBook} there is an exact sequence $\zeta_m \colon 0 \to Z_m(\bm{A}) \to Z_m(\bm{B}) \to Z_m(\bm{C}) \to 0$ in $\Mod(R)$ for each $m \in \mathbb{Z}$, where the connecting morphisms are induced by the universal property of kernels. We show that $\zeta_m$ is pure exact, that is, $\Hom_R(L,\zeta_m)$ is exact for every  finitely presented module $L$ in $\Mod(R)$. Since for every $m \in \mathbb{Z}$ the complex $S^m(L)$ is finitely presented by Lemma \ref{fg-fp}, we have an exact sequence $\bm{0} \to \underline{\Hom}(S^m(L),\bm{A}) \to \underline{\Hom}(S^m(L),\bm{B}) \to \underline{\Hom}(S^m(L),\bm{C}) \to \bm{0}$. Then, by \cite[Proposition 4.4.7]{PerezBook} we have a short exact sequence of abelian groups of the form $0 \to \Hom_R(S^m(L),\bm{A}) \to \Hom_R(S^m(L),\bm{B}) \to \Hom_R(S^m(L),\bm{C}) \to 0$, which is isomorphic to $\Hom_R(L,\zeta_m)$ by \cite[Lemma 3.1 (2)]{GillespieFlat}. It follows that $\Hom_R(L,\zeta_m)$ is exact, that is, $\zeta_m$ is pure exact. Now, since $Z_m(\bm{A}) \in \mathcal{I}_n(R)$, we have by \cite[Part 4. of Proposition 3.10]{BP16} that $Z_m(\bm{C}) \in \mathcal{I}_n(R)$ for every $m \in \mathbb{Z}$. Therefore, $\bm{C}$ is $\FP_n$-injective.
\end{proof}

In \cite[Propositions 3.5 and 3.6]{BP16}, Bravo and the second author studied the relation between $\FP_n$-injective and $\FP_n$-flat modules via the Pontrjagin duality functor
\[
(-)^+ \colon \Mod(R) \longrightarrow \Mod(R\op).
\]
Specifically, for every $n > 1$ and every module $N$ in $\Mod(R\op)$ and $M \in \Mod(R)$, one has that:
\begin{itemize}
\item[(a)] $N \in \mathcal{F}_n(R\op)$ if, and only if, $N^+ \in \mathcal{I}_n(R)$.

\item[(b)] $M \in \mathcal{I}_n(R)$ if, and only if, $M^+ \in \mathcal{F}_n(R\op)$.
\end{itemize}
In the category of complexes, one can obtain a similar duality between $\FP_n$-injective and $\FP_n$-flat complexes, as specified in the following result.

\begin{proposition}\label{fi-if} The following equivalences hold for any complex $\bm{X}$ in $\Ch(R)$ and any complex $\bm{Y}$ in $\Ch(R\op)$.
\begin{itemize}
\item[{\rm (a)}] For every $n \geq 0$, $\bm{Y} \in \mathscr{F}_n(R\op)$ if, and only if, $\bm{Y}^+ \in \mathscr{I}_n(R)$.

\item[{\rm (b)}] For every $n \geq 2$, $\bm{X} \in \mathscr{I}_n(R)$ if, and only if, $\bm{X}^+ \in \mathscr{F}_n(R\op)$.
\end{itemize}
\end{proposition}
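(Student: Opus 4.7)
\emph{Proof proposal.} My plan is to reduce each biconditional to the module-level duality results \cite[Propositions 3.5 and 3.6]{BP16} by means of the cycle-wise characterizations: Theorem \ref{tfae1} for $\FP_n$-injective complexes and its flat counterpart Theorem \ref{theo:char_flat}. The argument will split into two independent reductions at the chain-complex level, one handling exactness and one handling the cycles, and then assemble them with the known module-level duality.

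First I would verify the exactness component. Since $\mathbb{Q}/\mathbb{Z}$ is a faithfully injective cogenerator of $\Mod(\mathbb{Z})$, the functor $(-)^+ = \Hom_{\mathbb{Z}}(-,\mathbb{Q}/\mathbb{Z})$ is exact and reflects exactness. Applied termwise via the alternative description \eqref{eqn:alter_Pontrjagin}, this immediately yields that $\bm{X}$ is exact in $\Ch(R)$ if and only if $\bm{X}^+$ is exact in $\Ch(R\op)$, and likewise for $\bm{Y}$.

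Next I would compute the cycles of the Pontrjagin dual. The elementary identity $\ker(f^+) \cong (\coker f)^+$ for any $\mathbb{Z}$-linear map $f$, combined again with \eqref{eqn:alter_Pontrjagin}, gives a natural isomorphism $Z_m(\bm{X}^+) \cong \coker(\delta^{\bm{X}}_{\beta(m)})^+$ for some order-reversing index function $\beta \colon \mathbb{Z}\to\mathbb{Z}$. When $\bm{X}$ is exact one has $B_k(\bm{X}) = Z_k(\bm{X})$, and the short exact sequence $0 \to Z_k(\bm{X}) \to X_k \to Z_{k-1}(\bm{X}) \to 0$ together with this equality yields $\coker(\delta^{\bm{X}}_k) \cong Z_{k-2}(\bm{X})$. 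Hence the cycles of $\bm{X}^+$ are, up to reindexing, Pontrjagin duals of the cycles of $\bm{X}$; the symmetric statement holds for $\bm{Y}^+$.

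With both reductions available, the two equivalences will follow at once. For (a), $\bm{Y} \in \mathscr{F}_n(R\op)$ is equivalent by Theorem \ref{theo:char_flat} to $\bm{Y}$ being exact with every $Z_m(\bm{Y}) \in \mathcal{F}_n(R\op)$; by the previous two paragraphs this is equivalent to $\bm{Y}^+$ being exact with every $Z_m(\bm{Y}^+) \in \mathcal{I}_n(R)$, using \cite[Proposition 3.5]{BP16} (valid for every $n \geq 0$), which by Theorem \ref{tfae1} is precisely $\bm{Y}^+ \in \mathscr{I}_n(R)$. Part (b) will follow by the identical argument using \cite[Proposition 3.6]{BP16}, whose restriction to $n \geq 2$ is inherited by the complex-level statement. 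The main obstacle will be the careful bookkeeping of indices and signs in the termwise description of $(-)^+$, together with a cross-check that the flat analogue of Theorem \ref{tfae1} really does deliver the cycle-wise characterization used above; once these are in place, the module-level dualities of \cite{BP16} do all the remaining work.
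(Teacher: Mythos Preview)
Your argument is correct, but it takes a genuinely different route from the paper's. The paper works directly at the complex level with derived functors: for (a) it invokes the natural isomorphism $\underline{\Ext}^1(\bm{L},\bm{Y}^+)\cong \overline{\Tor}_1(\bm{Y},\bm{L})^+$ from \cite[Lemma 5.4.2]{GR99} together with the fact that $D^0(\mathbb{Q}/\mathbb{Z})$ is an injective cogenerator; for (b) it builds the analogous isomorphism $\underline{\Ext}^1(\bm{L},\bm{X})^+\cong \overline{\Tor}_1(\bm{X}^+,\bm{L})$ by a five-lemma style diagram, using a short exact sequence $\bm{0}\to\bm{K}\to\bm{P}\to\bm{L}\to\bm{0}$ with $\bm{P}$ finitely generated projective and $\bm{K}$ finitely presented (this is exactly where $n\geq 2$ enters), together with the natural isomorphisms $\bm{X}^+\overline{\otimes}(-)\cong \underline{\Hom}(-,\bm{X})^+$ of \cite[Lemma 2.3]{EG97} on the finitely presented terms. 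Your approach instead reduces everything to the module level via the cycle-wise descriptions of $\mathscr{I}_n(R)$ and $\mathscr{F}_n(R\op)$ and then quotes \cite[Propositions 3.5 and 3.6]{BP16}.

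One organizational caveat: you are using Theorem \ref{theo:char_flat} (the equivalence $\bm{Y}\in\mathscr{F}_n(R\op)\Leftrightarrow\bm{Y}$ exact with $\FP_n$-flat cycles), which in the paper is stated \emph{after} Proposition \ref{fi-if} and is introduced as a consequence of it. There is no genuine circularity, since the specific equivalence (1)$\Leftrightarrow$(3) you need is proved there by citing \cite[Lemma 4.5 and Proposition 4.6]{BG16} and does not rely on Proposition \ref{fi-if}; but your argument reverses the paper's logical order. The paper's functorial proof avoids this forward reference and yields the sharper intermediate isomorphism $\underline{\Ext}^1(\bm{L},\bm{X})^+\cong \overline{\Tor}_1(\bm{X}^+,\bm{L})$, while your approach has the virtue of making transparent that the complex-level statement is nothing more than the module-level one applied cycle by cycle.
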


\begin{proof} \
\begin{itemize}
\item[(a)] By \cite[Lemma 5.4.2]{GR99}, we have that $\underline{\Ext}^1(\bm{X},\bm{Y}^+)\cong \overline{\Tor}_1(\bm{Y}, \bm{X})^+$ for any complex $\bm{Y}$ in $\Ch(R\op)$ and any complex $\bm{X}$ in $\Ch(R)$. So the assertion follows since $D^0(\mathbb{Q / Z})$ is an injective cogenerator in the category of complexes of abelian groups.

\item[(b)] Let $\bm{L} \in \mathscr{FP}_n(R)$. Then there exists an exact sequence $\bm{0} \ra \bm{K} \ra \bm{P} \ra \bm{L} \ra \bm{0}$ in $\Ch(R)$ with $\bm{P}$ finitely generated projective and $\bm{K} \in \mathscr{FP}_{n-1}(R)$.  Note that since $n \geq 2$, $\bm{K}$ must be a finitely presented complex. It follows that we can consider the following commutative diagram with exact rows:
\[
\xymatrix@R=25pt@C=15pt{
\bm{0} \ar[r] & \overline{\Tor}_1(\bm{X}^+,\bm{L}) \ar[r] \ar[d] & \bm{X}^+\overline{\otimes} \bm{K} \ar[r] \ar[d]^{\theta_{\bm{K}}} & \bm{X}^+\overline{\otimes} \bm{P} \ar[d]^{\theta_{\bm{P}}} \\
\bm{0} \ar[r] & \underline{\Ext}^1(\bm{L},\bm{X})^+ \ar[r] & \underline{\Hom}(\bm{K},\bm{X})^+\ar[r] & \underline{\Hom}(\bm{P},\bm{X})^+
}
\]
where $\theta_{\bm{K}}$ and $\theta_{\bm{P}}$ are the isomorphisms described in \cite[Lemma 2.3]{EG97}, and the left-hand side arrow is induced by the universal property of kernels. We have that $\underline{\Ext}^1(\bm{L},\bm{X})^+ \cong \overline{\Tor}_1(\bm{X}^+,\bm{L})$. Thus the result follows.
\end{itemize}
\end{proof}

Proposition \ref{fi-if} turns out to be an important tool that allows us to establish the following characterization of $\FP_n$-flat complexes, similar to that proved in Theorem \ref{tfae1} for $\FP_n$-injective complexes.

\begin{theorem}\label{theo:char_flat}
The following statements are equivalent for any complex $\bm{Y}$ in $\Ch(R\op)$.
\begin{itemize}
\item[{\rm (1)}] $\bm{Y} \in \mathscr{F}_n(R\op)$.

\item[{\rm (2)}] $\overline{\Tor}_1(\bm{Y},S^m(L)) = 0$ for every $m \in \mathbb{Z}$ and every $L \in \mathcal{FP}_n(R)$.

\item[{\rm (3)}] $\bm{Y}$ is exact and $Z_m(\bm{Y}) \in \mathcal{F}_n(R\op)$ for every $m \in \mathbb{Z}$.

\item[{\rm (4)}] The complex $\Hom_R(\bm{Y},\mathbb{Q / Z}) := \cdots \to (Y_{m-2})^+ \to (Y_{m-1})^+ \to (Y_m)^+ \to \cdots$ is $\FP_n$-injective in $\Ch(R)$.
\end{itemize}
\end{theorem}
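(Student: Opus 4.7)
My plan is to transfer each equivalence across the Pontrjagin duality, using Proposition \ref{fi-if}(a) as the bridge between $\FP_n$-flatness of $\bm{Y}$ and $\FP_n$-injectivity of $\bm{Y}^+$, and then applying Theorem \ref{tfae1} to $\bm{Y}^+$. First I would observe that the complex displayed in condition (4) coincides, up to re-indexing, with the complex $\bm{Y}^\ast$ of \eqref{eqn:alter_Pontrjagin}, and hence with $\bm{Y}^+$. Therefore (4) is equivalent to $\bm{Y}^+ \in \mathscr{I}_n(R)$, and Proposition \ref{fi-if}(a) gives (4) $\Leftrightarrow$ (1) with no further work.

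The implication (1) $\Rightarrow$ (2) is immediate: given $L \in \mathcal{FP}_n(R)$, the sphere $S^m(L)$ is a bounded complex whose only nonzero term is of type $\FP_n$, so by Proposition \ref{type fp-n}(3) it belongs to $\mathscr{FP}_n(R)$, and (1) yields $\overline{\Tor}_1(\bm{Y},S^m(L)) = \bm{0}$. For the converse (2) $\Rightarrow$ (1), I would invoke the natural isomorphism $\underline{\Ext}^1(\bm{L},\bm{Y}^+) \cong \overline{\Tor}_1(\bm{Y},\bm{L})^+$ already used in the proof of Proposition \ref{fi-if}(a). Applied to $\bm{L} = S^m(L)$, (2) forces $\underline{\Ext}^1(S^m(L),\bm{Y}^+) = \bm{0}$, and combining this with \eqref{eqn:underext} gives $\Ext^1_{\Ch}(S^m(L),\bm{Y}^+) = 0$ for every $L \in \mathcal{FP}_n(R)$ and every $m \in \mathbb{Z}$. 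This is precisely condition (3) of Theorem \ref{tfae1} for $\bm{Y}^+$, so $\bm{Y}^+ \in \mathscr{I}_n(R)$, and Proposition \ref{fi-if}(a) returns (1).

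Finally, for (1) $\Leftrightarrow$ (3) I would again route through $\bm{Y}^+$. Since $\mathbb{Q}/\mathbb{Z}$ is a faithfully injective cogenerator of $\Mod(\mathbb{Z})$, the complex $\bm{Y}$ is exact if and only if $\bm{Y}^+$ is exact. Assuming exactness, dualizing each short exact sequence $0 \to Z_{k+1}(\bm{Y}) \to Y_{k+1} \to Z_k(\bm{Y}) \to 0$ exhibits $Z_k(\bm{Y})^+$ as the cycles of $\bm{Y}^+$ up to the index shift dictated by \eqref{eqn:alter_Pontrjagin}; combined with the module-level duality $Z_k(\bm{Y}) \in \mathcal{F}_n(R\op) \Leftrightarrow Z_k(\bm{Y})^+ \in \mathcal{I}_n(R)$ recalled in the paragraph preceding Proposition \ref{fi-if}, this shows that (3) for $\bm{Y}$ is equivalent to the condition (4) of Theorem \ref{tfae1} for $\bm{Y}^+$, i.e.\ to $\bm{Y}^+ \in \mathscr{I}_n(R)$, which by Proposition \ref{fi-if}(a) is (1).

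The main obstacle I anticipate is the bookkeeping in this last step: one needs to check carefully that the index shift in \eqref{eqn:alter_Pontrjagin} is compatible with the identification of the cycles of $\bm{Y}^+$ as duals of the cycles of $\bm{Y}$, and that $\FP_n$-flatness of each $Z_k(\bm{Y})$ really corresponds, term by term, to $\FP_n$-injectivity of every cycle of $\bm{Y}^+$ (not just a cofinal subfamily of them). Once this is set up correctly, the rest is a direct appeal to Theorem \ref{tfae1} and Proposition \ref{fi-if}(a).
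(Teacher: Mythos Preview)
Your proposal is correct. The paper's own proof is terse: it defers (1) $\Leftrightarrow$ (2) and (1) $\Leftrightarrow$ (3) to the analogous arguments in \cite[Lemma 4.5 and Proposition 4.6]{BG16}, and (1) $\Leftrightarrow$ (3) $\Leftrightarrow$ (4) to the method of \cite[Theorem 2.4]{EG98}. Your route---identifying the complex in (4) with $\bm{Y}^+$ via \eqref{eqn:alter_Pontrjagin}, invoking Proposition \ref{fi-if}(a), and then reading off each condition as one of the equivalent conditions of Theorem \ref{tfae1} applied to $\bm{Y}^+$---is precisely the strategy underlying the EG98 reference for (1) $\Leftrightarrow$ (3) $\Leftrightarrow$ (4), so there you match the paper. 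For (1) $\Leftrightarrow$ (2) your argument via duality and condition (3) of Theorem \ref{tfae1} may differ in presentation from the direct BG16 argument the paper cites, but it is equally valid and has the advantage of being self-contained within the paper. The index-shift bookkeeping you flag is genuine but routine: once $\bm{Y}$ (equivalently $\bm{Y}^+$) is exact, dualizing the short exact sequences of cycles identifies every cycle of $\bm{Y}^+$ with the dual of some cycle of $\bm{Y}$, and since $m$ ranges over all of $\mathbb{Z}$ no cycle is missed.
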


\begin{proof}
The equivalences (1) $\Leftrightarrow$ (2) and (1) $\Leftrightarrow$ (3) follow as in \cite[Lemma 4.5 and Proposition 4.6]{BG16}. On the other hand, (1) $\Leftrightarrow$ (3) $\Leftrightarrow$ (4) follows using an argument similar to that of \cite[Theorem 2.4]{EG98}.
\end{proof}

Apart from those mentioned in Remark \ref{rem}, the previous result allows us to deduce the following properties of $\FP_n$-flat complexes.

\begin{proposition}\label{prop:flatness_dl}
The sub-category $\mathscr{F}_n(R\op)$ is closed under direct limits for every $n \geq 0$, under direct products and pure quotients for every $n \geq 1$, and under pure sub-complexes for every $n \geq 2$.
\end{proposition}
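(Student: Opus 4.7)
The plan hinges on three ingredients: characterization (3) of Theorem \ref{theo:char_flat} (a complex is $\FP_n$-flat iff it is exact with $\FP_n$-flat cycles), the Pontrjagin duality of Proposition \ref{fi-if}(a), and the corresponding closure properties for the module class $\mathcal{F}_n(R\op)$ from \cite{BP16}, paralleling the strategy used for the injective side in Proposition \ref{coproduct-closed}.

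For direct limits ($n \geq 0$) and direct products ($n \geq 1$), I would argue via the cycle characterization. Given a directed system $\{\bm{Y}^i\}$ (resp., an indexed family) of $\FP_n$-flat complexes, each $\bm{Y}^i$ is exact with $Z_m(\bm{Y}^i) \in \mathcal{F}_n(R\op)$ for every $m$. Since $\varinjlim$ (resp., $\prod$) is exact in $\Ch(R\op)$ and commutes with the cycle functor $Z_m(-)$, the colimit (resp., product) is exact with cycles $\varinjlim Z_m(\bm{Y}^i)$ (resp., $\prod Z_m(\bm{Y}^i)$). The class $\mathcal{F}_n(R\op)$ is closed under direct limits for every $n \geq 0$ (since $\Tor$ commutes with direct limits) and under direct products for $n \geq 1$ (since in this range, modules of type $\FP_n$ are finitely presented, so $\Tor_1(-,L)$ commutes with products).

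For pure quotients ($n \geq 1$), let $\bm{\eta} \colon \bm{0} \to \bm{A} \to \bm{B} \to \bm{C} \to \bm{0}$ be pure exact with $\bm{B} \in \mathscr{F}_n(R\op)$. For any $\bm{L} \in \mathscr{FP}_n(R)$, purity ensures that $\bm{A}\overline{\otimes}\bm{L} \to \bm{B}\overline{\otimes}\bm{L}$ is a monomorphism. The long exact sequence of $\overline{\Tor}$ gives $\overline{\Tor}_1(\bm{B},\bm{L}) \to \overline{\Tor}_1(\bm{C},\bm{L}) \to \bm{A}\overline{\otimes}\bm{L} \to \bm{B}\overline{\otimes}\bm{L}$, and since $\overline{\Tor}_1(\bm{B},\bm{L}) = \bm{0}$ by $\FP_n$-flatness of $\bm{B}$, this forces $\overline{\Tor}_1(\bm{C},\bm{L}) = \bm{0}$, so $\bm{C} \in \mathscr{F}_n(R\op)$.

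For pure sub-complexes ($n \geq 2$), the plan is to dualize. By Proposition \ref{fi-if}(a), $\bm{B}^+ \in \mathscr{I}_n(R)$, and we obtain an exact sequence $\bm{\eta}^+ \colon \bm{0} \to \bm{C}^+ \to \bm{B}^+ \to \bm{A}^+ \to \bm{0}$ in $\Ch(R)$. Using the natural isomorphism $(\bm{\eta}\overline{\otimes}\bm{X})^+ \cong \underline{\Hom}(\bm{X},\bm{\eta}^+)$ (cf.\ the proof of Proposition \ref{fi-if}(a)), purity of $\bm{\eta}$ transfers to exactness of $\underline{\Hom}(\bm{X},\bm{\eta}^+)$ for every $\bm{X} \in \Ch(R)$, and one deduces from this that $\bm{\eta}^+$ is pure in $\Ch(R)$. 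Then Proposition \ref{coproduct-closed} provides closure of $\mathscr{I}_n(R)$ under pure quotients for $n \geq 2$, yielding $\bm{A}^+ \in \mathscr{I}_n(R)$ and hence $\bm{A} \in \mathscr{F}_n(R\op)$ by Proposition \ref{fi-if}(a). The main obstacle will be verifying the purity of $\bm{\eta}^+$ in $\Ch(R)$; as a backup, one can imitate the cycles argument from the proof of Proposition \ref{coproduct-closed} directly, producing a pure exact sequence $0 \to Z_m(\bm{A}) \to Z_m(\bm{B}) \to Z_m(\bm{C}) \to 0$ via \cite[Lemma 3.3.8]{PerezBook} together with degree-wise purity of $\bm{\eta}$, and then invoking the closure of $\mathcal{F}_n(R\op)$ under pure sub-modules (valid for $n \geq 2$) and characterization (3) of Theorem \ref{theo:char_flat}.
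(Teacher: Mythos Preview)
Your proposal is correct and follows essentially the same route as the paper: the arguments for direct limits, direct products, and pure quotients are identical to the paper's, and for pure sub-complexes the paper likewise reduces to Propositions \ref{fi-if} and \ref{coproduct-closed} via Pontrjagin duality. Your identification of the purity of $\bm{\eta}^+$ as the delicate point (and your backup cycle-level argument) is more careful than the paper's one-line reference; note that in fact purity of $\bm{\eta}$ forces $\bm{\eta}^+$ to split (cf.\ \cite[Theorem 5.1.3]{GR99}), so closure under direct summands already suffices.
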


\begin{proof}
Being $\Mod(R\op)$ a Grothendieck category, we have that exact complexes are closed under direct limits and direct products. On the other hand, $\Tor^R_1(-,M)$ commutes with direct limits for any $M$ in $\Mod(R)$, and $\Tor^R_1(-,L)$ commutes with direct products for any $L \in \mathcal{FP}_n(R)$ with $n \geq 1$ by \cite[Theorem 2]{Brown}. It follows that $\mathcal{F}_n(R\op)$ is closed under direct limits for any $n \geq 0$, and under direct products for any $n \geq 1$. By Theorem \ref{theo:char_flat}, we obtain the same closure properties in the context of $\Ch(R\op)$.

Now suppose we are given a pure exact sequence $\bm{\eta} \colon \bm{0} \to \bm{A} \to \bm{B} \to \bm{C} \to \bm{0}$ in $\Ch(R\op)$ with $\bm{B} \in \mathscr{F}_n(R\op)$ and $n \geq 1$. Let $\bm{L} \in \mathscr{FP}_n(R)$. Then, we have an exact sequence of the form $\overline{\Tor}_1(\bm{B},\bm{L}) \to \overline{\Tor}_1(\bm{C},\bm{L}) \to \bm{A} \overline{\otimes} \bm{L} \to \bm{B} \overline{\otimes} \bm{L} \to \bm{C} \overline{\otimes} \bm{L} \to \bm{0}$ where $\overline{\Tor}_1(\bm{B},\bm{L}) = \bm{0}$ since $\bm{B} \in \mathscr{F}_n(R\op)$, and $\bm{A} \overline{\otimes} \bm{L} \to \bm{B} \overline{\otimes} \bm{L}$ is a monomorphism since $\bm{\eta}$ is pure exact. It follows that $\overline{\Tor}_1(\bm{C},\bm{L}) = \bm{0}$, and hence $\bm{C} \in \mathscr{F}_n(R\op)$. In the case $n \geq 2$, it suffices to apply Propositions \ref{fi-if} and \ref{coproduct-closed} to show that $\bm{A}$ is also $\FP_n$-flat.
\end{proof}

\begin{example} \
\begin{itemize}
\item[(a)] If $M \in \mathcal{I}_n(R)$ then $D^m(M) \in \mathscr{I}_n(R)$ for any $m \in \mathbb{Z}$, by Theorem \ref{tfae1}. Similarly, if $N \in \mathcal{F}_n(R\op)$ then $D^m(N) \in \mathscr{F}_n(R\op)$ for any $m \in \mathbb{Z}$, by Theorem \ref{theo:char_flat}.

\item[(b)] Consider the ring $R = k[x_1,x_2,\cdots]/{\mathfrak{m}^2}$ as in Example \ref{ex1}. By the chain (\ref{aa}), we immediately obtain the following ascending chains:
\begin{align*}
\mathscr{I}_0(R) & \subseteq \mathscr{I}_1(R) \subseteq \mathscr{I}_2(R) = \mathscr{I}_3(R) = \cdots = \mathscr{I}_\infty(R),\\
\mathscr{F}_0(R\op) & = \mathscr{F}_1(R\op) \subseteq \mathscr{F}_2(R\op) = \mathscr{F}_3(R\op) = \cdots = \mathscr{F}_\infty(R\op).
\end{align*}
Moreover, by \cite[Example 5.7]{BP16} and Proposition \ref{fi-if}, $D^m(\langle x_1\rangle) \in \mathscr{I}_2(R) \backslash\mathscr{I}_1(R)$, and $D^m(\langle x_1\rangle)^+ \in \mathscr{F}_2(R\op) \backslash \mathscr{F}_1(R\op)$, that is, there are strict inclusions $\mathscr{I}_1(R) \subsetneq \mathscr{I}_2(R)$ and $\mathscr{F}_1(R\op) \subsetneq \mathscr{F}_2(R\op)$.
\end{itemize}
\end{example}


\section{\textbf{$\FP_n$-injective and $\FP_n$-flat dimensions}}\label{sec:FPninjflatdimensions}

In this section, we introduce and investigate $\FP_n$-injective and $\FP_n$-flat dimensions of modules and complexes. We also show that there exists a close link between these relative homological dimensions via Pontrjagin duality.


\subsection{$\FP_n$-injective and $\FP_n$-flat dimensions of modules}

First of all,  every module $M$ in $\Mod(R)$ has a coresolution by $\FP_n$-injective modules, that is, there exists an exact sequence
\begin{align}\label{eqn:cores}
\bm{\varepsilon} \colon & 0 \to M \to E^0 \to E^1 \to \cdots \to E^{k-1} \to E^k \to \cdots
\end{align}
in $\Mod(R)$ where $E^k \in \mathcal{I}_n(R)$ for every $k \geq 0$. This is due to the fact that for any ring $R$ and any $n \geq 0$, the class $\mathcal{I}_n(R)$ is the right half of a complete cotorsion pair (and so a special pre-enveloping class), proved by D. Bravo and the second author in \cite[Corollary 4.2]{BP16}. Whenever we are given a $\FP_n$-injective coresolution $\bm{\varepsilon}$, the module $\Omega^{-i}_{\bm{\varepsilon}}(M) := {\rm Ker}(E^i \to E^{i+1})$ is called the \emph{$\FP_n$-injective $i$-th cosyzygy of $M$ in $\bm{\varepsilon}$}, for any $i \geq 0$.

Dually, by \cite[Theorem 4.5]{BP16}, for any ring $R$ and any $n \geq 0$, the class $\mathcal{F}_n(R\op)$ is the left half of a complete cotorsion pair in $\Mod(R\op)$. It follows that for every module $N$ in $\Mod(R\op)$, there exists an exact sequence
\begin{align}\label{eqn:res}
\bm{\rho} \colon & \cdots \to Q_t \to Q_{t-1} \to \cdots \to Q_1 \to Q_0 \to N \to 0
\end{align}
in $\Mod(R\op)$ where $Q_t \in \mathcal{F}_n(R\op)$ for every $t \geq 0$. Whenever we are given an $\FP_n$-flat resolution $\bm{\rho}$, the module $\Omega^i_{\bm{\rho}}(N) := {\rm Im}(Q_{i} \to Q_{i-1})$ is called the \emph{$\FP_n$-flat $i$-th syzygy of $N$ in $\bm{\rho}$}, for any $i \geq 0$, where $Q_{-1} := N$. Based on the above, we now present the following.

\begin{definition} \
\begin{itemize}
\item[(a)] The \emph{$\FP_n$-injective dimension of a module $M$ in $\Mod(R)$}, denoted $\FP_n\mbox{-}\id_R(M)$, is defined as the smallest non-negative integer $k \geq 0$ such that $M$ has a coresolution by $\FP_n$-injective modules, as \eqref{eqn:cores}, with $E^i = 0$ for every $i > k$. If such $k$ does not exist, we set $\FP_n\mbox{-}\id_R(M) := \infty$.

\item[(b)] The \emph{$\FP_n$-flat dimension of a module $N$ in $\Mod(R\op)$}, denoted $\FP_n\mbox{-}\fd_{R\op}(N)$, is defined as the smallest non-negative integer $t \geq 0$ such that $N$ has a resolution by $\FP_n$-flat modules, as \eqref{eqn:res}, with $Q_i = 0$ for every $i > t$. If such $t$ does not exist, we set $\FP_n\mbox{-}\fd_{R\op}(N) := \infty$.
\end{itemize}
\end{definition}

Next, we give a functorial description of $\FP_n$-injective (resp., $\FP_n$-flat) dimension.

\begin{proposition} \label{prop:FPnid_char}
Let $M$ be a module in $\Mod(R)$ and $N$ be a module in $\Mod(R\op)$.
\begin{itemize}
\item[{\rm (a)}] The following are equivalent for every $n, k \geq 0$:
\begin{itemize}
\item[{\rm (1)}] $\FP_n\mbox{-}\id_R(M) \leq k$.

\item[{\rm (2)}] Every $\FP_n$-injective $k$-th cosyzygy of $M$ is $\FP_n$-injective.

\item[{\rm (3)}] Every injective $k$-th cosyzygy of $M$ is $\FP_n$-injective.

\item[{\rm (4)}] $\Ext^{k+1}_R(L,M) = 0$ for every $L \in \mathcal{FP}_n(R)$.
\end{itemize}

\item[{\rm (b)}] Dually, the following are equivalent for every $n, t \geq 0$:
\begin{itemize}
\item[{\rm (i)}]  $\FP_n\mbox{-}\fd_{R\op}(N) \leq t$.

\item[{\rm (ii)}] Every $\FP_n$-flat $t$-th syzygy of $N$ is $\FP_n$-flat.

\item[{\rm (iii)}] Every projective $t$-th syzygy of $N$ is $\FP_n$-flat.

\item[{\rm (iv)}] $\Tor^R_{t+1}(N,L) = 0$ for every $L \in \mathcal{FP}_n(R)$.
\end{itemize}
\end{itemize}
\end{proposition}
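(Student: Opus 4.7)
The plan for part (a) is to establish the equivalences via $(4) \Rightarrow (3) \Rightarrow (1) \Rightarrow (4)$ together with $(1) \Leftrightarrow (2)$, relying on two main tools: standard dimension shifting of $\Ext^*_R(L,-)$ along an injective coresolution of $M$, and the fact that $\mathcal{I}_n(R)$ contains all injective modules and is closed under extensions and direct summands (as shown in \cite[Proposition 3.10]{BP16}, and already used in the proof of Theorem \ref{tfae1}).

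The first equivalence to handle is $(3) \Leftrightarrow (4)$. Fix any injective coresolution $0 \to M \to I^0 \to I^1 \to \cdots$ of $M$ and set $\Omega^{-k} := \ker(I^k \to I^{k+1})$. Iterating dimension shifting through the short exact sequences $0 \to \Omega^{-j} \to I^j \to \Omega^{-(j+1)} \to 0$ (using that each $I^j$ is injective, so $\Ext^{i\geq 1}_R(L,I^j)=0$) yields
\[
\Ext^1_R(L, \Omega^{-k}) \cong \Ext^{k+1}_R(L, M)
\]
for every $L \in \mathcal{FP}_n(R)$. Hence $\Omega^{-k} \in \mathcal{I}_n(R)$ precisely when (4) holds. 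The usual Schanuel lemma, combined with the closure of $\mathcal{I}_n(R)$ under injective summands, makes this criterion independent of the choice of injective coresolution. The implication $(3) \Rightarrow (1)$ is now a simple truncation: since each $I^j$ (being injective) is already $\FP_n$-injective and $\Omega^{-k} \in \mathcal{I}_n(R)$ by (3), the sequence $0 \to M \to I^0 \to \cdots \to I^{k-1} \to \Omega^{-k} \to 0$ is a genuine $\FP_n$-injective coresolution of length $\leq k$.

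The main technical step is $(1) \Rightarrow (4)$ (equivalently $(1) \Rightarrow (3)$). I would handle this by a generalised Schanuel argument. Given the length-$k$ coresolution $\bm{\varepsilon}: 0 \to M \to E^0 \to \cdots \to E^k \to 0$ from (1) and a truncated injective coresolution $\bm{\iota}: 0 \to M \to I^0 \to \cdots \to I^{k-1} \to \Omega^{-k} \to 0$ of the same length, repeated pushouts of $\bm{\varepsilon}$ and $\bm{\iota}$ along $M$ produce an isomorphism $E^k \oplus A \cong \Omega^{-k} \oplus B$, in which $A$ and $B$ are finite direct sums of modules drawn from both coresolutions. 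Since every $E^i$ is $\FP_n$-injective by assumption, every $I^j$ is injective (hence $\FP_n$-injective), and $\mathcal{I}_n(R)$ is closed under extensions, both $A$ and $B$ lie in $\mathcal{I}_n(R)$. The left-hand side therefore belongs to $\mathcal{I}_n(R)$, and closure under direct summands forces $\Omega^{-k} \in \mathcal{I}_n(R)$, which is (3) and hence (4). The hardest part of this argument is justifying that the pushout construction, carried out $k$ times, really yields the stated isomorphism when the intermediate modules in $\bm{\varepsilon}$ are only $\FP_n$-injective rather than injective; closure of $\mathcal{I}_n(R)$ under extensions is essential at every iteration to keep the auxiliary summands inside $\mathcal{I}_n(R)$.

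The equivalence $(1) \Leftrightarrow (2)$ is handled in the same spirit: $(2) \Rightarrow (1)$ is truncation at the cosyzygy, and $(1) \Rightarrow (2)$ applies the Schanuel comparison now to two $\FP_n$-injective coresolutions, transferring $\FP_n$-injectivity of $E^k$ to the cosyzygy of the other coresolution via closure of $\mathcal{I}_n(R)$ under summands. Finally, part (b) follows by a formally dual argument: replace injective coresolutions of $M$ by projective resolutions of $N$, $\Ext$-dimension shifting by $\Tor$-dimension shifting, and invoke the analogous closure properties of $\mathcal{F}_n(R\op)$---namely, that it contains all projective modules and is closed under extensions and direct summands, as established in \cite[Section 4]{BP16}. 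The only delicate point is once again the generalised Schanuel comparison, now between an $\FP_n$-flat resolution of $N$ and a projective one.
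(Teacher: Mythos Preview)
Your proposal is correct and uses essentially the same ingredients as the paper's proof: dimension shifting of $\Ext$ along an injective coresolution for the equivalence with condition (4), and the (dual) generalized Schanuel Lemma together with closure of $\mathcal{I}_n(R)$ under finite direct sums and direct summands to compare cosyzygies in different coresolutions. The only difference is organizational: the paper runs the cycle $(1)\Rightarrow(2)\Rightarrow(3)\Rightarrow(4)\Rightarrow(1)$, where $(4)\Rightarrow(1)$ is handled simply by choosing the $\FP_n$-injective coresolution to be an injective one, whereas you prove $(1)\Rightarrow(4)$ directly via Schanuel---this is the same argument you already use for $(1)\Rightarrow(2)$, so your route is slightly redundant but not incorrect.
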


\begin{proof}
We only prove the equivalences concerning $\FP_n$-injectivity, as those involving $\FP_n$-flatness follow similarly.
\begin{itemize}
\item[$\bullet$] (1) $\Rightarrow$ (2): Suppose $\FP_n\mbox{-}\id_R(M) \leq k$. Then, we have an exact sequence
\[
0 \to M \to E^0 \to E^1 \to \cdots \to E^{k-1} \to E^k \to 0
\]
with $E^i \in \mathcal{I}_n(R)$ for every $0 \leq i \leq k$. On the other hand, suppose we are given a $\FP_n$-injective coresolution of $M$, say $\overline{\bm{\varepsilon}} \colon 0 \to M \to \overline{E}^0 \to \overline{E}^1 \to \cdots$. By the dual version of the generalized Schanuel's Lemma \cite[Corollary 8.6.4]{EJ00}, we have an isomorphism $\Omega^{-k}_{\overline{\bm{\varepsilon}}}(M) \oplus E^{k-1} \oplus \overline{E}^{k-2} \oplus \cdots \cong E^k \oplus \overline{E}^{k-1} \oplus E^{k-2} \oplus \cdots$, and so $\Omega^{-k}_{\overline{\bm{\varepsilon}}}(M) \in \mathcal{I}_n(R)$ since the class $\mathcal{I}_n(R)$ is closed under finite direct sums and direct summands.

\item[$\bullet$] (2) $\Rightarrow$ (3): Clear.

\item[$\bullet$] (3) $\Rightarrow$ (4): Consider an injective coresolution of $M$, say $\bm{\iota}$. Then, $\Omega^{-k}_{\bm{\iota}}(M) \in \mathcal{I}_n(R)$, and by dimension shifting, we have $\Ext^{k+1}_R(L,M) \cong \Ext^1_R(L,\Omega^{-k}_{\bm{\iota}}(M)) = 0$ for every $L \in \mathcal{FP}_n(R)$.

\item[$\bullet$] (4) $\Rightarrow$ (1): Suppose that $\Ext^{k+1}_R(L,M) = 0$ for every $L \in \mathcal{FP}_n(R)$ and consider an $\FP_n$-injective coresolution $\bm{\varepsilon}$ as \eqref{eqn:cores}. In particular, we can choose each $E^i$ to be injective. Let $L \in \mathcal{FP}_n(R)$. Since $\Ext^j_R(L,E^i) = 0$ for every $j > 0$ and $0 \leq i \leq k-1$, by dimension shifting we have $\Ext^1_R(L,\Omega^{-k}_{\bm{\varepsilon}}(M)) \cong \Ext^{k+1}_R(L,M) = 0$. Hence, $\Omega^{-k}_{\bm{\varepsilon}}(M) \in \mathcal{I}_n(R)$, and so $\FP_n\mbox{-}\id_R(M) \leq k$.

\end{itemize}
\end{proof}

As a consequence of the previous result, the $\FP_n$-injective dimension of a module $M$ in $\Mod(R)$ (in the case it is finite) can also be defined as the smallest non-negative integer $k$ such that $\Ext^{k+1}_R(L,M) = 0$ for every $L \in \mathcal{FP}_n(R)$. The $\FP_n$-flat dimension of every module $N$ in $\Mod(R\op)$ has also a similar functorial description in terms of the torsion functors $\Tor^R(-,-)$. 

We conclude our study of $\FP_n$-injective and $\FP_n$-flat dimensions of modules presenting the interplay between the two via the notion of Pontrjagin dual.

\begin{proposition}\label{prop:char_dims}
The following hold for every module $M$ in $\Mod(R)$ and $N$ in $\Mod(R\op)$:
\begin{itemize}
\item[{\rm (a)}] $\FP_n\mbox{-}\fd_{R\op}(N) = \FP_n\mbox{-}\id_R(N^+)$, for every $n \geq 0$.

\item[{\rm (b)}] $\FP_n\mbox{-}\id_R(M) = \FP_n\mbox{-}\fd_{R\op}(M^+)$, for every $n \geq 2$.
\end{itemize}
\end{proposition}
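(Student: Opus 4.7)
The plan is to reduce both statements to the module-level Pontrjagin duality $\mathcal{I}_n(R) \leftrightarrow \mathcal{F}_n(R\op)$ recorded in \cite[Propositions~3.5 and 3.6]{BP16}, combined with the $\Ext/\Tor$ and syzygy characterizations supplied by Proposition~\ref{prop:FPnid_char}.

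For part~(a), I will argue directly through the $\Ext/\Tor$ characterization. The standard tensor-hom adjunction $\Hom_R(L, N^+) \cong (N \otimes_R L)^+$ derives to a natural isomorphism $\Ext^{i}_R(L, N^+) \cong \Tor^R_i(N, L)^+$ for every $L \in \Mod(R)$, $N \in \Mod(R\op)$, and $i \geq 0$. Since $(-)^+$ is faithful (an abelian group $X$ vanishes iff $X^+$ does, because $\mathbb{Q}/\mathbb{Z}$ is an injective cogenerator for $\mathbb{Z}$), this gives $\Ext^{k+1}_R(L, N^+) = 0 \Leftrightarrow \Tor^R_{k+1}(N, L) = 0$; quantifying over $L \in \mathcal{FP}_n(R)$ and applying Proposition~\ref{prop:FPnid_char} yields the equality for every $n \geq 0$.

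Part~(b) is subtler because the mirror isomorphism $\Tor^R_i(M^+, L) \cong \Ext^{i}_R(L, M)^+$ requires $L$ to admit a finitely generated projective resolution through degree $i+1$, which is not guaranteed merely from $L \in \mathcal{FP}_n(R)$ when $i$ is large. Instead, I plan to work through the $k$-th cosyzygy. Fix an injective coresolution $0 \to M \to E^0 \to E^1 \to \cdots$ and set $N := \Omega^{-k}(M)$. Since $n \geq 2$, \cite[Proposition~3.6~(b)]{BP16} converts each injective $E^i$ into an $\FP_n$-flat dual $(E^i)^+$, so dualizing the truncated coresolution produces an exact sequence
\[
0 \to N^+ \to (E^{k-1})^+ \to \cdots \to (E^0)^+ \to M^+ \to 0
\]
whose inner terms all lie in $\mathcal{F}_n(R\op)$. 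The implication ``$\Rightarrow$'' is then immediate: $\FP_n\mbox{-}\id_R(M) \leq k$ forces $N \in \mathcal{I}_n(R)$ by Proposition~\ref{prop:FPnid_char}, whence $N^+ \in \mathcal{F}_n(R\op)$ by \cite[Proposition~3.6~(b)]{BP16}, and the displayed sequence becomes a genuine $\FP_n$-flat resolution of $M^+$ of length $k$.

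The main obstacle is the ``$\Leftarrow$'' direction. Assuming $\FP_n\mbox{-}\fd_{R\op}(M^+) \leq k$, the generalized Schanuel lemma — applied inside the complete cotorsion pair with left class $\mathcal{F}_n(R\op)$ from \cite[Theorem~4.5]{BP16} — forces the leftmost kernel $N^+$ in the displayed sequence to be $\FP_n$-flat, and then \cite[Proposition~3.6~(a)]{BP16} gives $N^{++} \in \mathcal{I}_n(R)$. The critical final step is the descent from $N^{++}$ back to $N$: the canonical evaluation map $N \hookrightarrow N^{++}$ is always a pure monomorphism in $\Mod(R)$, and I will use closure of $\mathcal{I}_n(R)$ under pure submodules (valid for $n \geq 1$ since $\mathcal{FP}_n(R) \subseteq \mathcal{FP}_1(R)$, so for every $L \in \mathcal{FP}_n(R)$ the functor $\Hom_R(L,-)$ preserves exactness of $0 \to N \to N^{++} \to N^{++}/N \to 0$, injecting $\Ext^1_R(L, N)$ into $\Ext^1_R(L, N^{++}) = 0$). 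This yields $N \in \mathcal{I}_n(R)$, hence $\FP_n\mbox{-}\id_R(M) \leq k$ by Proposition~\ref{prop:FPnid_char}, completing the equality. The hypothesis $n \geq 2$ enters exactly twice: once to invoke \cite[Proposition~3.6~(b)]{BP16} on each $E^i$, and once to invoke the same result to pass $N \in \mathcal{I}_n(R)$ to $N^+ \in \mathcal{F}_n(R\op)$ in the forward direction.
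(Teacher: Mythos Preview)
Your proof is correct and follows essentially the paper's approach. Part~(a) is identical. For part~(b), both you and the paper dualize an injective coresolution of $M$ and invoke the module-level duality from \cite{BP16}; the only difference is a detour in your backward direction. Having shown that the $k$-th cosyzygy satisfies $N^+ \in \mathcal{F}_n(R\op)$, the paper simply applies the full biconditional of \cite[Proposition~3.6]{BP16} (valid since $n \geq 2$) to conclude $N \in \mathcal{I}_n(R)$ directly. You instead pass to $N^{++} \in \mathcal{I}_n(R)$ via \cite[Proposition~3.5]{BP16} and then descend along the pure embedding $N \hookrightarrow N^{++}$ using closure of $\mathcal{I}_n(R)$ under pure submodules. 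Your route is valid but redundant: the implication $M^+ \in \mathcal{F}_n(R\op) \Rightarrow M \in \mathcal{I}_n(R)$ is exactly what \cite[Proposition~3.6]{BP16} provides for $n \geq 2$, so the double-dual step can be eliminated.
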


\begin{proof}
 Part (a) follows by Proposition \ref{prop:FPnid_char} and the natural isomorphism $\Ext^i_R(M,N^+) \cong \Tor^R_i(N,M)^+$, for every $M$ in $\Mod(R)$ and $N$ in $\Mod(R\op)$ \cite[Theorem 3.2.1]{EJ00}. We focus on proving (b).

First, let us consider the case $\FP_n\mbox{-}\id_R(M) = \infty$. Consider an $\FP_n$-injective coresolution $\bm{\varepsilon}$ of $M$, as in \eqref{eqn:cores}. Then, we have an $\FP_n$-flat resolution $\bm{\varepsilon}^+ \colon \cdots \to (E^1)^+ \to (E^0)^+ \to M^+ \to 0$. If $\FP_n\mbox{-}\fd_{R\op}(M^+) = t < \infty$, then $\Omega^t_{\bm{\varepsilon}^+}(M^+) \simeq (\Omega^{-t}_{\bm{\varepsilon}}(M))^+$ would be $\FP_n$-flat, and so $\Omega^{-t}_{\bm{\varepsilon}}(M)$ would be $\FP_n$-injective by \cite[Proposition 3.6]{BP16} since $n \geq 2$, thus getting a contradiction. It follows $\FP_n\mbox{-}\fd_{R\op}(M^+) = \infty$.

Now suppose that $\FP_n\mbox{-}\fd_{R\op}(M^+) = \infty$ and $\FP_n\mbox{-}\id_R(M) = k < \infty$. Then, there is an exact sequence $0 \to M \to E^0 \to E^1 \to \cdots \to E^{k-1} \to E^k \to 0$ with $E^i \in \mathcal{I}_n(R)$ for every $0 \leq i \leq k$. Since $n \geq 2$ and the functor $\Hom_{\mathbb{Z}}(-,\mathbb{Q / Z})$ is exact, the previous sequence gives rise to an $\FP_n$-flat resolution of $M^+$ of length $k$, thus getting a contradiction. It follows $\FP_n\mbox{-}\id_R(M) = \infty$.

Finally, assume $\FP_n\mbox{-}\id_R(M) = k < \infty$ and $\FP_n\mbox{-}\fd_{R\op}(M^+) = t < \infty$. Using the same arguments as in the previous paragraph, we can assert that $\FP_n\mbox{-}\fd_{R\op}(M^+) \leq k$. Now consider a partial injective coresolution of $M$ of length $t$, say
\[
0 \to M \to I^0 \to I^1 \to \cdots \to I^{t-1} \to M' \to 0.
\]
Then, we have the exact sequence
\[
0 \to (M')^+ \to (I^{t-1})^+ \to \cdots \to (I^1)^+ \to (I^0)^+ \to M^+ \to 0
\]
in $\Mod(R\op)$ where $(I^j)^+ \in \mathcal{F}_n(R\op)$ for every $0 \leq j \leq t-1$ since $n \geq 2$, and so $(M')^+ \in \mathcal{F}_n(R\op)$ since $\FP_n\mbox{-}\fd_{R\op}(M^+) = t$. It follows that $M' \in \mathcal{I}_n(R)$. Hence, we have $\FP_n\mbox{-}\id_R(M) \leq t$.
\end{proof}


\subsection{$\FP_n$-injective and $\FP_n$-flat dimensions of complexes}

We present the analogous concepts of $\FP_n$-injective and $\FP_n$-flat dimensions for complexes.

We already know that for left and right $R$-modules we can always construct coresolutions by $\FP_n$-injective modules and resolutions by $\FP_n$-flat modules. In order to assert that the same happens in the category of complexes, we need the analogous of the complete cotorsion pairs $({}^\perp(\mathcal{I}_n(R)),\mathcal{I}_n(R))$ and $(\mathcal{F}_n(R\op),(\mathcal{F}_n(R\op))^\perp)$ in $\Ch(R)$ and $\Ch(R\op)$, respectively. Thanks to the works \cite{GillespieDegree} and \cite{Estrada} by Gillespie, and M. Cort\'es Izurdiaga, S. Estrada and P. A. Guil Asensio, we know methods to induce certain complete cotorsion pairs in $\Ch(R)$ from a cotorsion pair in $\Mod(R)$ cogenerated by a set. Specifically, Gillespie proved in \cite[Proposition 4.3]{GillespieDegree} that if $(\mathcal{A,B})$ is a cotorsion pair in $\Mod(R)$ cogenerated by a set, then $({}^\perp\tilde{\mathcal{B}},\tilde{\mathcal{B}})$ is a cotorsion pair in $\Ch(R)$ cogenerated by a set (and so complete), where $\tilde{\mathcal{B}}$ is defined as the class of exact complexes with cycles in $\mathcal{B}$. On the other hand, $(\tilde{\mathcal{A}},\tilde{\mathcal{A}}^\perp)$ is also a complete cotorsion pair in $\Ch(R)$ by \cite[Theorem 1.5]{Estrada}, where $\tilde{\mathcal{A}}$ is the class of exact complexes with cycles in $\mathcal{A}$. To apply these results to the context of the present paper, we know by \cite[Corollary 4.2]{BP16} that $({}^\perp(\mathcal{I}_n(R)),\mathcal{I}_n(R))$ is a cotorsion pair in $\Mod(R)$ cogenerated by a set, and that the same is true for the cotorsion pair $(\mathcal{F}_n(R\op),(\mathcal{F}_n(R\op))^\perp)$ by \cite[Theorem 4.5]{BP16}\footnote{From \cite{BP16} we can only assert that the pair $(\mathcal{F}_n(R\op),(\mathcal{F}_n(R\op))^\perp)$ is complete. The fact that it has a cogenerating set follows as in \cite[Theorem 2.9]{EnochsKaplansky}}. It follows that
\[
({}^\perp(\widetilde{\mathcal{I}_n(R)}),\widetilde{\mathcal{I}_n(R)}) \mbox{ \ and \ } (\widetilde{\mathcal{F}_n(R\op)},(\widetilde{\mathcal{F}_n(R\op)})^\perp)
\]
are complete cotorsion pairs in $\Ch(R)$ and $\Ch(R\op)$, respectively. We are ready to prove the following result.

\begin{proposition}
The following statements hold for any $n \geq 0$:
\begin{itemize}
\item[{\rm (a)}] $({}^\perp(\mathscr{I}_n(R)),\mathscr{I}_n(R))$ is a complete cotorsion pair in $\Ch(R)$.

\item[{\rm (b)}] $(\mathscr{F}_n(R\op),(\mathscr{F}_n(R\op))^\perp)$ is a perfect cotorsion pair in $\Ch(R\op)$.
\end{itemize}
\end{proposition}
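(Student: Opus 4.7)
The plan for this proof is essentially to assemble the ingredients that the paper has already gathered in the preceding discussion. For part (a), the key observation is that Theorem \ref{tfae1} identifies the class $\mathscr{I}_n(R)$ of $\FP_n$-injective complexes with the class $\widetilde{\mathcal{I}_n(R)}$ of exact complexes whose cycle modules lie in $\mathcal{I}_n(R)$. Since $({}^\perp(\mathcal{I}_n(R)),\mathcal{I}_n(R))$ is a cotorsion pair in $\Mod(R)$ cogenerated by a set by \cite[Corollary 4.2]{BP16}, Gillespie's induction result \cite[Proposition 4.3]{GillespieDegree} applies directly and yields that
\[
({}^\perp\widetilde{\mathcal{I}_n(R)},\widetilde{\mathcal{I}_n(R)}) = ({}^\perp(\mathscr{I}_n(R)),\mathscr{I}_n(R))
\]
is a cotorsion pair in $\Ch(R)$ cogenerated by a set. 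By the Eklof--Trlifaj Theorem, such a pair is automatically complete, which establishes (a).

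For part (b), the strategy is analogous but requires the additional step of upgrading completeness to perfectness. First I would invoke Theorem \ref{theo:char_flat} to obtain $\mathscr{F}_n(R\op) = \widetilde{\mathcal{F}_n(R\op)}$. Since $(\mathcal{F}_n(R\op),(\mathcal{F}_n(R\op))^\perp)$ is a cotorsion pair in $\Mod(R\op)$ cogenerated by a set by \cite[Theorem 4.5]{BP16} (and the footnote in the excerpt), the result of Cort\'es Izurdiaga, Estrada and Guil Asensio \cite[Theorem 1.5]{Estrada} gives that
\[
(\widetilde{\mathcal{F}_n(R\op)},(\widetilde{\mathcal{F}_n(R\op)})^\perp) = (\mathscr{F}_n(R\op),(\mathscr{F}_n(R\op))^\perp)
\]
is a complete cotorsion pair in $\Ch(R\op)$.

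It remains to verify that this cotorsion pair is perfect, for which I would use the criterion recalled in the preliminaries (\cite[Corollary 2.3.7]{Gobel}): a complete cotorsion pair whose left half is closed under direct limits is automatically perfect. Proposition \ref{prop:flatness_dl} already provides exactly this closure property for $\mathscr{F}_n(R\op)$ at every $n \geq 0$, so (b) follows immediately.

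In truth there is no serious obstacle in this proof; it is a straightforward assembly of Theorems \ref{tfae1} and \ref{theo:char_flat} (which identify the complex-level classes with the tilde classes) and of the module-level cotorsion pairs from \cite{BP16}, passed through the induction machinery of \cite{GillespieDegree,Estrada}. The only point worth double-checking is that one is entitled to apply those induction results, i.e.\ that the hypothesis ``cogenerated by a set'' is indeed available for $\mathcal{F}_n(R\op)$ in $\Mod(R\op)$ (the excerpt's footnote addresses precisely this); once that is confirmed, everything falls into place.
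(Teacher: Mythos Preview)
Your proposal is correct and matches the paper's own argument essentially line for line: the paper also reduces (a) to Theorem \ref{tfae1} plus Gillespie's induction result, reduces (b) to Theorem \ref{theo:char_flat} plus the Cort\'es Izurdiaga--Estrada--Guil Asensio result, and then invokes closure of $\mathscr{F}_n(R\op)$ under direct limits (Proposition \ref{prop:flatness_dl}) together with the standard perfectness criterion. The only cosmetic difference is that the paper cites \cite[Theorem 7.2.6]{EJ00} rather than \cite[Corollary 2.3.7]{Gobel} for that last step.
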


\begin{proof}
Part (a) follows by Theorem \ref{tfae1} and the previous comments. In a similar way, using Theorem \ref{theo:char_flat}, we have that the cotorsion pair $(\mathscr{F}_n(R\op),(\mathscr{F}_n(R\op))^\perp)$ is complete. And since $\mathscr{F}_n(R\op)$ is closed under direct limits by Proposition \ref{prop:flatness_dl}, we gave that the previous pair is perfect by \cite[Theorem 7.2.6]{EJ00}\footnote{Although the cited result is stated in the category of modules, the arguments in its proof carry over to the category of complexes.}.
\end{proof}

Now, we can assert that for any complex $\bm{X}$ in $\Ch(R)$ and $\bm{Y}$ in $\Ch(R\op)$, we can construct exact sequences
\begin{align}
\bm{\varepsilon} \colon & \bm{0} \to \bm{X} \to \bm{E^0} \to \bm{E^1} \to \cdots \to \bm{E^{k-1}} \to \bm{E^k} \to \cdots, \label{eqn:coresCh} \\
\bm{\rho} \colon & \cdots \to \bm{Q_t} \to \bm{Q_{t-1}} \to \cdots \to \bm{Q_1} \to \bm{Q_0} \to \bm{Y} \to \bm{0}, \label{eqn:resCh}
\end{align}
with $\bm{E^k} \in \mathscr{I}_n(R)$ for every $k \geq 0$, and $\bm{Q_t} \in \mathscr{F}_n(R\op)$ for every $t \geq 0$, that is, any complex has a coresolution by $\FP_n$-injective complexes, and a resolution by $\FP_n$-flat complexes. Thus, the following definition makes sense.

\begin{definition} \
\begin{itemize}
\item[(a)] The \emph{$\FP_n$-injective dimension of a complex $\bm{X}$ in $\Ch(R)$}, denoted $\FP_n\mbox{-}\id(\bm{X})$, is defined as the smallest non-negative integer $k \geq 0$ such that $\bm{X}$ has a coresolution by $\FP_n$-injective complexes, as \eqref{eqn:coresCh}, with $\bm{E^i} = \bm{0}$ for every $i > k$. If such $k$ does not exist, we set $\FP_n\mbox{-}\id(\bm{X}) := \infty$.

\item[(b)] The \emph{$\FP_n$-flat dimension of a complex $\bm{Y}$ in $\Ch(R\op)$}, denoted $\FP_n\mbox{-}\fd(\bm{Y})$, is defined as the smallest non-negative integer $t \geq 0$ such that $\bm{Y}$ has a resolution by $\FP_n$-flat complexes, as \eqref{eqn:resCh}, with $\bm{Q_i} = \bm{0}$ for every $i > t$. If such $t$ does not exist, we set $\FP_n\mbox{-}\fd(\bm{Y}) := \infty$.
\end{itemize}
\end{definition}

J. R. Garc\'ia Rozas proved in \cite[Theorem 3.1.3]{GR99} that for any complex $\bm{X}$ in $\Ch(R)$, the injective dimension of $\bm{X}$ in $\Ch(R)$ is at most $k$ if, and only if, $\bm{X}$ is exact and the injective dimension of $Z_m(\bm{X})$ in $\Mod(R)$ is at most $k$ for any $m \in \mathbb{Z}$. Complexes with bounded flat dimension have a similar description, as proved in \cite[Lemma 5.4.1]{GR99}. Motivated by these results, we present the analogous for complexes of Proposition \ref{prop:FPnid_char}.

\begin{proposition} \label{fi-dim}
Let $\bm{X}$ be a complex in $\Ch(R)$ and $\bm{Y}$ be a complex in $\Ch(R\op)$.
\begin{itemize}
\item[{\rm (a)}] The following are equivalent for every $n, k \geq 0$:
\begin{itemize}
\item[{\rm (1)}] $\FP_n\mbox{-}\id(\bm{X}) \leq k$.

\item[{\rm (2)}] $\bm{X}$ is exact and $\FP_n\mbox{-}\id_R(Z_m(\bm{X})) \leq k$ for any $m \in \mathbb{Z}$.

\item[{\rm (3)}] Every $\FP_n$-injective $k$-th cosyzygy of $\bm{X}$ is $\FP_n$-injective.

\item[{\rm (4)}] Every injective $k$-th cosyzygy of $\bm{X}$ is $\FP_n$-injective.

\item[{\rm (5)}] $\underline{\Ext}^{k+1}(\bm{L},\bm{X}) = \bm{0}$ for every $\bm{L} \in \mathscr{FP}_n(R)$.

\item[{\rm (6)}] $\Ext^{k+1}_{\Ch}(\bm{L},\bm{X}) = 0$ for every $\bm{L} \in \mathscr{FP}_n(R)$.

\item[{\rm (7)}] $\Ext^{k+1}_{\Ch}(S^m(L),\bm{X}) = 0$ for every $m \in \mathbb{Z}$ and $L \in \mathcal{FP}_n(R)$.
\end{itemize}

\item[{\rm (b)}] Dually, the following are equivalent for every $n, t \geq 0$:
\begin{itemize}
\item[{\rm (i)}]  $\FP_n\mbox{-}\fd(\bm{Y}) \leq t$.

\item[{\rm (ii)}] $\bm{Y}$ is exact and $\FP_n\mbox{-}\fd_{R\op}(Z_m(\bm{Y})) \leq t$ for any $m \in \mathbb{Z}$.

\item[{\rm (iii)}] Every $\FP_n$-flat $t$-th syzygy of $\bm{Y}$ is $\FP_n$-flat.

\item[{\rm (iv)}] Every projective $t$-th syzygy of $\bm{Y}$ is $\FP_n$-flat.

\item[{\rm (v)}] $\overline{\Tor}_{t+1}(\bm{Y},\bm{L}) = \bm{0}$ for every $\bm{L} \in \mathscr{FP}_n(R)$.

\item[{\rm (vi)}] $\overline{\Tor}_{t+1}(\bm{Y},S^m(L)) = \bm{0}$ for every $m \in \mathbb{Z}$ and $L \in \mathcal{FP}_n(R)$. 
\end{itemize}
\end{itemize}
\end{proposition}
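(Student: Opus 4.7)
My plan is to verify (a) by circling among three pictures --- the Ext-vanishing conditions (5)--(7), the cosyzygy conditions (3)--(4), and the coresolution statement (1) --- and then to link them with the cycle characterization (2). The equivalences (5) $\Leftrightarrow$ (6) $\Leftrightarrow$ (7) should be fairly mechanical: (5) $\Leftrightarrow$ (6) follows directly from the componentwise description in (\ref{eqn:underext}), while (6) $\Leftrightarrow$ (7) is the same ``decompose $\bm{L}$ into finitely many spheres'' bookkeeping that proves (2) $\Leftrightarrow$ (3) in Theorem \ref{tfae1}, since any $\bm{L}\in\mathscr{FP}_n(R)$ is bounded with $L_m \in \mathcal{FP}_n(R)$ by Proposition \ref{type fp-n}, so one assembles $\bm{L}$ from the spheres $S^m(L_m)$ via finitely many short exact sequences and chases the long exact sequences in $\Ext^\bullet_{\Ch}(-,\bm{X})$.

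To connect these with (1), (3), and (4), I would dimension-shift. Given (1), a coresolution of length $k$ by $\FP_n$-injective complexes kills $\Ext^{k+1}_{\Ch}(\bm{L},\bm{X})$ for every $\bm{L} \in \mathscr{FP}_n(R)$, giving (6). Conversely, starting from (6), pick any injective coresolution of $\bm{X}$ in $\Ch(R)$; its $k$-th cosyzygy $\bm{X}^{-k}$ satisfies $\Ext^1_{\Ch}(\bm{L},\bm{X}^{-k}) \cong \Ext^{k+1}_{\Ch}(\bm{L},\bm{X}) = 0$, so Theorem \ref{tfae1} forces $\bm{X}^{-k} \in \mathscr{I}_n(R)$, giving (4). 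A generalized Schanuel argument in $\Ch(R)$ (paralleling Proposition \ref{prop:FPnid_char}(1) $\Rightarrow$ (2)) yields (4) $\Rightarrow$ (3), and a straightforward truncation gives (3) $\Rightarrow$ (1).

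The heart of the statement, and the main obstacle, is the cycle equivalence (1) $\Leftrightarrow$ (2). For (1) $\Rightarrow$ (2), an $\FP_n$-injective coresolution of length $k$ is a complex of exact complexes (by Theorem \ref{tfae1}), so $\bm{X}$ is exact, and applying $Z_m$ degreewise yields an $\FP_n$-injective coresolution of $Z_m(\bm{X})$ of length $k$, since each $Z_m(\bm{E^i}) \in \mathcal{I}_n(R)$ by Theorem \ref{tfae1}. For the converse, I would take an injective coresolution of $\bm{X}$ in $\Ch(R)$; its $k$-th cosyzygy $\bm{X}^{-k}$ is an extension of exact complexes and hence exact, and applying the cycle functor $Z_m$ to the exact sequence of exact complexes produces an injective coresolution of $Z_m(\bm{X})$ in $\Mod(R)$ whose $k$-th cosyzygy is $Z_m(\bm{X}^{-k})$ --- the key delicate point being that cycles of an injective complex are injective modules. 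Proposition \ref{prop:FPnid_char} then places $Z_m(\bm{X}^{-k}) \in \mathcal{I}_n(R)$, so Theorem \ref{tfae1} puts $\bm{X}^{-k} \in \mathscr{I}_n(R)$, and truncating yields (1). Finally, part (b) runs on the same scheme with $\overline{\Tor}$ in place of $\underline{\Ext}$ and $\FP_n$-flatness in place of $\FP_n$-injectivity, using Theorem \ref{theo:char_flat} and the flat-module analogue of Proposition \ref{prop:FPnid_char}; when $n \geq 2$ one may alternatively deduce (b) from (a) via Pontrjagin duality (Proposition \ref{fi-if}).
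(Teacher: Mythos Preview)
Your proposal is correct and follows essentially the same route as the paper. The paper handles (1) $\Leftrightarrow$ (3) $\Leftrightarrow$ (4) $\Leftrightarrow$ (5) by referring back to the module-level argument in Proposition~\ref{prop:FPnid_char}, proves (1) $\Leftrightarrow$ (2) exactly as you outline (citing \cite[Lemma~3.3.9]{PerezBook} for the fact that $Z_m$ is exact on short exact sequences of exact complexes), and for (7) $\Rightarrow$ (6) inducts on $k$ via a cosyzygy shift rather than filtering $\bm{L}$ by spheres as you suggest --- but both arguments are standard and equally direct.
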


\begin{proof}
We only prove part (a), as (b) is dual. Note that the equivalences (1) $\Leftrightarrow$ (3) $\Leftrightarrow$ (4) $\Leftrightarrow$ (5) follow as in Proposition \ref{prop:FPnid_char}. We only focus on showing (1) $\Leftrightarrow$ (2) and (5) $\Leftrightarrow$ (6) $\Leftrightarrow$ (7). 

Suppose first that $\FP_n\mbox{-}\id(\bm{X}) \leq k$ and consider an $\FP_n$-injective coresolution $\bm{\varepsilon}$ of $\bm{X}$ as in \eqref{eqn:coresCh}, with $\bm{E^i} = \bm{0}$ for every $i > k$. By Proposition \ref{tfae1}, each $\bm{E^i}$ is an exact complex. Thus, we have that $\bm{X}$ is exact, since the class of exact complexes is thick. On the other hand, we have an exact sequence $0 \to Z_m(\bm{X}) \to Z_m(\bm{E^0}) \to Z_m(\bm{E^1}) \to \cdots \to Z_m(\bm{E^{k-1}}) \to Z_m(\bm{E^k}) \to 0$ in $\Mod(R)$ for every $m \in \mathbb{Z}$, due to \cite[Lemma 3.3.9]{PerezBook}. By Proposition \ref{tfae1} again, $Z_m(\bm{E^i}) \in \mathcal{I}_n(R)$ for every $m \in \mathbb{Z}$. Therefore, we have $\FP_n\mbox{-}\id_R(Z_m(\bm{X})) = \Gamma_R(Z_m(\bm{X})) \leq k$ for every $m \in \mathbb{Z}$.

Now assume that (2) holds. Let $\bm{0} \to \bm{X} \to \bm{E^0} \to \bm{E^1} \to \cdots \to \bm{E^{k-1}} \to \bm{X}' \to \bm{0}$ be an exact sequence in $\Ch(R)$ with $\bm{E^i} \in \mathscr{I}_n(R)$ for every $0 \leq i \leq k-1$. Being $\bm{X}'$ exact, we only need to show by Proposition \ref{tfae1} that $Z_m(\bm{X}') \in \mathcal{I}_n(R)$ for every $m \in \mathbb{Z}$, in order to assert that $\bm{X}' \in \mathscr{I}_n(R)$. This follows by using again \cite[Lemma 3.3.9]{PerezBook}.

Finally, we can note that (5) $\Leftrightarrow$ (6) is a consequence of \eqref{eqn:underext}, while (6) $\Rightarrow$ (7) is clear by the characterization of complexes of type $\FP_n$ proved in the previous section. Conversely, if we assume (7), it suffices to apply Theorem \ref{tfae1} along with induction on $k$ to conclude (6). 
\end{proof}

As a consequence of Proposition \ref{fi-dim}, the $\FP_n$-injective dimension of a complex $\bm{X}$ in $\Ch(R)$ can also be defined (in the case it is finite) as the smallest non-nogative integer $k$ such that $\underline{\Ext}^{k+1}(\bm{L},\bm{X}) = \bm{0}$ for every $\bm{L} \in \mathscr{FP}_n(R)$. We can also deduce the following.

\begin{corollary}
Let $\bm{X}$  be an exact complex in $\Ch(R)$ and $\bm{Y}$ be an exact complex in $\Ch(R\op)$. Then, the following equalities hold:
\begin{itemize}
\item[{\rm (a)}] $\FP_n\mbox{-}\id(\bm{X}) = \sup \{ \FP_n\mbox{-}\id_R(Z_m(\bm{X})) \mbox{ {\rm :} } m \in \mathbb{Z}\}$.

\item[{\rm (b)}] $\FP_n\mbox{-}\fd(\bm{Y}) = \sup \{ \FP_n\mbox{-}\fd_{R\op}(Z_m(\bm{Y})) \mbox{ {\rm :} } m \in \mathbb{Z}\}$.
\end{itemize}
\end{corollary}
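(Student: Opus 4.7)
The corollary is a direct repackaging of Proposition \ref{fi-dim}, reading the equivalence (1) $\Leftrightarrow$ (2) of part (a) and (i) $\Leftrightarrow$ (ii) of part (b) as a statement about the least such bound. Since the hypothesis that $\bm{X}$ (respectively $\bm{Y}$) is exact is already in the statement, there is nothing to verify on that front; the entire content lies in matching two non-negative integer (or infinite) invariants. I would prove parts (a) and (b) by the same argument, so I only sketch (a).

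Write $s := \sup\{\FP_n\mbox{-}\id_R(Z_m(\bm{X})) : m \in \mathbb{Z}\}$ and $d := \FP_n\mbox{-}\id(\bm{X})$; both are elements of $\mathbb{Z}_{\geq 0} \cup \{\infty\}$. The plan is to establish the two inequalities $s \leq d$ and $d \leq s$ separately, handling the infinite cases along the way.

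For $s \leq d$: if $d = \infty$ the inequality is trivial, so assume $d < \infty$. By the implication (1) $\Rightarrow$ (2) of Proposition \ref{fi-dim}, taking $k := d$, we get $\FP_n\mbox{-}\id_R(Z_m(\bm{X})) \leq d$ for every $m \in \mathbb{Z}$, and hence $s \leq d$. Conversely, for $d \leq s$: if $s = \infty$ the inequality is trivial. Otherwise, $s < \infty$ means $\FP_n\mbox{-}\id_R(Z_m(\bm{X})) \leq s$ uniformly in $m \in \mathbb{Z}$; combined with the assumed exactness of $\bm{X}$, the implication (2) $\Rightarrow$ (1) of Proposition \ref{fi-dim} applied with $k := s$ yields $d \leq s$. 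Therefore $d = s$, proving (a).

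Part (b) is entirely analogous, using the equivalence (i) $\Leftrightarrow$ (ii) of Proposition \ref{fi-dim} (b) in place of (1) $\Leftrightarrow$ (2). There is no real obstacle in this argument; the only point worth flagging is the bookkeeping of the $\infty$-cases, which is why the two inequalities are treated separately rather than by a single chain of equivalences.
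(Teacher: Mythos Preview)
Your proof is correct and follows essentially the same approach as the paper: both derive the equality from the equivalence (1) $\Leftrightarrow$ (2) of Proposition \ref{fi-dim}, with the only difference being that the paper explicitly treats just the infinite cases (leaving the finite case implicit), while you cleanly organize both directions as a pair of inequalities with the trivial $\infty$-cases handled first.
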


\begin{proof}
We only prove part (a) in the cases where $\sup \{ \FP_n\mbox{-}\id_R(Z_m(\bm{X})) \mbox{ : } m \in \mathbb{Z}\} = \infty$ and $\FP_n\mbox{-}\id(\bm{X}) = \infty$. If we assume $\FP_n\mbox{-}\id(\bm{X}) = \infty$ and $\sup \{ \FP_n\mbox{-}\id_R(Z_m(\bm{X})) \mbox{ : } m \in \mathbb{Z}\} = k \leq \infty$, then since $\bm{X}$ is exact, we would have $\FP_n\mbox{-}\id(\bm{X}) \leq k$ by Proposition \ref{fi-dim}, and thus getting a contradiction. Similarly, if we assume $\sup \{ \FP_n\mbox{-}\id_R(Z_m(\bm{X})) \mbox{ : } m \in \mathbb{Z}\} = \infty$, we can conclude $\FP_n\mbox{-}\id(\bm{X}) = \infty$.
\end{proof}

We finish this section with the following proposition, which illustrates that, as it happens with modules, there exists a close relation between the $\FP_n$-injective and the $\FP_n$-flat dimension of complexes. We need the following preliminary result, which follows by the fact that exact functors preserve homology.

\begin{lemma}\label{lem:exact_Pontrjagin}
A complex $\bm{X}$ in $\Ch(R)$ is exact if, and only if, $\bm{X}^+$ is exact in $\Ch(R\op)$.
\end{lemma}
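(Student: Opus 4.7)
The plan is to reduce the statement to the classical fact that $\mathbb{Q}/\mathbb{Z}$ is an injective cogenerator in $\Mod(\mathbb{Z})$, so that the functor $\Hom_{\mathbb{Z}}(-,\mathbb{Q}/\mathbb{Z})$ on abelian groups is both exact and faithful (in the sense that $M^+ = 0$ forces $M = 0$). The alternative description \eqref{eqn:alter_Pontrjagin} of $\bm{X}^+$ tells us that, as a complex of abelian groups, $\bm{X}^+$ is obtained from $\bm{X}$ by applying $\Hom_{\mathbb{Z}}(-,\mathbb{Q}/\mathbb{Z})$ degreewise, then reindexing (and inserting a sign into the differentials, which is irrelevant for exactness). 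In particular, the underlying complex of abelian groups of $\bm{X}^+$ differs from $\Hom_{\mathbb{Z}}(\bm{X},\mathbb{Q}/\mathbb{Z})$ only by a degree shift, so exactness of one is equivalent to exactness of the other.

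For the forward implication, I would simply invoke the hint: since $\Hom_{\mathbb{Z}}(-,\mathbb{Q}/\mathbb{Z})$ is exact as a functor on abelian groups, it preserves homology, and hence sends exact complexes to exact complexes. So if $\bm{X}$ is exact (viewed as a complex of abelian groups via forgetting the $R$-action), then applying the functor degreewise yields an exact complex, and thus $\bm{X}^+$ is exact after reindexing.

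For the converse, the same exactness of $\Hom_{\mathbb{Z}}(-,\mathbb{Q}/\mathbb{Z})$ gives a natural isomorphism $H_m(\bm{X})^+ \cong H_{-m-1}(\bm{X}^+)$ for every $m \in \mathbb{Z}$ (the indexing read off from \eqref{eqn:alter_Pontrjagin}). Assuming $\bm{X}^+$ is exact, the right-hand side vanishes for all $m$, so $H_m(\bm{X})^+ = 0$ for every $m$. Since $\mathbb{Q}/\mathbb{Z}$ is a cogenerator of $\Mod(\mathbb{Z})$, any abelian group with vanishing Pontrjagin dual must itself be zero; hence $H_m(\bm{X}) = 0$ for all $m$, i.e., $\bm{X}$ is exact.

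There is no real obstacle here beyond bookkeeping of the index shift and the sign $(-1)^n$ appearing in the boundary operators of $\bm{X}^+$, both of which affect neither homology nor exactness. The entire argument only uses that $\mathbb{Q}/\mathbb{Z}$ is an injective cogenerator in $\Mod(\mathbb{Z})$ together with the explicit description \eqref{eqn:alter_Pontrjagin}, so the proof should fit in just a few lines.
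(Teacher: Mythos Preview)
Your proposal is correct and is precisely a fleshed-out version of the paper's own one-line justification, which merely says the lemma ``follows by the fact that exact functors preserve homology.'' You have spelled out both directions carefully, including the cogenerator property of $\mathbb{Q}/\mathbb{Z}$ needed for the converse, which the paper leaves implicit.
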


\begin{proposition}\label{dim-dual}
For any complex $\bm{X}$ in $\Ch(R)$ and $\bm{Y}$ in $\Ch(R\op)$, the following equalities hold true:
\begin{itemize}
\item[{\rm (a)}] $\FP_n\mbox{-}\fd(\bm{Y}) = \FP_n\mbox{-}\id(\bm{Y^+})$, for any $n \geq 0$.

\item[{\rm (b)}] $\FP_n\mbox{-}\id(\bm{X}) = \FP_n\mbox{-}\fd(\bm{X^+})$, for any $n \geq 2$.
\end{itemize}
\end{proposition}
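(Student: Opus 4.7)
The strategy is to reduce both equalities to the module-level duality of Proposition \ref{prop:char_dims}, using the cycle-wise characterizations of complex dimensions from Proposition \ref{fi-dim} together with the exactness duality of Lemma \ref{lem:exact_Pontrjagin}. Parts (a) and (b) admit an essentially identical proof; they differ only in which half of Proposition \ref{prop:char_dims} is applied, and correspondingly in the range of $n$.

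First I would handle the non-exact case. Proposition \ref{fi-dim} forces any complex of finite $\FP_n$-injective or $\FP_n$-flat dimension to be exact, and Lemma \ref{lem:exact_Pontrjagin} says $\bm{Y}$ is exact if and only if $\bm{Y}^+$ is. Consequently, if $\bm{Y}$ is not exact, then neither is $\bm{Y}^+$, so both sides of (a) equal $\infty$ and the equality is trivial; the same reduction applies to (b) for $\bm{X}$. Hence I may assume throughout that the complex in question is exact.

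The key technical step is then to identify the cycles of the dual complex with Pontrjagin duals of cycles of the original complex. By formula (\ref{eqn:alter_Pontrjagin}), the differential of $\bm{Y}^+$ is obtained (up to a sign $(-1)^n$) by applying the exact functor $(-)^+ = \Hom_\mathbb{Z}(-, \mathbb{Q}/\mathbb{Z})$ to the differential of $\bm{Y}$. Because $\bm{Y}$ is exact, one has short exact sequences $0 \to Z_m(\bm{Y}) \to Y_m \to Z_{m-1}(\bm{Y}) \to 0$, which dualize to $0 \to Z_{m-1}(\bm{Y})^+ \to Y_m^+ \to Z_m(\bm{Y})^+ \to 0$. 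Reading these off against the image-kernel structure of $\bm{Y}^+$ (itself exact) yields, for every $k \in \mathbb{Z}$, an isomorphism
\[
Z_k(\bm{Y}^+) \cong Z_{\sigma(k)}(\bm{Y})^+,
\]
where $\sigma: \mathbb{Z} \to \mathbb{Z}$ is an affine bijection dictated by the indexing conventions of (\ref{eqn:alter_Pontrjagin}).

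With this identification, Proposition \ref{prop:char_dims}(a), valid for all $n \geq 0$, gives
\[
\FP_n\mbox{-}\id_R(Z_k(\bm{Y}^+)) = \FP_n\mbox{-}\id_R(Z_{\sigma(k)}(\bm{Y})^+) = \FP_n\mbox{-}\fd_{R\op}(Z_{\sigma(k)}(\bm{Y})),
\]
and taking the supremum over $k \in \mathbb{Z}$ (invariant under the bijection $\sigma$) together with the exact-case sup-formula of Proposition \ref{fi-dim} proves (a). Part (b) follows by the identical argument with Proposition \ref{prop:char_dims}(a) replaced by its counterpart (b); the restriction $n \geq 2$ in the present (b) is inherited exactly from the restriction in the module-level statement. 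The main obstacle is the cycle identification: one must track the sign $(-1)^n$ and the indexing shift of (\ref{eqn:alter_Pontrjagin}) carefully, and ensure that the argument remains valid when the supremum over cycles is $+\infty$, so that exact complexes of unbounded cycle dimension are handled correctly on both sides.
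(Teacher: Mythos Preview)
Your proposal is correct and follows essentially the same route as the paper: reduce to the cycle-wise characterization of Proposition \ref{fi-dim}, identify $Z_m(\bm{Y}^+)$ with $Z_{\sigma(m)}(\bm{Y})^+$ via the exactness of $(-)^+$ and Lemma \ref{lem:exact_Pontrjagin}, and then invoke the module-level duality of Proposition \ref{prop:char_dims}. The paper packages this as a chain of equivalences $\FP_n\mbox{-}\fd(\bm{X^+}) \leq k \Leftrightarrow \FP_n\mbox{-}\id(\bm{X}) \leq k$ for each $k$ rather than via the sup-formula, but the content is identical.
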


\begin{proof}
We only prove part (b), as (a) will follow in a similar way. For the cases $\FP_n\mbox{-}\id(\bm{X}) = \infty$ and $\FP_n\mbox{-}\fd(\bm{X^+}) = \infty$ the result holds by Proposition \ref{fi-dim}. Now, let $k$ be a non-negative integer. Then:

\begin{align*}
\FP_n\mbox{-}\fd(\bm{X^+}) \leq k & \Leftrightarrow \bm{X^+} \mbox{ is exact and } \FP_n\mbox{-}\fd_{R\op}(Z_m(\bm{X^+})) \leq k \mbox{ for all } m \in \mathbb{Z} \ \ (\mbox{by \ref{fi-dim}}) \\
& \Leftrightarrow \bm{X^+} \mbox{ is exact and } \FP_n\mbox{-}\fd_{R\op}(Z_m(\bm{X})^+) \leq k \mbox{ for all } m \in \mathbb{Z} \\
& \Leftrightarrow \bm{X} \mbox{ is exact and } \FP_n\mbox{-}\id_R(Z_m(\bm{X})) \leq k \mbox{ for all } m \in \mathbb{Z} \ \ (\mbox{by \ref{prop:char_dims} and \ref{lem:exact_Pontrjagin}}) \\
& \Leftrightarrow \FP_n\mbox{-}\id(\bm{X}) \leq k \ \ (\mbox{by \ref{fi-dim}}).
\end{align*}
Hence, (b) follows.
\end{proof}


\section{\textbf{Pre-envelopes and covers by $\FP_n$-injective and $\FP_n$-flat complexes}}\label{sec:InjFlatCom}

In this section, we will investigate two classes of complexes, namely complexes of $\FP_n$-injective dimension at most $k$ and that of $\FP_n$-flat dimension at most $k$, respectively, and prove the existence of the corresponding covers and pre-envelopes. We will first investigate the same classes but in the category of modules. From them we will obtain a construction known as duality pairs, and later on we will prove some general methods to produce dual pairs of complexes from duality pairs of modules. These methods will simplify the process to obtain the covers and pre-envelopes mentioned before. We will mainly use a result of Holm and J\o rgensen \cite{duality} about duality pairs and perfect cotorsion pairs of modules, and an analogous result of Yang \cite{Ya} for chain complexes.


\subsection{Duality pairs from $\FP_n$-injective and $\FP_n$-flat dimensions of modules}

For any non-negative integer $k$, let $\mathcal{I}_{(n,k)}(R)$ denote the class of modules in $\Mod(R)$ with $\FP_n$-injective dimension at most $k$, and $\mathcal{F}_{(n,k)}(R\op)$ the class of modules in $\Mod(R\op)$ with $\FP_n$-flat dimension at most $k$. Note that $\mathcal{I}_{(n,0)}(R) = \mathcal{I}_n(R)$ and $\mathcal{F}_{(n,0)}(R\op) = \mathcal{F}_n(R\op)$, while $\mathcal{I}_{(0,k)}(R)$ is the class of modules in $\Mod(R)$ with injective dimension at most $k$, and $\mathcal{F}_{(0,k)}(R\op) = \mathcal{F}_{(1,k)}(R\op)$ is the class of modules in $\Mod(R\op)$ with flat dimension at most $k$. In what follows, we will see that $\mathcal{F}_{(n,k)}(R\op)$ is always a covering class, while the same is true for the class $\mathcal{I}_{(n,k)}(R)$ in the case where $n \geq 2$.

The notion of covers is associated to that of duality pairs, in the sense that the latter comprises enough properties to obtaining perfect cotorsion pairs.

Recall from \cite[Definition 2.1]{duality} that a \emph{duality pair over $R$} is a pair $(\mathcal{M,C})$, where $\mathcal{M}$ is a class of left (resp., right) $R$-modules, and $\mathcal{C}$ is a class of right (resp., left) $R$-modules, subject to the following conditions:
\begin{itemize}
\item[(a)] \underline{Duality property}: $M \in \mathcal{M}$ $\Leftrightarrow$ $M^+ \in \mathcal{C}$, for any $M$ in $\Mod(R)$ (resp., in $\Mod(R\op)$).

\item[(b)] $\mathcal{C}$ is closed under direct summands and finite direct sums.
\end{itemize}
A duality pair $(\mathcal{M,C})$ over $R$ is called:
\begin{itemize}
\item[$\bullet$] \emph{(co)product-closed} if the class $\mathcal{M}$ is closed under arbitrary (co)products in the category of left $R$-modules;

\item[$\bullet$] \emph{perfect} if it is coproduct-closed, $\mathcal{M}$ is closed under extensions, and $R \in \mathcal{M}$.
\end{itemize}
We construct new examples of duality pairs from $\mathcal{I}_{(n,k)}(R)$ and $\mathcal{F}_{(n,k)}(R\op)$.

\begin{theorem}\label{theo:duality_pairs}
The following statements hold true for every $k \geq 0$:
\begin{itemize}
\item[{\rm (a)}] $(\mathcal{F}_{(n,k)}(R\op),\mathcal{I}_{(n,k)}(R))$ is a perfect duality pair over $R$ for any $n \geq 0$. Moreover:
\begin{itemize}
\item[{\rm (a.1)}] For any $n \geq 2$, $(\mathcal{F}_{(n,k)}(R\op),\mathcal{I}_{(n,k)}(R))$ is product-closed.

\item[{\rm (a.2)}] $R$ is right coherent if, and only if, $(\mathcal{F}_{(1,k)}(R\op),\mathcal{I}_{(1,k)}(R))$ is product-closed.
\end{itemize}

\item[{\rm (b)}] $(\mathcal{I}_{(n,k)}(R),\mathcal{F}_{(n,k)}(R\op))$ is a (co)product-closed duality pair over $R$ for any $n \geq 2$, with the class $\mathcal{I}_{(n,k)}(R)$ closed under extensions. Moreover: 
\begin{itemize}
\item[{\rm (b.1)}] $(\mathcal{I}_{(n,k)}(R),\mathcal{F}_{(n,k)}(R\op))$ is perfect if, and only if, $\FP_n\mbox{-}\id_R(R) \leq k$.

\item[{\rm (b.2)}] $R$ is a left coherent ring if, and only if, $(\mathcal{I}_{(1,k)}(R),\mathcal{F}_{(1,k)}(R\op))$ is a coproduct-closed duality pair over $R$.

\item[{\rm (b.3)}] $R$ is a left noetherian ring if, and only if, $(\mathcal{I}_{(0,k)}(R),\mathcal{F}_{(0,k)}(R\op))$ is a coproduct-closed duality pair over $R$.
\end{itemize}
\end{itemize}
\end{theorem}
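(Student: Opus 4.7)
My plan is to exploit the functorial characterisations of the relative dimensions from Proposition~\ref{prop:FPnid_char} together with the Pontrjagin interplay recorded in Proposition~\ref{prop:char_dims}. First I would establish the duality property for both pairs. For~(a), the equivalence $N\in\mathcal{F}_{(n,k)}(R\op)\Leftrightarrow N^{+}\in\mathcal{I}_{(n,k)}(R)$ is immediate from Proposition~\ref{prop:char_dims}(a), valid for every $n\geq 0$; for~(b), the dual equivalence comes from Proposition~\ref{prop:char_dims}(b) and forces the hypothesis $n\geq 2$. Closure of $\mathcal{I}_{(n,k)}(R)$ and $\mathcal{F}_{(n,k)}(R\op)$ under direct summands and finite direct sums is then routine from the additivity of $\Ext^{k+1}_R(-,-)$ and $\Tor^R_{k+1}(-,-)$ appearing in Proposition~\ref{prop:FPnid_char}.

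Next I would verify the extra properties required for perfectness in~(a) and for~(a.1). Since $\Tor^R_{k+1}(-,L)$ commutes with coproducts for every module $L$, the Tor-characterisation immediately yields closure of $\mathcal{F}_{(n,k)}(R\op)$ under coproducts for all $n\geq 0$; the long exact sequence of Tor gives closure under extensions, and $R$ lies in $\mathcal{F}_{(n,k)}(R\op)$ because $R$ is flat. For product-closure with $n\geq 2$ I would first observe that $\mathcal{F}_n(R\op)$ is closed under products, a consequence of Brown's theorem that $\Tor^R_1(-,L)$ commutes with products whenever $L$ is of type $\FP_2$; then, given any family $\{N_\alpha\}\subseteq\mathcal{F}_{(n,k)}(R\op)$, I would choose length-$k$ $\FP_n$-flat resolutions, take the (exact) product, and read off $\prod N_\alpha\in\mathcal{F}_{(n,k)}(R\op)$. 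The same resolution trick proves the forward direction of~(a.2), since over a right coherent ring $\mathcal{FP}_1(R)=\mathcal{FP}_\infty(R)$ and the previous argument applies to $n=1$; for the converse, one specialises product-closure of $\mathcal{F}_{(1,k)}(R\op)$ to families of flat modules and, after a syzygy reduction down to dimension zero, invokes Chase's theorem to conclude that $R$ is right coherent.

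Part~(b) then falls into place. Product-closure of $\mathcal{F}_{(n,k)}(R\op)$ was already handled in~(a.1); coproduct-closure of $\mathcal{I}_{(n,k)}(R)$ is obtained by dualising the resolution argument, since $\mathcal{I}_n(R)$ is closed under coproducts for every $n\geq 1$ (the module analogue of Proposition~\ref{coproduct-closed}, following from the fact that $\Ext^1_R(L,-)$ commutes with coproducts whenever $L$ is finitely presented). Closure of $\mathcal{I}_{(n,k)}(R)$ under extensions then follows from the long exact Ext sequence together with Proposition~\ref{prop:FPnid_char}(a), and~(b.1) is immediate because perfectness demands only $R\in\mathcal{I}_{(n,k)}(R)$, which by definition means $\FP_n\mbox{-}\id_R(R)\leq k$.

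The statements~(b.2) and~(b.3) are the most delicate, since Proposition~\ref{prop:char_dims}(b) is unavailable for $n<2$ and the duality property must be extracted from classical ring-theoretic criteria. For~(b.2), over a left coherent ring one has $\mathcal{FP}_1(R)=\mathcal{FP}_\infty(R)$ and the proof of Proposition~\ref{prop:char_dims}(b) carries over to $n=1$; conversely, the duality $M\in\mathcal{I}_{(1,k)}(R)\Leftrightarrow M^{+}\in\mathcal{F}_{(1,k)}(R\op)$ forces left coherence via a Chase-style argument applied to cosyzygies of~$R$. For~(b.3) the analogous role is played by the Bass--Papp theorem: left noetherianness is equivalent to closure of injectives (and hence of $\mathcal{I}_{(0,k)}(R)$) under coproducts, and it is precisely the missing ingredient for the duality in the dimension-zero case, since over a non-noetherian ring the Pontrjagin dual of an injective need not be flat. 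I expect the main obstacle to be these reverse implications in~(a.2),~(b.2) and~(b.3), each of which requires reading a finiteness property of the ring off a bounded-dimension statement; the key device will be a syzygy/cosyzygy shift down to dimension zero followed by an appeal to Chase or Bass--Papp.
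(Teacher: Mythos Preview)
Your overall strategy matches the paper's: both rely on the Pontrjagin characterisations in Proposition~\ref{prop:char_dims} for the duality property, on the Tor/Ext descriptions in Proposition~\ref{prop:FPnid_char} for closure under direct summands, finite direct sums, extensions and (co)products, and on a resolution-lifting argument to pass from $k=0$ to general $k$.

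There is, however, a genuine error in your treatment of the forward direction of~(a.2). You write that ``over a right coherent ring $\mathcal{FP}_1(R)=\mathcal{FP}_\infty(R)$'', but right coherence of $R$ yields $\mathcal{FP}_1(R\op)=\mathcal{FP}_\infty(R\op)$ for \emph{right} modules, not for the left modules that serve as the test objects in the definition of $\mathcal{F}_{(1,k)}(R\op)$. Your proposed reduction to the $n\geq 2$ case via Brown's theorem therefore does not go through: Brown requires the second argument of $\Tor^R_1(-,L)$ to be of type $\FP_2$ as a left module, and right coherence gives you no control over that. The paper avoids this pitfall by invoking Chase's theorem directly: $R$ is right coherent if and only if flat right $R$-modules are closed under products, after which your own resolution trick (products of length-$k$ flat resolutions remain exact) handles general $k$. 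You already cite Chase for the converse direction; using it for both directions is both simpler and correct.

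A smaller slip: in part~(b) you write ``Product-closure of $\mathcal{F}_{(n,k)}(R\op)$ was already handled in~(a.1)'', but for the pair $(\mathcal{I}_{(n,k)}(R),\mathcal{F}_{(n,k)}(R\op))$ it is the \emph{first} class whose product-closure is at stake. This is harmless, since $\mathcal{I}_{(n,k)}(R)$ is trivially closed under products (because $\Ext^{k+1}_R(L,-)$ commutes with products), which is exactly what the paper observes.
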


\begin{proof} \
\begin{itemize}
\item[(a)] The class $\mathcal{I}_{(n,k)}(R)$ is clearly  closed under direct summands and finite direct sums, so by Proposition \ref{prop:char_dims} (a) it follows that $(\mathcal{F}_{(n,k)}(R\op),\mathcal{I}_{(n,k)}(R))$ is a duality pair. Since $\Tor^R_{k+1}(-,M)$ preserves direct limits for every module $M$ in $\Mod(R)$, we have that $\mathcal{F}_{(n,k)}(R\op)$ is closed under coproducts. Using long exact sequences of $\Tor^R_i(-,-)$, one can easily note that $\mathcal{F}_{(n,k)}(R\op)$ is also closed under extensions. Finally, it is clear that $R \in \mathcal{F}_{(n,k)}(R\op)$. Hence, the duality pair $(\mathcal{F}_{(n,k)}(R\op),\mathcal{I}_{(n,k)}(R))$ is perfect.
\begin{itemize}
\item[(a.1)] If $n \geq 2$, the class of $\FP_n$-flat modules in $\Mod(R\op)$ is closed under products for any ring $R$, by \cite[Proposition 3.11]{BP16}. And by Proposition \ref{prop:FPnid_char} and the fact that products of exact sequences in $\Mod(R\op)$ are exact, we have that $\mathcal{F}_{(n,k)}(R\op)$ is closed under products.

\item[(a.2)] Set $n = 1$. By \cite[Theorem 2.3.2]{Glaz}, a ring is right coherent if, and only if, the class of flat modules in $\Mod(R\op)$ is closed under products. Using again the fact that the product of exact sequences in $\Mod(R\op)$ is exact, the result follows.
\end{itemize}

\item[(b)] It is clear that $\mathcal{F}_{(n,k)}(R\op)$ is closed under direct summands and finite direct sums, and by Proposition \ref{prop:char_dims} (b), we have that $(\mathcal{I}_{(n,k)}(R),\mathcal{F}_{(n,k)}(R\op))$ is a duality pair. Note also that $\mathcal{I}_{(n,k)}(R)$ is clearly closed under products and extensions. Moreover, we know from the proof of Proposition \ref{coproduct-closed} that the class of $\FP_n$-injective modules is closed under coproducts if $n \geq 1$. It follows by Proposition \ref{prop:FPnid_char} that $\mathcal{I}_{(n,k)}(R)$ is closed under coproducts.
\begin{itemize}
\item[(b.1)] Clear.

\item[(b.2)] If $(\mathcal{I}_{(1,k)}(R),\mathcal{F}_{(1,k)}(R\op))$ is a duality pair for every $k \geq 0$, in particular for $k = 0$, that is, $(\mbox{absolutely pures, flats})$ is a duality pair, and so a left $R$-module $M$ is absolutely pure if, and only if, $M^+$ is flat. This implies that $R$ is left coherent by \cite[Theorem 3.1]{DingChen}. Now if $R$ is a left coherent ring, on can deduce from \cite[Theorem 2.2]{Fe72} that the pair $(\mathcal{I}_{(1,k)}(R),\mathcal{F}_{(1,k)}(R\op))$ is a duality pair, for every $k \geq 0$.

\item[(b.3)] If $(\mathcal{I}_{(0,k)}(R),\mathcal{F}_{(0,k)}(R\op))$ is a coproduct-closed duality pair for every $k \geq 0$, in particular for $k = 0$, that is, $(\mbox{injectives, flats})$ is a coproduct-closed duality pair. It follows that $R$ is left noetherian. Now if $R$ is a left noetherian ring, we know that  the class of injective modules in $\Mod(R)$ is closed under coproducts, and by \cite[Theorem 2.2]{Fe71} we have that $(\mathcal{I}_{(0,k)}(R),\mathcal{F}_{(0,k)}(R\op))$ is a duality pair.
\end{itemize}
\end{itemize}
\end{proof}

In \cite[Theorem 3.1]{duality}, it is proven that if $(\mathcal{M,C})$ is a duality pair, then $\mathcal{M}$ is closed under pure sub-modules, pure quotients, and pure extensions. Furthermore, the following hold:
\begin{itemize}
\item[(a)] If $(\mathcal{M,C})$ is product-closed, then $\mathcal{M}$ is pre-enveloping.

\item[(b)] If $(\mathcal{M,C})$ is coproduct-closed, then $\mathcal{M}$ is covering.

\item[(c)] If $(\mathcal{M,C})$ is perfect, then $(\mathcal{M},\mathcal{M}^\perp)$ is a perfect cotorsion pair.
\end{itemize}
Combining these results with Theorem \ref{theo:duality_pairs} gives us the following.

\begin{corollary} \label{coro:more_properties}
The following statements hold true for every $k \geq 0$:
\begin{itemize}
\item[{\rm (a)}] The class $\mathcal{F}_{(n,k)}(R\op)$ is closed under pure sub-modules, pure quotients, and pure extensions for any $n \geq 0$.

\item[{\rm (b)}] The class $\mathcal{I}_{(n,k)}(R)$ is closed under pure sub-modules, pure quotients, and pure extensions for any $n \geq 2$.

\item[{\rm (c)}] For any $n \geq 0$, $(\mathcal{F}_{(n,k)}(R\op),(\mathcal{F}_{(n,k)}(R\op))^\perp)$ is a perfect cotorsion pair, and so every right $R$-module has a cover by a module with $\FP_n$-flat dimension at most $k$.

\item[{\rm (d)}] If $R$ is a right coherent ring, then every right $R$-module has a pre-envelope by a module with flat dimension at most $k$, and a cover by a module with absolutely pure dimension at most $k$.

\item[{\rm (e)}] If $R$ a left noetherian ring, then every left $R$-module has a cover by a module with injective dimension at most $k$.

\item[{\rm (f)}] For any $n \geq 2$, every right $R$-module has a pre-envelope by a module with $\FP_n$-flat dimension at most $k$, and every left $R$-module has a pre-envelope and a cover by a module with $\FP_n$-injective dimension at most $k$. In the case where $\FP_n\mbox{-}\id_R(R) \leq k$, $(\mathcal{I}_{(n,k)}(R),(\mathcal{I}_{(n,k)}(R))^\perp)$ is a perfect cotorsion pair.
\end{itemize}
\end{corollary}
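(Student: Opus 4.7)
The plan is to derive each item of the corollary by feeding the duality pairs produced in Theorem \ref{theo:duality_pairs} into the three consequences of Holm and J\o rgensen's \cite[Theorem 3.1]{duality} that are recalled in the paragraph immediately preceding the corollary. Since the hard work of verifying the duality pair axioms (and the extra closure and product/coproduct properties) has already been done in Theorem \ref{theo:duality_pairs}, the remaining task is mostly a matter of matching hypotheses and, when needed, passing to the opposite ring.

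Concretely, I would proceed as follows. For \emph{(a)} and \emph{(b)}, I invoke the first assertion of \cite[Theorem 3.1]{duality} applied to the duality pairs $(\mathcal{F}_{(n,k)}(R\op),\mathcal{I}_{(n,k)}(R))$ (valid for all $n\ge 0$ by Theorem \ref{theo:duality_pairs}(a)) and $(\mathcal{I}_{(n,k)}(R),\mathcal{F}_{(n,k)}(R\op))$ (valid for $n\ge 2$ by Theorem \ref{theo:duality_pairs}(b)), to conclude the stated closure of $\mathcal{F}_{(n,k)}(R\op)$ and $\mathcal{I}_{(n,k)}(R)$ under pure submodules, pure quotients, and pure extensions. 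For \emph{(c)}, Theorem \ref{theo:duality_pairs}(a) shows that $(\mathcal{F}_{(n,k)}(R\op),\mathcal{I}_{(n,k)}(R))$ is a perfect duality pair, so the third consequence of \cite[Theorem 3.1]{duality} yields the perfect cotorsion pair $(\mathcal{F}_{(n,k)}(R\op),(\mathcal{F}_{(n,k)}(R\op))^{\perp})$, which in turn provides the claimed covers.

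For \emph{(d)}, I split the statement in two. The pre-envelope part uses Theorem \ref{theo:duality_pairs}(a.2): when $R$ is right coherent, the pair $(\mathcal{F}_{(1,k)}(R\op),\mathcal{I}_{(1,k)}(R))$ is product-closed, so the second consequence of \cite[Theorem 3.1]{duality} gives the pre-envelopes of right $R$-modules by modules of flat dimension at most $k$. The cover part follows by passing to $R\op$ in Theorem \ref{theo:duality_pairs}(b.2): $R$ right coherent means $R\op$ is left coherent, so $(\mathcal{I}_{(1,k)}(R\op),\mathcal{F}_{(1,k)}(R))$ is a coproduct-closed duality pair, and the second consequence of \cite[Theorem 3.1]{duality} (for covers) furnishes $\mathcal{I}_{(1,k)}(R\op)$-covers. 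Item \emph{(e)} is analogous, relying on Theorem \ref{theo:duality_pairs}(b.3) to get that $(\mathcal{I}_{(0,k)}(R),\mathcal{F}_{(0,k)}(R\op))$ is coproduct-closed over a left noetherian ring.

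For \emph{(f)}, the three assertions correspond to applying the three consequences of \cite[Theorem 3.1]{duality} to the duality pairs in Theorem \ref{theo:duality_pairs}(b) for $n\ge 2$: the product-closedness of $(\mathcal{F}_{(n,k)}(R\op),\mathcal{I}_{(n,k)}(R))$ from part (a.1) gives the pre-envelope by a module of $\FP_n$-flat dimension at most $k$; the coproduct-closedness of $(\mathcal{I}_{(n,k)}(R),\mathcal{F}_{(n,k)}(R\op))$ gives the $\FP_n$-injective covers, while the product-closedness (which always holds for $\FP_n$-injective dimension classes when $n \ge 2$, since $\mathcal{I}_{(n,k)}(R)$ is obviously closed under products) gives the corresponding pre-envelopes; and the criterion $\FP_n\mbox{-}\id_R(R)\le k$ from Theorem \ref{theo:duality_pairs}(b.1) makes the pair perfect, from which the perfect cotorsion pair $(\mathcal{I}_{(n,k)}(R),(\mathcal{I}_{(n,k)}(R))^{\perp})$ follows. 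The main (and really the only) obstacle is the careful bookkeeping of which side of the ring the modules sit on in each statement, particularly in \emph{(d)}, where applying Theorem \ref{theo:duality_pairs}(b.2) to $R\op$ rather than $R$ is essential; everything else is a direct application of results already available.
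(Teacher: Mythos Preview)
Your proposal is correct and follows exactly the approach the paper takes: the corollary is stated as an immediate consequence of combining Theorem~\ref{theo:duality_pairs} with Holm--J\o rgensen's \cite[Theorem 3.1]{duality}, and you have simply spelled out which clause of each result is being invoked for each item, including the passage to $R\op$ in part~(d).
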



\subsection{Induced dual pairs in chain complexes}

It is known that from a cotorsion pair $(\mathcal{A,B})$ cogenerated by a set in the category of modules, we can induce a series of complete cotorsion pairs in the category of complexes. This was pioneered by Gillespie in \cite{GillespieFlat} and \cite{GillespieDegree}. Following the spirit of these works, and being aware that there is a relation between duality pairs and perfect cotorsion pairs (in the categories of modules \cite{duality} and complexes \cite{Ya}), we are interested in inducing dual pairs of complexes from duality pairs of modules.

Recall from \cite[Definition 3.1]{Ya} that a \emph{dual pair over a ring $R$} is a pair $(\mathscr{M,C})$, where $\mathscr{M}$ is a class of complexes of left (resp., right) $R$-modules, and $\mathscr{C}$ is a class of complexes of right (resp., left) $R$-modules, subject to the following conditions:
\begin{itemize}
\item[(a)] \underline{Duality property}: $\bm{X} \in \mathscr{M}$ $\Leftrightarrow$ $\bm{X^+} \in \mathscr{C}$, for any complex $\bm{X}$ in $\Ch(R)$ (resp., in $\Ch(R\op)$).

\item[(b)] $\mathscr{C}$ is closed under direct summands and finite direct sums.
\end{itemize}
A dual pair $(\mathscr{M,C})$ is called:
\begin{itemize}
\item[$\bullet$] \emph{(co)product-closed} if $\mathscr{M}$ is closed under (co)products;

\item[$\bullet$] \emph{perfect} if it is coproduct-closed, $\mathscr{M}$ is closed under extensions, and $D^0(R) \in \mathscr{M}$.
\end{itemize}

Given a class $\mathcal{M}$ of left (resp., right) $R$-modules, recall from \cite{GillespieFlat,GillespieDegree} the following classes of complexes:
\begin{align*}
{\rm dw}\widetilde{\mathcal{M}} & := \{ \bm{X} \in \Ch(R) \mbox{ : } X_m \in \mathcal{M} \mbox{ for every } m \in \mathbb{Z} \}, \\
{\rm ex}\widetilde{\mathcal{M}} & := {\rm dw}\widetilde{\mathcal{M}} \cap \mathcal{E}(R),
\end{align*}
where $\mathcal{E}(R)$ denotes the class of exact complexes in $\Ch(R)$. Recall also the class $\widetilde{\mathcal{M}}$ defined previously. We have the following result.

\begin{theorem}\label{theo:induced_duality}
Let $(\mathcal{M,C})$ be a duality pair over $R$. Then, $({\rm dw}\widetilde{\mathcal{M}},{\rm dw}\widetilde{\mathcal{C}})$, $({\rm ex}\widetilde{\mathcal{M}},{\rm ex}\widetilde{C})$ and $(\widetilde{\mathcal{M}},\widetilde{\mathcal{C}})$ are dual pairs over $R$. Moreover:
\begin{itemize}
\item[$\bullet$] If $(\mathcal{M,C})$ is (co)product-closed, then so are $({\rm dw}\widetilde{\mathcal{M}},{\rm dw}\widetilde{\mathcal{C}})$, $({\rm ex}\widetilde{\mathcal{M}},{\rm ex}\widetilde{C})$ and $(\widetilde{\mathcal{M}},\widetilde{\mathcal{C}})$.

\item[$\bullet$] If $(\mathcal{M,C})$ is perfect, then so are $({\rm dw}\widetilde{\mathcal{M}},{\rm dw}\widetilde{\mathcal{C}})$, $({\rm ex}\widetilde{\mathcal{M}},{\rm ex}\widetilde{C})$ and $(\widetilde{\mathcal{M}},\widetilde{\mathcal{C}})$.
\end{itemize}
Conversely, let $\mathcal{M}$ be a class of modules in $\Mod(R)$ and $\mathcal{C}$ be a class of modules in $\Mod(R\op)$. If $({\rm dw}\widetilde{M},{\rm dw}\widetilde{\mathcal{C}})$, $({\rm ex}\widetilde{\mathcal{M}},{\rm ex}\widetilde{\mathcal{C}})$ or $(\widetilde{\mathcal{M}},\widetilde{\mathcal{C}})$ is a dual pair over $R$, then $(\mathcal{M,C})$ is a duality pair over $R$. Moreover, if any of these pairs is (co)product-closed or perfect, then so is $(\mathcal{M,C})$.
\end{theorem}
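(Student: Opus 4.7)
The plan is to exploit two structural facts about the Pontrjagin dual functor $(-)^+$: firstly, the reindexing identity $(\bm{X}^+)_n \cong (X_{-n-2})^+$ from \eqref{eqn:alter_Pontrjagin}; secondly, the exactness of $(-)^+$ on $\Mod(R)$, since $\mathbb{Q}/\mathbb{Z}$ is an injective cogenerator of abelian groups. Combining these yields the cycle formula $Z_n(\bm{X}^+) \cong (X_{-n-2}/B_{-n-2}(\bm{X}))^+$, which for exact $\bm{X}$ simplifies to $(Z_{-n-3}(\bm{X}))^+$ via the canonical short exact sequence $0 \to Z_k(\bm{X}) \to X_k \to Z_{k-1}(\bm{X}) \to 0$. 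Together with Lemma \ref{lem:exact_Pontrjagin}, these allow the duality property of $(\mathcal{M}, \mathcal{C})$ to lift to all three complex-level pairs: degree-wise for $({\rm dw}\widetilde{\mathcal{M}}, {\rm dw}\widetilde{\mathcal{C}})$; degree-wise plus preservation of exactness for $({\rm ex}\widetilde{\mathcal{M}}, {\rm ex}\widetilde{\mathcal{C}})$; and cycle-wise (together with preservation of exactness) for $(\widetilde{\mathcal{M}}, \widetilde{\mathcal{C}})$.

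For the remaining ingredients in the forward direction, the closure of each $\mathscr{C}$ under direct summands and finite direct sums reduces to the same property on $\mathcal{C}$, since these operations in $\Ch(R)$ are computed degree-wise, preserve exactness, and commute with cycles. The same principle, combined with the facts that (co)products commute with kernels in $\Mod(R)$ and preserve exactness, shows that (co)product-closedness of $\mathcal{M}$ propagates to each of the three classes. Extension-closedness propagates likewise, via the snake-lemma-type sequence $0 \to Z_m(\bm{A}) \to Z_m(\bm{B}) \to Z_m(\bm{C}) \to 0$ attached to any short exact sequence of exact complexes, as used in the proof of Proposition \ref{coproduct-closed}. Finally, $D^0(R)$ is exact with its only non-zero terms and its only non-zero cycle all equal to $R$, so $D^0(R) \in \mathscr{M}$ reduces exactly to $R \in \mathcal{M}$ in every case.

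For the converse direction, I would use the disk complex $D^0(-)$ as a bridge. Since $D^0(M)$ is exact with all non-zero terms equal to $M$ and unique non-zero cycle equal to $M$, membership of $D^0(M)$ in any of ${\rm dw}\widetilde{\mathcal{M}}$, ${\rm ex}\widetilde{\mathcal{M}}$, $\widetilde{\mathcal{M}}$ is equivalent to $M \in \mathcal{M}$; and a direct computation gives $D^0(M)^+ \cong D^{-1}(M^+)$, so the analogous equivalence holds on the $\mathcal{C}$-side. This transfers the duality of any of the three dual pairs of complexes down to $(\mathcal{M}, \mathcal{C})$. The remaining closure properties of $\mathcal{M}$ and $\mathcal{C}$ descend through the same embedding: from $\bigoplus_i D^0(M_i) \cong D^0(\bigoplus_i M_i)$ and its analogue for products, and from the fact that $D^0(-)$ is an exact functor, one reads off (co)product-closedness and extension-closedness of $\mathcal{M}$, while direct-summand and finite-sum closure of $\mathcal{C}$ is obtained by noting that $D^0(N') \in \widetilde{\mathcal{C}}$ whenever $N' \in \mathcal{C}$ and using that $D^0$ preserves direct summands.

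The main obstacle will be the careful reindexing in the cycle formula $Z_n(\bm{X}^+) \cong Z_{-n-3}(\bm{X})^+$ for exact $\bm{X}$, together with keeping track of the sign conventions in $\delta^{\bm{X}^+}_n = (-1)^n \Hom_\mathbb{Z}(\delta^{\bm{X}}_{-n-1}, \mathbb{Q}/\mathbb{Z})$. Once these identities are cleanly in place, the rest of the argument is a uniform bookkeeping of closure properties, using that every construction (terms, cycles, boundaries, (co)products, extensions) in $\Ch(R)$ is compatible, up to the stated reindexing, with the corresponding construction in $\Mod(R)$ under the Pontrjagin dual.
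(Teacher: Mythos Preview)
Your proposal is correct and follows essentially the same approach as the paper: the duality property is lifted via the reindexing identity \eqref{eqn:alter_Pontrjagin} together with Lemma~\ref{lem:exact_Pontrjagin}, and the closure properties are checked term-wise or cycle-wise exactly as you outline. The only cosmetic difference is that in the converse for the ${\rm dw}$ case the paper uses sphere complexes $S^0(M)$ rather than disks (switching to $D^0(M)$ only for the ${\rm ex}$ and $\widetilde{(-)}$ cases, where exactness is needed), whereas your uniform use of $D^0(M)$ handles all three cases at once; your explicit reindexing $Z_n(\bm{X}^+)\cong (Z_{-n-3}(\bm{X}))^+$ is also slightly more precise than the paper's assertion that the exact functor $\underline{\Hom}(-,D^0(\mathbb{Q}/\mathbb{Z}))$ ``preserves cycles''.
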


\begin{proof}
We split the proof into several parts:
\begin{itemize}
\item[$\bullet$] \underline{$({\rm dw}\widetilde{\mathcal{M}},{\rm dw}\widetilde{\mathcal{C}})$ is a dual pair}: The duality property follows by \eqref{eqn:alter_Pontrjagin}. On the other hand, since coproducts are computed component-wise, we have that ${\rm dw}\widetilde{\mathcal{C}}$ is closed under finite direct sums. Also, it is easy to see that ${\rm dw}\widetilde{\mathcal{C}}$ is also closed under direct summands. Hence, $({\rm dw}\widetilde{\mathcal{M}},{\rm dw}\widetilde{\mathcal{C}})$ is a dual pair. In the cases where $(\mathcal{M,C})$ is (co)product-closed and perfect, it is easy to see that so is $({\rm dw}\widetilde{\mathcal{M}},{\rm dw}\widetilde{\mathcal{C}})$, since the closure properties asked for ${\rm dw}\widetilde{\mathcal{M}}$ are verified component-wise.

\item[$\bullet$] \underline{$({\rm ex}\widetilde{\mathcal{M}},{\rm ex}\widetilde{\mathcal{C}})$ is a dual pair}: The duality property follows by the corresponding equivalence for $({\rm dw}\widetilde{\mathcal{M}},{\rm dw}\widetilde{\mathcal{C}})$ and by Lemma \ref{lem:exact_Pontrjagin}. In the cases where $(\mathcal{M,C})$ is (co)product-closed and perfect, the rest of the proof follows by the previous case and the facts that the class of exact complexes is closed under (co)products and extensions, and that $D^0(R) \in {\rm ex}\widetilde{\mathcal{M}}$.

\item[$\bullet$] \underline{$(\widetilde{\mathcal{M}},\widetilde{\mathcal{C}})$ is a dual pair}: By Lemma \ref{lem:exact_Pontrjagin}, we know that for any complex $\bm{X}$ in $\Ch(R)$, $\bm{X}$ is exact if, and only if, $\bm{X^+}$ is exact. On the other hand, the functor $\underline{\Hom}(-,D^0(\mathbb{Q / Z}))$ preservers cycles since it is exact, and so, $Z_m(\bm{X}) \in \mathcal{M}$ $\Leftrightarrow$ $Z_m(\bm{X^+}) \in \mathcal{C}$, for any $m \in \mathbb{Z}$. Then, the duality property follows. Now given two chain complexes $\bm{C^1},\bm{C^2} \in \widetilde{\mathcal{C}}$, we can note that $Z_m(\bm{C^1} \oplus \bm{C^2}) \simeq Z_m(\bm{C^1}) \oplus Z_m(\bm{C^2}) \in \mathcal{C}$, for any $m \in \mathbb{Z}$. It follows that $\bm{C^1} \oplus \bm{C^2} \in \widetilde{\mathcal{C}}$ since exact complexes are closed under finite direct sums. On the other hand, if $\bm{C'}$ is a direct summand of $\bm{C} \in \widetilde{\mathcal{C}}$, we have that $Z_m(\bm{C'})$ is a direct summand of $Z_m(\bm{C})$ for any $m \in \mathbb{Z}$. Also, exact complexes are closed under direct summands, and hence $\bm{C'} \in \widetilde{\mathcal{C}}$.

Given any (co)product $\prod_{i \in I} \bm{M^i}$ (resp., $\coprod_{i \in I} \bm{M^i}$) with $\bm{M^i} \in \widetilde{\mathcal{M}}$ for every $i \in I$, we have that $\prod_{i \in I} \bm{M^i}$ (resp., $\coprod_{i \in I} \bm{M^i}$) is exact since $\Mod(R)$ is a Grothendieck category. On the other hand,  $Z_m(\prod_{i \in I} \bm{M^i}) \simeq \prod_{i \in I} Z_m(\bm{M^i}) \in \mathcal{M}$ (resp., $Z_m(\coprod_{i \in I} \bm{M^i}) \simeq \coprod_{i \in I} Z_m(\bm{M^i}) \in \mathcal{M}$) in the case where $(\mathcal{M,C})$ is (co)product-closed. Finally, if $\mathcal{M}$ is closed under extensions with $R \in \mathcal{M}$, it follows that so is $\widetilde{\mathcal{M}}$ with $D^0(R) \in \widetilde{\mathcal{M}}$. Hence, $(\widetilde{\mathcal{M}},\widetilde{\mathcal{C}})$ is perfect in the case where $(\mathcal{M,C})$ is perfect.
\end{itemize}

The rest of the proof is devoted to show the converse statement. If $({\rm dw}\widetilde{\mathcal{M}},{\rm dw}\widetilde{\mathcal{C}})$ is a dual pair, we can prove the duality property for $(\mathcal{M,C})$ considering sphere complexes $S^0(M)$. Since finite direct sums and direct summands in $\Mod(R\op)$ can be thought as finite direct sums and direct summands of sphere complexes in $\Ch(R\op)$ at the same degree, we have that $\mathcal{C}$ is closed under finite direct sums and direct summands, and hence $(\mathcal{M,C})$ is a duality pair. The same argument works to show that $(\mathcal{M,C})$ is (co)product-closed or closed under extensions if so is $({\rm dw}\widetilde{\mathcal{M}},{\rm dw}\widetilde{\mathcal{C}})$. Also, it is clear that $R \in \mathcal{M}$ if $D^0(R) \in {\rm dw}\widetilde{\mathcal{M}}$. Hence, $(\mathcal{M,C})$ is perfect in the case where $({\rm dw}\widetilde{\mathcal{M}},{\rm dw}\widetilde{\mathcal{C}})$ is perfect.

On the other hand, if $({\rm ex}\widetilde{\mathcal{M}},{\rm ex}\widetilde{\mathcal{C}})$ or $(\widetilde{\mathcal{M}},\widetilde{\mathcal{C}})$ are ((co)product-closed or perfect) dual pairs, it suffices to consider disk complexes $D^0(M)$ to show that $(\mathcal{M,C})$ is a ((co)product-closed or perfect) duality pair.
\end{proof}


\subsection{Dual pairs from $\FP_n$-injective and $\FP_n$-flat dimensions of complexes}

The rest of this section will be addressed to apply the methods from Theorem \ref{theo:induced_duality} to obtain covers and pre-envelopes by the classes of complexes with $\FP_n$-injective and $\FP_n$-flat dimension at most $k$. We denote these classes by $\mathscr{I}_{(n,k)}(R)$ and $\mathscr{F}_{(n,k)}(R\op)$, respectively. Note that $\mathscr{I}_{(n,0)}(R) = \mathscr{I}_n(R)$ and $\mathscr{F}_{(n,0)}(R\op) = \mathscr{F}_n(R\op)$, while $\mathscr{I}_{0,k}(R)$ is the class of complexes in $\Ch(R)$ with injective dimension at most $k$, and $\mathscr{F}_{(0,k)}(R\op) = \mathscr{F}_{(1,k)}(R\op)$ is the class of complexes in $\Ch(R\op)$ with flat dimension at most $k$. We begin with the following result, which is a consequence Theorems \ref{theo:duality_pairs} and \ref{theo:induced_duality}, and Proposition \ref{fi-dim}.

\begin{theorem} \label{fi-dual}
The following statements hold true for every $k \geq 0$:
\begin{itemize}
\item[{\rm (a)}] The pairs
\begin{align}
& (\mathscr{F}_{(n,k)}(R\op),\mathscr{I}_{(n,k)}(R)) \label{fi-Ch} \\
& ({\rm dw}\widetilde{\mathcal{F}_{(n,k)}}(R\op),{\rm dw}\widetilde{\mathcal{I}_{(n,k)}}(R)) \label{fi-dw} \\
& ({\rm ex}\widetilde{\mathcal{F}_{(n,k)}}(R\op),{\rm ex}\widetilde{\mathcal{I}_{(n,k)}}(R)) \label{fi-ex}
\end{align}
are perfect dual pairs over $R$ for any $n \geq 0$. Moreover:
\begin{itemize}
\item[{\rm (a.1)}] For any $n \geq 2$, \eqref{fi-Ch}, \eqref{fi-dw} and \eqref{fi-ex} are product-closed.

\item[{\rm (a.2)}] In the case $n = 1$, \eqref{fi-Ch}, \eqref{fi-dw} and \eqref{fi-ex} are product-closed if, and only if, $R$ is right coherent.
\end{itemize}

\item[{\rm (b)}] The pairs
\begin{align}
& (\mathscr{I}_{(n,k)}(R),\mathscr{F}_{(n,k)}(R\op)) \label{if-Ch} \\
& ({\rm dw}\widetilde{\mathcal{I}_{(n,k)}}(R),{\rm dw}\widetilde{\mathcal{F}_{(n,k)}}(R\op)) \label{if-dw} \\
& ({\rm ex}\widetilde{\mathcal{I}_{(n,k)}}(R),{\rm ex}\widetilde{\mathcal{F}_{(n,k)}}(R\op)) \label{if-ex}
\end{align}
are (co)product-closed dual pairs over $R$ for any $n \geq 2$, with $\mathscr{I}_{(n,k)}(R)$, ${\rm dw}\widetilde{\mathcal{I}_{(n,k)}}(R)$ and ${\rm ex}\widetilde{\mathcal{I}_{(n,k)}}(R)$ closed under extensions. Moreover:
\begin{itemize}
\item[{\rm (b.1)}] \eqref{if-Ch}, \eqref{if-dw} and \eqref{if-ex} are perfect if, and only if, $\FP_n\mbox{-}\id_R(R) \leq k$.

\item[{\rm (b.2)}] In the case $n = 1$, \eqref{if-Ch}, \eqref{if-dw} or \eqref{if-ex} are dual pairs over $R$ if, and only if, $R$ is a left coherent ring.

\item[{\rm (b.3)}] In the case $n = 0$, \eqref{if-Ch}, \eqref{if-dw} or \eqref{if-ex} are coproduct-closed dual pairs over $R$ if, and only if, $R$ is a left noetherian ring.
\end{itemize}
\end{itemize}
\end{theorem}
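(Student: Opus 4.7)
The plan is to reduce everything to the module-level duality pairs established in Theorem \ref{theo:duality_pairs} via the lifting mechanism of Theorem \ref{theo:induced_duality}. The essential preliminary step is the observation that Proposition \ref{fi-dim} yields the identifications $\mathscr{I}_{(n,k)}(R) = \widetilde{\mathcal{I}_{(n,k)}}(R)$ and $\mathscr{F}_{(n,k)}(R\op) = \widetilde{\mathcal{F}_{(n,k)}}(R\op)$: a complex belongs to $\mathscr{I}_{(n,k)}(R)$ (resp.\ $\mathscr{F}_{(n,k)}(R\op)$) exactly when it is exact with each cycle in $\mathcal{I}_{(n,k)}(R)$ (resp.\ $\mathcal{F}_{(n,k)}(R\op)$). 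Under this identification, the first pair \eqref{fi-Ch} in (a) becomes the cycle-wise construction $(\widetilde{\mathcal{F}_{(n,k)}}(R\op),\widetilde{\mathcal{I}_{(n,k)}}(R))$ appearing in Theorem \ref{theo:induced_duality}, and similarly for \eqref{if-Ch} in (b).

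For part (a), I would apply Theorem \ref{theo:induced_duality} to the module-level perfect duality pair $(\mathcal{F}_{(n,k)}(R\op),\mathcal{I}_{(n,k)}(R))$ of Theorem \ref{theo:duality_pairs} (a); this immediately transfers perfectness to all three complex-level pairs \eqref{fi-Ch}, \eqref{fi-dw}, \eqref{fi-ex}. The refinements (a.1) and (a.2) are obtained by transferring the product-closed property in the same manner, using Theorem \ref{theo:duality_pairs} (a.1)--(a.2) together with the converse direction of Theorem \ref{theo:induced_duality} to recover, when $n=1$, right coherence of $R$ from the product-closedness of any of the three induced pairs. Part (b) proceeds by the same template: starting from the (co)product-closed duality pair of Theorem \ref{theo:duality_pairs} (b) with $\mathcal{I}_{(n,k)}(R)$ closed under extensions, the lifting of Theorem \ref{theo:induced_duality} supplies \eqref{if-Ch}, \eqref{if-dw}, \eqref{if-ex} as (co)product-closed dual pairs whose left-hand class is extension-closed. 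Items (b.2) and (b.3) are handled symmetrically by combining Theorem \ref{theo:duality_pairs} (b.2)--(b.3) with the biconditional assertions of Theorem \ref{theo:induced_duality}.

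The one case-specific point is the perfectness criterion (b.1), which amounts to deciding when $D^0(R)$ lies in the corresponding class of complexes. Since $D^0(R)$ is exact and has every term equal to either $R$ or $0$, each of $\widetilde{\mathcal{I}_{(n,k)}}(R)$, ${\rm dw}\widetilde{\mathcal{I}_{(n,k)}}(R)$ and ${\rm ex}\widetilde{\mathcal{I}_{(n,k)}}(R)$ contains $D^0(R)$ precisely when $R \in \mathcal{I}_{(n,k)}(R)$, i.e.\ when $\FP_n\mbox{-}\id_R(R) \leq k$; combined with the coproduct-closedness and extension-closedness already established, this yields (b.1). The main obstacle I anticipate is not computational but conceptual: one must carefully recognise that $\mathscr{I}_{(n,k)}(R)$ coincides with the cycle-wise class $\widetilde{\mathcal{I}_{(n,k)}}(R)$ and not with the larger term-wise class ${\rm ex}\widetilde{\mathcal{I}_{(n,k)}}(R)$, since closure of $\mathcal{I}_{(n,k)}(R)$ under kernels of epimorphisms is not available in general. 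Once this identification is secured, the remaining argument is a systematic application of Theorem \ref{theo:induced_duality}, with no new substantive input.
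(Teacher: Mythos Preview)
Your proposal is correct and matches the paper's own approach exactly: the paper records the theorem as a direct consequence of Theorems \ref{theo:duality_pairs} and \ref{theo:induced_duality} together with Proposition \ref{fi-dim}, which is precisely the reduction you outline. Your added remarks on the identification $\mathscr{I}_{(n,k)}(R)=\widetilde{\mathcal{I}_{(n,k)}}(R)$ and the explicit treatment of the $D^0(R)$ membership in (b.1) are useful elaborations of the same argument rather than a different route.
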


The analogue of \cite[Theorem 3.1]{duality} is also valid in the context of complexes. This is due to Yang's \cite[Theorem 3.2]{Ya}. Namely, given a dual pair $(\mathscr{M,C})$ over $R$, then $\mathscr{M}$ is closed under pure sub-complexes, pure quotients and pure extensions. Furthermore, the following hold:
\begin{itemize}
\item[(a)] If $(\mathscr{M,C})$ is product-closed, then $\mathscr{M}$ is pre-enveloping.

\item[(b)] If $(\mathscr{M,C})$ is coproduct-closed, then $\mathscr{M}$ is covering.

\item[(c)] If $(\mathscr{M,C})$ is perfect, then $(\mathscr{M},\mathscr{M}^\perp)$ is a perfect cotorsion pair.
\end{itemize}
As it occurred with modules, the following result is a consequence of these properties combined with Theorem \ref{fi-dual}.

\begin{corollary} \label{ChCoversPreenvelopes}
The following statements hold true for every $k \geq 0$:
\begin{itemize}
\item[{\rm (a)}] The class $\mathscr{F}_{(n,k)}(R\op)$ is closed under pure sub-complexes, pure quotients, and pure extensions for any $n \geq 0$.

\item[{\rm (b)}] The class $\mathscr{I}_{(n,k)}(R)$ is closed under pure sub-complexes, pure quotients, and pure extensions for any $n \geq 2$.

\item[{\rm (c)}] For any $n \geq 0$, the pairs
\begin{align*}
& (\mathscr{F}_{(n,k)}(R\op),(\mathscr{F}_{(n,k)}(R\op))^\perp) \\
& ({\rm dw}\widetilde{\mathcal{F}_{(n,k)}(R\op)},({\rm dw}\widetilde{\mathcal{F}_{(n,k)}(R\op)})^\perp) \\
& ({\rm ex}\widetilde{\mathcal{F}_{(n,k)}(R\op)},({\rm ex}\widetilde{\mathcal{F}_{(n,k)}(R\op)})^\perp)
\end{align*}
are perfect cotorsion pairs in $\Ch(R\op)$. In particular, every complex of right $R$-modules has a cover by a complex with $\FP_n$-flat dimension at most $k$.

\item[{\rm (d)}] If $R$ is a right coherent ring, then every complex in $\Ch(R\op)$ has a pre-envelope by a complex with flat dimension at most $k$, and every complex in $\Ch(R)$ has a cover by a complex with absolutely pure dimension at most $k$.

\item[{\rm (e)}] If $R$ is a left noetherian ring, then every complex in $\Ch(R)$ has a cover by a complex with injective dimension at most $k$.

\item[{\rm (f)}] For any $n \geq 2$, every complex in $\Ch(R\op)$ has a pre-envelope by a complex with $\FP_n$-flat dimension at most $k$, and every complex in $\Ch(R)$ has a pre-envelope and a cover by a complex with $\FP_n$-injective dimension at most $k$. In the case where $\FP_n\mbox{-}\id_R(R) \leq k$, the following are perfect cotorsion pairs in $\Ch(R)$:
\begin{align*}
& (\mathscr{I}_{(n,k)}(R),(\mathscr{I}_{(n,k)}(R))^\perp) \\
& ({\rm dw}\widetilde{\mathcal{I}_{(n,k)}(R)},({\rm dw}\widetilde{\mathcal{I}_{(n,k)}(R)})^\perp) \\
& ({\rm ex}\widetilde{\mathcal{I}_{(n,k)}(R)},({\rm ex}\widetilde{\mathcal{I}_{(n,k)}(R)})^\perp).
\end{align*}
\end{itemize}
\end{corollary}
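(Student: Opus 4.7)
The plan is to deduce every assertion of the corollary by a direct application of Yang's theorem \cite[Theorem 3.2]{Ya} to the dual pairs produced in Theorem \ref{fi-dual}, exactly as Corollary \ref{coro:more_properties} was extracted from Theorem \ref{theo:duality_pairs} via Holm and J\o rgensen's \cite[Theorem 3.1]{duality} in the module setting. The toolkit supplied by \cite[Theorem 3.2]{Ya} is: for any dual pair $(\mathscr{M},\mathscr{C})$ over $R$, the class $\mathscr{M}$ is closed under pure sub-complexes, pure quotients and pure extensions; if the pair is product-closed then $\mathscr{M}$ is pre-enveloping; if it is coproduct-closed then $\mathscr{M}$ is covering; and if it is perfect then $(\mathscr{M},\mathscr{M}^\perp)$ is a perfect cotorsion pair in $\Ch(R)$.

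With this in hand, parts (a) and (b) are immediate from the dual-pair statements of Theorem \ref{fi-dual}(a) (valid for every $n\geq 0$) and Theorem \ref{fi-dual}(b) (valid for $n\geq 2$), respectively. Part (c) is the perfect cotorsion pair conclusion applied to each of \eqref{fi-Ch}, \eqref{fi-dw}, \eqref{fi-ex}, which are perfect dual pairs for every $n\geq 0$ by Theorem \ref{fi-dual}(a); the cover clause then follows automatically from perfectness. Part (e) is the coproduct-closedness of \eqref{if-Ch} at $n=0$ when $R$ is left noetherian, furnished by Theorem \ref{fi-dual}(b.3). Part (f), assembled for $n\geq 2$, combines the product-closedness of \eqref{fi-Ch} (Theorem \ref{fi-dual}(a.1)) and of \eqref{if-Ch} (Theorem \ref{fi-dual}(b)) with the coproduct-closedness of \eqref{if-Ch}, and, under the additional hypothesis $\FP_n\mbox{-}\id_R(R)\leq k$, with the perfectness of \eqref{if-Ch}, \eqref{if-dw}, \eqref{if-ex} given by Theorem \ref{fi-dual}(b.1); this last produces the three perfect cotorsion pairs displayed in the statement.

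Part (d) requires a small amount of bookkeeping, since it interleaves results on $\Ch(R)$ and $\Ch(R\op)$. The flat pre-envelope in $\Ch(R\op)$ comes from the product-closedness of \eqref{fi-Ch} at $n=1$ under right coherence of $R$, as in Theorem \ref{fi-dual}(a.2). The absolutely pure cover in $\Ch(R)$ comes by invoking the opposite-ring instance of Theorem \ref{fi-dual}(b.2) (together with Theorem \ref{theo:duality_pairs}(b.2)) through the $R\leftrightarrow R\op$ symmetry stated in the introduction. The main obstacle I anticipate, aside from this side-switching, is keeping track of the three variants $({\rm dw}\widetilde{\cdot},{\rm dw}\widetilde{\cdot})$, $({\rm ex}\widetilde{\cdot},{\rm ex}\widetilde{\cdot})$ and $(\cdot,\cdot)$ of each dual pair simultaneously; however, Theorem \ref{theo:induced_duality} is specifically designed so that the product/coproduct/perfect properties pass from the module duality pair to all three complex dual pairs at once, so each item of the corollary is obtained by a single application of \cite[Theorem 3.2]{Ya} with no additional computation.
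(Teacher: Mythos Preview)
Your proposal is correct and follows exactly the approach of the paper: the paper itself simply states that Corollary~\ref{ChCoversPreenvelopes} is a consequence of Yang's \cite[Theorem 3.2]{Ya} combined with Theorem~\ref{fi-dual}, and your write-up is just a careful unpacking of that one-line deduction, item by item.
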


As a special case of Corollary \ref{ChCoversPreenvelopes}, we have that $\mathscr{F}_n(R\op)$ is always covering. On the other hand, if $n \geq 2$, then $\mathscr{F}_n(R\op)$ is pre-enveloping, and $\mathscr{I}_n(R)$ is covering and pre-enveloping.

The obtention of monic pre-envelopes and epic covers in $\Ch(R)$ and $\Ch(R\op)$ from the classes $\mathscr{I}_{(n,k)}(R)$ and $\mathscr{F}_{(n,k)}(R\op)$ is surprisingly related to asking a single property to the disk complex $D^0(R)$. We close this section going into the details of this, complementing Corollary \ref{ChCoversPreenvelopes}.

\begin{proposition}
The following statements are equivalent for every $n \geq 2$:
\begin{itemize}
\item[{\rm (1)}]  $D^0(R)$ has $\FP_n$-injective dimension at most $k$.

\item[{\rm (2)}] Every complex in $\Ch(R\op)$ has a monic $\mathscr{F}_{(n,k)}(R\op)$-pre-envelope.

\item[{\rm (3)}] Every complex in $\Ch(R)$ has an epic $\mathscr{I}_{(n,k)}(R)$-cover.

\item[{\rm (4)}] Every injective complex in $\Ch(R\op)$ has $\FP_n$-flat dimension at most $k$.

\item[{\rm (5)}] Every projective complex in $\Ch(R)$ has $\FP_n$-injective dimension at most $k$.

\item[{\rm (6)}] Every flat complex in $\Ch(R)$ has $\FP_n$-injective dimension at most $k$.
\end{itemize}
\end{proposition}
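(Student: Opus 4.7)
First, since $D^0(R)$ is exact with $Z_{-1}(D^0(R)) = R$ and all other cycles zero, Proposition \ref{fi-dim}(a) identifies (1) with the module-level condition $\FP_n\mbox{-}\id_R(R) \leq k$, i.e.\ $R \in \mathcal{I}_{(n,k)}(R)$. My plan is to establish the chain (1) $\Rightarrow$ (4) $\Rightarrow$ (6) $\Rightarrow$ (5) $\Rightarrow$ (1) together with the two side-equivalences (1) $\Leftrightarrow$ (3) and (1) $\Leftrightarrow$ (2); each of the latter reduces to a split-off trick applied to the appropriate cover or pre-envelope.

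For (1) $\Rightarrow$ (4) I would exploit that $R^+$ is an injective cogenerator of $\Mod(R\op)$, so every injective $E$ in $\Mod(R\op)$ is a direct summand of a product $(R^+)^I$. By Proposition \ref{prop:char_dims}(b), condition (1) gives $R^+ \in \mathcal{F}_{(n,k)}(R\op)$, and since this class is product-closed for $n \geq 2$ by Theorem \ref{theo:duality_pairs}(a.1) and closed under direct summands, $E \in \mathcal{F}_{(n,k)}(R\op)$ follows. Because injective complexes in $\Ch(R\op)$ are exact with injective cycles, Proposition \ref{fi-dim}(b) then promotes this module statement to (4). For (4) $\Rightarrow$ (6) I would pass through Pontrjagin duality at the complex level: a flat complex $\bm{F} \in \Ch(R)$ has injective dual $\bm{F}^+$, so by (4) and Proposition \ref{dim-dual}(a), $\FP_n\mbox{-}\id(\bm{F}^{++}) = \FP_n\mbox{-}\fd(\bm{F}^+) \leq k$; the canonical degreewise-pure embedding $\bm{F} \hookrightarrow \bm{F}^{++}$ together with the pure-subcomplex closure of $\mathscr{I}_{(n,k)}(R)$ (Corollary \ref{ChCoversPreenvelopes}(b), valid for $n \geq 2$) then yields $\FP_n\mbox{-}\id(\bm{F}) \leq k$. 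The implications (6) $\Rightarrow$ (5) $\Rightarrow$ (1) are immediate since projective complexes are flat and $D^0(R)$ is projective.

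For (1) $\Leftrightarrow$ (3), Corollary \ref{ChCoversPreenvelopes}(f) already supplies $\mathscr{I}_{(n,k)}(R)$-covers for $n \geq 2$. Granted (1), hence (5), every projective complex lies in $\mathscr{I}_{(n,k)}(R)$, so any canonical projective surjection $\bm{P} \twoheadrightarrow \bm{X}$ factors through the cover of $\bm{X}$, forcing the cover to be epic, which is (3). Conversely, applying (3) to $D^0(R)$ yields an epic $\mathscr{I}_{(n,k)}(R)$-cover that splits because $D^0(R)$ is projective, and closure of $\mathscr{I}_{(n,k)}(R)$ under direct summands then gives $D^0(R) \in \mathscr{I}_{(n,k)}(R)$, which is (1). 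The equivalence (1) $\Leftrightarrow$ (2) is dual: granted (1), hence (4), every injective complex in $\Ch(R\op)$ lies in $\mathscr{F}_{(n,k)}(R\op)$, so embedding any $\bm{Y} \in \Ch(R\op)$ into an injective and factoring through the $\mathscr{F}_{(n,k)}(R\op)$-pre-envelope of $\bm{Y}$ forces monicness; conversely, the monic pre-envelope of an injective complex splits by injectivity, yielding (4), and hence (1).

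The main obstacle will be keeping the Pontrjagin-duality bookkeeping clean: specifically, checking that $R^+$ is an injective cogenerator of $\Mod(R\op)$ so that every injective embeds split into a power $(R^+)^I$, and verifying that the monomorphism $\bm{F} \hookrightarrow \bm{F}^{++}$ of complexes is indeed degreewise pure so that Corollary \ref{ChCoversPreenvelopes}(b) applies. Once these duality ingredients are in place the remaining cover and pre-envelope manipulations are formal split-off arguments and the hypothesis $n \geq 2$ is used only to invoke the product-closure of $\mathcal{F}_{(n,k)}(R\op)$ and the pure-subcomplex closure of $\mathscr{I}_{(n,k)}(R)$.
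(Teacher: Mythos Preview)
Your proposal is correct and follows essentially the same split-off strategy as the paper, but the routing differs in two places worth noting.

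For (4) $\Rightarrow$ (6) you take an unnecessary detour through the double dual $\bm{F}^{++}$ and the pure-subcomplex closure of $\mathscr{I}_{(n,k)}(R)$. The paper simply applies Proposition \ref{dim-dual}(b) directly: since $n \geq 2$, $\FP_n\mbox{-}\id(\bm{F}) = \FP_n\mbox{-}\fd(\bm{F}^+) \leq k$, and you are done. Your argument works (degreewise pure is equivalent to pure for complexes by \cite[Theorem 5.1.3]{GR99}), but it is heavier than needed. For (1) $\Rightarrow$ (4) you argue at the module level via the injective cogenerator $R^+$ and product-closure of $\mathcal{F}_{(n,k)}(R\op)$, then promote to complexes; the paper instead establishes (1) $\Rightarrow$ (2) first by embedding an arbitrary $\bm{X}$ into $\prod_m (D^m(R))^+$ and then obtains (4) from (2) by the splitting trick. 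Both approaches ultimately rest on the same ingredients (Pontrjagin duality of $R$ and product-closure for $n \geq 2$), so the difference is organizational rather than substantive.
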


\begin{proof} \
\begin{itemize}
\item[$\bullet$] (1) $\Rightarrow$ (2): Let $\bm{X}$ be a complex in $\Ch(R\op)$. Then there is a $\mathscr{F}_{(n,k)}(R\op)$-pre-envelope $\bm{\varphi} \colon \bm{X} \rightarrow \bm{W}$ by Corollary \ref{ChCoversPreenvelopes}. Now consider an exact sequence
\[
\bm{0} \to  \bm{X} \to \prod_{m \in \mathbb{Z}} (D^m(R))^+.
\]
Since $D^0(R)$ has $\FP_n$-injective dimension at most $k$ by (1), then each $(D^m(R))^+$ has $\FP_n$-flat dimension at most $k$ by Proposition \ref{dim-dual}, and hence $\prod_{m \in \mathbb{Z}} (D^m(R))^+ \in \mathscr{F}_{(n,k)}(R\op)$. Now from the following commutative diagram
\[
\xymatrix@C=0.5cm{
\bm{0} \ar[d] & \\
\bm{X} \ar[d] \ar[r]^{\bm{\varphi}} & \bm{W} \ar@{.>}[ld] \\
\displaystyle\operatorname*{\prod}_{m \in \mathbb{Z}} (D^i(R))^+
}
\]
we can get that the $\mathscr{F}_{(n,k)}(R\op)$-pre-envelope $\bm{\varphi} \colon \bm{X} \rightarrow \bm{W}$ is monic.

\item[$\bullet$] (2) $\Rightarrow$ (4): Let $\bm{E}$ be an injective complex in $\Ch(R\op)$. By (2), there is an exact sequence $\bm{0} \to \bm{E} \to \bm{W} \to \bm{W/E} \to \bm{0}$ with $\bm{W} \in \mathscr{F}_{(n,k)}(R\op)$. Moreover, this sequence is split, and so $\bm{E}$ belongs to $\mathscr{F}_{(n,k)}(R\op)$ as a direct summand of $\bm{W}$.

\item[$\bullet$] (4) $\Rightarrow$ (6): Let $\bm{Q}$ be a flat complex in $\Ch(R)$. Then, $\bm{Q^+}$ is injective by \cite{Fe72}, and hence $\bm{Q^+} \in \mathscr{F}_{(n,k)}(R\op)$ by hypothesis. Finally, by Proposition \ref{dim-dual} we have  $\bm{Q} \in \mathscr{I}_{(n,k)}(R)$.

\item[$\bullet$] (1) $\Rightarrow$ (3): Let $\bm{X}$ be a complex in $\Ch(R)$. Then, there is a $\mathscr{I}_{(n,k)}(R)$-cover $\bm{\psi} \colon \bm{W} \to \bm{X}$ by Corollary \ref{ChCoversPreenvelopes}. Consider an epimorphism $\bm{f} \colon \bm{F} \to  \bm{X}$ with $\bm{F}$ free. Since $D^0(R)$ has $\FP_n$-injective dimension at most $k$ by (1), then so does $\bm{F}$. Hence, there exists a morphism $\bm{g} \colon \bm{F} \to \bm{W}$ such that $\bm{\psi} \circ \bm{g} = \bm{f}$. Since $\bm{f}$ is epic, we can get that $\bm{\psi} \colon \bm{W} \to \bm{X}$ is also epic.

\item[$\bullet$] (3) $\Rightarrow$ (5): Let $\bm{P}$ be a projective complex in $\Ch(R)$. By (3), there is an exact sequence $\bm{0} \to \bm{K} \to \bm{W} \to \bm{P} \to \bm{0}$ with $\bm{W} \in \mathscr{I}_{(n,k)}(R)$. Moreover, this sequence is split, so $\bm{P}$ belongs to $\mathscr{I}_{(n,k)}(R)$ as a direct summand of $\bm{W}$.

\item[$\bullet$] The implications (6) $\Rightarrow$ (5) and (5) $\Rightarrow$ (1) are clear.
\end{itemize}
\end{proof}


\section{\textbf{Model structures from $\FP_n$-injective and $\FP_n$-flat dimensions}}\label{sec:models}

In this last section, we construct abelian model structures on $\Ch(R)$ from the classes $\mathcal{I}_{(n,k)}(R)$ and $\mathcal{F}_{(n,k)}(R\op)$. Recall that a \emph{model structure $\mathfrak{M}$ on a bicomplete category $\mathcal{D}$}, roughly speaking, is formed by three classes of morphisms $\mathfrak{F}_{\rm ib}$, $\mathfrak{C}_{\rm of}$ and $\mathfrak{W}_{\rm eak}$ in $\mathcal{D}$ called fibrations, cofibrations and weak equivalences, respectively, satisfying a series of axioms under which it is possible to do homotopy theory on $\mathcal{D}$. We do not go into the details of the definition of model structure, but we suggest the reader to check \cite{HoveyBook}.

For the purpose of this paper, we are interested in a particular type of model structure on bicomplete abelian categories, known as \emph{abelian}. These model structure were defined by Hovey in \cite[Definition 2.1]{HoveyPaperPro}, as those model structures $\mathfrak{M} = (\mathfrak{C}_{\rm of},\mathfrak{F}_{\rm ib},\mathfrak{W}_{\rm eak})$ such that:
\begin{itemize}
\item[$\bullet$] $f \in \mathfrak{C}_{\rm of}$ if, and only if, it is monic and ${\rm CoKer}(f)$ is a cofibrant object.

\item[$\bullet$] $g \in \mathfrak{F}_{\rm ib}$ if, and only if, it is epic and ${\rm Ker}(g)$ is a fibrant object.
\end{itemize}
Trivial cofibrations (that is, cofibrations that are also weak equivalences) and trivial fibrations have a similar description. The importance of abelian model structures lies in the fact that they are in one-to-one correspondence with certain pairs of cotorsion pairs. Specifically, if we are given three classes of objects $\mathcal{A}$, $\mathcal{B}$ and $\mathcal{W}$ on an abelian category $\mathcal{D}$ such that $(\mathcal{A} \cap \mathcal{W},\mathcal{B})$ and $(\mathcal{A},\mathcal{B} \cap \mathcal{W})$ are complete cotorsion pairs, and such that $\mathcal{W}$ is thick, then there exists a unique abelian model structure on $\mathcal{D}$ such that:
\begin{align*}
\mbox{cofibrations} & = \mbox{monomorphisms with cokernel in $\mathcal{A}$}, \\
\mbox{fibrations} & = \mbox{epimorphisms with kernel in $\mathcal{B}$}, \\
\mbox{trivial cofibrations} & = \mbox{monomorphisms with cokernel in $\mathcal{A} \cap \mathcal{W}$}, \\
\mbox{trivial fibrations} & = \mbox{epimorphisms with kernel in $\mathcal{B} \cap \mathcal{W}$}.
\end{align*}
Conversely, for any abelian model structure $(\mathfrak{C}_{\rm of},\mathfrak{F}_{\rm ib},\mathfrak{W}_{\rm eak})$ on a bicomplete abelian category $\mathcal{D}$ one has that the classes $\mathcal{Q}$, $\mathcal{R}$ and $\mathcal{T}$ of cofibrant, fibrant and trivial objects, respectively, form two complete cotorsion pairs $(\mathcal{Q} \cap \mathcal{T},\mathcal{R})$ and $(\mathcal{Q},\mathcal{R} \cap \mathcal{T})$ with $\mathcal{T}$ thick. This result is known as \emph{Hovey's Correspondence}, proved by Hovey in \cite[Theorem 2.2]{HoveyPaper}, and which has turned out to be a useful method to transporting tools from algebraic topology to homological algebra.

Any two cotorsion pairs of the form $(\mathcal{A} \cap \mathcal{W},\mathcal{B})$ and $(\mathcal{A}, \mathcal{B} \cap \mathcal{W})$ are said to be \emph{compatible}. If in addition, these pairs are complete and $\mathcal{W}$ is thick, the triple $(\mathcal{A,W,B})$ is called \emph{Hovey triple}. We will denote the abelian model structure associated to a Hovey triple $(\mathcal{A,W,B})$ by
\[
\mathfrak{M} := (\mathcal{A,W,B}).
\]
In the next section, we explain how to apply Hovey's Correspondence to the context of this paper, along with some results of Gillespie to produce cotorsion pairs of complexes from cotorsion pairs of modules. All the abelian model structures constructed on $\Ch(R)$ from now on will have $\mathcal{W}$ as the class $\mathcal{E}(R)$ of exact complexes, which we know is thick, and so their classes of weak equivalences will be given by the quasi-isomorphisms.


\subsection{Construction of model structures via Hovey correspondence}\label{sec:model_sts}

We know by Corollary \ref{coro:more_properties} that, for any $n \geq 0$, the class $\mathcal{F}_{(n,k)}(R\op)$ of modules with $\FP_n$-flat dimension $\leq k$ is the left half of a perfect cotorsion pair $(\mathcal{F}_{(n,k)}(R\op),(\mathcal{F}_{(n,k)}(R\op))^\perp)$. As it happened with the case $k = 0$, this pair has also a cogenerating set, and one can notice this using the arguments from \cite[Theorem 2.9]{EnochsKaplansky}. This implies by \cite[Theorem 7.3.2]{EJ11} and \cite[Propositions 3.2, 3.3 and 4.3, and Theorem 5.5]{GillespieDegree} that we have the following complete cotorsion pairs in $\Ch(R\op)$:

\begin{align}
& (\widetilde{\mathcal{F}_{(n,k)}(R\op)},{\rm dg}\widetilde{(\mathcal{F}_{(n,k)}(R\op))^\perp}), \label{pair1} \\
& ({\rm dg}\widetilde{\mathcal{F}_{(n,k)}(R\op)},\widetilde{(\mathcal{F}_{(n,k)}(R\op))^\perp}), \label{pair2} \\
& ({\rm dw}\widetilde{\mathcal{F}_{(n,k)}(R\op)},({\rm dw}\widetilde{\mathcal{F}_{(n,k)}(R\op)})^\perp), \label{pair3} \\
& ({\rm ex}\widetilde{\mathcal{F}_{(n,k)}(R\op)},({\rm ex}\widetilde{\mathcal{F}_{(n,k)}(R\op)})^\perp), \label{pair4}
\end{align}
where \eqref{pair1}, \eqref{pair3} and \eqref{pair4} are also perfect by Corollary \ref{ChCoversPreenvelopes}, and $\widetilde{\mathcal{F}_{(n,k)}(R\op)}$ is the class of complexes with $\FP_n$-flat dimension at most $k$ by Proposition \ref{fi-dim}. The symbol ``${\rm dg}$'' stands for ``differential graded''. Recall from \cite{GillespieFlat} that if $(\mathcal{A,B})$ is a cotorsion pair in $\Mod(R)$, then
\begin{align*}
{\rm dg}\widetilde{A} & := \left\{ \begin{array}{ll} \bm{X} \in \Ch(R) \mbox{ :} & \mbox{$X_m \in \mathcal{A}$ for every $m \in \mathbb{Z}$, and $\hom(\bm{X},\bm{B})$} \\ {} & \mbox{is exact whenever $\bm{B}$ is a complex in $\widetilde{\mathcal{B}}$} \end{array} \right\} \\
{\rm dg}\widetilde{B} & := \left\{ \begin{array}{ll} \bm{Y} \in \Ch(R) \mbox{ :} & \mbox{$Y_m \in \mathcal{B}$ for every $m \in \mathbb{Z}$, and $\hom(\bm{A},\bm{Y})$} \\ {} & \mbox{is exact whenever $\bm{A}$ is a complex in $\widetilde{\mathcal{A}}$} \end{array} \right\}.
\end{align*}
As an example, if $\mathcal{P}(R\op)$ denotes the class of projective modules in $\Mod(R\op)$, then the triple $({\rm dg}\widetilde{\mathcal{P}(R\op)},\mathcal{E}(R\op),\Ch(R\op))$ is a Hovey triple in $\Ch(R\op)$. The associated model structure is known as the \emph{standard} or \emph{projective model structure} on $\Ch(R\op)$, which we will denote by $\mathfrak{M}^{\rm proj}(R\op)$. Dually, $(\Ch(R),\mathcal{E}(R),{\rm dg}\widetilde{\mathcal{I}_{(0,0)}(R)})$ is also a Hovey triple in $\Ch(R)$, and the associated model structure is known as the \emph{injective model structure} on $\Ch(R)$. See \cite[Section 2.3]{HoveyBook} for details.

We first study the possibility of obtaining model structures from the pairs \eqref{pair1} and \eqref{pair2}. In the cases $n = 0,1$, we know that $\mathcal{F}_{(n,k)}(R\op)$ is the class of modules with flat dimension at most $k$, and so the inducing cotorsion pair $(\mathcal{F}_{(n,k)}(R\op),(\mathcal{F}_{(n,k)}(R\op))^\perp)$ is hereditary, that is, the class $\mathcal{F}_{(n,k)}(R\op)$ is resolving (that is, it is closed under extensions and epi-kernels, and contains $\mathcal{P}(R\op)$). By \cite[Theorem 3.12]{GillespieFlat}, we know that if $(\mathcal{A,B})$ is a hereditary cotorsion pair in $\Mod(R\op)$ cogenerated by a set, then $\widetilde{\mathcal{A}} = {\rm dg}\widetilde{\mathcal{A}} \cap \mathcal{E}(R\op)$ and $\widetilde{\mathcal{B}} = {\rm dg}\widetilde{\mathcal{B}} \cap \mathcal{E}(R\op)$. It follows that, if $n = 0,1$, then $({\rm dg}\widetilde{\mathcal{F}_{(n,k)}(R\op)}, \mathcal{E}, {\rm dg}\widetilde{(\mathcal{F}_{(n,k)}(R\op))^\perp})$ is a Hovey triple, and so it gives rise to abelian model structures on $\Ch(R\op)$, which are the \emph{$k$-flat model structures} obtained by the second author in \cite[Theorem 6.1]{Perez16}.

Now consider $n \to \infty$. In this case, $\mathcal{F}_{(\infty,k)}(R\op)$ coincides with the class of modules with level dimension $\leq k$, and it is clear that it contains $\mathcal{P}(R\op)$ and that it is closed under extensions. On the other hand, for the case $k = 0$, it is known by \cite[Proposition 2.8]{BGH14} that the class of level modules is also closed under epi-kernels. This closure property is also true for any $k > 0$.

\begin{proposition}\label{prop:level_resolving}
Let $k \geq 0$ be a non-negative integer. Then, the class $\mathcal{F}_{(\infty,k)}(R\op)$ of modules with level dimension at most $k$ is resolving.
\end{proposition}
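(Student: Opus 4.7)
The plan is to verify the three defining properties of a resolving class: containment of the projectives, closure under extensions, and closure under kernels of epimorphisms. As noted in the paragraph preceding the statement, the first two hold effortlessly: every projective right $R$-module is flat and hence level, so it lies in $\mathcal{F}_{(\infty,0)}(R\op) \subseteq \mathcal{F}_{(\infty,k)}(R\op)$; and closure under extensions is a direct consequence of the long exact Tor sequence combined with the vanishing criterion of Proposition \ref{prop:FPnid_char}(b) (or, more accurately, its $n = \infty$ analog). The entire content of the present proposition therefore lies in closure under kernels of epimorphisms.

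For that, suppose $0 \to A \to B \to C \to 0$ is a short exact sequence in $\Mod(R\op)$ with $B, C \in \mathcal{F}_{(\infty,k)}(R\op)$. For each $L \in \mathcal{FP}_\infty(R)$ I would consider the segment
\[
\Tor^R_{k+2}(C,L) \longrightarrow \Tor^R_{k+1}(A,L) \longrightarrow \Tor^R_{k+1}(B,L)
\]
of the associated long Tor sequence. The right-hand term vanishes by Proposition \ref{prop:FPnid_char}(b) applied to $B$ with $t = k$. The left-hand term is the delicate one, but it is handled by the same proposition applied to $C$ with $t = k+1$: since $\FP_\infty\mbox{-}\fd_{R\op}(C) \leq k$ trivially implies $\FP_\infty\mbox{-}\fd_{R\op}(C) \leq k+1$, one has $\Tor^R_{k+2}(C,L) = 0$. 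Consequently $\Tor^R_{k+1}(A,L) = 0$ for every $L \in \mathcal{FP}_\infty(R)$, and invoking Proposition \ref{prop:FPnid_char}(b) in the reverse direction gives $\FP_\infty\mbox{-}\fd_{R\op}(A) \leq k$, that is, $A \in \mathcal{F}_{(\infty,k)}(R\op)$.

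The only real subtlety, and the main point requiring care, is that Proposition \ref{prop:FPnid_char} is stated for finite $n$, so one should first verify that the equivalence between bounded $\FP_n$-flat dimension and vanishing of a single $\Tor$ persists in the limit $n = \infty$. Inspecting the argument given for the finite case, it rests solely on the generalized Schanuel Lemma together with closure of $\mathcal{F}_n(R\op)$ under finite direct sums and direct summands, both of which remain valid for level modules; hence the equivalence transfers verbatim. Once this observation is recorded, the two-line Tor chase above delivers the required closure under epi-kernels and completes the proof.
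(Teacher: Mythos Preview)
Your proof is correct and follows essentially the same route as the paper: both argue via the segment $\Tor^R_{k+2}(C,L) \to \Tor^R_{k+1}(A,L) \to \Tor^R_{k+1}(B,L)$ of the long exact sequence. The only cosmetic difference is in killing the left term: the paper shifts on the $L$ variable, writing $\Tor^R_{k+2}(C,L) \cong \Tor^R_{k+1}(C,L')$ with $L'$ a first syzygy of $L$ (still of type $\FP_\infty$), whereas you invoke the characterization at level $t = k+1$; both are equally valid, and your observation that Proposition~\ref{prop:FPnid_char} carries over verbatim to $n = \infty$ is tacitly used by the paper as well.
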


\begin{proof}
It is only left to show that $\mathcal{F}_{(\infty,k)}(R\op)$ is closed under epi-kernels if $k > 0$. So suppose we are given an exact sequence $0 \to A \to B \to C \to 0$ in $\Mod(R\op)$ with $B, C \in \mathcal{F}_{(\infty,k)}(R\op)$. For any $L \in \mathcal{FP}_\infty(R)$, we have an exact sequence $\Tor^R_{k+2}(C,L) \to \Tor^R_{k+1}(A,L) \to \Tor^R_{k+1}(B,L)$ where $\Tor^R_{k+1}(B,L) = 0$, $\Tor^R_{k+2}(C,L) \cong \Tor^R_{k+1}(C,L') = 0$, and $L' \in \mathcal{FP}_\infty(R)$ appearing in an exact sequence $0 \to  L' \to F \to L \to 0$ with $F$ finitely generated and free. It follows $\Tor^R_{k+1}(A,L) = 0$, and hence $\FP_\infty\mbox{-}\fd_{R\op}(A) \leq k$.
\end{proof}

Thus, being $(\mathcal{F}_{(\infty,k)}(R\op),(\mathcal{F}_{(\infty,k)}(R\op))^\perp)$ a hereditary cotorsion pair cogenerated by a set, the equalities
\[
\widetilde{\mathcal{F}_{(\infty,k)}(R\op)} = {\rm dg}\widetilde{\mathcal{F}_{(\infty,k)}(R\op)} \cap \mathcal{E}(R\op) \mbox{ \ and \ } \widetilde{(\mathcal{F}_{(\infty,k)}(R\op))^\perp} = {\rm dg}\widetilde{(\mathcal{F}_{(\infty,k)}(R\op))^\perp} \cap \mathcal{E}(R\op)
\]
hold, where $\widetilde{\mathcal{F}_{(\infty,k)}(R\op)}$ is the class of complexes with level dimension at most $k$, and so we have the following result by Hovey's Correspondence and \cite[Lemma 6.7]{HoveyPaper}.

\begin{theorem}\label{theo:level_model_structure}
Let $R$ be an arbitrary ring and $k$ be a non-negative integer. There exists a unique cofibrantly generated abelian model structure on $\Ch(R\op)$ given by
\[
\mathfrak{M}^{\rm flat}_{(\infty,k)}(R\op) := ({\rm dg}\widetilde{\mathcal{F}_{(\infty,k)}(R\op)},\mathcal{E}(R\op),{\rm dg}\widetilde{(\mathcal{F}_{(\infty,k)}(R\op))^\perp}).
\]
In the case $k = 0$, we will refer to $\mathfrak{M}^{\rm flat}_{(\infty,0)}(R\op)$ as the \emph{level model structure}.
\end{theorem}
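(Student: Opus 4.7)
The plan is to apply Hovey's Correspondence to the Hovey triple
\[
\bigl({\rm dg}\widetilde{\mathcal{F}_{(\infty,k)}(R\op)},\ \mathcal{E}(R\op),\ {\rm dg}\widetilde{(\mathcal{F}_{(\infty,k)}(R\op))^\perp}\bigr)
\]
and then invoke \cite[Lemma 6.7]{HoveyPaper} to pass from cogenerating sets at the level of cotorsion pairs to a cofibrantly generated abelian model structure. Since the class of weak equivalences is built out of the thick class $\mathcal{E}(R\op)$ of exact complexes, the proof reduces entirely to producing two compatible complete cotorsion pairs whose ``trivial parts'' coincide with $\mathcal{E}(R\op)$.

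First I would assemble the input cotorsion pair. By Corollary \ref{coro:more_properties} (c), $(\mathcal{F}_{(\infty,k)}(R\op),(\mathcal{F}_{(\infty,k)}(R\op))^\perp)$ is a perfect cotorsion pair in $\Mod(R\op)$, and, as discussed before the theorem, it admits a cogenerating set (using the argument of \cite[Theorem 2.9]{EnochsKaplansky}). Moreover, by Proposition \ref{prop:level_resolving}, the class $\mathcal{F}_{(\infty,k)}(R\op)$ is resolving, so this cotorsion pair is hereditary. Applying \cite[Propositions 3.2, 3.3 and 4.3]{GillespieDegree} (equivalently, the induction procedure yielding the pairs \eqref{pair1} and \eqref{pair2}) gives two complete cotorsion pairs in $\Ch(R\op)$:
\[
\bigl(\widetilde{\mathcal{F}_{(\infty,k)}(R\op)},\ {\rm dg}\widetilde{(\mathcal{F}_{(\infty,k)}(R\op))^\perp}\bigr) \quad\text{and}\quad \bigl({\rm dg}\widetilde{\mathcal{F}_{(\infty,k)}(R\op)},\ \widetilde{(\mathcal{F}_{(\infty,k)}(R\op))^\perp}\bigr),
\]
both cogenerated by sets.

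Next I would verify the compatibility needed by Hovey's Correspondence. The hereditary character of the inducing pair, together with \cite[Theorem 3.12]{GillespieFlat}, yields the key identifications
\[
\widetilde{\mathcal{F}_{(\infty,k)}(R\op)} = {\rm dg}\widetilde{\mathcal{F}_{(\infty,k)}(R\op)} \cap \mathcal{E}(R\op), \qquad \widetilde{(\mathcal{F}_{(\infty,k)}(R\op))^\perp} = {\rm dg}\widetilde{(\mathcal{F}_{(\infty,k)}(R\op))^\perp} \cap \mathcal{E}(R\op).
\]
These are exactly the two equalities expressing that the two cotorsion pairs above fit together as the ``trivial cofibration/fibration'' and the ``cofibration/trivial fibration'' halves of a Hovey triple with thick middle class $\mathcal{E}(R\op)$. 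Applying Hovey's Correspondence (in the form of \cite[Theorem 2.2]{HoveyPaper}) then produces the desired unique abelian model structure $\mathfrak{M}^{\rm flat}_{(\infty,k)}(R\op)$ on $\Ch(R\op)$.

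Finally, for cofibrant generation, I would note that because both compatible cotorsion pairs above are cogenerated by sets, \cite[Lemma 6.7]{HoveyPaper} (together with smallness in the Grothendieck category $\Ch(R\op)$) yields generating sets of cofibrations and trivial cofibrations with small domains, so the model structure is cofibrantly generated. The main obstacle is the hereditary/resolving step: once Proposition \ref{prop:level_resolving} guarantees that $\mathcal{F}_{(\infty,k)}(R\op)$ is resolving, Gillespie's machinery automatically delivers both the compatibility identities and the completeness of the induced cotorsion pairs, and everything else is bookkeeping.
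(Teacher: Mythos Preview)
Your proposal is correct and follows essentially the same approach as the paper: both establish that $(\mathcal{F}_{(\infty,k)}(R\op),(\mathcal{F}_{(\infty,k)}(R\op))^\perp)$ is a hereditary cotorsion pair cogenerated by a set via Proposition~\ref{prop:level_resolving}, invoke \cite[Theorem 3.12]{GillespieFlat} for the compatibility equalities, and then apply Hovey's Correspondence together with \cite[Lemma 6.7]{HoveyPaper} for cofibrant generation. The paper's argument is just the paragraph immediately preceding the theorem; your write-up is a more detailed unpacking of the same steps.
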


A model structure is, roughly speaking, \emph{cofibrantly generated} if its classes of cofibrations and trivial cofibrations can be generated via \emph{transfinite compositions} from sets of morphisms, called \emph{generating cofibrations} and \emph{genereating trivial cofibrations}. We do not recall specifically the definition of a cofibrantly generated model structures, as it involves several thick abstract notions, but we refer the interested reader to \cite[Section 2.1]{HoveyBook}. However, if we work in the context of abelian model structures, cofibrantly generated model structures can be thought as the analogous of a cotorsion pair cogenerated by a set.

The flat model structure constructed by Gillespie in \cite{GillespieFlat} has the additional property that it is monoidal, with respect to the closed symmetric monoidal structure on $\Ch(R\op)$ (where $R$ is commutative) given by the usual tensor product $\otimes$. Roughly speaking, a model structure on a closed symmetric monoidal category is \emph{monoidal} if it is compatible with the monoidal structure. Checking that a model structure is monoidal involves some lengthy conditions (See \cite[Definition 4.2.6]{HoveyBook}). However, in the case of abelian model structures and thanks to Hovey's \cite[Theorem 7.2]{HoveyPaper}, we have a list of simpler conditions to check. Reading this result for the (closed symmetric) monoidal structure $(\Ch(R\op),\otimes)$, we have that an abelian model structure on $\Ch(R\op)$ is monoidal if:
\begin{itemize}
\item[(a)] Every cofibration is a pure injection in each degree.

\item[(b)] If $\bm{X}$ and $\bm{Y}$ are cofibrant objects, then so is $\bm{X} \otimes \bm{Y}$.

\item[(c)] If $\bm{X}$ and $\bm{Y}$ are cofibrant objects and any of them is trivial, then $\bm{X} \otimes \bm{Y}$ is trivial.

\item[(d)] The unit $S^0(R)$ of the monoidal category $(\Ch(R\op),\otimes)$ is cofibrant.
\end{itemize}
Level modules represent a relative version of flat modules from which one can obtain a model structure on $\Ch(R\op)$, namely $\mathfrak{M}^{\rm flat}_{(\infty,0)}(R\op)$. However, $\mathfrak{M}^{\rm flat}_{(\infty,0)}(R\op)$ does not share the property of being monoidal that its flat sibling $\mathcal{M}^{\rm flat}_{(0,0)}(R\op)$ does have. This is settled in the following result.

\begin{proposition}\label{prop:monoidal}
Let $R$ be a commutative ring. The level model structure $\mathfrak{M}^{\rm flat}_{(\infty,0)}(R\op)$ on $\Ch(R\op)$ is monoidal if, and only if, $R$ is coherent.
\end{proposition}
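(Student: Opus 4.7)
The plan is to prove both implications separately, with the easy direction handled by reduction to a known result of Gillespie, and the hard direction extracting coherence from Hovey's monoidal axiom (a).

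For the \emph{sufficiency} direction, I would argue that when $R$ is coherent, the level model structure collapses to Gillespie's flat model structure, which is already known to be monoidal. Concretely, by \cite[Corollary 2.3]{BG16} (or the version stated after Example \ref{ex2}), $R$ coherent is equivalent to $\mathcal{FP}_1(R) = \mathcal{FP}_\infty(R)$, so the class $\mathcal{F}_\infty(R\op)$ of level modules coincides with the class $\mathcal{F}_1(R\op)$ of flat modules. All the ``dg''-and ``$\widetilde{\ \ }$''-decorated classes defining $\mathfrak{M}^{\rm flat}_{(\infty,0)}(R\op)$ are therefore identical to the ones defining Gillespie's flat model structure on $\Ch(R\op)$, which is monoidal.

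For the \emph{necessity} direction, the strategy is to use only Hovey's axiom (a), namely that every cofibration is a pure monomorphism in each degree. Given an arbitrary level module $C$, choose a short exact sequence $0 \to K \to P \to C \to 0$ with $P$ projective and form the morphism of sphere complexes $S^0(K) \hookrightarrow S^0(P)$, whose cokernel is $S^0(C)$. I would first verify that $S^0(C) \in {\rm dg}\widetilde{\mathcal{F}_\infty(R\op)}$: its components are obviously level, and for any complex $\bm{A} \in \widetilde{(\mathcal{F}_\infty(R\op))^\perp}$ one has $\hom(S^0(C),\bm{A}) \cong \Hom_R(C,\bm{A})$, which is exact because $\Ext^1_R(C,Z_n(\bm{A})) = 0$ by the cotorsion pair $(\mathcal{F}_\infty(R\op),(\mathcal{F}_\infty(R\op))^\perp)$. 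Consequently $S^0(K) \hookrightarrow S^0(P)$ is a cofibration, which by (a) is pure in each degree; reading off degree $0$ shows that $0 \to K \to P \to C \to 0$ is a pure exact sequence, so $C$ is a pure quotient of the flat module $P$ and is therefore itself flat.

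It remains to deduce coherence from the equality $\mathcal{F}_\infty(R\op) = \mathcal{F}_1(R\op)$ just obtained. Here I would invoke Theorem \ref{theo:duality_pairs}(a.1), which (since $\infty \geq 2$) guarantees that $\mathcal{F}_\infty(R\op)$ is closed under arbitrary products. Under the identification with the flat modules, this says that flat $R$-modules are closed under products, so by Chase's classical theorem $R$ is coherent (we are using commutativity to identify the one-sided flavors of Chase's criterion). The main technical step I expect to need care with is the verification that $S^0(C)$ lies in ${\rm dg}\widetilde{\mathcal{F}_\infty(R\op)}$, but as sketched above this reduces cleanly to the defining $\Ext^1$-vanishing of the cotorsion pair $(\mathcal{F}_\infty(R\op),(\mathcal{F}_\infty(R\op))^\perp)$ already established earlier in the paper.
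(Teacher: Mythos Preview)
Your argument follows the paper's route closely: sufficiency by reduction to Gillespie's monoidal flat model structure, and necessity by invoking Hovey's condition (a) to show that every level module is flat. Your execution of the necessity step is in fact sharper than the paper's---the paper applies (a) to the map $\bm{0} \to S^0(M)$, which literally only yields the vacuous statement that $0 \to M$ is pure, whereas your choice of the cofibration $S^0(K) \hookrightarrow S^0(P)$ genuinely extracts purity of $K \hookrightarrow P$ and hence flatness of $C$. Your final passage to coherence via Chase's theorem is a legitimate alternative to the paper's direct appeal to \cite[Corollary~2.9]{BGH14}.

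One point deserves care in both arguments: condition (a) is presented in the paper (and in Hovey's \cite[Theorem~7.2]{HoveyPaper}) as a \emph{sufficient} criterion for monoidality (``is monoidal if''), not as a necessary one. The pushout-product axiom only forces $\bm{X} \otimes f$ to be a monomorphism when $\bm{X}$ is \emph{cofibrant}, so from monoidality alone one immediately obtains $\Tor_1^R(M,C)=0$ for all \emph{level} $M$ and $C$, which is weaker than flatness of $C$. Your proof (like the paper's) tacitly assumes that (a) follows from monoidality; to make this step airtight you should either verify that Hovey's theorem is an ``if and only if'' in this abelian setting, or argue separately that the weaker $\Tor$-orthogonality of level modules against level modules already forces $R$ to be coherent.
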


\begin{proof}
By \cite[Corollary 2.9]{BGH14}, $R$ is left coherent if, and only if, the classes of (right) level modules and flat modules coincide. So, if $R$ is right coherent, $\mathfrak{M}^{\rm flat}_{(\infty,0)}(R\op)$ is precisely the flat model structure, which is monoidal by \cite[Corollary 5.1]{GillespieFlat}.

Now suppose that $\mathfrak{M}^{\rm flat}_{(\infty,0)}(R\op)$ coincides with the flat model structure. Given a level module $M$ in $\Mod(R\op)$, we have by \cite[Lemma 3.4]{GillespieFlat} that $\bm{0} \to S^0(M)$ is a cofibration, and so $0 \to M$ is a pure injection, implying that $M$ must be flat. Hence, we can conclude that $R$ is left coherent.
\end{proof}

So far, with respect to the pairs \eqref{pair1} and \eqref{pair2}, we have only worked out the limit cases $n = 0,1$ and $n \to \infty$. For the cases in between, we cannot even obtain a model structure on $\Ch(R\op)$ from the inducing cotorsion pair $(\mathcal{F}_{(n,k)}(R\op),(\mathcal{F}_{(n,k)}(R\op))^\perp)$, since the cotorsion pairs \eqref{pair1} and \eqref{pair2} in $\Ch(R\op)$ are not necessarily compatible, that is, we cannot always guarantee that the equalities
\[
\widetilde{\mathcal{F}_{(n,k)}(R\op)} = {\rm dg}\widetilde{\mathcal{F}_{(n,k)}(R\op)} \cap \mathcal{E}(R\op) \mbox{ \ and \ } \widetilde{(\mathcal{F}_{(n,k)}(R\op))^\perp} = {\rm dg}\widetilde{(\mathcal{F}_{(n,k)}(R\op))^\perp} \cap \mathcal{E}(R\op)
\]
hold. Actually, this is only possible in the case where the ground ring $R$ is left $n$-coherent, due to the following result.

\begin{proposition}\label{prop:no_flat}
The following are equivalent for any ring $R$ and any $n \geq 2$:
\begin{itemize}
\item[{\rm (1)}] $R$ is left $n$-coherent.

\item[{\rm (2)}] $\widetilde{\mathcal{F}_{(n,k)}(R\op)} = {\rm dg}\widetilde{\mathcal{F}_{(n,k)}(R\op)} \cap \mathcal{E}(R\op)$.

\item[{\rm (3)}] $\widetilde{(\mathcal{F}_{(n,k)}(R\op))^\perp} = {\rm dg}\widetilde{(\mathcal{F}_{(n,k)}(R\op))^\perp} \cap \mathcal{E}(R\op)$.
\end{itemize}
\end{proposition}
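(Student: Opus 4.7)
The plan is to recognize that both (2) and (3) are equivalent, via Gillespie's correspondence in \cite{GillespieFlat}, to the cotorsion pair $(\mathcal{F}_{(n,k)}(R\op), (\mathcal{F}_{(n,k)}(R\op))^\perp)$ in $\Mod(R\op)$ being hereditary; the proposition then reduces to showing that $R$ is left $n$-coherent if and only if $\mathcal{F}_{(n,k)}(R\op)$ is closed under kernels of epimorphisms in $\Mod(R\op)$.

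For the implication $(1) \Rightarrow (2), (3)$, I adapt the argument of Proposition \ref{prop:level_resolving}. If $R$ is left $n$-coherent, then $\mathcal{FP}_n(R) = \mathcal{FP}_\infty(R)$, so any first syzygy $L'$ of $L \in \mathcal{FP}_n(R)$ is again in $\mathcal{FP}_n(R)$. Given $0 \to A \to B \to C \to 0$ in $\Mod(R\op)$ with $B,C \in \mathcal{F}_{(n,k)}(R\op)$ and $L \in \mathcal{FP}_n(R)$, pick $0 \to L' \to F \to L \to 0$ with $F$ finitely generated free; dimension shifting gives $\Tor^R_{k+2}(C,L) \cong \Tor^R_{k+1}(C,L') = 0$, and the Tor long exact sequence combined with $\Tor^R_{k+1}(B,L) = 0$ forces $\Tor^R_{k+1}(A,L) = 0$, so $A \in \mathcal{F}_{(n,k)}(R\op)$. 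The cotorsion pair is therefore hereditary, and \cite[Theorem 3.12]{GillespieFlat} delivers both equalities.

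For the converse $(2) \Rightarrow (1)$ (and $(3) \Rightarrow (1)$ dually), the key step is a converse to Gillespie's theorem: the complex-level equality forces the module cotorsion pair to be hereditary, via a witness-complex construction assembling a putative counterexample sequence $0 \to A \to B \to C \to 0$ (with $B,C \in \mathcal{F}_{(n,k)}$ but $A \notin \mathcal{F}_{(n,k)}$) together with its special $\mathcal{F}_{(n,k)}$-precover datum into a bounded exact complex in ${\rm dg}\widetilde{\mathcal{F}_{(n,k)}(R\op)} \cap \mathcal{E}(R\op)$ whose cycles detect the failed closure. Specializing to $k = 0$, $\mathcal{F}_n(R\op)$ is closed under kernels of epimorphisms. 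Applying Pontrjagin duality (\cite[Propositions 3.5--3.6]{BP16}; cf.\ Proposition \ref{prop:char_dims}) to a short exact sequence $0 \to X \to Y \to Z \to 0$ with $X,Y \in \mathcal{I}_n(R)$ yields $Z^+ \in \mathcal{F}_n(R\op)$, whence $Z^{++} \in \mathcal{I}_n(R)$; the pure embedding $Z \hookrightarrow Z^{++}$ together with Corollary \ref{coro:more_properties} then forces $Z \in \mathcal{I}_n(R)$, so $\mathcal{I}_n(R)$ is coresolving. For any $L \in \mathcal{FP}_n(R)$ with first syzygy $L' \in \mathcal{FP}_{n-1}(R)$, coresolvability gives $\Ext^1_R(L',M) = \Ext^2_R(L,M) = 0$ for every $M \in \mathcal{I}_n(R)$, so $L' \in {}^\perp \mathcal{I}_n$. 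By the Eklof-Trlifaj description of this left class as direct summands of $\mathcal{FP}_n$-filtered modules, and the finite presentation of $L'$, it is a direct summand of a finite $\mathcal{FP}_n$-extension; closure of $\mathcal{FP}_n$ under extensions and direct summands (\cite[Proposition 1.7]{BP16}) then gives $L' \in \mathcal{FP}_n$, hence $\mathcal{FP}_n = \mathcal{FP}_{n+1}$, so $R$ is left $n$-coherent.

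The principal obstacle is the converse direction of Gillespie's correspondence, i.e., extracting the module-level hereditary property from the complex-level equality (2) or (3). The witness-complex construction must simultaneously arrange the pointwise inclusion in $\mathcal{F}_{(n,k)}(R\op)$ and the Hom-exactness against $\widetilde{(\mathcal{F}_{(n,k)}(R\op))^\perp}$ required by the ${\rm dg}$ condition; once this delicate step is in place, the Pontrjagin-duality and Eklof-Trlifaj machinery carries the argument through essentially mechanically.
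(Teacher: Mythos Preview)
Your forward direction $(1)\Rightarrow(2),(3)$ is fine and essentially matches the paper. For the converse, however, the paper takes a much shorter route: it cites \cite[Corollary 3.13]{GillespieFlat} for both $(2)\Leftrightarrow(3)$ and the passage between the complex-level equalities and heredity of the module cotorsion pair $(\mathcal{F}_{(n,k)}(R\op),(\mathcal{F}_{(n,k)}(R\op))^\perp)$, and then invokes \cite[Theorem 5.6]{BP16} directly for the equivalence ``$R$ left $n$-coherent $\Leftrightarrow$ $(\mathcal{F}_n(R\op),(\mathcal{F}_n(R\op))^\perp)$ hereditary''. The only new content the paper supplies is the elementary reduction between $\mathcal{F}_n(R\op)$ being resolving and $\mathcal{F}_{(n,k)}(R\op)$ being resolving. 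You are attempting to reprove both cited results from scratch.

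That reproof has a genuine gap. Your Eklof--Trlifaj step claims that a finitely presented $L'$ which is a direct summand of a transfinitely $\mathcal{FP}_n$-filtered module must be a direct summand of a \emph{finite} iterated $\mathcal{FP}_n$-extension. Finite generation of $L'$ does let you push the summand inclusion into some filtration stage $M_\alpha$, but $\alpha$ may well be an infinite (successor) ordinal, and $M_\alpha$ then has no reason to lie in $\mathcal{FP}_n(R)$. The implication $L'\in{}^\perp\mathcal{I}_n(R)\cap\mathcal{FP}_{n-1}(R)\Rightarrow L'\in\mathcal{FP}_n(R)$ is exactly the nontrivial content of \cite[Theorems 5.5--5.6]{BP16} and is not recoverable by the filtration argument you sketch. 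Separately, the ``witness-complex construction'' for extracting module-level heredity from $(2)$ is only described as a plan---you yourself flag the arrangement of the $\hom$-exactness condition as the principal obstacle---so that step is not established either. Finally, note that ``specializing to $k=0$'' is not automatic: the proposition is stated for a fixed $k$, so from $(2)$ you only learn that $\mathcal{F}_{(n,k)}(R\op)$ is resolving, and passing down to $\mathcal{F}_n(R\op)$ requires a separate argument.
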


\begin{proof}
The equivalence (2) $\Leftrightarrow$ (3) is a consequence of \cite[Corollary 3.13]{GillespieFlat}. On the other hand, by \cite[Theorem 5.6]{BP16} we know that $R$ is left $n$-coherent if, and only if, the cotorsion pair $(\mathcal{F}_n(R\op),(\mathcal{F}_n(R\op))^\perp)$ is hereditary. On the other hand, we can note that $\mathcal{F}_n(R\op)$ is resolving if, and only if, so is $\mathcal{F}_{(n,k)}(R\op)$ for any $k \geq 0$. For, note that $\mathcal{F}_{(n,k)}(R\op)$ is always closed under extensions and contains $\mathcal{P}(R\op)$. Now suppose that $\mathcal{F}_n(R\op)$ is closed under epi-kernels and that we are given a short exact sequence $0 \to A \to B \to C \to 0$ with $B, C \in \mathcal{F}_{(n,k)}(R\op)$. For any $L \in \mathcal{FP}_n(R)$, we have an exact sequence $\Tor^R_{k+2}(C,L) \to \Tor^R_{k+1}(A,L) \to \Tor^R_{k+1}(B,L)$ where $\Tor^R_{k+1}(B,L) = 0$ and $\Tor^R_{k+2}(C,L) \cong \Tor^R_{1}(C',L)$, and where $C'$ is a projective $(k+1)$-st syzygy of $C$. On the other hand, consider an exact sequence $0 \to C' \to P \to C'' \to 0$ with $P$ projective and $C''$ a projective $k$-th syzygy of $C$. Since $\FP_n\mbox{-}\fd_{R\op}(C) \leq k$, we have that $C'' \in \mathcal{F}_n(R\op)$. Then, it follows that $C' \in \mathcal{F}_n(R\op)$ since we are assuming $\mathcal{F}_n(R\op)$ closed under epi-kernels. Thus, we get $\Tor^R_{k+2}(C,L) \cong \Tor^R_{1}(C',L) = 0$, and so $\Tor^R_{k+1}(A,L) = 0$, that is, $A \in \mathcal{F}_{(n,k)}(R\op)$. Therefore, (1) $\Leftrightarrow$ (2) follows by \cite[Corollary 3.13]{GillespieFlat}.
\end{proof}

From the previous result, we have that there are no abelian model structures on $\Ch(R\op)$ associated to $\mathcal{F}_{(n,k)}(R\op)$ for the cases $1 < n < \infty$, unless in the case $R$ is left $n$-coherent where the model structures are those in Theorem \ref{theo:level_model_structure}. One good aspect about the pairs \eqref{pair3} and \eqref{pair4} is that we are going to have abelian model structures for any choice of $n$ and without imposing extra conditions on $R$. For the cases $n = 0, 1$, these model structures were called \emph{degree-wise $k$-flat model structures} by the second author in \cite[Theorem 6.2]{Perez16}. One important result from the previous reference is that it provides sufficient conditions to obtain a Hovey triple from \eqref{pair3} and \eqref{pair4}. On the one hand, it is clear by definition that
\[
{\rm ex}\widetilde{\mathcal{F}_{(n,k)}(R\op)} = {\rm dw}\widetilde{\mathcal{F}_{(n,k)}(R\op)} \cap \mathcal{E}(R\op).
\]
On the other hand, by \cite[Proposition 5.6 (i)]{Perez16} it is known that if the inducing cotorsion pair $(\mathcal{A,B})$ in $\Mod(R\op)$ is such that $({\rm dw}\widetilde{\mathcal{A}},({\rm dw}\widetilde{\mathcal{A}})^\perp)$ is complete, then
\[
({\rm dw}\widetilde{\mathcal{A}})^\perp = ({\rm ex}\widetilde{\mathcal{A}})^\perp \cap \mathcal{E}(R\op).
\]
Since \eqref{pair3} is complete, we have a Hovey triple
\[
({\rm dw}\widetilde{\mathcal{F}_{(n,k)}(R\op)},\mathcal{E}(R\op),({\rm ex}\widetilde{\mathcal{F}_{(n,k)}(R\op)})^\perp)
\]
and so the following result is a consequence of Hovey's Correspondence, \cite[Lemma 6.7]{HoveyPaper} and \cite[Theorem 7.2.14]{EJ11}.

\begin{theorem}
For any ring $R$ and $n, k \geq 0$, there exists a unique cofibrantly generated abelian model structure on $\Ch(R\op)$, given by
\[
\mathfrak{M}^{\rm dw\mbox{-}flat}_{(n,k)}(R\op) := ({\rm dw}\widetilde{\mathcal{F}_{(n,k)}(R\op)},\mathcal{E}(R\op),({\rm ex}\widetilde{\mathcal{F}_{(n,k)}(R\op)})^\perp).
\]
For the case $n \to \infty$ and $k = 0$, the model structure $\mathfrak{M}^{\rm dw\mbox{-}flat}_{(\infty,0)}(R\op)$ will be referred as the \emph{degree-wise level model structure on $\Ch(R\op)$}.
\end{theorem}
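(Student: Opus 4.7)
The plan is to collect the pieces assembled in the paragraphs immediately preceding the statement and feed them into Hovey's Correspondence. Setting
\[
\mathcal{A} := {\rm dw}\widetilde{\mathcal{F}_{(n,k)}(R\op)}, \quad \mathcal{W} := \mathcal{E}(R\op), \quad \mathcal{B} := ({\rm ex}\widetilde{\mathcal{F}_{(n,k)}(R\op)})^\perp,
\]
I must verify that $(\mathcal{A},\mathcal{W},\mathcal{B})$ is a Hovey triple in $\Ch(R\op)$, namely that $\mathcal{W}$ is thick and that both $(\mathcal{A}\cap\mathcal{W},\mathcal{B})$ and $(\mathcal{A},\mathcal{B}\cap\mathcal{W})$ are complete cotorsion pairs.

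Thickness of $\mathcal{E}(R\op)$ is standard, following at once from two-out-of-three applied to long exact homology sequences. The identification of the first compatible pair follows from the definitional equality ${\rm dw}\widetilde{\mathcal{F}_{(n,k)}(R\op)}\cap\mathcal{E}(R\op) = {\rm ex}\widetilde{\mathcal{F}_{(n,k)}(R\op)}$, so that $(\mathcal{A}\cap\mathcal{W},\mathcal{B})$ is precisely the complete cotorsion pair \eqref{pair4}. For the second compatibility, I would invoke \cite[Proposition 5.6 (i)]{Perez16} to obtain
\[
({\rm dw}\widetilde{\mathcal{F}_{(n,k)}(R\op)})^\perp = ({\rm ex}\widetilde{\mathcal{F}_{(n,k)}(R\op)})^\perp\cap\mathcal{E}(R\op) = \mathcal{B}\cap\mathcal{W};
\]
its hypothesis (completeness of \eqref{pair3}) is already available from the preceding discussion, and so $(\mathcal{A},\mathcal{B}\cap\mathcal{W})$ coincides with the complete pair \eqref{pair3}. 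Hovey's Correspondence then delivers the unique abelian model structure with the advertised classes of cofibrant, trivial and fibrant objects.

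For cofibrant generation, I would use that the inducing module-level pair $(\mathcal{F}_{(n,k)}(R\op),(\mathcal{F}_{(n,k)}(R\op))^\perp)$ is cogenerated by a set, as remarked in this section via the argument of \cite[Theorem 2.9]{EnochsKaplansky}. The induced cotorsion pairs \eqref{pair3} and \eqref{pair4} in $\Ch(R\op)$ then inherit cogenerating sets by \cite[Theorem 7.2.14]{EJ11}, and \cite[Lemma 6.7]{HoveyPaper} converts these into the generating (trivial) cofibrations of the model structure.

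The main obstacle is conceptual rather than computational: everything hinges on the non-obvious identification $\mathcal{B}\cap\mathcal{W} = ({\rm dw}\widetilde{\mathcal{F}_{(n,k)}(R\op)})^\perp$ supplied by \cite[Proposition 5.6 (i)]{Perez16}, without which the two candidate cotorsion pairs would fail to be compatible in the Hovey sense and the construction would collapse. Once this identification is in force, the proof reduces to citing the completeness of \eqref{pair3} and \eqref{pair4} (which is Corollary \ref{ChCoversPreenvelopes}) and applying the standard machinery, so no further technical work is required.
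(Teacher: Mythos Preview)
Your proposal is correct and follows essentially the same route as the paper: establish the two compatibility identities ${\rm ex}\widetilde{\mathcal{F}_{(n,k)}(R\op)} = {\rm dw}\widetilde{\mathcal{F}_{(n,k)}(R\op)} \cap \mathcal{E}(R\op)$ (definitional) and $({\rm dw}\widetilde{\mathcal{F}_{(n,k)}(R\op)})^\perp = ({\rm ex}\widetilde{\mathcal{F}_{(n,k)}(R\op)})^\perp \cap \mathcal{E}(R\op)$ (via \cite[Proposition 5.6 (i)]{Perez16}), then invoke Hovey's Correspondence together with \cite[Lemma 6.7]{HoveyPaper} and \cite[Theorem 7.2.14]{EJ11} for cofibrant generation. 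Your write-up is in fact a more explicit unpacking of the argument the paper sketches in the paragraph preceding the theorem.
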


We know that the monoidality of the level model structure is equivalent to the coherency of the ring $R$. The same phenomenon occurs for the degree-wise level model structure, if we impose an extra condition on $R$.

\begin{proposition}
Let $R$ be a commutative ring with weak dimension at most $1$. The degree-wise level model structure $\mathfrak{M}^{\rm dw\mbox{-}flat}_{(\infty,0)}(R\op)$ is monoidal if, and only if, $R$ is coherent.
\end{proposition}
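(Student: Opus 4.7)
The strategy parallels Proposition~\ref{prop:monoidal}, reducing monoidality of $\mathfrak{M}^{\rm dw\mbox{-}flat}_{(\infty,0)}(R\op)$ to a comparison between the classes of level and flat modules in $\Mod(R\op)$. The hypothesis $\wid(R)\leq 1$ will supply the $\Tor$-vanishing needed to verify Hovey's monoidality conditions in one direction and to upgrade a pureness datum to honest flatness in the other.

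For the sufficiency direction, I would first assume $R$ is coherent. By \cite[Corollary~2.9]{BGH14}, the classes of level and flat modules in $\Mod(R\op)$ coincide, so ${\rm dw}\widetilde{\mathcal{F}_{(\infty,0)}(R\op)} = {\rm dw}\widetilde{\Flat(R\op)}$ and $\mathfrak{M}^{\rm dw\mbox{-}flat}_{(\infty,0)}(R\op)$ is exactly the degree-wise flat model structure. Monoidality then follows by checking the four conditions of the chain-complex version of \cite[Theorem~7.2]{HoveyPaper} for $(\Ch(R\op),\otimes)$: the unit $S^0(R)$ is cofibrant since $R$ is flat; tensor products of degree-wise flat complexes are degree-wise flat because $\Flat(R\op)$ is closed under tensor products and coproducts; the tensor of a trivially cofibrant complex (exact, with degree-wise flat terms) with a cofibrant complex is again exact and degree-wise flat, since tensoring an exact sequence with a flat module preserves exactness degree by degree; and finally, because cofibrations have degree-wise flat cokernels, the bound $\wid(R)\leq 1$ ensures that every short exact sequence with flat quotient is pure in each degree, so every cofibration is a degree-wise pure injection.

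For the necessity direction, I would mimic the argument of Proposition~\ref{prop:monoidal}. Given a level module $M$ in $\Mod(R\op)$, the sphere complex $S^0(M)$ lies in ${\rm dw}\widetilde{\mathcal{F}_{(\infty,0)}(R\op)}$, so $\bm{0}\to S^0(M)$ is a cofibration in $\mathfrak{M}^{\rm dw\mbox{-}flat}_{(\infty,0)}(R\op)$. Monoidality forces this map to be a pure injection in each degree; combining this pureness datum with $\wid(R)\leq 1$ and the characterization of coherent rings via flatness of character modules in \cite[Theorem~3.1]{DingChen}, I would conclude that $M$ is necessarily flat. Hence every level right $R$-module is flat, and \cite[Corollary~2.9]{BGH14} forces $R$ to be coherent.

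The main obstacle I expect will be extracting genuinely new information from Hovey's condition that cofibrations are pure injections in each degree (together with the implicit pushout-product axiom): the bound $\wid(R)\leq 1$ is precisely the ingredient that kills the $\Tor_2$-obstructions that could otherwise break injectivity of the pushout-product of two degree-wise flat cofibrations, and it is also the fulcrum on which the backward implication pivots when transferring pureness of $\bm{0}\to S^0(M)$ at degree $0$ to genuine flatness of $M$. Once this pureness--flatness bridge is in place, the remaining verifications in both directions are routine bookkeeping.
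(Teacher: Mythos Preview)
Your overall strategy matches the paper's: both directions reduce to comparing level and flat modules via \cite[Corollary 2.9]{BGH14}. For sufficiency the paper simply cites \cite[Proposition 6.11]{Perez16} (the degree-wise flat model structure is monoidal when the weak dimension of $R$ is at most $1$), whereas you re-derive this by checking Hovey's conditions directly; that is a legitimate expansion and lands in the same place.

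The necessity direction, however, has a gap. You take a level module $M$, note that $\bm 0 \to S^0(M)$ is a cofibration, and assert that monoidality forces it to be degree-wise pure---but the map $0 \to M$ is \emph{trivially} a pure monomorphism for every module $M$, so this observation carries no content whatsoever. Your appeal to $\wid(R)\leq 1$ and \cite[Theorem 3.1]{DingChen} does not repair the step: neither extracts flatness of $M$ from the vacuous datum ``$0 \to M$ is pure''. The paper, for its part, just says the implication ``follows as in the proof of Proposition~\ref{prop:monoidal}'' without further detail; the point here is that your added ingredients are red herrings---in particular, the hypothesis $\wid(R)\leq 1$ plays no role in the necessity direction (it enters only through \cite[Proposition 6.11]{Perez16} on the sufficiency side). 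What one actually needs from monoidality is that \emph{every} cofibration---hence every monomorphism with degree-wise level cokernel---is degree-wise pure; applied to an arbitrary short exact sequence $0 \to A \to B \to M \to 0$ with $M$ level (viewed via $S^0(-)$), this is exactly the characterization of $M$ being flat.
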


\begin{proof}
Suppose $R$ is coherent. Then, the class of level modules coincides with the class of flat modules. So $\mathfrak{M}^{\rm dw\mbox{-}flat}_{(\infty,0)}(R\op)$ is the degree-wise flat model structure, which is monoidal by \cite[Proposition 6.11]{Perez16}. The remaining implication follows as in the proof of Proposition \ref{prop:monoidal}.
\end{proof}

For the rest of this section, we study dual process of constructing model structures from the class of modules with bounded $\FP_n$-injective dimension. We know by \cite[Corollary 4.2]{BP16} that $\mathcal{I}_{(n,0)}(R)$ is the right half of a cotorsion pair $({}^\perp(\mathcal{I}_{(n,0)}(R)),\mathcal{I}_{(n,0)}(R))$ cogenerated by a set, for any $n \geq 0$. This fact will help us to prove the following result.

\begin{theorem} \label{theo:pair_inj}
For any ring $R$ and $n, k \geq 0$, $({}^\perp(\mathcal{I}_{(n,k)}(R)),\mathcal{I}_{(n,k)}(R))$ is a cotorsion pair in $\Mod(R)$ cogenerated by a set.
\end{theorem}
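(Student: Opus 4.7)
The plan is to exhibit $\mathcal{I}_{(n,k)}(R)$ as the right Ext-orthogonal of a set of modules, so that the Eklof-Trlifaj Theorem produces the desired cotorsion pair cogenerated by a set. The starting point is Proposition \ref{prop:FPnid_char} (a), which characterizes $\mathcal{I}_{(n,k)}(R)$ as the class of modules $M$ in $\Mod(R)$ satisfying $\Ext^{k+1}_R(L,M) = 0$ for every $L \in \mathcal{FP}_n(R)$. The obstacle to directly applying Eklof-Trlifaj is twofold: the orthogonality uses $\Ext^{k+1}$ rather than $\Ext^1$, and it is indexed over the proper class $\mathcal{FP}_n(R)$. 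Both obstacles are routine to overcome.

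First, I would argue that $\mathcal{FP}_n(R)$ admits a representative set (up to isomorphism), call it $\mathcal{S}_0$: every module of type $\FP_n$ is finitely generated, hence a quotient of some $R^{(m)}$ with $m < \omega$, so its cardinality is bounded by $\max\{|R|,\aleph_0\}$, which provides the desired cardinality bound to cut the class down to a set. Then, for each $L \in \mathcal{S}_0$, I would fix once and for all a (finitely generated) projective resolution $\cdots \to P_1 \to P_0 \to L \to 0$ and denote its $k$-th syzygy by $\Omega^k L := {\rm Ker}(P_{k-1} \to P_{k-2})$ for $k \geq 1$ (with the convention $\Omega^0 L := L$). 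Finally, set
\[
\mathcal{S} := \{ \Omega^k L \mbox{ : } L \in \mathcal{S}_0 \}.
\]
This is a set.

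Next, standard dimension shifting along these fixed projective resolutions yields a natural isomorphism $\Ext^{k+1}_R(L,M) \cong \Ext^1_R(\Omega^k L,M)$ for every $L \in \mathcal{S}_0$ and every $M$ in $\Mod(R)$. Combining this with Proposition \ref{prop:FPnid_char} (a) gives the key identification
\[
\mathcal{I}_{(n,k)}(R) = \mathcal{S}^\perp.
\]
It follows that $\bigl({}^\perp(\mathcal{S}^\perp),\mathcal{S}^\perp\bigr) = \bigl({}^\perp(\mathcal{I}_{(n,k)}(R)),\mathcal{I}_{(n,k)}(R)\bigr)$ is a cotorsion pair in $\Mod(R)$ cogenerated by the set $\mathcal{S}$. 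By the Eklof-Trlifaj Theorem (cited in Section \ref{sec:prelim}), this cotorsion pair is automatically complete, which is stronger than what the statement requires.

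The only conceptual step is the reduction from the higher-Ext orthogonality given by Proposition \ref{prop:FPnid_char} to an $\Ext^1$-orthogonality against a set, and this is handled uniformly by syzygies. No further hypotheses on $R$ or on $n$ are needed, which matches the generality asserted in the theorem.
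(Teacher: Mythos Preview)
Your proof is correct and follows essentially the same route as the paper: take a set $\mathcal{S}_0$ of isomorphism representatives of $\mathcal{FP}_n(R)$, pass to $k$-th projective syzygies, and use dimension shifting together with Proposition~\ref{prop:FPnid_char} to identify $\mathcal{I}_{(n,k)}(R)$ as the $\Ext^1$-orthogonal of that set. Your verification of the inclusion $\mathcal{S}^\perp \subseteq \mathcal{I}_{(n,k)}(R)$ is in fact more direct than the paper's, which detours through filtrations and Eklof's Lemma even though each $L \in \mathcal{FP}_n(R)$ is already isomorphic to an element of $\mathcal{S}_0$.
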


\begin{proof}
The pair $({}^\perp(\mathcal{I}_{(n,0)}(R)),\mathcal{I}_{(n,0)}(R))$ is cogenerated by a set $\mathcal{S}$ of representatives of modules in $\mathcal{FP}_n(R)$. Let $\mathcal{S}_k$ be a set of representatives of $k$-th projective syzygies of modules in $\mathcal{S}$. Note that $\mathcal{I}_{(n,k)}(R) = (\mathcal{S}_k)^\perp$. In fact, if $N \in \mathcal{I}_{(n,k)}(R)$ and $M \in \mathcal{S}_k$, we have that $\Ext^1_R(M,N) \cong \Ext^{k+1}_R(S,N)$, for some $S \in \mathcal{S}$. Since $\Ext^{k+1}_R(S,N) = 0$, it follows that $\mathcal{I}_{(n,k)}(R) \subseteq (\mathcal{S}_k)^\perp$. Now suppose that $N \in (\mathcal{S}_k)^\perp$ and let $L \in \mathcal{FP}_n(R)$. Since $L \in {}^\perp(\mathcal{I}_{(n,0)}(R))$ and every module in ${}^\perp(\mathcal{I}_{(n,0)}(R))$ is a direct summand of a module filtered by $\mathcal{S}$ (see \cite[Corollary 3.2.4]{Gobel}), there exists a module $L'$ in $\Mod(R)$ and an ordinal number $\lambda$ such that $L$ is a direct summand of $L'$ and $L' = \bigcup_{\alpha < \lambda} L'_\alpha$ where $L'_0 \in \mathcal{S}$ and $L'_{\alpha + 1} / L'_\alpha \in \mathcal{S}$ for any $\alpha + 1 < \lambda$. Thus, we have:
\begin{align*}
\Ext^{k+1}_R(L'_0,N) & \cong \Ext^1_R(\Omega^k(L'_0),N) = 0, \\
\Ext^{k+1}_R\left( \frac{L'_{\alpha + 1}}{L_\alpha}, N \right) & \cong \Ext^1_R\left( \Omega^{k}\left( \frac{L'_{\alpha+1}}{L'_\alpha} \right), N \right) = 0 \mbox{ for any } \alpha + 1 < \lambda.
\end{align*}
Eklof's Lemma \cite[Theorem 7.3.4]{EJ00} implies that $\Ext^{k+1}_R(L',N) = 0$, and so $\Ext^{k+1}_R(L,N) = 0$ for any $L \in \mathcal{FP}_n(R)$, that is, $N \in \mathcal{I}_{(n,k)}(R)$. Therefore, $\mathcal{I}_{(n,k)}(R) = (\mathcal{S}_k)^\perp$ and the pair $({}^\perp(\mathcal{I}_{(n,k)}(R)),\mathcal{I}_{(n,k)}(R))$ is a cotorsion pair in $\Mod(R)$ cogenerated by $\mathcal{S}_k$.
\end{proof}

The previous result, along with \cite[Propositions 4.3, 4.4 and 4.6]{GillespieDegree} and \cite[Theorem 7.3.2]{EJ11}, implies that we have the following cotorsion pairs in $\Ch(R)$ cogenerated by sets (and so complete):
\begin{align}
& ({\rm dg}\widetilde{{}^\perp(\mathcal{I}_{(n,k)}(R))},\widetilde{\mathcal{I}_{(n,k)}(R)}), \label{pair5} \\
& (\widetilde{{}^\perp(\mathcal{I}_{(n,k)}(R))},{\rm dg}\widetilde{\mathcal{I}_{(n,k)}(R)}), \label{pair6} \\
& ({}^\perp({\rm dw}\widetilde{\mathcal{I}_{(n,k)}(R)}),{\rm dw}\widetilde{\mathcal{I}_{(n,k)}(R)}), \label{pair7} \\
& ({}^\perp({\rm ex}\widetilde{\mathcal{I}_{(n,k)}(R)}),{\rm ex}\widetilde{\mathcal{I}_{(n,k)}(R)}), \label{pair8}
\end{align}
where $\widetilde{\mathcal{I}_{(n,k)}(R)}$ is by Proposition \ref{fi-dim} the class of complexes with $\FP_n$-injective dimension at most $k$. With respect to the pairs \eqref{pair7} and \eqref{pair8}, we are going to have by \cite[Proposition 5.6 (ii)]{Perez16} the equality:
\[
{}^\perp({\rm dw}\widetilde{\mathcal{I}_{(n,k)}(R)}) = {}^\perp({\rm ex}\widetilde{\mathcal{I}_{(n,k)}(R)}) \cap \mathcal{E}(R).
\]
It follows that $({}^\perp({\rm ex}\widetilde{\mathcal{I}_{(n,k)}(R)}),\mathcal{E}(R),{\rm dw}\widetilde{\mathcal{I}_{(n,k)}(R)})$ is a Hovey triple, and the following result is a consequence of Hovey's correspondence.

\begin{theorem}
Let $R$ be a ring and $n, k \geq 0$. Then, there exists a unique cofibrantly generated abelian model structure on $\Ch(R)$ given by
\[
\mathfrak{M}^{\rm dw\mbox{-}inj}_{(n,k)}(R) := ({}^\perp({\rm ex}\widetilde{\mathcal{I}_{(n,k)}(R)}),\mathcal{E}(R),{\rm dw}\widetilde{\mathcal{I}_{(n,k)}(R)}).
\]
\end{theorem}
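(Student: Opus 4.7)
The plan is to apply Hovey's correspondence to the triple $({}^\perp({\rm ex}\widetilde{\mathcal{I}_{(n,k)}(R)}),\mathcal{E}(R),{\rm dw}\widetilde{\mathcal{I}_{(n,k)}(R)})$, so I need to verify three things: (i) the class $\mathcal{E}(R)$ of exact complexes is thick; (ii) the pair $({}^\perp({\rm ex}\widetilde{\mathcal{I}_{(n,k)}(R)}),\mathcal{E}(R),{\rm dw}\widetilde{\mathcal{I}_{(n,k)}(R)})$ is a Hovey triple, i.e.\ the two induced cotorsion pairs are complete and compatible; and (iii) the model structure is cofibrantly generated.

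For (i), the thickness of $\mathcal{E}(R)$ is well known and was noted already in Section~\ref{sec:FPninjflat}. For (ii), I would first recall that by Theorem~\ref{theo:pair_inj} the cotorsion pair $({}^\perp(\mathcal{I}_{(n,k)}(R)),\mathcal{I}_{(n,k)}(R))$ in $\Mod(R)$ is cogenerated by a set, hence by \cite[Propositions 4.3, 4.4 and 4.6]{GillespieDegree} together with \cite[Theorem 7.3.2]{EJ11}, both pairs \eqref{pair7} and \eqref{pair8} are complete cotorsion pairs in $\Ch(R)$ cogenerated by a set. Then I would establish the two compatibility equalities
\[
{\rm ex}\widetilde{\mathcal{I}_{(n,k)}(R)} \; = \; {\rm dw}\widetilde{\mathcal{I}_{(n,k)}(R)} \cap \mathcal{E}(R),
\]
which is immediate from the very definitions of ${\rm ex}\widetilde{(-)}$ and ${\rm dw}\widetilde{(-)}$, and
\[
{}^\perp({\rm dw}\widetilde{\mathcal{I}_{(n,k)}(R)}) \; = \; {}^\perp({\rm ex}\widetilde{\mathcal{I}_{(n,k)}(R)}) \cap \mathcal{E}(R),
\]
which is supplied by \cite[Proposition 5.6 (ii)]{Perez16} since the pair \eqref{pair7} is complete. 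With these two equalities, the cotorsion pair \eqref{pair8} rewrites as
\[
\bigl({}^\perp({\rm ex}\widetilde{\mathcal{I}_{(n,k)}(R)}),\; {\rm dw}\widetilde{\mathcal{I}_{(n,k)}(R)} \cap \mathcal{E}(R)\bigr),
\]
and the cotorsion pair \eqref{pair7} rewrites as
\[
\bigl({}^\perp({\rm ex}\widetilde{\mathcal{I}_{(n,k)}(R)}) \cap \mathcal{E}(R),\; {\rm dw}\widetilde{\mathcal{I}_{(n,k)}(R)}\bigr),
\]
exactly matching the two cotorsion pairs required by Hovey's correspondence for the proposed Hovey triple. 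Invoking Hovey's correspondence (\cite[Theorem 2.2]{HoveyPaper}) then produces the unique abelian model structure.

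For (iii), the cofibrantly generated property follows from \cite[Lemma 6.7]{HoveyPaper}, exactly as was applied to obtain the analogous statement for $\mathfrak{M}^{\rm dw\mbox{-}flat}_{(n,k)}(R\op)$ earlier in this section: since both underlying cotorsion pairs \eqref{pair7} and \eqref{pair8} are cogenerated by sets (the sets being built from $\mathcal{S}_k$ as in the proof of Theorem~\ref{theo:pair_inj}, via disks and spheres as in \cite[Propositions 4.3, 4.4 and 4.6]{GillespieDegree}), the generating (trivial) cofibrations of the model structure can be taken to be the inclusions $\bm{0} \hookrightarrow \bm{X}$ with $\bm{X}$ ranging over these sets.

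The only potentially delicate point is the verification of the second compatibility equality ${}^\perp({\rm dw}\widetilde{\mathcal{I}_{(n,k)}(R)}) = {}^\perp({\rm ex}\widetilde{\mathcal{I}_{(n,k)}(R)}) \cap \mathcal{E}(R)$, but this is precisely the content of the cited Proposition~5.6(ii) from \cite{Perez16} and requires no new argument; everything else is a direct citation of previously established results in the excerpt.
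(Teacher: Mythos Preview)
Your proposal is correct and follows essentially the same approach as the paper: the argument given in the paragraphs immediately preceding the theorem statement establishes that \eqref{pair7} and \eqref{pair8} are complete cotorsion pairs cogenerated by sets (via Theorem~\ref{theo:pair_inj}, \cite[Propositions 4.3, 4.4 and 4.6]{GillespieDegree}, and \cite[Theorem 7.3.2]{EJ11}), then invokes \cite[Proposition 5.6 (ii)]{Perez16} for the compatibility equality ${}^\perp({\rm dw}\widetilde{\mathcal{I}_{(n,k)}(R)}) = {}^\perp({\rm ex}\widetilde{\mathcal{I}_{(n,k)}(R)}) \cap \mathcal{E}(R)$, and concludes by Hovey's correspondence. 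Your explicit mention of \cite[Lemma 6.7]{HoveyPaper} for the cofibrantly generated claim is a welcome clarification that the paper leaves implicit here (though it cites it for the analogous flat model structures).
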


The previous theorem is a generalization of the \emph{degree-wise $k$-injective model structures} found by the second author in \cite[Theorem 5.11]{Perez16}.

The pairs \eqref{pair5} and \eqref{pair6} have the same problem that their flat counterpart: they are not necessarily compatible since the inducing cotorsion pair $({}^\perp(\mathcal{I}_{(n,k)}(R)),\mathcal{I}_{(n,k)}(R))$ is not hereditary in general. The following result follows as Proposition \ref{prop:no_flat}, using \cite[Corollary 3.13]{GillespieFlat} and \cite[Theorem 5.5]{BP16}.

\begin{proposition} \label{prop:no_injective}
The following conditions are equivalent for any ring $R$ and $n \geq 1$.
\begin{itemize}
\item[{\rm(1)}] $R$ is left $n$-coherent.

\item[{\rm(2)}] $\widetilde{\mathcal{I}_{(n,k)}(R)} = {\rm dg}\widetilde{\mathcal{I}_{(n,k)}(R)} \cap \mathcal{E}(R)$.

\item[{\rm(3)}] $\widetilde{{}^\perp(\mathcal{I}_{(n,k)}(R))} = {\rm dg}\widetilde{{}^\perp(\mathcal{I}_{(n,k)}(R))} \cap \mathcal{E}(R)$.
\end{itemize}
\end{proposition}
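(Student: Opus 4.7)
My plan follows closely the strategy of Proposition \ref{prop:no_flat}, transposing its flat-side arguments to the injective side. By Theorem \ref{theo:pair_inj}, the pair $({}^\perp \mathcal{I}_{(n,k)}(R), \mathcal{I}_{(n,k)}(R))$ is a cotorsion pair in $\Mod(R)$ cogenerated by a set. Applying \cite[Corollary 3.13]{GillespieFlat} to this cotorsion pair yields the equivalence (2) $\Leftrightarrow$ (3), since both conditions are equivalent to the cotorsion pair being hereditary, i.e., to the class $\mathcal{I}_{(n,k)}(R)$ being coresolving.

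To complete the cycle with (1), I invoke \cite[Theorem 5.5]{BP16}, which asserts that $R$ is left $n$-coherent if and only if the cotorsion pair $({}^\perp \mathcal{I}_n(R), \mathcal{I}_n(R))$ is hereditary, equivalently if and only if $\mathcal{I}_n(R) = \mathcal{I}_{(n,0)}(R)$ is coresolving. Hence the remaining task is to establish: $\mathcal{I}_n(R)$ is coresolving if and only if $\mathcal{I}_{(n,k)}(R)$ is coresolving (for any $k \geq 0$). Both classes always contain the injective modules and are closed under extensions via the long exact sequence of $\Ext$, so only closure under cokernels of monomorphisms requires attention.

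For the forward direction, suppose $\mathcal{I}_n(R)$ is coresolving and consider a short exact sequence $0 \to A \to B \to C \to 0$ with $A, B \in \mathcal{I}_{(n,k)}(R)$. For each $L \in \mathcal{FP}_n(R)$, the long exact sequence of $\Ext$ yields
\[
\Ext^{k+1}_R(L, B) \to \Ext^{k+1}_R(L, C) \to \Ext^{k+2}_R(L, A),
\]
with the left-hand term vanishing by hypothesis. To dispatch the right-hand term, fix an injective coresolution of $A$ and let $A^{(j)}$ denote its $j$-th cosyzygy: by Proposition \ref{prop:FPnid_char}, $A^{(k)} \in \mathcal{I}_n(R)$. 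The short exact sequence $0 \to A^{(k)} \to I^k \to A^{(k+1)} \to 0$, combined with the fact that $I^k \in \mathcal{I}_n(R)$ and the assumption that $\mathcal{I}_n(R)$ is coresolving, forces $A^{(k+1)} \in \mathcal{I}_n(R)$. Dimension shifting then yields $\Ext^{k+2}_R(L,A) \cong \Ext^1_R(L,A^{(k+1)}) = 0$, and hence $C \in \mathcal{I}_{(n,k)}(R)$.

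The backward direction follows the same scheme as in Proposition \ref{prop:no_flat}: specializing to $k = 0$ is immediate from the identification $\mathcal{I}_{(n,0)}(R) = \mathcal{I}_n(R)$, while for $k \geq 1$ the argument proceeds by a syzygy-descent parallel to the flat case. The main obstacle I anticipate is precisely this descent, namely recovering $\Ext^1_R$-vanishing on $\mathcal{I}_n(R)$ from the higher-degree vanishing supplied by the coresolving property of $\mathcal{I}_{(n,k)}(R)$; this is the subtler half of the analogous flat-side argument in Proposition \ref{prop:no_flat}, and I expect to handle it by the same mechanism, appealing to the cogenerating set of $k$-th projective syzygies identified in the proof of Theorem \ref{theo:pair_inj}.
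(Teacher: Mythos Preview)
Your proposal takes essentially the same approach as the paper, whose proof is the single line ``follows as Proposition~\ref{prop:no_flat}, using \cite[Corollary 3.13]{GillespieFlat} and \cite[Theorem 5.5]{BP16}''. Your explicit dualization of the forward direction and your observation that the backward direction is immediate at $k=0$ match exactly the level of detail in the paper's own proof of Proposition~\ref{prop:no_flat}; the syzygy-descent you anticipate for fixed $k\geq 1$ is not carried out there either.
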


However, in the case where $n \to \infty$, the class $\mathcal{I}_{(\infty,0)}(R)$ of absolutely clean modules is the right half of a hereditary cotorsion pair $({}^\perp(\mathcal{I}_{(\infty,0)}(R)),\mathcal{I}_{(\infty,0)}(R))$ cogenerated by a set (See \cite[Corollary 4.2 and Theorem 5.5]{BP16}), and as in Proposition \ref{prop:level_resolving}, we can use the fact that $\mathcal{I}_{(\infty,0)}(R)$ is coresolving to prove the following result.

\begin{proposition}\label{prop:absolutely_clean_coresolving}
For any $k \geq 0$, the class $\mathcal{I}_{(\infty,k)}(R)$ of modules with absolutely clean dimension at most $k$ is coresolving.
\end{proposition}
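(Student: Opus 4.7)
The plan is to mirror the strategy of Proposition \ref{prop:level_resolving}, working in the dual direction. Recall that a class is coresolving if it contains all injective modules, is closed under extensions, and is closed under mono-cokernels. The first two properties are essentially free: every injective module $I$ satisfies $\Ext^{k+1}_R(L,I)=0$ for every $L$, so $\mathcal{I}_{(\infty,k)}(R)$ contains the injectives; and a routine long exact sequence argument in $\Ext^{k+1}_R(L,-)$ shows that $\mathcal{I}_{(\infty,k)}(R)$ is closed under extensions. The only real content lies in closure under mono-cokernels, and the case $k=0$ is already known (since absolutely clean modules are coresolving, as used throughout the paper). So the plan reduces to treating the case $k\geq 1$.

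For the mono-cokernel step, I would start with a short exact sequence $0 \to A \to B \to C \to 0$ in $\Mod(R)$ with $A, B \in \mathcal{I}_{(\infty,k)}(R)$, and fix an arbitrary $L \in \mathcal{FP}_\infty(R)$. Applying $\Hom_R(L,-)$, the relevant segment of the long exact sequence is
\[
\Ext^{k+1}_R(L,B) \to \Ext^{k+1}_R(L,C) \to \Ext^{k+2}_R(L,A).
\]
The left term vanishes by Proposition \ref{prop:FPnid_char} because $B \in \mathcal{I}_{(\infty,k)}(R)$. For the right term, I would use dimension shifting: pick a short exact sequence $0 \to L' \to F \to L \to 0$ with $F$ finitely generated free; then $L' \in \mathcal{FP}_\infty(R)$ (since $\mathcal{FP}_\infty(R)$ is closed under kernels of epimorphisms from finitely generated projectives), and $\Ext^{k+2}_R(L,A) \cong \Ext^{k+1}_R(L',A) = 0$ because $A \in \mathcal{I}_{(\infty,k)}(R)$. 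Consequently $\Ext^{k+1}_R(L,C)=0$ for every $L \in \mathcal{FP}_\infty(R)$, whence $C \in \mathcal{I}_{(\infty,k)}(R)$ by Proposition \ref{prop:FPnid_char}.

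There is no real obstacle here; the argument is completely symmetric to the dual situation in Proposition \ref{prop:level_resolving}. The only subtle point that one must record explicitly is that $\mathcal{FP}_\infty(R)$ is closed under syzygies from finite free presentations, which is built into the definition of super finitely presented modules and is precisely why the parameter $n = \infty$ makes the argument go through (the analogous closure fails for finite $n$, which is why Proposition \ref{prop:no_injective} forces an $n$-coherence hypothesis when one attempts the analogous claim for $\mathcal{I}_{(n,k)}(R)$).
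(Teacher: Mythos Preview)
Your proposal is correct and follows exactly the route the paper indicates: the paper does not spell out a proof but simply says to argue ``as in Proposition~\ref{prop:level_resolving}'', using that $\mathcal{I}_{(\infty,0)}(R)$ is coresolving, and your dualization of that argument (long exact sequence in $\Ext^{k+1}_R(L,-)$ together with the syzygy shift $\Ext^{k+2}_R(L,A)\cong\Ext^{k+1}_R(L',A)$ for $L'\in\mathcal{FP}_\infty(R)$) is precisely that. Your closing remark about why the argument fails for finite $n$ unless $R$ is $n$-coherent is also on point and consistent with Proposition~\ref{prop:no_injective}.
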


Theorem \ref{theo:pair_inj} is also valid in the case $n \to \infty$. It follows that $({}^\perp(\mathcal{I}_{(\infty,k)}(R)),\mathcal{I}_{(\infty,k)}(R))$ is a hereditary cotorsion pair cogenerated by a set, and hence we have the following model structure on $\Ch(R)$ from the Hovey triple $({\rm dg}\widetilde{{}^\perp(\mathcal{I}_{(\infty,k)}(R))},\mathcal{E}(R),{\rm dg}\widetilde{\mathcal{I}_{(\infty,k)}(R)})$, which is a relativization of the \emph{$k$-injective model structures} \cite[Theorem 4.9]{Perez16}.

\begin{theorem}\label{theo:model_ac}
Let $R$ be any ring and $k \geq 0$. Then, there exists a unique cofibrantly generated abelian model structure on $\Ch(R)$ given by:
\[
\mathfrak{M}^{\rm inj}_{(\infty,k)}(R) := ({\rm dg}\widetilde{{}^\perp(\mathcal{I}_{(\infty,k)}(R))},\mathcal{E}(R),{\rm dg}\widetilde{\mathcal{I}_{(\infty,k)}(R)})
\]
This model structure coincides with the $k$-injective model structure if, and only if, $R$ is left noetherian.
\end{theorem}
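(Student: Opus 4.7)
The plan has three stages: assemble a hereditary cotorsion pair of modules cogenerated by a set, transfer it to a Hovey triple in $\Ch(R)$ via Gillespie's machinery, and then compare the resulting model structure to the $k$-injective one.

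First, I would verify that $({}^\perp(\mathcal{I}_{(\infty,k)}(R)), \mathcal{I}_{(\infty,k)}(R))$ is a hereditary cotorsion pair in $\Mod(R)$ cogenerated by a set. The cogenerating set is the collection of representatives of $k$-th projective syzygies of modules of type $\FP_\infty$, as in the proof of Theorem \ref{theo:pair_inj} (whose argument carries over verbatim to the limit case $n\to\infty$, since the generating cotorsion pair $({}^\perp(\mathcal{I}_\infty(R)),\mathcal{I}_\infty(R))$ is itself cogenerated by a set by \cite[Corollary 4.2]{BP16}). Hereditariness is equivalent to $\mathcal{I}_{(\infty,k)}(R)$ being coresolving, which is precisely Proposition \ref{prop:absolutely_clean_coresolving}.

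Second, I would apply \cite[Propositions 3.2, 3.3 and 4.3, Theorem 5.5]{GillespieDegree} together with \cite[Theorem 7.3.2]{EJ11} to lift this pair to two complete cotorsion pairs
\[
\bigl({\rm dg}\widetilde{{}^\perp(\mathcal{I}_{(\infty,k)}(R))},\,\widetilde{\mathcal{I}_{(\infty,k)}(R)}\bigr), \quad \bigl(\widetilde{{}^\perp(\mathcal{I}_{(\infty,k)}(R))},\,{\rm dg}\widetilde{\mathcal{I}_{(\infty,k)}(R)}\bigr)
\]
in $\Ch(R)$, both cogenerated by sets. Because the inducing cotorsion pair is hereditary, \cite[Corollary 3.13]{GillespieFlat} yields the compatibility identities
\[
\widetilde{\mathcal{I}_{(\infty,k)}(R)} = {\rm dg}\widetilde{\mathcal{I}_{(\infty,k)}(R)} \cap \mathcal{E}(R), \qquad \widetilde{{}^\perp(\mathcal{I}_{(\infty,k)}(R))} = {\rm dg}\widetilde{{}^\perp(\mathcal{I}_{(\infty,k)}(R))} \cap \mathcal{E}(R),
\]
so the triple $({\rm dg}\widetilde{{}^\perp(\mathcal{I}_{(\infty,k)}(R))},\mathcal{E}(R),{\rm dg}\widetilde{\mathcal{I}_{(\infty,k)}(R)})$ is Hovey (thickness of $\mathcal{E}(R)$ is clear). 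Hovey's correspondence supplies the unique abelian model structure, and the cofibrant generation follows from \cite[Lemma 6.7]{HoveyPaper} because the cotorsion pairs arose from sets.

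Third, for the comparison, I would note that two abelian model structures on $\Ch(R)$ sharing the class of trivial objects coincide if and only if their Hovey triples do, so $\mathfrak{M}^{\rm inj}_{(\infty,k)}(R)$ agrees with the $k$-injective model structure $\bigl({\rm dg}\widetilde{{}^\perp(\mathcal{I}_{(0,k)}(R))},\mathcal{E}(R),{\rm dg}\widetilde{\mathcal{I}_{(0,k)}(R)}\bigr)$ from \cite[Theorem 4.9]{Perez16} precisely when $\mathcal{I}_{(\infty,k)}(R) = \mathcal{I}_{(0,k)}(R)$; equivalence at the level of the ${\rm dg}\widetilde{(-)}$ classes is reduced to the module level by testing on disk complexes $D^n(M)$ and using the compatibility above. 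When $R$ is left noetherian, $\mathcal{FP}_0(R) = \mathcal{FP}_\infty(R)$, whence $\mathcal{I}_\infty(R) = \mathcal{I}_0(R)$ and Proposition \ref{prop:FPnid_char} immediately delivers the equality at every dimension $k$.

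The main obstacle is the converse. Assuming $\mathcal{I}_{(\infty,k)}(R) = \mathcal{I}_{(0,k)}(R)$, every absolutely clean module $M$ belongs to $\mathcal{I}_{(\infty,0)} \subseteq \mathcal{I}_{(\infty,k)} = \mathcal{I}_{(0,k)}$, so $\id_R(M) \leq k$. Applying this to $M = \bigoplus_{i \in I} E_i$ for any family of injectives (which is absolutely clean by closure of $\mathcal{I}_\infty(R)$ under coproducts) shows that every direct sum of injectives has bounded injective dimension. Combined with Bass's characterization of noetherian rings (direct sums of injectives are injective) and a syzygy-shifting argument that promotes a uniform bound $k$ on $\id_R$ of such sums to $\id_R = 0$, I would conclude $R$ is left noetherian. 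Formalizing this last promotion cleanly is the delicate point; the alternative is to invoke Proposition \ref{prop:no_injective} as a template and adapt it to separate the ``coherency-type'' conclusion from the stronger ``noetherian'' conclusion that the $n\to\infty, k = 0$ case already provides (namely $\mathcal{I}_\infty(R) = \mathcal{I}_0(R) \Leftrightarrow R$ left noetherian).
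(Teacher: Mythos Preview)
Your first two stages are correct and mirror the paper exactly: the paragraph preceding the theorem assembles the hereditary cotorsion pair via Theorem~\ref{theo:pair_inj} (extended to $n\to\infty$) and Proposition~\ref{prop:absolutely_clean_coresolving}, then invokes Gillespie's induced pairs, \cite[Corollary 3.13]{GillespieFlat} for compatibility, Hovey's correspondence, and \cite[Lemma 6.7]{HoveyPaper} for cofibrant generation, just as you outline.

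For the final assertion the paper gives no argument at all beyond the sentence ``The last assertion in the previous theorem is a consequence of \cite[Theorem 3.17]{GillespieModels}.'' Your attempt at a direct proof of the ``only if'' direction is therefore a genuinely different route, and the gap you flag is real. The promotion step---from ``every direct sum of injectives has injective dimension $\le k$'' to ``$R$ is left noetherian''---cannot be closed by the syzygy-shifting you sketch, because cosyzygies of such a direct sum are not again direct sums of injectives, so there is no descent on $k$. More seriously, the hypothesis you actually extract, namely $\mathcal{I}_{(\infty,k)}(R)=\mathcal{I}_{(0,k)}(R)$, is satisfied over any ring of left global dimension $\le k$ (both sides equal $\Mod(R)$), and there exist non-noetherian left hereditary rings such as the free algebra $k\langle x,y\rangle$; so for $k\ge 1$ neither the inclusion $\mathcal{I}_\infty(R)\subseteq\mathcal{I}_{(0,k)}(R)$ nor the full equality of module classes can by itself force noetherianity. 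This means your Bass-based strategy cannot succeed as written, and the alternative of adapting Proposition~\ref{prop:no_injective} is not obviously relevant either, since that proposition detects $n$-coherence rather than noetherianity. Whatever the cited Gillespie result provides, it must involve more than the reduction to module classes that you perform; you should consult that reference directly rather than attempt to reconstruct the argument.
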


The last assertion in the previous theorem is a consequence of \cite[Theorem 3.17]{GillespieModels}.

It is only left to work with the case where $n \geq 2$ and $\FP_n\mbox{-}\id_R(R) \leq k$, in which $\mathcal{I}_{(n,k)}(R)$ is the left half of a perfect cotorsion pair $(\mathcal{I}_{(n,k)}(R),(\mathcal{I}_{(n,k)}(R))^\perp)$ cogenerated by a set. Then, we have the following cotorsion pairs in $\Ch(R)$ cogenerated by sets:
\begin{align}
& (\widetilde{\mathcal{I}_{(n,k)}(R)},{\rm dg}\widetilde{(\mathcal{I}_{(n,k)}(R))^\perp}) \label{pair9} \\
& ({\rm dg}\overline{\mathcal{I}_{(n,k)}(R)},\widetilde{(\mathcal{I}_{(n,k)}(R))^\perp}) \label{pair10} \\
& ({\rm dw}\widetilde{\mathcal{I}_{(n,k)}(R)},({\rm dw}\widetilde{\mathcal{I}_{(n,k)}(R)})^\perp) \label{pair11} \\
& ({\rm ex}\widetilde{\mathcal{I}_{(n,k)}(R)},({\rm ex}\widetilde{\mathcal{I}_{(n,k)}(R)})^\perp) \label{pair12}
\end{align}
where \eqref{pair9}, \eqref{pair11} and \eqref{pair12} are perfect by Corollary \ref{ChCoversPreenvelopes}. We are not aware if there are conditions under which the pairs \eqref{pair9} and \eqref{pair10}\footnote{We have used the notation ${\rm dg}\overline{\mathcal{I}_{(n,k)}(R)}$ to avoid confusion with the class ${\rm dg}\widetilde{\mathcal{I}_{(n,k)}(R)}$ associated to the cotorsion pair $({}^\perp(\mathcal{I}_{(n,k)}(R)),\mathcal{I}_{(n,k)}(R))$.} are compatible (For instance, injective modules are not resolving in general). However, if we consider the pairs \eqref{pair11} and \eqref{pair12}, we have the compatibility relations
\[
{\rm ex}\widetilde{\mathcal{I}_{(n,k)}(R)} = {\rm dw}\widetilde{\mathcal{I}_{(n,k)}(R)} \cap \mathcal{E}(R) \mbox{ \ and \ } ({\rm dw}\widetilde{\mathcal{I}_{(n,k)}(R)})^\perp = ({\rm ex}\widetilde{\mathcal{I}_{(n,k)}(R)})^\perp \cap \mathcal{E}(R),
\]
and so we obtain the following result.

\begin{theorem}
Let $n \geq 2$ and $k \geq 0$. For any ring $R$ with $\FP_n\mbox{-}\id_R(R) \leq k$, there exists a unique cofibrantly generated abelian model structure on $\Ch(R)$ given by:
\[
\mathfrak{M}^{\rm op\mbox{ }dw\mbox{-}inj}_{(n,k)}(R) := ({\rm dw}\widetilde{\mathcal{I}_{(n,k)}(R)},\mathcal{E}(R),({\rm ex}\widetilde{\mathcal{I}_{(n,k)}(R)})^\perp).
\]
\end{theorem}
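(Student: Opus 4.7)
The plan is to obtain this model structure via Hovey's correspondence applied to a Hovey triple built from the pairs \eqref{pair11} and \eqref{pair12}, following the same pattern used earlier in the section for $\mathfrak{M}^{\rm dw\mbox{-}inj}_{(n,k)}(R)$. The hypothesis $\FP_n\mbox{-}\id_R(R) \leq k$ plays a crucial role: by Theorem \ref{theo:duality_pairs} (b.1) and Corollary \ref{coro:more_properties} (f), it guarantees that $(\mathcal{I}_{(n,k)}(R),(\mathcal{I}_{(n,k)}(R))^\perp)$ is a perfect cotorsion pair in $\Mod(R)$ which is cogenerated by a set (a set of representatives of $\FP_n$-injective modules of cardinality at most an appropriate infinite cardinal, exactly as in the proof of Theorem \ref{theo:pair_inj} but on the other side of the pair). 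Without this hypothesis, $(\mathcal{I}_{(n,k)}(R),(\mathcal{I}_{(n,k)}(R))^\perp)$ need not be a cotorsion pair at all.

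First I would invoke the Gillespie-type machinery (\cite[Propositions 3.2 and 3.3]{GillespieDegree} and \cite[Theorem 1.5]{Estrada}) applied to the cotorsion pair $(\mathcal{I}_{(n,k)}(R),(\mathcal{I}_{(n,k)}(R))^\perp)$ cogenerated by a set, to conclude that the pairs
\[
({\rm dw}\widetilde{\mathcal{I}_{(n,k)}(R)},({\rm dw}\widetilde{\mathcal{I}_{(n,k)}(R)})^\perp) \mbox{ \ and \ } ({\rm ex}\widetilde{\mathcal{I}_{(n,k)}(R)},({\rm ex}\widetilde{\mathcal{I}_{(n,k)}(R)})^\perp)
\]
are complete cotorsion pairs in $\Ch(R)$ cogenerated by sets (and in fact perfect, by Corollary \ref{ChCoversPreenvelopes}). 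Second, I would check the two compatibility relations
\[
{\rm ex}\widetilde{\mathcal{I}_{(n,k)}(R)} = {\rm dw}\widetilde{\mathcal{I}_{(n,k)}(R)} \cap \mathcal{E}(R), \quad ({\rm dw}\widetilde{\mathcal{I}_{(n,k)}(R)})^\perp = ({\rm ex}\widetilde{\mathcal{I}_{(n,k)}(R)})^\perp \cap \mathcal{E}(R).
\]
The first is immediate from the definition of the classes. The second is the step I expect to be the main obstacle; it should follow by a dual argument to \cite[Proposition 5.6 (i)]{Perez16} used earlier in the section for the flat case, using the completeness of the pair in \eqref{pair11}, and in particular the fact that every complex admits a special $({\rm dw}\widetilde{\mathcal{I}_{(n,k)}(R)})$-precover whose kernel lies in $({\rm dw}\widetilde{\mathcal{I}_{(n,k)}(R)})^\perp$, combined with the characterization of $({\rm dw}\widetilde{\mathcal{I}_{(n,k)}(R)})^\perp$-objects via $\Ext^1$-vanishing against shifts and disks.

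Third, since the class $\mathcal{E}(R)$ of exact complexes is thick in $\Ch(R)$, the two cotorsion pairs above together with $\mathcal{E}(R)$ satisfy the hypotheses of Hovey's correspondence, i.e., they form a Hovey triple
\[
({\rm dw}\widetilde{\mathcal{I}_{(n,k)}(R)},\mathcal{E}(R),({\rm ex}\widetilde{\mathcal{I}_{(n,k)}(R)})^\perp).
\]
Applying \cite[Theorem 2.2]{HoveyPaper} yields a unique abelian model structure on $\Ch(R)$ whose cofibrant, trivial and fibrant objects are exactly the three specified classes, and whose class of weak equivalences consists of the quasi-isomorphisms (since the triv classes are $\mathcal{E}(R)$-intersections). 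Finally, to see that $\mathfrak{M}^{\rm op\mbox{ }dw\mbox{-}inj}_{(n,k)}(R)$ is cofibrantly generated, I would apply \cite[Lemma 6.7]{HoveyPaper} (or \cite[Theorem 7.2.14]{EJ11}), since both underlying cotorsion pairs are cogenerated by sets and $\Ch(R)$ is a bicomplete abelian category satisfying the required smallness conditions.
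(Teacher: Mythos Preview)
Your proposal is correct and follows essentially the same approach as the paper: establish the two complete cotorsion pairs \eqref{pair11} and \eqref{pair12} from the perfect cotorsion pair $(\mathcal{I}_{(n,k)}(R),(\mathcal{I}_{(n,k)}(R))^\perp)$ cogenerated by a set, verify the two compatibility relations, and apply Hovey's correspondence together with \cite[Lemma 6.7]{HoveyPaper}. One small clarification: the second compatibility relation $({\rm dw}\widetilde{\mathcal{I}_{(n,k)}(R)})^\perp = ({\rm ex}\widetilde{\mathcal{I}_{(n,k)}(R)})^\perp \cap \mathcal{E}(R)$ is not obtained by a \emph{dual} of \cite[Proposition 5.6 (i)]{Perez16} but by a direct application of part (i) itself, since $\mathcal{I}_{(n,k)}(R)$ now sits as the \emph{left} class $\mathcal{A}$ of the inducing cotorsion pair.
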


Most of the model structures constructed so far may be related with each other via the notion of Quillen equivalence. We explore this point at the end of this paper, but before that, we need some preliminaries on Pontrjagin duality.


\subsection{The Pontrjagin dual of differential graded complexes}

In Section \ref{sec:InjFlatCom}, we showed how to construct from a duality pair $(\mathcal{M,C})$ over $R$, three different dual pairs over $R$, namely, $(\widetilde{\mathcal{M}},\widetilde{\mathcal{C}})$, $({\rm dw}\widetilde{\mathcal{M}},{\rm dw}\widetilde{\mathcal{C}})$ and $({\rm ex}\widetilde{\mathcal{M}},{\rm ex}\widetilde{\mathcal{C}})$. The only classes of induced complexes we did not consider were those of differential graded complexes. The problem is that we cannot even define ${\rm dg}\widetilde{\mathcal{M}}$ and ${\rm dg}\widetilde{\mathcal{C}}$, as we need $\mathcal{M}$ and $\mathcal{C}$ to be halves of cotorsion pairs. We can assume that $(\mathcal{M},\mathcal{M}^\perp)$ is a cotorsion pair in $\Mod(R)$, and that $({}^\perp\mathcal{C},\mathcal{C})$ is a cotorsion pair in $\Mod(R\op)$, but even in this case, in which we can define ${\rm dg}\widetilde{\mathcal{M}}$ and ${\rm dg}\widetilde{\mathcal{C}}$, we are not aware if $({\rm dg}\widetilde{\mathcal{M}},{\rm dg}\widetilde{\mathcal{C}})$ is a dual pair. However, we can show that the Pontrjagin duality $(-)^+ \colon \Ch(R) \longrightarrow \Ch(R\op)$ maps any complex in ${\rm dg}\widetilde{\mathcal{M}}$ to a complex in ${\rm dg}\widetilde{\mathcal{C}}$. We settle this in the following results.

\begin{lemma}\label{lem:Pontrjagin_perp}
If $(\mathcal{M,C})$ is a duality pair over $R$, then $N^+ \in \mathcal{M}^\perp$ for any $N \in {}^\perp\mathcal{C}$.
\end{lemma}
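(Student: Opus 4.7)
The plan is to reduce everything to a standard natural isomorphism coming from the Hom-tensor adjunction between the functors $\Hom_R(-,-)$ and $-\otimes_R-$, namely
\[
\Ext^1_R(M,N^+) \cong \Ext^1_{R\op}(N,M^+) \cong \Tor^R_1(N,M)^+
\]
for every $M$ in $\Mod(R)$ and every $N$ in $\Mod(R\op)$ (this is the complex-free analogue of \cite[Theorem 3.2.1]{EJ00} that was already quoted in Proposition \ref{prop:char_dims}). Both sides are derived from the adjunction isomorphism $\Hom_R(M,\Hom_\mathbb{Z}(N,\mathbb{Q/Z})) \cong \Hom_\mathbb{Z}(N\otimes_R M,\mathbb{Q/Z}) \cong \Hom_{R\op}(N,\Hom_\mathbb{Z}(M,\mathbb{Q/Z}))$ together with the fact that $\mathbb{Q/Z}$ is an injective cogenerator of $\mathbb{Z}\mbox{-}\mathrm{Mod}$, so that the Pontrjagin functor $(-)^+$ is exact.

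Once this identification is in place, the argument is immediate. Let $N \in {}^\perp\mathcal{C}$ be arbitrary (a right $R$-module, under the convention $\mathcal{C}\subseteq\Mod(R\op)$ from the definition of a duality pair). To prove $N^+\in\mathcal{M}^\perp$, I take any $M\in\mathcal{M}$ and show $\Ext^1_R(M,N^+)=0$. By the duality property of $(\mathcal{M,C})$, $M\in\mathcal{M}$ forces $M^+\in\mathcal{C}$. Since $N\in{}^\perp\mathcal{C}$, this gives $\Ext^1_{R\op}(N,M^+)=0$, and then the natural isomorphism above yields $\Ext^1_R(M,N^+)=0$, as needed.

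The only point requiring care is bookkeeping the left/right sidedness when applying the duality property, since $(\mathcal{M,C})$ can be organized with $\mathcal{M}\subseteq\Mod(R)$ and $\mathcal{C}\subseteq\Mod(R\op)$ or vice versa. The argument is symmetric in this convention, so no real obstacle arises; one simply writes down the two possible set-ups and checks that the Hom-tensor swap lands the two Ext-groups on the correct sides in each case. I do not anticipate any further difficulty, and the proof should occupy only a few lines once the isomorphism is invoked.
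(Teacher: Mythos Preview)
Your proposal is correct and follows essentially the same approach as the paper: both use the natural isomorphism $\Ext^1_R(M,N^+)\cong\Tor^R_1(N,M)^+\cong\Ext^1_{R\op}(N,M^+)$, invoke the duality property to get $M^+\in\mathcal{C}$, and conclude from $N\in{}^\perp\mathcal{C}$. Your attention to the left/right bookkeeping is in fact slightly more careful than the paper's version.
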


\begin{proof}
Suppose $N \in {}^\perp\mathcal{C}$, and let $M \in \mathcal{M}$. Then, $\Ext^1_R(M,N^+) \cong \Tor^R_1(N,M)^+ \cong \Ext^1_R(N,M^+)$ where $M^+ \in \mathcal{C}$ since $(\mathcal{M,C})$ is a duality pair. Hence, $\Ext^1_R(N,M^+) = 0$.
\end{proof}

\begin{proposition} \label{prop:duality_dg}
Let $(\mathcal{M,C})$ be a duality pair over $R$ such that $(\mathcal{M},\mathcal{M}^\perp)$ is a cotorsion pair in $\Mod(R)$, and $({}^\perp\mathcal{C},\mathcal{C})$ is a cotorsion pair in $\Mod(R\op)$. Then, for any complex $\bm{X}$ in ${\rm dg}\widetilde{\mathcal{M}}$, one has $\bm{X^+} \in {\rm dg}\widetilde{\mathcal{C}}$.
\end{proposition}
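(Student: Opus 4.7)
The plan is to verify the two defining conditions for $\bm{X}^+$ to lie in ${\rm dg}\widetilde{\mathcal{C}}$: first, each component $(X^+)_m$ must belong to $\mathcal{C}$; and second, $\hom(\bm{A},\bm{X}^+)$ must be exact for every $\bm{A}\in\widetilde{{}^\perp\mathcal{C}}$.

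The component condition is immediate. From the alternative description \eqref{eqn:alter_Pontrjagin}, each component of $\bm{X}^+$ is, up to reindexing, the Pontrjagin dual of a component of $\bm{X}$. Since $\bm{X}\in{\rm dg}\widetilde{\mathcal{M}}$ forces each $X_\ell\in\mathcal{M}$, the duality property of $(\mathcal{M},\mathcal{C})$ yields $(X^+)_m\cong X_{-m-1}^+\in\mathcal{C}$.

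For the $\hom$-exactness, I would exploit a ``swap'' made available by Pontrjagin duality. Assembling the module-level adjunction $\Hom_R(M,N^+)\cong(N\otimes_R M)^+$ componentwise produces natural isomorphisms of complexes of abelian groups
\[
\hom(\bm{A},\bm{X}^+) \;\cong\; (\bm{A}\otimes\bm{X})^+ \;\cong\; \hom(\bm{X},\bm{A}^+).
\]
Consequently it suffices to prove that $\hom(\bm{X},\bm{A}^+)$ is exact, and the defining property of ${\rm dg}\widetilde{\mathcal{M}}$ will deliver this as soon as I know $\bm{A}^+\in\widetilde{\mathcal{M}^\perp}$. Exactness of $\bm{A}^+$ is Lemma \ref{lem:exact_Pontrjagin}; and using exactness of $\bm{A}$ together with the exactness of $(-)^+$ I can identify $Z_m(\bm{A}^+)\cong(A_{-m-1}/Z_{-m-1}(\bm{A}))^+\cong(Z_{-m-2}(\bm{A}))^+$, which lies in $\mathcal{M}^\perp$ by Lemma \ref{lem:Pontrjagin_perp} since $Z_{-m-2}(\bm{A})\in{}^\perp\mathcal{C}$.

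The main technical point will be the displayed natural isomorphism: the module-level adjunctions are routine, but they need to be glued into an isomorphism of chain complexes that respects the sign conventions in the differentials of $\hom(-,-)$, $\otimes$ and $(-)^+$, in the spirit of the isomorphism $\theta_{\bm{K}}$ used in the proof of Proposition \ref{fi-if}(b). Once that is in place, the remainder is a short chain of the duality property of $(\mathcal{M},\mathcal{C})$ with Lemmas \ref{lem:exact_Pontrjagin} and \ref{lem:Pontrjagin_perp}.
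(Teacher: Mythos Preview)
Your argument is correct and follows the same overall architecture as the paper's: both verify the component condition via \eqref{eqn:alter_Pontrjagin}, both reduce the $\hom$-exactness to showing that $\bm{A}^+\in\widetilde{\mathcal{M}^\perp}$, and both invoke Lemma~\ref{lem:Pontrjagin_perp} for the cycles of $\bm{A}^+$. Your identification $Z_m(\bm{A}^+)\cong Z_{-m-2}(\bm{A})^+$ is a slightly more careful version of what the paper abbreviates as ``$(-)^+$ preserves kernels''.

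The genuine difference is in how you pass from exactness of $\hom(\bm{A},\bm{X}^+)$ to exactness of $\hom(\bm{X},\bm{A}^+)$. You do it in one stroke via a chain-level swap $\hom(\bm{A},\bm{X}^+)\cong(\bm{A}\otimes\bm{X})^+\cong\hom(\bm{X},\bm{A}^+)$, whereas the paper computes homology degreewise using $H_n(\hom(\bm{K},\bm{X}^+))\cong\Ext^1_{\rm dw}(\bm{K}[n+1],\bm{X}^+)$ \cite[Lemma~2.1]{GillespieFlat}, identifies $\Ext^1_{\rm dw}=\Ext^1_{\Ch}$ via the component orthogonality, and then applies the duality $\underline{\Ext}^1(\bm{K}[n+1],\bm{X}^+)\cong\overline{\Tor}_1(\bm{K}[n+1],\bm{X})^+\cong\underline{\Ext}^1(\bm{X},(\bm{K}[n+1])^+)$ from \cite[Lemma~5.4.2]{GR99}. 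Your route is shorter and conceptually cleaner, but the price is exactly what you flag: the chain-level isomorphism with its sign conventions is not established in the paper (the isomorphism $\theta_{\bm{K}}$ of \cite[Lemma~2.3]{EG97} cited in Proposition~\ref{fi-if}(b) is the $\overline{\otimes}/\underline{\Hom}$ analogue, not the $\otimes/\hom$ one you need). The paper's detour through $\Ext^1_{\rm dw}$ and $\overline{\Tor}_1$ buys exactly this: it rests entirely on results already cited, at the cost of a longer chain of isomorphisms.
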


\begin{proof}
Let $\bm{X}$ be a complex in ${\rm dg}\widetilde{\mathcal{M}}$. Then, $X_m \in \mathcal{M}$ for any $m \in \mathbb{Z}$, and $\hom(\bm{X},\bm{Y})$ is an exact complex whenever $\bm{Y} \in \widetilde{\mathcal{M}^\perp}$. We first note that $(\bm{X}^+)_m \in \mathcal{C}$ by \eqref{eqn:alter_Pontrjagin}, for any $m \in \mathbb{Z}$. It is only left to show that $\hom(\bm{K},\bm{X^+})$ is exact whenever $\bm{K} \in \widetilde{{}^\perp\mathcal{C}}$. On the one hand, we have that $H_n(\hom(\bm{K},\bm{X^+})) \cong \Ext^1_{\rm dw}(\bm{K}[n+1],\bm{X^+})$. On the other hand, since $\bm{K}[n+1]_m \in {}^\perp\mathcal{C}$ and $(\bm{X}^+)_m \in \mathcal{C}$, it follows that $\Ext^1_{\rm dw}(\bm{K}[n+1],\bm{X^+}) = \Ext^1_{\Ch}(\bm{K}[n+1],\bm{X^+})$. Proving that $\Ext^1_{\Ch}(\bm{K}[n+1],\bm{X^+}) = 0$ is equivalent to showing that $\underline{\Ext}^1(\bm{K}[n+1],\bm{X^+}) = \bm{0}$. By \cite[Lemma 5.4.2 b)]{GR99}, we have $\underline{\Ext}^1(\bm{K}[n+1],\bm{X^+}) \cong \overline{\Tor}_1(\bm{K}[n+1],\bm{X})^+ \cong \underline{\Ext}^1(\bm{X},(\bm{K}[n+1])^+)$. Note that $\bm{K} \in \widetilde{{}^\perp\mathcal{C}}$ implies $\bm{K}[n+1] \in \widetilde{{}^\perp\mathcal{C}}$. On the other hand, recall that $(-)^+$ preserves kernels, and so $Z_m((\bm{K}[n+1])^+) \simeq (Z_m(\bm{K}[n+1]))^+ \in \mathcal{M}^\perp$ by Lemma \ref{lem:Pontrjagin_perp}. It follows that $(\bm{K}[n+1])^+ \in \widetilde{\mathcal{M}^\perp}$. Noticing that $X_m \in \mathcal{C}$ and $(\bm{K}[n+1])^+_m \in \mathcal{M}^\perp$, we have that $\Ext^1_{\Ch}(\bm{X},(\bm{K}[n+1])^+) = 0$, since it is isomorphic to $H_n(\hom(\bm{X},(\bm{K}[n+1])^+))$ and $\hom(\bm{X},(\bm{K}[n+1])^+)$ is exact. Hence, we have $\underline{\Ext}^1(\bm{X},(\bm{K}[n+1])^+) = \bm{0}$, and so $\Ext^1_{\Ch}(\bm{K}[n+1],\bm{X^+}) = 0$ for every $n \in \mathbb{Z}$, that is, $\hom(\bm{K},\bm{X^+})$ is an exact complex. Therefore, $\bm{X^+} \in {\rm dg}\widetilde{C}$.
\end{proof}


\subsection{Relation between $\FP_n$-injective and $\FP_n$-flat model structures}

We close this paper comparing the different model structures we have obtained so far. The most common way to compare two model structures is via Quillen adjunctions, which are the morphisms between model structures. Indeed, it is known by \cite{HoveyBook} that the classes of model categories, Quillen adjunctions and natural transformations form a $2$-category. Let us give a brief review of these morphisms.

Given two model categories $(\mathcal{D}_1,\mathfrak{M}_1)$ and $(\mathcal{D}_2,\mathfrak{M}_2)$, a \emph{left Quillen functor} is a left adjoint functor $F \colon \mathcal{D}_1 \longrightarrow \mathcal{D}_2$ which preserves cofibrations and trivial cofibrations. The notion of \emph{right Quillen functor} $G \colon \mathcal{D}_2 \longrightarrow \mathcal{D}_1$ is dual. Finally, a \emph{Quillen adjunction} is given by a pair $(F,G)$ such that $F$ is a left adjoint of $G$ that is a left Quillen functor, or equivalently, if $G$ is a right adjoint of $F$ that is a right Quillen functor. A Quillen adjunction $(F,G)$ is called a \emph{Quillen equivalence} if for any cofibrant object $X$ in $\mathfrak{M}_1$ and any fibrant object $Y$ in $\mathfrak{M}_2$, a morphism $f \colon F(X) \to Y$ is a weak equivalence in $\mathfrak{M}_2$ if, and only if, $\varphi(f) \colon X \to G(Y)$ is a weak equivalence in $\mathfrak{M}_1$, where $\varphi$ is the natural isomorphism $\Hom_{\mathcal{D}_2}(F(-),-) \Rightarrow \Hom_{\mathcal{D}_1}(-,G(-))$ in the adjunction $(F,G)$. The model categories $(\mathcal{D}_1,\mathfrak{M}_1)$ and $(\mathcal{D}_2,\mathfrak{M}_2)$ are said to be \emph{Quillen equivalent} if there is a Quillen equivalence between them. The reader can see these definitions in detail in \cite[Definitions 1.3.1 and 1.3.12 and Lemma 1.3.4]{HoveyBook}. As in \cite{DS}, we will say that $(\mathcal{D}_1,\mathfrak{M}_1)$ and $(\mathcal{D}_2,\mathfrak{M}_2)$ are \emph{$\ast$Quillen equivalent} if they are connected by a zig-zag of Quillen equivalences between model categories. We will write
\[
\mathfrak{M}_1 \underset{q}{\sim} \mathfrak{M}_2
\]
whenever $(\mathcal{D}_1,\mathfrak{M}_1)$ and $(\mathcal{D}_2,\mathfrak{M}_2)$ are Quillen equivalent, and
\[
\mathfrak{M}_1 \overset{\ast}{\underset{q}{\sim}} \mathfrak{M}_2
\]
whenever $(\mathcal{D}_1,\mathfrak{M}_1)$ and $(\mathcal{D}_2,\mathfrak{M}_2)$ are $\ast$Quillen equivalent.

Before establishing a comparison between the model structures in Section \ref{sec:model_sts} via Quillen equivalences, we comment some properties of the Pontrjagin duality functor in the context of abelian model structures. Namely, we show how (trivial) cofibrations of certain model structures on $\Ch(R\op)$ in Section \ref{sec:model_sts} can be determined by the (trivial) fibrations of other model structures on $\Ch(R)$. The following result is a consequence of Theorems \ref{theo:duality_pairs} and \ref{theo:induced_duality}.

\begin{proposition}
The following conditions hold true:
\begin{itemize}
\item[{\rm (a)}] For every $n, k \geq 0$, a morphism $\bm{f}$ in $\Ch(R\op)$ is a (trivial) cofibration in $\mathfrak{M}^{\rm dw\mbox{-}flat}_{(n,k)}(R\op)$ if, and only if, $\bm{f}^+$ is a (trivial) fibration in $\mathfrak{M}^{\rm dw\mbox{-}inj}_{(n,k)}(R)$. This includes the case $n \to \infty$.

\item[{\rm (b)}] For every $n \geq 2$ and $k \geq 0$, a morphism $\bm{g}$ in $\Ch(R)$ is a (trivial) fibration in $\mathfrak{M}^{\rm dw\mbox{-}inj}_{(n,k)}(R)$ if, and only if, $\bm{g}^+$ is a (trivial) cofibration in $\mathfrak{M}^{\rm dw\mbox{-}flat}_{(n,k)}(R\op)$. This includes the case $n \to \infty$.
\end{itemize}
\end{proposition}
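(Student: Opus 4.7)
The plan is to exploit the standard characterization of (trivial) cofibrations and (trivial) fibrations in an abelian model structure, together with the exact, contravariant, faithful character of Pontrjagin duality $(-)^+$. Since $\mathbb{Q}/\mathbb{Z}$ is an injective cogenerator in $\Mod(\mathbb{Z})$, the functor $(-)^+ \colon \Ch(R\op) \to \Ch(R)$ is exact and reflects the zero object; in particular, $\bm{h}$ is monic (resp.\ epic) in either category of complexes if and only if $\bm{h}^+$ is epic (resp.\ monic). First I would spell out the two descriptions explicitly: a morphism $\bm{f}$ in $\Ch(R\op)$ is a (trivial) cofibration in $\mathfrak{M}^{\rm dw\mbox{-}flat}_{(n,k)}(R\op)$ iff $\bm{f}$ is monic with $\mathrm{CoKer}(\bm{f}) \in {\rm dw}\widetilde{\mathcal{F}_{(n,k)}(R\op)}$ (resp.\ in ${\rm ex}\widetilde{\mathcal{F}_{(n,k)}(R\op)} = {\rm dw}\widetilde{\mathcal{F}_{(n,k)}(R\op)} \cap \mathcal{E}(R\op)$), while $\bm{g}$ in $\Ch(R)$ is a (trivial) fibration in $\mathfrak{M}^{\rm dw\mbox{-}inj}_{(n,k)}(R)$ iff $\bm{g}$ is epic with $\ker(\bm{g}) \in {\rm dw}\widetilde{\mathcal{I}_{(n,k)}(R)}$ (resp.\ in ${\rm ex}\widetilde{\mathcal{I}_{(n,k)}(R)}$).

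For part (a), given $\bm{f} \colon \bm{X} \to \bm{Y}$ I would apply $(-)^+$ to $\bm{0} \to \bm{X} \to \bm{Y} \to \mathrm{CoKer}(\bm{f}) \to \bm{0}$ to obtain the exact sequence $\bm{0} \to (\mathrm{CoKer}(\bm{f}))^+ \to \bm{Y}^+ \to \bm{X}^+ \to \bm{0}$, yielding $\ker(\bm{f}^+) \cong (\mathrm{CoKer}(\bm{f}))^+$. The only remaining ingredient is the degree-wise translation
\[
\bm{Z} \in {\rm dw}\widetilde{\mathcal{F}_{(n,k)}(R\op)} \ \Longleftrightarrow \ \bm{Z}^+ \in {\rm dw}\widetilde{\mathcal{I}_{(n,k)}(R)},
\]
which I would verify component by component by combining Proposition \ref{prop:char_dims}(a) with the explicit formula \eqref{eqn:alter_Pontrjagin} for the terms of $\bm{Z}^+$. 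The analogous translation between ${\rm ex}\widetilde{\mathcal{F}_{(n,k)}(R\op)}$ and ${\rm ex}\widetilde{\mathcal{I}_{(n,k)}(R)}$ follows by additionally invoking Lemma \ref{lem:exact_Pontrjagin}. The converse direction (``$\bm{f}^+$ a (trivial) fibration implies $\bm{f}$ a (trivial) cofibration'') uses the same short exact sequence argument plus the fact that $(-)^+$ reflects zero objects; for the case $n \to \infty$, I would use the corresponding level/absolutely clean version of Proposition \ref{prop:char_dims}(a) available from \cite{BGH14}.

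Part (b) is entirely symmetric: starting from $\bm{g} \colon \bm{X} \to \bm{Y}$ in $\Ch(R)$, dualizing $\bm{0} \to \ker(\bm{g}) \to \bm{X} \to \bm{Y} \to \bm{0}$ produces $\mathrm{CoKer}(\bm{g}^+) \cong (\ker(\bm{g}))^+$, and one then needs
\[
\bm{U} \in {\rm dw}\widetilde{\mathcal{I}_{(n,k)}(R)} \ \Longleftrightarrow \ \bm{U}^+ \in {\rm dw}\widetilde{\mathcal{F}_{(n,k)}(R\op)},
\]
together with its exact analogue. The main conceptual point, and the only reason for the hypothesis $n \geq 2$ appearing here, is that this equivalence relies on Proposition \ref{prop:char_dims}(b), which is only proved under $n \geq 2$ (the argument there ultimately uses the characterization $M \in \mathcal{I}_n(R) \Leftrightarrow M^+ \in \mathcal{F}_n(R\op)$ of \cite[Proposition 3.6]{BP16}, valid only for $n \geq 2$). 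For $n \to \infty$, the analogous duality between level and absolutely clean modules carries no such restriction, so the same proof covers that case. There is no substantive obstacle beyond bookkeeping; the proof is essentially a translation between ``degree-wise bounded $\FP_n$-injective/flat dimension'' on the two sides via $(-)^+$, and the asymmetry in Proposition \ref{prop:char_dims} is precisely what forces the asymmetry $n \geq 0$ versus $n \geq 2$ between (a) and (b).
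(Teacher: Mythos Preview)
Your proposal is correct and follows essentially the same approach as the paper, which simply records the result as a consequence of Theorems \ref{theo:duality_pairs} and \ref{theo:induced_duality}. You have unpacked that citation into its constituent ingredients (Proposition \ref{prop:char_dims}, Lemma \ref{lem:exact_Pontrjagin}, and formula \eqref{eqn:alter_Pontrjagin}), which is exactly what those two theorems encapsulate, so the only difference is that you could have quoted Theorem \ref{fi-dual} directly for the equivalences between the ${\rm dw}$ and ${\rm ex}$ classes rather than rederiving them.
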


The Pontrjagin duality functor $(-)^+ \colon \Ch(R) \longrightarrow \Ch(R\op)$ has other preservation properties due to Lemma \ref{lem:Pontrjagin_perp}, Theorem \ref{theo:induced_duality} and Proposition \ref{prop:duality_dg}.

\begin{proposition}
The following conditions hold:
\begin{itemize}
\item[{\rm (a)}] $(-)^+$ maps (trivial) cofibrations in $\mathfrak{M}^{\rm dw\mbox{-}inj}_{(n,k)}(R)$ to (trivial) fibrations in $\mathfrak{M}^{\rm dw\mbox{-}flat}_{(n,k)}(R\op)$.

\item[{\rm (b)}] $(-)^+$ maps (trivial) cofibrations in $\mathfrak{M}^{\rm flat}_{(\infty,k)}(R\op)$ to (trivial) fibrations in $\mathfrak{M}^{\rm inj}_{(\infty,k)}(R)$.
\end{itemize}
\end{proposition}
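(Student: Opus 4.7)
The plan is to exploit the natural duality
\[
\underline{\Hom}(\bm{A},\bm{B}^+) \cong (\bm{B}\,\overline{\otimes}\,\bm{A})^+ \cong \underline{\Hom}(\bm{B},\bm{A}^+),
\]
valid for $\bm{A}\in \Ch(R)$ and $\bm{B}\in \Ch(R\op)$, which comes from the $(\overline{\otimes},\underline{\Hom})$-adjunction together with the definition $\bm{X}^+ = \underline{\Hom}(\bm{X},D^1(\mathbb{Q}/\mathbb{Z}))$. Deriving and using exactness of $(-)^+$ yields, via \cite[Lemma 5.4.2]{GR99}, the symmetry
\[
\underline{\Ext}^1(\bm{A},\bm{B}^+) \cong \overline{\Tor}_1(\bm{B},\bm{A})^+ \cong \underline{\Ext}^1(\bm{B},\bm{A}^+).
\]
Since $(-)^+$ is exact and contravariant, a (trivial) cofibration $\bm{f}\colon\bm{X}\hookrightarrow\bm{Y}$ with cokernel $\bm{A}$ maps to an epimorphism $\bm{f}^+$ with kernel $\bm{A}^+$, so in each case the task reduces to checking that $\bm{A}^+$ lies in the appropriate class of kernels of (trivial) fibrations.

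For part (a), recall that trivial cofibrations in $\mathfrak{M}^{\rm dw\mbox{-}inj}_{(n,k)}(R)$ have cokernel in ${}^\perp({\rm ex}\widetilde{\mathcal{I}_{(n,k)}(R)})\cap \mathcal{E}(R) = {}^\perp({\rm dw}\widetilde{\mathcal{I}_{(n,k)}(R)})$, while trivial fibrations in $\mathfrak{M}^{\rm dw\mbox{-}flat}_{(n,k)}(R\op)$ have kernel in $({\rm ex}\widetilde{\mathcal{F}_{(n,k)}(R\op)})^\perp\cap \mathcal{E}(R\op) = ({\rm dw}\widetilde{\mathcal{F}_{(n,k)}(R\op)})^\perp$, via the compatibility identities recorded in Section \ref{sec:model_sts}. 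Fix $\bm{A}\in {}^\perp({\rm ex}\widetilde{\mathcal{I}_{(n,k)}(R)})$ (respectively ${}^\perp({\rm dw}\widetilde{\mathcal{I}_{(n,k)}(R)})$) and any $\bm{B}\in {\rm ex}\widetilde{\mathcal{F}_{(n,k)}(R\op)}$ (resp.\ ${\rm dw}\widetilde{\mathcal{F}_{(n,k)}(R\op)}$). The formula \eqref{eqn:alter_Pontrjagin} together with Proposition \ref{prop:char_dims}(a) gives $(\bm{B}^+)_m \cong (B_{-m-1})^+ \in \mathcal{I}_{(n,k)}(R)$ for every $m\in\mathbb{Z}$, and Lemma \ref{lem:exact_Pontrjagin} preserves exactness; hence $\bm{B}^+$, and every shift $\bm{B}^+[j]$, lies in ${\rm ex}\widetilde{\mathcal{I}_{(n,k)}(R)}$ (resp.\ ${\rm dw}\widetilde{\mathcal{I}_{(n,k)}(R)}$). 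This forces $\Ext^1_{\Ch}(\bm{A},\bm{B}^+[j])=0$ for all $j\in\mathbb{Z}$, i.e.\ $\underline{\Ext}^1(\bm{A},\bm{B}^+)=\bm{0}$, and the symmetry above delivers $\underline{\Ext}^1(\bm{B},\bm{A}^+)=\bm{0}$, so in particular $\Ext^1_{\Ch}(\bm{B},\bm{A}^+)=0$, as needed.

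Part (b) is handled directly by the tools developed earlier. The cofibration case is a direct application of Proposition \ref{prop:duality_dg} to the duality pair $(\mathcal{F}_{(\infty,k)}(R\op),\mathcal{I}_{(\infty,k)}(R))$ of Theorem \ref{theo:duality_pairs}(a), whose two halves are precisely the left/right sides of complete cotorsion pairs in $\Mod(R\op)$ and $\Mod(R)$ respectively (by Corollary \ref{coro:more_properties} and Theorem \ref{theo:pair_inj}); this produces $\bm{A}^+\in {\rm dg}\widetilde{\mathcal{I}_{(\infty,k)}(R)}$ whenever $\bm{A}\in {\rm dg}\widetilde{\mathcal{F}_{(\infty,k)}(R\op)}$. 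For a trivial cofibration, $\bm{A}\in \widetilde{\mathcal{F}_{(\infty,k)}(R\op)}$ means $\bm{A}$ is exact with $\FP_\infty\mbox{-}\fd(\bm{A})\leq k$ by Proposition \ref{fi-dim}(b); Lemma \ref{lem:exact_Pontrjagin} and Proposition \ref{dim-dual}(a) then yield $\bm{A}^+$ exact with $\FP_\infty\mbox{-}\id(\bm{A}^+)\leq k$, so $\bm{A}^+\in \widetilde{\mathcal{I}_{(\infty,k)}(R)}$. The main (mild) obstacle is the bookkeeping of the (trivial) cofibration/fibration classes through the $\mathcal{A}\cap\mathcal{W}$ and $\mathcal{B}\cap\mathcal{W}$ descriptions, and the passage from $\Ext^1_{\Ch}$-vanishing to $\underline{\Ext}^1$-vanishing via shift invariance of the target class; once these are unpacked, the four verifications fall to immediate applications of Propositions \ref{prop:char_dims}, \ref{dim-dual} and \ref{prop:duality_dg} together with the exactness of Pontrjagin duality.
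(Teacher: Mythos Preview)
Your proof is correct and rests on the same central ingredient as the paper: the symmetry $\underline{\Ext}^1(\bm{A},\bm{B}^+)\cong\overline{\Tor}_1(\bm{B},\bm{A})^+\cong\underline{\Ext}^1(\bm{B},\bm{A}^+)$ from \cite[Lemma 5.4.2]{GR99}, together with the identification of (trivial) cofibrant/fibrant classes via the compatibility relations in Section~\ref{sec:model_sts}.

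There is one organizational difference in part (a) worth recording. The paper shows $\bm{K}^+\in({\rm ex}\widetilde{\mathcal{F}_{(n,k)}(R\op)})^\perp$ by invoking Gillespie's two-part description of this orthogonal (\cite[Proposition 3.3]{GillespieDegree}): first it checks $(K_m)^+\in(\mathcal{F}_{(n,k)}(R\op))^\perp$ term-wise via Lemma~\ref{lem:Pontrjagin_perp}, and then exactness of $\hom(\bm{W},\bm{K}^+)$ via the $\underline{\Ext}^1$ symmetry. You bypass the term-wise check entirely: since ${\rm ex}\widetilde{\mathcal{I}_{(n,k)}(R)}$ and ${\rm dw}\widetilde{\mathcal{I}_{(n,k)}(R)}$ are visibly closed under shifts, you obtain $\underline{\Ext}^1(\bm{A},\bm{B}^+)=\bm{0}$ directly from the cotorsion-pair orthogonality at every shift, and then the symmetry gives $\Ext^1_{\Ch}(\bm{B},\bm{A}^+)=0$ without ever appealing to Lemma~\ref{lem:Pontrjagin_perp} or to Gillespie's characterization. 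This is a small but genuine economy. Part (b) is handled the same way in both arguments (the paper only spells out (a) and leaves the remaining cases implicit).
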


\begin{proof}
We only prove that $(-)^+$ maps cofibrations in $\mathfrak{M}^{\rm dw\mbox{-}inj}_{(n,k)}(R)$ to fibrations in $\mathfrak{M}^{\rm dw\mbox{-}flat}_{(n,k)}(R\op)$. So let $\bm{f} \colon \bm{X} \to \bm{Y}$ be a cofibration in $\mathfrak{M}^{\rm dw\mbox{-}inj}_{(n,k)}(R)$, that is, a monomorphism with cokernel $\bm{K} \in {}^\perp({\rm ex}\widetilde{\mathcal{I}_{(n,k)}(R)})$. Then, we have that $\bm{f}^+$ is an epimorphism. It remains to show that $\bm{K}^+ \in ({\rm ex}\widetilde{\mathcal{F}_{(n,k)}(R\op)})^\perp$. According to \cite[Proposition 3.3]{GillespieDegree}, this holds true if $(K_m)^+ \in (\mathcal{F}_{(n,k)}(R\op))^\perp$ and if the complex $\hom(\bm{W},\bm{K}^+)$ is exact whenever $\bm{W} \in {\rm ex}\widetilde{\mathcal{F}_{(n,k)}(R\op)}$. The former condition follows by Lemma \ref{lem:Pontrjagin_perp}. For the latter, using arguments similar to those in the proof of Proposition \ref{prop:duality_dg}, it suffices to verify that $\underline{\Ext}^1(\bm{W}[m+1],\bm{K}^+) = \bm{0}$ for every $m \in \mathbb{Z}$. This follows by the fact that $(\bm{W}[m+1])^+ \in {\rm ex}\widetilde{\mathcal{I}_{(n,k)}(R)}$ by Theorem \ref{fi-dual}, and by the natural isomorphisms $\underline{\Ext}^1(\bm{W}[m+1],\bm{K}^+) \cong \overline{\Tor}_1(\bm{W}[m+1],\bm{K})^+ \cong \underline{\Ext}^1(\bm{K},(\bm{W}[m+1])^+) = \bm{0}$.
\end{proof}

Although the functor $(-)^+$ has some nice properties when it comes to relating model structures, it fails to be a Quillen equivalence (or even a left or right Quillen functor). In what remains of this section, we will study some conditions under which it is possible to establish Quillen equivalences between the model structures associated to $\FP_n$-injective modules, and those associated to $\FP_n$-flat modules. The functors to be studied here as candidates for Quillen functors will be the identity functor ${\rm id}\colon \Ch(R) \to \Ch(R)$ and the functor $\overline{R} \otimes_R - \colon \Ch(R) \to \Ch(\overline{R})$ related to the \emph{change of ring} construction induced by a ring homomorphism $\varphi \colon R \to \overline{R}$. The contents presented below are motivated by Hovey's work \cite{HoveyBook} on projective and injective model structures.

In the last part of \cite[Section 2.3]{HoveyBook}, it is claimed that the identity $\id \colon \Ch(R) \longrightarrow \Ch(R)$ is a Quillen equivalence between the standard and injective model structures on $\Ch(R)$. This functor is going to be a source for several Quillen equivalences between the model structures in Section \ref{sec:model_sts}. We can start to specify this claim by noticing that $\id$ maps (trivial) cofibrations in $\mathfrak{M}^{\rm inj}_{(\infty,0)}(R)$ into (trivial) cofibrations in $\mathfrak{M}^{\rm inj}_{(0,0)}(R)$. It follows that
\begin{align}\label{equiv1}
\mathfrak{M}^{\rm inj}_{(\infty,0)}(R) & \underset{q}{\sim} \mathfrak{M}^{\rm inj}_{(0,0)}(R).
\end{align}
This equivalence also holds for any injective dimension, that is,
\begin{align}\label{equiv2}
\mathfrak{M}^{\rm inj}_{(\infty,k)}(R) & \underset{q}{\sim} \mathfrak{M}^{\rm inj}_{(0,k)}(R)
\end{align}
for any $k \geq 0$. Recall by Theorem \ref{theo:model_ac} that the previous equivalence becomes an equality if, and only if, $R$ is a left noetherian ring. We can extend the previous equivalence to a $\ast$Quillen equivalence between model structures associated to $\FP_n$-injective modules when we vary the finiteness parameter ``n''. If $n \geq m \geq 0$, then every module in $\Mod(R)$ of type $\FP_n$ is of type $\FP_m$. It follows that $\mathcal{I}_{(m,k)}(R) \subseteq \mathcal{I}_{(n,k)}(R)$ for any $k \geq 0$. On the other hand, if we want to vary the dimension parameter by assuming that $k \geq t \geq 0$, then $\mathcal{I}_{(n,k)}(R) \subseteq \mathcal{I}_{(n,t)}(R)$. From these inclusions, we have that:
\begin{align}
\mathfrak{M}^{\rm dw\mbox{-}inj}_{(n,k)}(R) & \overset{\ast}{\underset{q}{\sim}} \mathfrak{M}^{\rm dw\mbox{-}inj}_{(m,t)}(R), \label{equiv3} \\
\mathfrak{M}^{\rm op\mbox{ }dw\mbox{-}inj}_{(n,k)}(R) & \overset{\ast}{\underset{q}{\sim}} \mathfrak{M}^{\rm op\mbox{ }dw\mbox{-}inj}_{(m,t)}(R). \label{equiv4}
\end{align}
The $\ast$Quillen equivalence \eqref{equiv3} becomes an equality if, and only if, $R$ is left $m$-coherent and $k = t$. In a similar way, we have that
\begin{align}\label{equiv5}
\mathfrak{M}^{\rm dw\mbox{-}flat}_{(n,k)}(R\op) \overset{\ast}{\underset{q}{\sim}} \mathfrak{M}^{\rm dw\mbox{-}flat}_{(m,t)}(R\op)
\end{align}
for every $n,m \geq 0$ and $k,t \geq 0$.

Now we compare the absolutely clean and level model structures on $\Ch(R)$ and $\Ch(R\op)$. We study some conditions under which these two model structures are $\ast$Quillen equivalent. One way to do this is comparing the homotopy categories of each of the model structures. Recall from \cite{DS} that the \emph{homotopy category} of a model category $(\mathcal{D},\mathfrak{M})$, denoted by ${\rm Ho}(\mathcal{D})$ is obtained by formally inverting the weak equivalences to obtain the category-theoretic localization $\mathfrak{W}_{\rm eak}^{-1}\mathcal{D}$. If we choose any of the model structures on $\Ch(R)$ obtained in this paper, its homotopy category is equivalent to the derived category $\bm{{\rm D}}(R)$ of the ring $R$, since its class of weak equivalences is given by the quasi-isomorphisms. So one may think of comparing abelian model categories on $\Ch(R)$ and $\Ch(\overline{R})$ by checking if the rings $R$ and $\overline{R}$ are derived equivalent. This is related to a non trivial result due to D. Dugger and B. Shipley \cite[Theorem 2.6]{DS}. Specifically, they proved that two rings $R$ and $\overline{R}$ are derived equivalent if, and only if, their associated standard model structures on $\Ch(R)$ and $\Ch(\overline{R})$ are $\ast$Quillen equivalent. We will use this result as a way to compare model structures on $\Ch(R)$ and $\Ch(R\op)$.

Consider the absolutely clean model structure $\mathfrak{M}^{\rm inj}_{(\infty,0)}(R)$ on $\Ch(R)$ and the level model structure $\mathfrak{M}^{\rm flat}_{(\infty,0)}(R\op)$ on $\Ch(R\op)$. On the one hand, we already know that
\[
\mathfrak{M}^{\rm inj}_{(\infty,0)}(R) \underset{q}{\sim} \mathfrak{M}^{\rm inj}_{(0,0)}(R) \mbox{ \ and \ } \mathfrak{M}^{\rm inj}_{(0,0)}(R) \underset{q}{\sim} \mathfrak{M}^{\rm proj}(R).
\]
Thus,
\[
\mathfrak{M}^{\rm inj}_{(\infty,0)}(R) \overset{\ast}{\underset{q}{\sim}} \mathfrak{M}^{\rm proj}(R).
\]
On the other hand, it is easy to note that every dg-projective complex in $\Ch(R\op)$ is dg-level (that is, ${\rm dg}\widetilde{\mathcal{P}(R\op)} \subseteq {\rm dg}\widetilde{\mathcal{F}_{(\infty,0)}(R\op)}$), and so $\id$ maps (trivial) cofibrations in $\mathfrak{M}^{\rm proj}(R\op)$ to (trivial) cofibrations in $\mathfrak{M}^{\rm flat}_{(\infty,0)}(R\op)$. It follows that
\[
\mathfrak{M}^{\rm flat}_{(\infty,0)}(R\op) \underset{q}{\sim} \mathfrak{M}^{\rm proj}(R\op).
\]
By \cite[Theorem 2.6]{DS}, we conclude the following result.

\begin{proposition}\label{prop:ac_level_eq}
For any ring $R$, the following conditions are equivalent:
\begin{itemize}
\item[{\rm (1)}] $R$ and $R\op$ are derived equivalent.

\item[{\rm (2)}] The absolutely clean model structure on $\Ch(R)$ is $\ast$Quillen equivalent to the level model structure on $\Ch(R\op)$.
\end{itemize}
\end{proposition}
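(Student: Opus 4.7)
The plan is to exhibit both sides as derived-equivalent statements and then invoke \cite[Theorem 2.6]{DS}. Concretely, I would show that each of the two abelian model structures appearing in (2) is $\ast$Quillen equivalent, via (zig-zags of) identity functors, to the standard projective model structure on its underlying category; this reduces (2) to the statement that $\mathfrak{M}^{\rm proj}(R) \overset{\ast}{\underset{q}{\sim}} \mathfrak{M}^{\rm proj}(R\op)$, which by Dugger--Shipley is exactly (1).

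For the left-hand side, the Quillen equivalence \eqref{equiv1} gives $\mathfrak{M}^{\rm inj}_{(\infty,0)}(R) \underset{q}{\sim} \mathfrak{M}^{\rm inj}_{(0,0)}(R)$, and the classical fact recalled in \cite[Section 2.3]{HoveyBook} supplies a Quillen equivalence between the injective and projective model structures on $\Ch(R)$. For the right-hand side, I would note that every projective right $R$-module is level, so
\[
{\rm dg}\widetilde{\mathcal{P}(R\op)} \subseteq {\rm dg}\widetilde{\mathcal{F}_{(\infty,0)}(R\op)}.
\]
Consequently the identity functor $\id \colon \Ch(R\op) \to \Ch(R\op)$ sends cofibrations of $\mathfrak{M}^{\rm proj}(R\op)$ into cofibrations of $\mathfrak{M}^{\rm flat}_{(\infty,0)}(R\op)$, and since both model structures have $\mathcal{E}(R\op)$ as their class of trivial objects it also preserves trivial cofibrations; thus $(\id,\id)$ is a Quillen adjunction between these two model structures.

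Putting these pieces together yields
\[
\mathfrak{M}^{\rm inj}_{(\infty,0)}(R) \overset{\ast}{\underset{q}{\sim}} \mathfrak{M}^{\rm proj}(R), \qquad \mathfrak{M}^{\rm flat}_{(\infty,0)}(R\op) \underset{q}{\sim} \mathfrak{M}^{\rm proj}(R\op),
\]
so (2) is equivalent to $\mathfrak{M}^{\rm proj}(R) \overset{\ast}{\underset{q}{\sim}} \mathfrak{M}^{\rm proj}(R\op)$, and the desired equivalence with (1) then follows from \cite[Theorem 2.6]{DS}. The only subtle checkpoint in the plan is verifying that the identity Quillen adjunction on the level side is in fact a Quillen \emph{equivalence}, not merely a Quillen adjunction; this reduces to the observation that both the projective and the level model structures on $\Ch(R\op)$ share the same class of weak equivalences (the quasi-isomorphisms, since the trivial objects of each are precisely the exact complexes), so that on passage to homotopy categories the derived identity functor is the identity of $\bm{{\rm D}}(R\op)$, hence an equivalence.
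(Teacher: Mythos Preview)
Your proposal is correct and follows essentially the same route as the paper: both sides are linked to the projective model structure on their respective categories via the identity functor (using \eqref{equiv1} and the inclusion ${\rm dg}\widetilde{\mathcal{P}(R\op)} \subseteq {\rm dg}\widetilde{\mathcal{F}_{(\infty,0)}(R\op)}$), and the conclusion is drawn from \cite[Theorem 2.6]{DS}. Your explicit remark that the identity Quillen adjunction on the level side is a Quillen \emph{equivalence} because both model structures share the quasi-isomorphisms as weak equivalences is a point the paper leaves implicit.
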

In a similar way, we can conclude that $R$ and $R\op$ are derived equivalent if, and only if, the injective model structure on $\Ch(R)$ and the flat model structure on $\Ch(R\op)$ are $\ast$Quillen equivalent. And more generally, combining Proposition \ref{prop:ac_level_eq} with \eqref{equiv3} and \eqref{equiv5}, along with the $\ast$Quillen equivalences
\[
\mathfrak{M}^{\rm dw\mbox{-}flat}_{(n,k)}(R\op) \overset{\ast}{\underset{q}{\sim}} \mathfrak{M}^{\rm proj}(R\op) \mbox{ \ and \ } \mathfrak{M}^{\rm dw\mbox{-}inj}_{(n,k)}(R) \overset{\ast}{\underset{q}{\sim}} \mathfrak{M}^{\rm proj}(R),
\]
we obtain the following result.

\begin{proposition}
For any ring $R$, the following conditions are equivalent for any finiteness parameters $n, m \geq 0$ and any dimension parameters $k, t \geq 0$:
\begin{itemize}
\item[{\rm (1)}] $R$ and $R\op$ are derived equivalent.

\item[{\rm (2)}] $\mathfrak{M}^{\rm dw\mbox{-}inj}_{(n,k)}(R) \overset{\ast}{\underset{q}{\sim}} \mathfrak{M}^{\rm dw\mbox{-}flat}_{(m,t)}(R\op)$.
\end{itemize}
\end{proposition}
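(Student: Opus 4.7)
The plan is to reduce everything to the standard projective model structures on $\Ch(R)$ and $\Ch(R\op)$ and then apply the Dugger--Shipley criterion \cite[Theorem 2.6]{DS}, which says that $R$ and $R\op$ are derived equivalent if and only if $\mathfrak{M}^{\rm proj}(R) \overset{\ast}{\underset{q}{\sim}} \mathfrak{M}^{\rm proj}(R\op)$. The key input, already isolated in the paragraph preceding the proposition, is that for every ring $R$ and every choice of parameters $(n,k)$ one has
\[
\mathfrak{M}^{\rm dw\mbox{-}inj}_{(n,k)}(R) \overset{\ast}{\underset{q}{\sim}} \mathfrak{M}^{\rm proj}(R), \qquad \mathfrak{M}^{\rm dw\mbox{-}flat}_{(n,k)}(R\op) \overset{\ast}{\underset{q}{\sim}} \mathfrak{M}^{\rm proj}(R\op).
\]

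For the implication $(1) \Rightarrow (2)$, I would assume $R$ and $R\op$ are derived equivalent so that $\mathfrak{M}^{\rm proj}(R) \overset{\ast}{\underset{q}{\sim}} \mathfrak{M}^{\rm proj}(R\op)$ by Dugger--Shipley, and then concatenate the three available zig-zags
\[
\mathfrak{M}^{\rm dw\mbox{-}inj}_{(n,k)}(R) \overset{\ast}{\underset{q}{\sim}} \mathfrak{M}^{\rm proj}(R) \overset{\ast}{\underset{q}{\sim}} \mathfrak{M}^{\rm proj}(R\op) \overset{\ast}{\underset{q}{\sim}} \mathfrak{M}^{\rm dw\mbox{-}flat}_{(m,t)}(R\op)
\]
to obtain the desired $\ast$Quillen equivalence. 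Conversely, for $(2) \Rightarrow (1)$, splicing the $\ast$Quillen equivalence in (2) between the two displayed reductions yields a zig-zag connecting $\mathfrak{M}^{\rm proj}(R)$ to $\mathfrak{M}^{\rm proj}(R\op)$, and the Dugger--Shipley theorem then delivers derived equivalence of $R$ and $R\op$.

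The only genuine work lies in verifying the two reductions to the projective model structures; the rest is a formal splicing of zig-zags. By repeated application of \eqref{equiv3} (respectively \eqref{equiv5}) one first lowers the parameters $(n,k)$ down to $(0,0)$. To then connect $\mathfrak{M}^{\rm dw\mbox{-}inj}_{(0,0)}(R)$ with $\mathfrak{M}^{\rm proj}(R)$, one observes that the identity functor $\id\colon\Ch(R)\to\Ch(R)$ furnishes a Quillen adjunction between them, and since \emph{all} model structures involved share the class of quasi-isomorphisms as weak equivalences, any Quillen adjunction of the form $(\id,\id)$ is automatically a Quillen equivalence; the same argument carries over to the flat side. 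This automatic upgrade of a Quillen adjunction to a Quillen equivalence, made possible by the common class of weak equivalences, is the only delicate step; once it is in place, the proof collapses to the formal chaining described above.
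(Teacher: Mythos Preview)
Your proof is correct and follows essentially the same approach as the paper: reduce both model structures to the standard projective model structures on $\Ch(R)$ and $\Ch(R\op)$ via the $\ast$Quillen equivalences $\mathfrak{M}^{\rm dw\mbox{-}inj}_{(n,k)}(R) \overset{\ast}{\underset{q}{\sim}} \mathfrak{M}^{\rm proj}(R)$ and $\mathfrak{M}^{\rm dw\mbox{-}flat}_{(m,t)}(R\op) \overset{\ast}{\underset{q}{\sim}} \mathfrak{M}^{\rm proj}(R\op)$, and then invoke the Dugger--Shipley criterion. The paper simply asserts these two reductions in the sentence preceding the proposition, whereas you spell out how to obtain them by first lowering the parameters via \eqref{equiv3} and \eqref{equiv5} and then connecting the $(0,0)$ case to the projective model structure through the identity functor, using that all model structures share the quasi-isomorphisms as weak equivalences; this extra detail is a welcome elaboration but not a different argument.
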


In general, a ring $R$ is not derived equivalent to its opposite. There are cases where, however, such an equivalence occurs. For instance, if $R$ is commutative, then $R = R\op$. In the case where $R$ is not commutative, we cannot even assert that $R$ and $R\op$ are Morita equivalent. Rings which are Morita equivalent to its opposite were characterized by U. A. First in \cite{First}. For these rings, we have that $\Mod(R)$ and $\Mod(R\op)$ are (category-theoretic) equivalent. It follows that the derived categories $\bm{{\rm D}}(R)$ and $\bm{{\rm D}}(R\op)$ of $\Mod(R)$ and $\Mod(R\op)$ are equivalent, that is, $R$ and $R\op$ are derived equivalent. So in this case, the absolutely clean model structure on $\Ch(R)$ and the level model structure on $\Ch(R\op)$ are $\ast$Quillen equivalent, and the corresponding homotopy categories are triangle equivalent. This is one of the cases where a triangle equivalence between homotopy categories comes from a Quillen equivalence, although this is not true in general (See \cite[Theorem 8.5.23]{Hir}). Notice that the homotopy categories considered here are triangulated since their model structures are pointed (See \cite[Chapter 7]{HoveyBook}).

Although the absolutely clean and level model structures are not always $\ast$Quillen equivalent when compared between $\Ch(R)$ and $\Ch(R\op)$, they will be indeed if they are considered on the same category, say $\Ch(R)$. We can complement the equivalence \eqref{equiv1} with the fact that the standard and Gillespie's flat model structure \cite{GillespieFlat} are Quillen equivalent, and hence
\begin{align}
\mathfrak{M}^{\rm inj}_{(\infty,0)}(R) \overset{\ast}{\underset{q}{\sim}} \mathfrak{M}^{\rm flat}_{(\infty,0)}(R).
\end{align}
Moreover, by the previous equivalence, along with \eqref{equiv3} and \eqref{equiv5}, we have
\begin{align}
\mathfrak{M}^{\rm dw\mbox{-}inj}_{(n,k)}(R) \overset{\ast}{\underset{q}{\sim}} \mathfrak{M}^{\rm dw\mbox{-}flat}_{(m,t)}(R)
\end{align}
for every $n,m \geq 0$ and $k,t \geq 0$.

In what remains of this paper, we study the possibility that a ring homomorphism $\varphi \colon R \to \overline{R}$ induces, in the form of the change of ring functor, a Quillen adjunction between the $\FP_n$-injective and the $\FP_n$-flat model structures of Section \ref{sec:model_sts}. Any ring homomorphism $\varphi$ induces an adjoint pair $(\overline{R} \otimes_R -, U) \colon \Ch(R) \to \Ch(\overline{R})$. On the one hand, for any left $R$-module $M$, one has that $\overline{R} \otimes_R M$ is a left $\overline{R}$-module. On the other hand, any left $\overline{R}$-module $N$ can be given a left $R$-module structure via $\varphi$ as follows: $r \cdot y = \varphi(r) \cdot y$ for every $r \in R$ and $y \in N$. We denote by $U(N)$ the left $\overline{R}$-module $N$ thought as a left $R$-module. These two constructions yield functors $\overline{R} \otimes_R - \colon \Ch(R) \longrightarrow \Ch(\overline{R})$ and $U \colon \Ch(\overline{R}) \longrightarrow \Ch(R)$, which form an adjoint pair $(\overline{R} \otimes_R -, U)$. Similarly, we also get an adjunction $(\overline{R} \otimes_R -, U) \colon \Mod(R) \longleftrightarrow \Mod(\overline{R})$, which we denote the same way by abuse of notation. The left adjoint is known as the \emph{change of ring} or the \emph{induction} functor, while the right adjoint is known as the \emph{restriction} or the \emph{forgetful functor}. According to \cite[Section 2.3]{HoveyBook}, the induction is a left Quillen adjunction between the standard model structures on $\Ch(R)$ and $\Ch(\overline{R})$, which turns out to be a Quillen equivalence if, and only if, $\varphi$ is an isomorphism. Note that this result cannot be applied if we set $\overline{R} := R\op$, since $R$ and $R\op$ are not necessarily isomorphic (See \cite[Section 2.8]{Jacobson1} for a counter-example).

The fact that $(\overline{R} \otimes_R -, U)$ is a Quillen adjunction between the standard model structures is only claimed but not proved in \cite{HoveyBook}, but it is important that we prove it by ourselves in order to study $\overline{R} \otimes_R -$ and $U$ as left and right Quillen functors between the model structures of Section \ref{sec:model_sts}. We also extend Hovey's assertions to Gillespie's flat model structures.

\begin{lemma}\label{lem:proj_flat_Quillen}
The induction $- \otimes_R \overline{R} \colon \Ch(R\op) \to \Ch(\overline{R}\op)$ is a left Quillen functor from the standard model structure on $\Ch(R\op)$ to the standard model structure on $\Ch(\overline{R}\op)$. It is also a left Quillen functor from the flat model structure on $\Ch(R\op)$ to the flat model structure on $\Ch(\overline{R}\op)$. In both cases, it is a Quillen equivalence if, and only if, $\varphi$ is an isomorphism.
\end{lemma}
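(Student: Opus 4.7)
The plan is to verify the left Quillen property of $- \otimes_R \overline{R}$ by checking preservation on the relevant halves of each model structure, and then to derive the Quillen equivalence criterion from the unit of the adjunction evaluated at a well-chosen cofibrant object.

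For the standard projective model structure on $\Ch(R\op)$, it is easier to verify that the right adjoint $U$ is right Quillen. Since $\mathfrak{M}^{\rm proj}(R\op) = ({\rm dg}\widetilde{\mathcal{P}(R\op)}, \mathcal{E}(R\op), \Ch(R\op))$, the fibrations are precisely the epimorphisms and the trivial fibrations are the epimorphisms with exact kernel. The functor $U$ is exact (it merely restricts scalars along $\varphi$), so it preserves epimorphisms and takes exact complexes to exact complexes. Hence $U$ is right Quillen, and therefore $- \otimes_R \overline{R}$ is left Quillen.

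For the flat model structure $\mathfrak{M}^{\rm flat}(R\op) = ({\rm dg}\widetilde{\Flat(R\op)}, \mathcal{E}(R\op), {\rm dg}\widetilde{\Cot(R\op)})$, I would instead check directly that $- \otimes_R \overline{R}$ preserves (trivial) cofibrations. A cofibration is a monomorphism $\bm{f}\colon \bm{X} \hookrightarrow \bm{Y}$ with dg-flat cokernel $\bm{Z}$. Since each $Z_m$ is a flat right $R$-module, the degreewise short exact sequences are preserved by $- \otimes_R \overline{R}$, so $\bm{f} \otimes_R \overline{R}$ remains a monomorphism and its cokernel is $\bm{Z} \otimes_R \overline{R}$. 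To show the latter is dg-flat over $\overline{R}\op$, first observe that each $Z_m \otimes_R \overline{R}$ is flat as a right $\overline{R}$-module (flatness is preserved under change of rings). For the global condition, given any exact complex $\bm{E}$ of left $\overline{R}$-modules, the natural isomorphism $(\bm{Z} \otimes_R \overline{R}) \otimes_{\overline{R}} \bm{E} \cong \bm{Z} \otimes_R U(\bm{E})$ reduces exactness to the dg-flat property of $\bm{Z}$ applied to the exact complex $U(\bm{E})$ of left $R$-modules. For trivial cofibrations the cokernel $\bm{Z}$ is exact with flat cycles; flatness of $Z_{m-1}(\bm{Z})$ makes the short exact sequences $0 \to Z_m(\bm{Z}) \to Z_m \to Z_{m-1}(\bm{Z}) \to 0$ pure, so their image under $- \otimes_R \overline{R}$ remains exact, giving $Z_m(\bm{Z} \otimes_R \overline{R}) \cong Z_m(\bm{Z}) \otimes_R \overline{R}$, which is flat.

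For the Quillen equivalence criterion, the direction ($\Leftarrow$) is immediate: if $\varphi$ is an isomorphism then $- \otimes_R \overline{R}$ and $U$ are mutually inverse equivalences of the module categories, and hence of the chain complex categories. For ($\Rightarrow$), I would evaluate the unit $\eta_{\bm{X}} \colon \bm{X} \to U(\bm{X} \otimes_R \overline{R})$ at the complex $\bm{X} = S^0(R)$, which is dg-projective (hence cofibrant in the standard structure) and also dg-flat (hence cofibrant in the flat structure). The unit map is then precisely the ring homomorphism $\varphi \colon R \to \overline{R}$, interpreted as a morphism between sphere complexes concentrated in degree $0$. A Quillen equivalence forces $\eta_{\bm{X}}$ to be a quasi-isomorphism on cofibrant objects (possibly after fibrant replacement, which is itself a quasi-isomorphism, so the conclusion is unaffected); since both source and target are concentrated in degree $0$, this quasi-isomorphism must be an honest isomorphism, so $\varphi$ is an isomorphism.

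The main obstacle is the verification that $- \otimes_R \overline{R}$ sends dg-flat complexes to dg-flat complexes: while the pointwise flatness condition is classical, the global condition involving the tensor product with arbitrary exact complexes of left $\overline{R}$-modules requires the natural isomorphism $(\bm{Z} \otimes_R \overline{R}) \otimes_{\overline{R}} \bm{E} \cong \bm{Z} \otimes_R U(\bm{E})$ together with the exactness of $U$ to transport the dg-flat property across the change of rings.
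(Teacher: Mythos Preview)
Your proof is correct, but takes a genuinely different route from the paper in two places, and is in fact more complete in a third.

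For the projective (standard) model structure, the paper works on the \emph{left} side: it takes a cofibration, uses flatness of the projective cokernel to see that $-\otimes_R\overline{R}$ preserves the monomorphism, and then verifies directly that $\bm{K}\otimes_R\overline{R}$ is dg-projective via the adjunction isomorphism $\hom(\bm{K}\otimes_R\overline{R},\bm{E})\cong\hom(\bm{K},\hom(S^0(\overline{R}),\bm{E}))$. Your argument via the \emph{right} adjoint is considerably shorter: since fibrations in $\mathfrak{M}^{\rm proj}$ are bare epimorphisms and trivial fibrations are epimorphisms with exact kernel, exactness of $U$ settles everything in one line. This is a genuine simplification that the paper's approach does not exploit.

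For the flat model structure, both proofs work on the left side, but with different characterizations of dg-flat. The paper uses the $\hom$-definition from the cotorsion pair (exactness of $\hom(\bm{Z},\bm{C})$ for cotorsion complexes $\bm{C}$) together with Lazard's theorem for the termwise condition. You instead use Gillespie's tensor characterization (exactness of $\bm{Z}\otimes\bm{E}$ for exact $\bm{E}$), which lets you transport the dg-flat property through the associativity isomorphism $(\bm{Z}\otimes_R\overline{R})\otimes_{\overline{R}}\bm{E}\cong\bm{Z}\otimes_R U(\bm{E})$. Both are valid; yours is arguably more transparent, though you are silently invoking the equivalence of the two descriptions of dg-flat complexes (a standard result from \cite{GillespieFlat}, but not stated in the present paper).

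Finally, the paper does not actually prove the Quillen-equivalence criterion at all; it simply asserts it, deferring to Hovey. Your argument evaluating the unit at $S^0(R)$ and using that $U$ preserves all quasi-isomorphisms (being exact) is a clean, self-contained justification that the paper omits.
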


\begin{proof}
Suppose that $\bm{f} \colon \bm{X} \to \bm{Y}$ is a cofibration in $\mathfrak{M}^{\rm proj}(R\op)$, that is, a monomorphism with cokernel $\bm{K}$ dg-projective over $R$. Since each $K_m$ is projective, and so flat, we have that each $f_m \otimes_R \overline{R}$ is a monomorphism, and so $\bm{f} \otimes_R \overline{R}$ is a monomorphism in $\Ch(\overline{R}\op)$. We show that $\bm{K} \otimes_R \overline{R}$ is dg-projective over $\overline{R}$. For any exact complex $\bm{E}$ in $\Ch(\overline{R}\op)$, we have by \cite[Proposition 4.4.11]{PerezBook} a natural isomorphism $\hom(\bm{K} \otimes_R \overline{R},\bm{E}) = \hom(\bm{K} \otimes S^0(\overline{R}), \bm{E}) \cong \hom(\bm{K},\hom(S^0(\overline{R}),\bm{E}))$ where $\hom(S^0(\overline{R}),\bm{E})$ is exact as a complex in $\Ch(R)$ since $S^0(\overline{R})$ is dg-projective over $\overline{R}$, and so the resulting complex $\hom(\bm{K},\hom(S^0(\overline{R}),\bm{E}))$ is exact since $S^0(\overline{R})$ is dg-projective over $R$. Then, $\hom(\bm{K} \otimes_R \overline{R},\bm{E})$ is exact. On the other hand, note that each $K_m \otimes_R \overline{R}$ is a projective module in $\Mod(\overline{R}\op)$, due to the natural isomorphism $\Hom_{\overline{R}\op}(K_m \otimes_R \overline{R},-) \cong \Hom_{R\op}(K_m,U(-))$ and to the fact that the forgetful functor $U \colon \Ch(\overline{R}\op) \longrightarrow \Ch(R\op)$ is exact by \cite[Proposition 8.33]{Rotman}. Hence, we conclude that $\overline{R} \otimes_R \bm{K}$ is dg-projective over $\overline{R}$. One can also check that projective complexes in $\Ch(R\op)$ remain exact after tensoring with $\overline{R}$. It follows that $- \otimes_R \overline{R}$ is a left Quillen functor.

Now suppose that $\bm{f}$ as above is a cofibration in the flat model structure on $\Ch(R\op)$. Then, one can note that $\hom(\bm{K} \otimes_R \overline{R},\bm{E})$ is exact whenever $\bm{E}$ is a cotorsion complex in $\Ch(\overline{R}\op)$, that is, $\bm{E}$ is exact with cycles in $(\mathcal{F}_{(0,0)}(\overline{R}\op))^\perp$. On the other hand, each $K_m \otimes_R \overline{R}$ is flat in $\Mod(\overline{R}\op)$. Since $K_m$ is flat in $\Mod(R\op)$, by Lazard's Theorem we can write $K_m \simeq \varinjlim K^i_m$ where each $K^i_m$ is projective, that is, $K_m$ is a direct limit of projective modules in $\Mod(R\op)$. Now using the fact that $- \otimes_R \overline{R}$ preserves direct limits, we have $K_m \otimes_R \overline{R} \simeq \varinjlim K^i_m \otimes_R \overline{R}$, where each $K^i_m \otimes_R \overline{R}$ is projective in $\Mod(\overline{R}\op)$, and hence, $K_m \otimes_R \overline{R}$ is flat in $\Mod(\overline{R}\op)$. Hence, $\bm{f} \otimes_R \overline{R}$ is a cofibration in $\mathfrak{M}^{\rm flat}_{(0,0)}(\overline{R}\op)$. Also, $- \otimes_R \overline{R}$ preserves the exactness of exact complexes with flat cycles, and hence $- \otimes_R \overline{R}$ maps trivial cofibrations in $\mathfrak{M}^{\rm flat}_{(0,0)}(R\op)$ to trivial cofibrations in $\mathfrak{M}^{\rm flat}_{(0,0)}(\overline{R}\op)$.
\end{proof}

The arguments applied in the previous lemma cannot apply to the model structures involving the class $\mathcal{F}_{(n,k)}(R\op)$ with $n > 1$. Specifically, we do not have a version of Lazard's Theorem for $\FP_n$-flat modules. However, we can settle this inconvenience by imposing some extra conditions on $R$ and $\overline{R}$. We first study the preservation of modules of type $\FP_n$ under $\overline{R} \otimes_R -$ and $U$. This will have to do with a particular type of flat modules. Recall that a left $R$-module $M$ is \emph{faithfully flat} if for every sequence $\bm{\eta} \colon 0 \to A \to B \to C \to 0$ in $\Mod(R\op)$, one has that $\bm{\eta}$ is exact if, and only if, $\bm{\eta} \otimes_R M$ is exact.

\begin{proposition}\label{prop:change_FPn}
Let $\varphi \colon R \to \overline{R}$ be a ring homomorphism. The following conditions hold:
\begin{itemize}
\item[{\rm (a)}] If $\varphi$ makes $\overline{R}$ a faithfully flat right $R$-module, then the conditions $M \in \mathcal{FP}_n(R)$ and $\overline{R} \otimes_R M \in \mathcal{FP}_n(\overline{R})$ are equivalent.

\item[{\rm (b)}] If $\varphi$ makes $\overline{R}$ a finitely generated projective left $R$-module, then $U(N) \in \mathcal{FP}_n(R)$  whenever $N \in \mathcal{FP}_n(\overline{R})$. If in addition $\varphi$ is an isomorphism and $\varphi$ makes $\overline{R}$ a faithfully flat right $R$-module, then $N \in \mathcal{FP}_n(\overline{R})$ whenever $U(N) \in \mathcal{FP}_n(R)$.
\end{itemize}
\end{proposition}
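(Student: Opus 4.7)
\medskip

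\noindent \textbf{Proof plan.} For part (a), the forward implication is straightforward: if $M \in \mathcal{FP}_n(R)$, take a partial finitely generated projective presentation $P_n \to P_{n-1} \to \cdots \to P_0 \to M \to 0$ in $\Mod(R)$. Applying $\overline{R} \otimes_R -$, exactness is preserved because $\overline{R}$ is flat as a right $R$-module, and each $\overline{R} \otimes_R P_i$ is a finitely generated projective left $\overline{R}$-module (since the functor sends finitely generated projectives to finitely generated projectives). Note that flatness alone suffices here.

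For the backward implication in (a), I would argue by induction on $n$. For $n = 0$, the goal is to show that $\overline{R} \otimes_R M$ finitely generated over $\overline{R}$ forces $M$ finitely generated over $R$; this is a standard descent property of faithfully flat modules. For the inductive step, using the base case $M$ is already finitely generated, so pick an epimorphism $R^k \twoheadrightarrow M$ with finite $k$ and let $K$ denote its kernel. Applying $\overline{R} \otimes_R -$ and using flatness, we obtain a short exact sequence
\[
0 \to \overline{R} \otimes_R K \to \overline{R}^k \to \overline{R} \otimes_R M \to 0
\]
in $\Mod(\overline{R})$. Since $\overline{R} \otimes_R M \in \mathcal{FP}_n(\overline{R})$, the closure of $\mathcal{FP}_{n-1}(\overline{R})$ under kernels of epimorphisms from finitely generated projectives onto modules of type $\FP_n$ (see \cite[Proposition 1.7]{BP16}) yields $\overline{R} \otimes_R K \in \mathcal{FP}_{n-1}(\overline{R})$. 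By the inductive hypothesis, $K \in \mathcal{FP}_{n-1}(R)$, and therefore $M \in \mathcal{FP}_n(R)$. The main obstacle of the whole proposition is this step, specifically verifying that faithful flatness is what is genuinely needed to descend finite generation at the base, and ensuring that the inductive setup correctly transports the syzygy information back along the faithfully flat extension.

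For part (b), assume $\overline{R}$ is a finitely generated projective left $R$-module via $\varphi$. Given $N \in \mathcal{FP}_n(\overline{R})$, take a partial finitely generated projective presentation $Q_n \to Q_{n-1} \to \cdots \to Q_0 \to N \to 0$ in $\Mod(\overline{R})$ and apply $U$. The functor $U$ is exact (since restriction of scalars always is), so we obtain a partial presentation of $U(N)$. Each $U(Q_i)$ is a direct summand of $U(\overline{R})^{k_i}$ for some finite $k_i$, and since $U(\overline{R})$ is by assumption finitely generated projective over $R$, each $U(Q_i)$ is finitely generated projective over $R$ as well. Hence $U(N) \in \mathcal{FP}_n(R)$. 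For the converse, when $\varphi$ is an isomorphism the restriction functor $U \colon \Mod(\overline{R}) \to \Mod(R)$ is an equivalence of categories, so $U(N) \in \mathcal{FP}_n(R)$ transfers back to $N \in \mathcal{FP}_n(\overline{R})$ through the inverse of $U$. The faithful flatness assumption is included to match the hypotheses used in subsequent applications (e.g.\ in combination with part (a)); under the hypothesis that $\varphi$ is an isomorphism, the converse is immediate as stated.
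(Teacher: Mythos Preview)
Your proposal is correct and follows essentially the same inductive strategy as the paper: both use induction on $n$ for part (a), descending finite generation via faithful flatness at the base and then transporting the syzygy along a short exact sequence (the paper cites \cite[Theorem 2.1.2]{Glaz} where you cite \cite[Proposition 1.7]{BP16}, but these give the same closure property); and both handle part (b) by applying $U$ to a finite presentation and using that $U(\overline{R})$ is finitely generated projective.

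One minor point worth recording: for the converse in (b), the paper argues via the counit isomorphism $\overline{R} \otimes_R U(N) \cong N$ and then invokes part (a), which formally uses the faithful flatness hypothesis. Your observation that the converse is immediate once $\varphi$ is an isomorphism (since $U$ is then an equivalence of categories) is correct, and indeed the faithful flatness of $\overline{R}$ as a right $R$-module is automatic in that case (being isomorphic to $R$ itself). So your remark that this hypothesis is redundant for the converse in (b) is accurate.
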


\begin{proof} \
\begin{itemize}
\item[(a)] The cases $n = 0, 1$ follow by \cite[Theorem 2.1.9]{Glaz}. Now let $M \in \mathcal{FP}_n(R)$, and suppose that the result is true for every module in $\mathcal{FP}_{n-1}(R)$. We have a short exact sequence $\bm{\eta} \colon 0 \to M' \to F \to M \to 0$ in $\Mod(R)$ such that $F$ is finitely generated and free and $M' \in \mathcal{FP}_{n-1}(R)$. Since $\overline{R}$ is a faithfully flat module in $\Mod(R\op)$, we have that $\overline{R} \otimes \bm{\eta} \colon 0 \to \overline{R} \otimes_R M' \to \overline{R} \otimes_R F \to \overline{R} \otimes_R M \to 0$ is a short exact sequence in $\Mod(\overline{R})$, where $\overline{R} \otimes_R M' \in \mathcal{FP}_{n-1}(\overline{R})$. On the other hand, we can write $F \simeq R^{(I)}$, where $I$ is a finite set and $R^{(I)}$ is a coproduct of copies of $R$ indexed by $I$. Then, $\overline{R} \otimes_R F \simeq (\overline{R} \otimes_R R)^{(I)} \simeq \overline{R}^{(I)}$, that is, $\overline{R} \otimes_R F$ is a finitely generated free module in $\Mod(\overline{R})$. It follows that $\overline{R} \otimes_R M \in \mathcal{FP}_n(\overline{R})$.

Now suppose that $\overline{R} \otimes_R M \in \mathcal{FP}_n(\overline{R})$. Then, we know by the case $n = 0$ that $M$ is finitely generated. Then, we can consider a short exact sequence as $\bm{\eta}$ with $F$ finitely generated and free in $\Mod(R)$. Then, we obtain a short exact sequence $\overline{R} \otimes_R \bm{\eta}$, where $\overline{R} \otimes_R F$ is finitely generated and free in $\Mod(\overline{R})$ and $\overline{R} \otimes_R M \in \mathcal{FP}_n(\overline{R})$. It follows by \cite[Theorem 2.1.2]{Glaz} that $\overline{R} \otimes_R M' \in \mathcal{FP}_{n-1}(\overline{R})$, and by the induction hypothesis, we conclude that $M' \in \mathcal{FP}_{n-1}(R)$. Hence, $M \in \mathcal{FP}_n(R)$.

\item[(b)] Let $N \in \mathcal{FP}_n(\overline{R})$. First of all, since $N$ is a finitely generated module in $\Mod(\overline{R})$, we have an epimorphism $h \colon \overline{R}^{(J)} \to N$ where $J$ is a finite set. Since the forgetful functor $U$ preserves epimorphisms and finite direct sums in $\Mod(\overline{R})$, we have an epimorphism $U(h) \colon U(\overline{R})^{(J)} \to U(N)$ in $\Mod(R)$, where each $U(\overline{R})$ is a non-zero finitely generated projective left $R$-module, and thus so is $U(\overline{R})^{(J)}$. It follows that $U(N)$ is finitely generated. In the same way, one can show that $U(N) \in \mathcal{FP}_n(R)$.

Now suppose that $\varphi$ is an isomorphism and that $U(N) \in \mathcal{FP}_n(R)$. On the one hand, the adjoint pair $(\overline{R} \otimes_R -, U) \colon \Mod(R) \longleftrightarrow \Mod(\overline{R})$ is in this case an adjoint equivalence, and so the counit $\varepsilon \colon \overline{R} \otimes_R U(-) \Rightarrow \id_{\Mod(\overline{R})}$ is a natural isomorphism. Thus, we have $\overline{R} \otimes_R U(N) \simeq N$. By part (a), we have $N \in \mathcal{FP}_n(\overline{R})$.
\end{itemize}
\end{proof}

\begin{proposition}\label{prop:change_FPn-flat}
Let $\varphi \colon R \to \overline{R}$ be a ring homomorphism and $M$ be a right $R$-module.
\begin{itemize}
\item[{\rm (a)}] If $\varphi$ makes $\overline{R}$ a finitely generated projective module in $\Mod(R)$, then $M \otimes_R \overline{R} \in \mathcal{F}_{(n,k)}(\overline{R}\op)$ whenever $M \in \mathcal{F}_{(n,k)}(R\op)$.

\item[{\rm (b)}] If $R$ and $\overline{R}$ are commutative, and $\varphi$ makes $\overline{R}$ a (left and right) faithfully flat $R$-module, then $M \in \mathcal{F}_{(n,k)}(R\op)$ whenever $M \otimes_R \overline{R} \in \mathcal{F}_{(n,k)}(\overline{R}\op)$.
\end{itemize}
\end{proposition}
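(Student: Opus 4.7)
The plan is to use the functorial characterization of $\FP_n$-flat dimension from Proposition \ref{prop:FPnid_char}, namely that $M \in \mathcal{F}_{(n,k)}(R\op)$ if and only if $\Tor^R_{k+1}(M,L)=0$ for every $L \in \mathcal{FP}_n(R)$, together with the transfer of the $\FP_n$ property across $\varphi$ already proved in Proposition \ref{prop:change_FPn}, and some standard base-change isomorphisms for $\Tor$.

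For part (a), I would fix $N \in \mathcal{FP}_n(\overline{R})$ and aim to show that $\Tor^{\overline{R}}_{k+1}(M \otimes_R \overline{R}, N) = 0$. Since $\varphi$ makes $\overline{R}$ a finitely generated projective left $R$-module (hence flat), Proposition \ref{prop:change_FPn}(b) gives $U(N) \in \mathcal{FP}_n(R)$. Flatness of $\overline{R}$ over $R$ is exactly what is needed for the standard base-change isomorphism
\[
\Tor^{\overline{R}}_i(M \otimes_R \overline{R}, N) \cong \Tor^R_i(M, U(N)) \qquad \text{for every } i \geq 0,
\]
which one obtains by taking a projective resolution $\bm{P} \to M$ in $\Mod(R\op)$, observing that $\bm{P} \otimes_R \overline{R} \to M \otimes_R \overline{R}$ remains a projective resolution in $\Mod(\overline{R}\op)$, and then using the associativity isomorphism $(P_i \otimes_R \overline{R}) \otimes_{\overline{R}} N \cong P_i \otimes_R U(N)$. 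Since $M \in \mathcal{F}_{(n,k)}(R\op)$ and $U(N) \in \mathcal{FP}_n(R)$, the right-hand side vanishes in degree $k+1$, giving the desired conclusion.

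For part (b), I would again fix $L \in \mathcal{FP}_n(R)$ and try to show $\Tor^R_{k+1}(M,L) = 0$. The key observation is that when $R$ and $\overline{R}$ are commutative and $\overline{R}$ is a flat $R$-algebra, one has the base-change isomorphism
\[
\Tor^R_i(M, L) \otimes_R \overline{R} \cong \Tor^{\overline{R}}_i\bigl(M \otimes_R \overline{R},\, L \otimes_R \overline{R}\bigr),
\]
obtained by tensoring a projective resolution of $L$ over $R$ with $\overline{R}$ and using flatness to preserve exactness. By Proposition \ref{prop:change_FPn}(a), the faithful flatness of $\overline{R}$ over $R$ gives $L \otimes_R \overline{R} \in \mathcal{FP}_n(\overline{R})$, so the right-hand side vanishes in degree $k+1$ by our hypothesis $M \otimes_R \overline{R} \in \mathcal{F}_{(n,k)}(\overline{R}\op)$ together with Proposition \ref{prop:FPnid_char}. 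Thus $\Tor^R_{k+1}(M,L) \otimes_R \overline{R} = 0$, and applying faithful flatness one last time yields $\Tor^R_{k+1}(M,L)=0$.

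I expect the only real obstacle to be bookkeeping: verifying that the base-change isomorphism is available in each setting (left/right modules, change of scalars direction). Part (a) relies on flatness of $\overline{R}$ as a \emph{left} $R$-module so that $-\otimes_R \overline{R}$ preserves resolutions in $\Mod(R\op)$; part (b) uses the commutative hypothesis to freely convert between left and right module structures and to have the symmetric base-change formula. Once these isomorphisms are in place, the vanishing arguments are immediate from Propositions \ref{prop:FPnid_char} and \ref{prop:change_FPn} and the definition of (faithful) flatness.
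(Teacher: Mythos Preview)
Your proposal is correct and follows essentially the same approach as the paper: both parts use the functorial characterization of $\FP_n$-flat dimension from Proposition \ref{prop:FPnid_char}, the transfer of type $\FP_n$ across $\varphi$ from Proposition \ref{prop:change_FPn}, and the standard base-change isomorphisms for $\Tor$ (the paper cites \cite[Corollary 10.61]{Rotman} and \cite[Theorem 2.1.11]{EJ00} for these, while you sketch them directly), concluding via faithful flatness in part (b) exactly as you do.
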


\begin{proof}
For part (a), let $M \in \mathcal{F}_{(n,k)}(R\op)$ and $L \in \mathcal{FP}_n(\overline{R})$. By \cite[Corollary 10.61]{Rotman}, we have $\Tor^{\overline{R}}_{k+1}(M \otimes_R \overline{R},L) \cong \Tor^R_{k+1}(M, U(\overline{R} \otimes_{\overline{R}} L)) \cong \Tor^R_{k+1}(M,U(L))$. And by Proposition \ref{prop:change_FPn}, we have that $U(L) \in \mathcal{FP}_n(R)$, and so $\Tor^R_{k+1}(M,U(L)) = 0$. It follows that $\Tor^{\overline{R}}_{k+1}(M \otimes_R \overline{R},L) = 0$. Hence, $M \otimes_R \overline{R} \in \mathcal{F}_{(n,k)}(\overline{R}\op)$.

Now for part (b), suppose $M \otimes_R \overline{R} \in \mathcal{F}_{(n,k)}(\overline{R}\op)$ and $L \in \mathcal{FP}_n(R)$. We want to show $\Tor^R_{k+1}(M,L) = 0$. By \cite[Proposition 1, page 27]{Bourbaki}, this is equivalent to showing that $\Tor^R_{k+1}(M,L) \otimes_R \overline{R} = 0$, since $\overline{R}$ is faithfully flat over $R$. By \cite[Theorem 2.1.11]{EJ00}, we have that $\Tor^R_{k+1}(M,L) \otimes_R S \cong \Tor^{\overline{R}}_{k+1}(M \otimes_R \overline{R}, L \otimes_R \overline{R})$. On the other hand, by Proposition \ref{prop:change_FPn}, we know that $L \otimes_R \overline{R} \in \mathcal{FP}_n(\overline{R})$. Then, $\Tor^{\overline{R}}_{k+1}(M \otimes_R \overline{R}, L \otimes_R \overline{R}) = 0$. Therefore, $\Tor^R_{k+1}(M,L) \otimes_R \overline{R} = 0$, that is, $\Tor^R_{k+1}(M,L) = 0$ and so $M \in \mathcal{F}_{(n,k)}(R\op)$.
\end{proof}

Given a functor $F \colon \mathcal{D}_1 \longrightarrow \mathcal{D}_2$ between model categories, recall that $F$ is said to \emph{reflect} a property of morphisms if, given a morphism $f$ in $\mathcal{D}_1$, if $F(f)$ has the property so does $f$. The following result is a consequence of the previous proposition and the techniques from Lemma \ref{lem:proj_flat_Quillen}.

\begin{theorem}\label{theo:final1}
Let $\varphi \colon R \to \overline{R}$ be a ring homomorphism making $\overline{R}$ a finitely generated projective module in $\Mod(R)$. The following statements hold true for every $n \geq 2$ and $k \geq 0$:
\begin{itemize}
\item[{\rm (a)}] The induction $- \otimes_R \overline{R} \colon \Ch(R\op) \longrightarrow \Ch(\overline{R}\op)$ is a left Quillen functor from $\mathfrak{M}^{\rm dw\mbox{-}flat}_{(n,k)}(R\op)$ to $\mathfrak{M}^{\rm dw\mbox{-}flat}_{(n,k)}(\overline{R}\op)$, which is a Quillen equivalence if, and only if, $\varphi$ is an isomorphism of rings. If in addition $R$ and $\overline{R}$ are commutative, then $- \otimes_R \overline{R}$ reflects cofibrations and trivial cofibrations between $\mathfrak{M}^{\rm dw\mbox{-}flat}_{(n,k)}(R\op)$ and $\mathfrak{M}^{\rm dw\mbox{-}flat}_{(n,k)}(\overline{R}\op)$. This also applies to the case where $n \to \infty$.

\item[{\rm (b)}] The induction $- \otimes_R \overline{R} \colon \Ch(R\op) \longrightarrow \Ch(\overline{R}\op)$ is a left Quillen functor from $\mathfrak{M}^{\rm flat}_{(\infty,k)}(R\op)$ to $\mathfrak{M}^{\rm flat}_{(\infty,k)}(\overline{R}\op)$, which is a Quillen equivalence if, and only if, $\varphi$ is an isomorphism of rings. If in addition $R$ and $\overline{R}$ are commutative, then $- \otimes_R \overline{R}$ reflects cofibrations and trivial cofibrations between $\mathfrak{M}^{\rm flat}_{(\infty,k)}(R\op)$ and $\mathfrak{M}^{\rm flat}_{(\infty,k)}(\overline{R}\op)$.
\end{itemize}
\end{theorem}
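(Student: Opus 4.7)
The plan is to check the Quillen adjunction and Quillen equivalence conditions directly, by combining the flatness of $\overline{R}$ as a left $R$-module (which follows from its being finitely generated projective) with Proposition \ref{prop:change_FPn-flat}, and to handle the converse of the equivalence via a derived-category argument.

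For the preservation of (trivial) cofibrations in part (a), flatness makes $- \otimes_R \overline{R}$ an exact functor, so monomorphisms and exact complexes are preserved. A cofibration in $\mathfrak{M}^{\rm dw\mbox{-}flat}_{(n,k)}(R\op)$ is a monomorphism $\bm{f}\colon \bm{X} \to \bm{Y}$ with cokernel $\bm{K} \in {\rm dw}\widetilde{\mathcal{F}_{(n,k)}(R\op)}$; after applying $- \otimes_R \overline{R}$, exactness of the functor produces a monomorphism with cokernel $\bm{K} \otimes_R \overline{R}$, whose degree-$m$ components $K_m \otimes_R \overline{R}$ belong to $\mathcal{F}_{(n,k)}(\overline{R}\op)$ by Proposition \ref{prop:change_FPn-flat}(a). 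Trivial cofibrations add the requirement that $\bm{K}$ be exact, which is again preserved by flatness.

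The argument for part (b) is the same for the degreewise-flatness condition, but extra care is required to verify the $\hom$-exactness condition defining ${\rm dg}\widetilde{\mathcal{F}_{(\infty,k)}(\overline{R}\op)}$. The key input is the tensor--hom adjunction, which for any $\bm{E'} \in \Ch(\overline{R}\op)$ yields a natural isomorphism of complexes of abelian groups $\hom_{\overline{R}\op}(\bm{K} \otimes_R \overline{R}, \bm{E'}) \cong \hom_{R\op}(\bm{K}, U(\bm{E'}))$. It therefore suffices to show that $U$ maps $\widetilde{(\mathcal{F}_{(\infty,k)}(\overline{R}\op))^\perp}$ into $\widetilde{(\mathcal{F}_{(\infty,k)}(R\op))^\perp}$, which reduces at the module level to the natural isomorphism $\Ext^1_{R\op}(F, U(E)) \cong \Ext^1_{\overline{R}\op}(F \otimes_R \overline{R}, E)$ (valid since $\overline{R}$ is flat over $R$) combined with Proposition \ref{prop:change_FPn-flat}(a); exactness of $U(\bm{E'})$ is automatic.

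For the Quillen equivalence claim, if $\varphi$ is an isomorphism then $- \otimes_R \overline{R}$ is an equivalence of categories, so the adjunction is trivially a Quillen equivalence. Conversely, assuming $(- \otimes_R \overline{R}, U)$ is a Quillen equivalence, since the weak equivalences in both model structures are the quasi-isomorphisms and $- \otimes_R \overline{R}$ is already exact, the total left derived functor coincides with $- \otimes_R \overline{R}$ and induces a triangulated equivalence $\bm{D}(R\op) \to \bm{D}(\overline{R}\op)$ sending $R[0]$ to $\overline{R}[0]$; the induced ring isomorphism on endomorphism rings is precisely $\varphi$, forcing $\varphi$ to be an isomorphism. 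For the reflection statement under commutativity, under the (implicit) hypothesis that $\overline{R}$ is faithfully flat over $R$ (needed to invoke Proposition \ref{prop:change_FPn-flat}(b)), faithful flatness ensures that $\bm{f} \otimes_R \overline{R}$ being monic (resp. $\bm{K} \otimes_R \overline{R}$ being exact) implies $\bm{f}$ is monic (resp. $\bm{K}$ is exact), while Proposition \ref{prop:change_FPn-flat}(b) applied degreewise transfers the flat-dimension bound back to $\bm{K}$.

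The main obstacle is the $\hom$-exactness condition in part (b): verifying it cleanly requires careful bookkeeping with the tensor--hom adjunction and the orthogonality classes on both sides. A secondary subtlety is the converse direction of the Quillen equivalence claim, where one must argue via the derived equivalence of the unbounded derived categories rather than directly with the unit map, since $S^0(\overline{R})$ need not be fibrant in $\mathfrak{M}^{\rm dw\mbox{-}flat}_{(n,k)}(\overline{R}\op)$ and one has to replace it by a fibrant object before evaluating the derived unit.
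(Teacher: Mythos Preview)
Your proposal is correct and follows essentially the same route as the paper: the paper's own proof is a single sentence stating that the theorem is a consequence of Proposition~\ref{prop:change_FPn-flat} together with the techniques of Lemma~\ref{lem:proj_flat_Quillen}, and you have unpacked exactly those ingredients. Your treatment of the ${\rm dg}$-condition in part~(b) via the adjunction $\hom_{\overline{R}\op}(\bm{K}\otimes_R\overline{R},\bm{E'})\cong\hom_{R\op}(\bm{K},U(\bm{E'}))$ and the module-level isomorphism $\Ext^1_{R\op}(F,U(E))\cong\Ext^1_{\overline{R}\op}(F\otimes_R\overline{R},E)$ is the right elaboration of Lemma~\ref{lem:proj_flat_Quillen}; you also correctly spell out the derived-category argument for the converse of the Quillen-equivalence claim and correctly flag that the reflection statement implicitly requires $\overline{R}$ to be faithfully flat over $R$, since Proposition~\ref{prop:change_FPn-flat}(b) needs that hypothesis and finitely generated projective alone does not suffice.
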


We are also interested in presenting the analogous of Theorem \ref{theo:final1} for $\FP_n$-injective dimensions. This interest is motivated by the fact that if $\varphi \colon R \to \overline{R}$ is a ring homomorphism and $\Ch(R)$ and $\Ch(\overline{R})$ are equipped with the injective model structures, then the induction will be a left Quillen functor if, and only if, $\varphi$ makes $\overline{R}$ into a flat left $R$-module, and again, this will be a Quillen equivalence if, and only if, $\varphi$ is an isomorphism (See \cite[Section 2.3]{HoveyBook}). Note that, in this case, if $I$ is an injective module in $\Mod(\overline{R})$, then we have that $\Hom_R(-,U(I)) \cong \Hom_{\overline{R}}(\overline{R} \otimes_R -,I)$ is an exact functor since $\overline{R}$ is flat over $R$. We generalize this fact in the following result.

\begin{proposition}\label{prop:change_FPn-injective}
Let $\varphi \colon R \to \overline{R}$ be a ring homomorphism. The following statements hold:
\begin{itemize}
\item[{\rm (a)}] If $\varphi$ makes $\overline{R}$ a faithfully flat right $R$-module and $N \in \mathcal{I}_{(n,k)}(\overline{R})$, then $U(N) \in \mathcal{I}_{(n,k)}(R)$.

\item[{\rm (b)}] If $\varphi$ is an isomorphism that makes $\overline{R}$ a finitely generated projective left $R$-module and a faithfully flat right $R$-module, then $N \in \mathcal{I}_{(n,k)}(\overline{R})$ whenever $U(N) \in \mathcal{I}_{(n,k)}(R)$.
\end{itemize}
\end{proposition}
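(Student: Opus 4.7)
The plan is to reduce both claims to the corresponding statement on modules of type $\FP_n$ (Proposition \ref{prop:change_FPn}) together with the characterization of $\FP_n$-injective dimension by vanishing of $\Ext$ (Proposition \ref{prop:FPnid_char}), using the standard Hom-tensor adjunction
\[
\Hom_R(-,U(N)) \cong \Hom_{\overline{R}}(\overline{R}\otimes_R-,N)
\]
coming from the adjoint pair $(\overline{R}\otimes_R-,U)$. The only non-trivial ingredient will be promoting this adjunction from $\Hom$ to $\Ext$, which needs $\overline{R}$ to be flat as a right $R$-module so that $\overline{R}\otimes_R-$ preserves projective resolutions; this is exactly the flatness hypothesis present in (a) and (b).

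\textbf{Part (a).} Assume $\overline{R}$ is faithfully flat in $\Mod(R\op)$ and let $N\in\mathcal{I}_{(n,k)}(\overline{R})$. By Proposition \ref{prop:FPnid_char}, to conclude $U(N)\in\mathcal{I}_{(n,k)}(R)$ it suffices to check that $\Ext^{k+1}_R(L,U(N))=0$ for every $L\in\mathcal{FP}_n(R)$. Pick a projective resolution $P_\bullet\to L$ in $\Mod(R)$. Since $\overline{R}$ is (faithfully) flat over $R$, the complex $\overline{R}\otimes_R P_\bullet\to \overline{R}\otimes_R L$ is a projective resolution in $\Mod(\overline{R})$, because each $\overline{R}\otimes_R P_i$ is projective in $\Mod(\overline{R})$ (it is a direct summand of a free $\overline{R}$-module). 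Applying the adjunction termwise and taking cohomology yields a natural isomorphism
\[
\Ext^{k+1}_R(L,U(N))\;\cong\;\Ext^{k+1}_{\overline{R}}(\overline{R}\otimes_R L,N).
\]
By Proposition \ref{prop:change_FPn} (a), $\overline{R}\otimes_R L\in\mathcal{FP}_n(\overline{R})$, and by Proposition \ref{prop:FPnid_char} applied to $N\in\mathcal{I}_{(n,k)}(\overline{R})$ the right-hand side vanishes. Hence $U(N)\in\mathcal{I}_{(n,k)}(R)$.

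\textbf{Part (b).} Assume now that $\varphi$ is a ring isomorphism and that $\overline{R}$ is finitely generated projective in $\Mod(R)$ and faithfully flat in $\Mod(R\op)$, and suppose $U(N)\in\mathcal{I}_{(n,k)}(R)$. We must show $\Ext^{k+1}_{\overline{R}}(L',N)=0$ for every $L'\in\mathcal{FP}_n(\overline{R})$. By Proposition \ref{prop:change_FPn} (b), $U(L')\in\mathcal{FP}_n(R)$. Since $\varphi$ is an isomorphism, the counit $\overline{R}\otimes_R U(L')\to L'$ is an isomorphism in $\Mod(\overline{R})$. Arguing exactly as in (a) (the flatness hypothesis still applies), we get
\[
\Ext^{k+1}_{\overline{R}}(L',N)\;\cong\;\Ext^{k+1}_{\overline{R}}(\overline{R}\otimes_R U(L'),N)\;\cong\;\Ext^{k+1}_R(U(L'),U(N)),
\]
and the right-hand side is zero by Proposition \ref{prop:FPnid_char} applied to $U(N)\in\mathcal{I}_{(n,k)}(R)$. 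Therefore $N\in\mathcal{I}_{(n,k)}(\overline{R})$.

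\textbf{Main obstacle.} The only place that requires a little care is checking that $\overline{R}\otimes_R P_\bullet$ really is a projective resolution of $\overline{R}\otimes_R L$ in $\Mod(\overline{R})$ (so that the Hom-tensor adjunction yields an honest $\Ext$-isomorphism rather than a mere spectral sequence). This rests on (i) flatness of $\overline{R}$ over $R$ to preserve exactness, and (ii) the fact that $\overline{R}\otimes_R F$ is a free $\overline{R}$-module when $F$ is a free $R$-module, so that $\overline{R}\otimes_R P_i$ is projective in $\Mod(\overline{R})$. Both are standard and are already built into the hypotheses.
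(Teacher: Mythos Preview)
Your proof is correct and follows essentially the same strategy as the paper: both use the adjunction $(\overline{R}\otimes_R-,U)$ together with Proposition~\ref{prop:change_FPn} to transfer the $\FP_n$ property across the ring change, and then invoke the $\Ext$-characterization of $\FP_n$-injective dimension. The only cosmetic difference is that the paper argues the case $k=0$ via a five-lemma diagram chase on a single short exact sequence (and leaves general $k$ implicit), whereas you obtain the isomorphism $\Ext^{k+1}_R(L,U(N))\cong\Ext^{k+1}_{\overline{R}}(\overline{R}\otimes_R L,N)$ directly at the level of projective resolutions; your version is slightly cleaner and handles arbitrary $k$ at once.
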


\begin{proof}
For (a) and (b), we only prove the case where $k = 0$. Let us first start with (a). Suppose $N \in \mathcal{I}_n(\overline{R})$ and $L \in \mathcal{FP}_n(R)$. Then, we have an exact sequence $0 \to L' \to F \to L \to 0$ in $\Mod(R)$ with $F$ finitely generated and free, and $L' \in \mathcal{FP}_n(R)$. Using the adjunction $(\overline{R} \otimes_R -, U)$, along with the fact that the functor $\overline{R} \otimes_R - \colon \Mod(R) \longrightarrow \Mod(\overline{R})$ is exact, we can obtain the following commutative diagram with exact rows

\[
\begin{tikzpicture}
\matrix (m) [matrix of math nodes, row sep=3em, column sep=0.75em]
{
0 & \Hom_R(L,U(N)) & \Hom_R(F,U(N)) & \Hom_R(L',U(N)) & \Ext^1_R(L,U(N)) & 0 \\
0 & \Hom_{\overline{R}}(\overline{R} \otimes_R L, N) & \Hom_{\overline{R}}(\overline{R} \otimes_R F, N) & \Hom_{\overline{R}}(\overline{R} \otimes_R L', N) & \Ext^1_{\overline{R}}(\overline{R} \otimes_R L, N) & 0 \\
};
\path[->]
(m-1-1) edge (m-1-2) (m-1-2) edge (m-1-3) (m-1-3) edge (m-1-4) (m-1-4) edge (m-1-5) (m-1-5) edge (m-1-6)
(m-2-1) edge (m-2-2) (m-2-2) edge (m-2-3) (m-2-3) edge (m-2-4) (m-2-4) edge (m-2-5) (m-2-5) edge (m-2-6)
(m-1-2) edge node[right] {\footnotesize$\cong$} (m-2-2) (m-1-3) edge node[right] {\footnotesize$\cong$} (m-2-3) (m-1-4) edge node[right] {\footnotesize$\cong$} (m-2-4) (m-1-5) edge (m-2-5)
;
\end{tikzpicture}
\]
where $\Ext^1_R(F,U(N)) = 0$ since $F$ is projective, and $\Ext^1_R(\overline{R} \otimes_R F, N) = 0$ by Proposition \ref{prop:change_FPn}. It follows that $\Ext^1_R(L,U(N)) \cong \Ext^1_{\overline{R}}(\overline{R} \otimes_R L, N) = 0$, that is, $U(N) \in \mathcal{I}_n(R)$.

For (b), suppose that $U(N) \in \mathcal{I}_n(R)$ and $L \in \mathcal{FP}_n(\overline{R})$. Since $\varphi$ is an isomorphism, the pair $(\overline{R} \otimes_R -, U) \colon \Mod(R) \longleftrightarrow \Mod(\overline{R})$ is an adjoint equivalence, and so $\overline{R} \otimes_R U(L) \cong L$. So, it suffices to show $\Ext^1_{\overline{R}}(\overline{R} \otimes_R U(L), N) = 0$, but this follows by the previous diagram and the fact that $U(L) \in \mathcal{FP}_n(R)$ by Proposition \ref{prop:change_FPn}.
\end{proof}

\begin{theorem}\label{theo:final2}
Let $\varphi \colon R \to \overline{R}$ be a ring homomorphism. The following statements hold true for every $\infty \geq n \geq 1$ and $k \geq 0$:
\begin{itemize}
\item[{\rm (a)}] If $\varphi$ makes $\overline{R}$ a faithfully flat right $R$-module, then $U \colon \Ch(\overline{R}) \longrightarrow \Ch(R)$ is:
\begin{itemize}
\item[$\bullet$] A right Quillen functor from $\mathfrak{M}^{\rm dw\mbox{-}inj}_{(n,k)}(\overline{R})$ to $\mathfrak{M}^{\rm dw\mbox{-}inj}_{(n,k)}(R)$, which is a Quillen equivalence if, and only if, $\varphi$ is an isomorphism.

\item[$\bullet$] A right Quillen functor from $\mathfrak{M}^{\rm inj}_{(\infty,k)}(\overline{R})$ to $\mathfrak{M}^{\rm inj}_{(\infty,k)}(R)$, which is a Quillen equivalence if, and only if, $\varphi$ is an isomorphism of rings.
\end{itemize}

\item[{\rm (b)}] If $\varphi$ is an isomorphism that makes $\overline{R}$ a finitely generated projective left $R$-module and a faithfully flat right $R$-module, then the forgetful functor reflects:
\begin{itemize}
\item[$\bullet$] Fibrations and trivial fibrations between $\mathfrak{M}^{\rm dw\mbox{-}inj}_{(n,k)}(\overline{R})$ and $\mathfrak{M}^{\rm dw\mbox{-}inj}_{(n,k)}(R)$.

\item[$\bullet$] Fibrations and trivial fibrations between $\mathfrak{M}^{\rm inj}_{(\infty,k)}(\overline{R})$ and $\mathfrak{M}^{\rm inj}_{(\infty,k)}(R)$.
\end{itemize}
\end{itemize}
\end{theorem}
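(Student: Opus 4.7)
The plan is to verify that $U$ preserves fibrations and trivial fibrations, which, combined with the adjunction $(\overline{R} \otimes_R -, U)$, makes $U$ right Quillen. Since $U$ is exact (it simply reinterprets an $\overline{R}$-module as an $R$-module through $\varphi$), it preserves epimorphisms, kernels, and the class of exact complexes; it therefore suffices to show that $U$ carries the right half of the cotorsion pair defining each Hovey triple into its counterpart. For the dw-inj model structure this is immediate: if $\bm{N} \in {\rm dw}\widetilde{\mathcal{I}_{(n,k)}(\overline{R})}$, then each $N_m \in \mathcal{I}_{(n,k)}(\overline{R})$, and Proposition~\ref{prop:change_FPn-injective}(a) gives $U(N_m) \in \mathcal{I}_{(n,k)}(R)$; the trivial-fibration case is then automatic.

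For the inj model structure with $n = \infty$, the delicate step is the dg-condition. Given $\bm{Y} \in {\rm dg}\widetilde{\mathcal{I}_{(\infty,k)}(\overline{R})}$ and $\bm{A} \in \widetilde{{}^\perp(\mathcal{I}_{(\infty,k)}(R))}$, one needs $\hom(\bm{A}, U(\bm{Y}))$ to be exact. Via the componentwise adjunction isomorphism
\[
\hom(\bm{A}, U(\bm{Y})) \cong \hom(\overline{R} \otimes_R \bm{A}, \bm{Y}),
\]
this reduces to showing $\overline{R} \otimes_R \bm{A} \in \widetilde{{}^\perp(\mathcal{I}_{(\infty,k)}(\overline{R}))}$. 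Faithful flatness of $\overline{R}_R$ preserves exactness and commutes with cycles, so the remaining point is that $\overline{R} \otimes_R -$ sends ${}^\perp(\mathcal{I}_{(\infty,k)}(R))$ into ${}^\perp(\mathcal{I}_{(\infty,k)}(\overline{R}))$. Flatness of $\overline{R}$ together with the fact that $\overline{R} \otimes_R -$ preserves projectives yields the natural isomorphism $\Ext^1_{\overline{R}}(\overline{R} \otimes_R C, N) \cong \Ext^1_R(C, U(N))$, and the vanishing follows once again from Proposition~\ref{prop:change_FPn-injective}(a).

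For the Quillen-equivalence assertions in (a), one direction is immediate: an isomorphism $\varphi$ renders $(\overline{R} \otimes_R -, U)$ an equivalence of categories, transporting each model structure onto its counterpart. For the converse, I would take a cofibrant replacement $\bm{Q} \twoheadrightarrow S^0(R)$ in the source model category and analyze the derived unit $\bm{Q} \to U(\overline{R} \otimes_R \bm{Q})$, which a Quillen equivalence forces to be a quasi-isomorphism; passing to $H_0$ recovers the ring map $\varphi \colon R \to U(\overline{R})$, which must therefore be an isomorphism. The main obstacle is confirming that a cofibrant replacement of $S^0(R)$ with the desired homological behaviour is available in each of the two model structures, and that the induction sends it to an object whose fibrant replacement in the target carries enough information to detect $\varphi$ itself.

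Finally, for part (b), the hypothesis that $\varphi$ is an isomorphism turns $U$ into an equivalence of abelian categories that reflects epimorphisms and exactness. Reflection of (trivial) fibrations then follows by combining Proposition~\ref{prop:change_FPn-injective}(b) with the same closure arguments as above, since $U$ reflects both the componentwise $\mathcal{I}_{(n,k)}$-condition and the exactness of the relevant kernel. In the dg-case, a symmetric application of the adjunction isomorphism transports the exactness of the $\hom$-complex back across $U$: if $\bm{B} \in \widetilde{{}^\perp(\mathcal{I}_{(\infty,k)}(\overline{R}))}$, one uses that the equivalence induced by $\varphi$ identifies $U(\bm{B})$ with an object in $\widetilde{{}^\perp(\mathcal{I}_{(\infty,k)}(R))}$, thereby reducing the required exactness to the one already given in the source model structure.
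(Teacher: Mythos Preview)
Your proposal is correct and follows essentially the same approach that the paper leaves implicit: the paper gives no explicit proof of Theorem~\ref{theo:final2}, relying instead on Proposition~\ref{prop:change_FPn-injective} together with the techniques of Lemma~\ref{lem:proj_flat_Quillen}, which is exactly the route you take. Your treatment of the dg-condition via the adjunction isomorphism $\hom(\bm{A},U(\bm{Y})) \cong \hom(\overline{R}\otimes_R \bm{A},\bm{Y})$ and the $\Ext$-isomorphism $\Ext^1_{\overline{R}}(\overline{R}\otimes_R C,N) \cong \Ext^1_R(C,U(N))$ spells out precisely the step the paper skips.

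Regarding the Quillen-equivalence converse, where you flag an obstacle: the paper does not prove this either, but defers to \cite[Section~2.3]{HoveyBook} for the standard and injective model structures. The bridge you are missing is that all model structures involved share the same weak equivalences (quasi-isomorphisms), and under the hypotheses both $\overline{R}\otimes_R-$ (by flatness) and $U$ (by exactness) already preserve all quasi-isomorphisms. Hence the total derived functors coincide with the underlying functors on the homotopy category regardless of which model structure is used, and the Quillen-equivalence question reduces to Hovey's statement for $\bm{\mathrm{D}}(R)\leftrightarrow\bm{\mathrm{D}}(\overline{R})$. Your cofibrant-replacement argument on $S^0(R)$ would ultimately recover the same conclusion, but the route through derived categories is shorter and avoids the obstacle you identified.
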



\section*{\textbf{Acknowledgements}}
The first author is supported by  the National Natural Science Foundation of China  (No. 11571164), the University Postgraduate Research and Innovation Project of Jiangsu Province 2016 (No. KYZZ16\_0034), and Nanjing University Innovation and Creative Program for PhD candidate (No. 2016011).  The second author is supported by a DGAPA-UNAM (Direcci\'on General de Asuntos del Personal Acad\'emico - Universidad Nacional Aut\'onoma de M\'exico) postdoctoral fellowship. The authors would like to thank Professor Zhaoyong Huang for his careful guidance and helpful suggestions.

\bibliographystyle{alpha}
\bibliography{RefRelcom}

\end{document}